\newcommand{\mathref}[1]{\text{\ref{#1}}}
\newcommand{\xpower}{b}
\newcommand{\ppower}{a}
\newcommand{\Oh}{\mathcal{O}}
\newcommand{\Ha}{\mathcal{H}}
\newcommand{\Ly}{\mathcal{V}}
\newcommand{\Wy}{\mathcal{W}}
\newcommand{\Pa}{\mathcal{P}}
\newcommand{\grad}{\nabla \!\!\;}
\newcommand{\hess}{\nabla^2 \!\!\;}
\newcommand{\fc}{f_c}
\newcommand{\f}{f}
\newcommand{\K}{k}
\newcommand{\xmin}{x_{\star}}
\newcommand{\maxeigenf}{\lambda_{\max}}
\newcommand{\maxeigen}[1]{\maxeigenf^{\norm{\c}} \!  \l(#1\r)}
\newcommand{\maxeigenconj}[1]{\maxeigenf^{\norm{\c}_*} \! \l(#1\r)}
\newcommand{\maxeigennorm}[2]{\maxeigenf^{#2} \!  \l(#1\r)}
\newcommand{\Ca}{C_{\alpha, \gamma}}
\newcommand{\Cfk}{C_{\f,\K}}
\newcommand{\Ck}{C_{\K}}
\newcommand{\Dfk}{D_{\f,\K}}
\newcommand{\Df}{D_{\f}}
\newcommand{\Dk}{D_{\K}}
\newcommand{\Lf}{L_{\f}}
\newcommand{\Lk}{L_{\K}}
\newcommand{\MaA}{\max\{a,A\}}
\newcommand{\ie}{i.e.,}
\newcommand{\eg}{e.g.,}
\newcommand{\etal}{\emph{et al.}}
\newcommand{\alphastar}{\alpha_{\star}}
\renewcommand{\c}{\cdot}
\renewcommand{\l}{\left}
\renewcommand{\r}{\right}
\newcounter{asscounter}
\renewcommand{\theasscounter}{\Alph{asscounter}}
\newenvironment{assumptions}[1]
    {	
    	\refstepcounter{asscounter}
        \label{ass:#1}
	\begin{center}
        	\begin{tabular}{| @{\hspace{1em}}m{0.9\textwidth}@{\hspace{1em}} |}
	\hline
        \vspace{1ex}\textbf{Assumptions \theasscounter.}
        \begin{enumerate}[label=  \theasscounter.\arabic*]
    }
    {
        \end{enumerate}\\
        \hline
    	\end{tabular}
	\end{center}
    }
\newenvironment{titledbox}[1]
    {	
	\begin{center}
        	\begin{tabular}{| @{\hspace{1em}}m{0.9\textwidth}@{\hspace{1em}} |}
	\hline
        \vspace{1ex}\textbf{#1.}
    }
    {
        \\\hline
    	\end{tabular}
	\end{center}
    }
\title{Hamiltonian Descent Methods}
\author[1,2,*]{Chris J. Maddison}
\author[1,*]{Daniel Paulin}
\author[1,2]{Yee Whye Teh}
\author[2]{Brendan O'Donoghue}
\author[1]{Arnaud Doucet}
\affil[1]{Department of Statistics, University of Oxford}
\affil[2]{DeepMind, London, UK}
\affil[*]{Both authors contributed equally to this work.}
\date{\today}
\begin{document}

\maketitle

\begin{abstract}
We propose a family of optimization methods that achieve linear convergence using first-order gradient information and constant step sizes on a class of convex functions much larger than the smooth and strongly convex ones. This larger class includes functions whose second derivatives may be singular or unbounded at their minima. Our methods are discretizations of conformal Hamiltonian dynamics, which generalize the classical momentum method to model the motion of a particle with non-standard kinetic energy exposed to a dissipative force and the gradient field of the function of interest. They are first-order in the sense that they require only gradient computation. Yet, crucially the kinetic gradient map can be designed to incorporate information about the convex conjugate in a fashion that allows for linear convergence on convex functions that may be non-smooth or non-strongly convex. We study in detail one implicit and two explicit methods. For one explicit method, we provide conditions under which it converges to stationary points of non-convex functions. For all, we provide conditions on the convex function and kinetic energy pair that guarantee linear convergence, and show that these conditions can be satisfied by functions with power growth. In sum, these methods expand the class of convex functions on which linear convergence is possible with first-order computation.
\end{abstract}

\section{Introduction}
\label{sec:introduction}

We consider the problem of unconstrained minimization of a differentiable function $f:\R^d\to\R$,
\begin{equation}
\label{eq:problem} \min_{x \in \R^d} f(x),
\end{equation}
by iterative methods that require only the partial derivatives $\grad f(x) = (\partial f(x) / \partial x^{(n)}) \in \R^d$ of $f$, known also as first-order methods \cite{nemirovsky1983problem, polyak1987introduction, nesterov2013introductory}. These methods produce a sequence of iterates $x_i \in \R^d$, and our emphasis is on those that achieve linear convergence, \ie{} as a function of the iteration $i$ they satisfy $f(x_i) - f(\xmin) = \mathcal{O}(\lambda^{-i})$ for some rate $\lambda > 1$ and $\xmin \in \R^d$ a global minimizer. We briefly consider non-convex differentiable $f$, but the bulk of our analysis focuses on the case of convex differentiable $f$. Our results will also occasionally require twice differentiability of $f$.

The convergence rates of first-order methods on convex functions can be broadly separated by the properties of strong convexity and Lipschitz smoothness. Taken together these properties for convex $f$ are equivalent to the conditions that the following left hand bound (strong convexity) and right hand bound (smoothness) hold for some $\mu, L \in (0, \infty)$ and all $x,y \in \R^d$,
\begin{equation}
\label{eq:strgcvxsmooth} \frac{\mu}{2}\norm{x-y}^2_2 \leq f(x) - f(y) - \inner{\grad f(y)}{x-y} \leq \frac{L}{2}\norm{x-y}^2_2,
\end{equation}
where $\inner{x}{y} = \sum_{n=1}^d x^{(n)} y^{(n)}$ is the standard inner product and $\norm{x}_2 = \sqrt{\inner{x}{x}}$ is the Euclidean norm.
For twice differentiable $f$, these properties are equivalent to the conditions that eigenvalues of the matrix of second-order partial derivatives $\hess f(x) = (\partial^2 f(x) / \partial x^{(n)} \partial x^{(m)}) \in \R^{d \times d}$ are everywhere lower bounded by $\mu$ and upper bounded by $L$, respectively. Thus, functions whose second derivatives are continuously unbounded or approaching 0, cannot be both strongly convex and smooth.
Both bounds play an important role in the performance of first-order methods. On the one hand, for smooth and strongly convex $f$, the iterates of many first-order methods converge linearly. On the other hand, for any first-order method, there exist smooth convex functions and non-smooth strongly convex functions on which its convergence is sub-linear, \ie{} $f(x_i) - f(\xmin) \geq \mathcal{O}(i^{-2})$ for any first-order method on smooth convex functions. See \cite{nemirovsky1983problem, polyak1987introduction, nesterov2013introductory} for these classical results and \cite{juditsky2014deterministic} for other more exotic scenarios. Moreover, for a given method it can sometimes be very easy to find examples on which its convergence is slow; see Figure \ref{fig:introfig}, in which gradient descent with a fixed step size converges slowly on $f(x) =  [x^{(1)}+x^{(2)}]^4 +  [x^{(1)}/2 - x^{(2)}/2]^4$,  which is not strongly convex as its Hessian is singular at $(0,0)$.

\begin{figure}[t]
    \centering
    \includegraphics[scale=1]{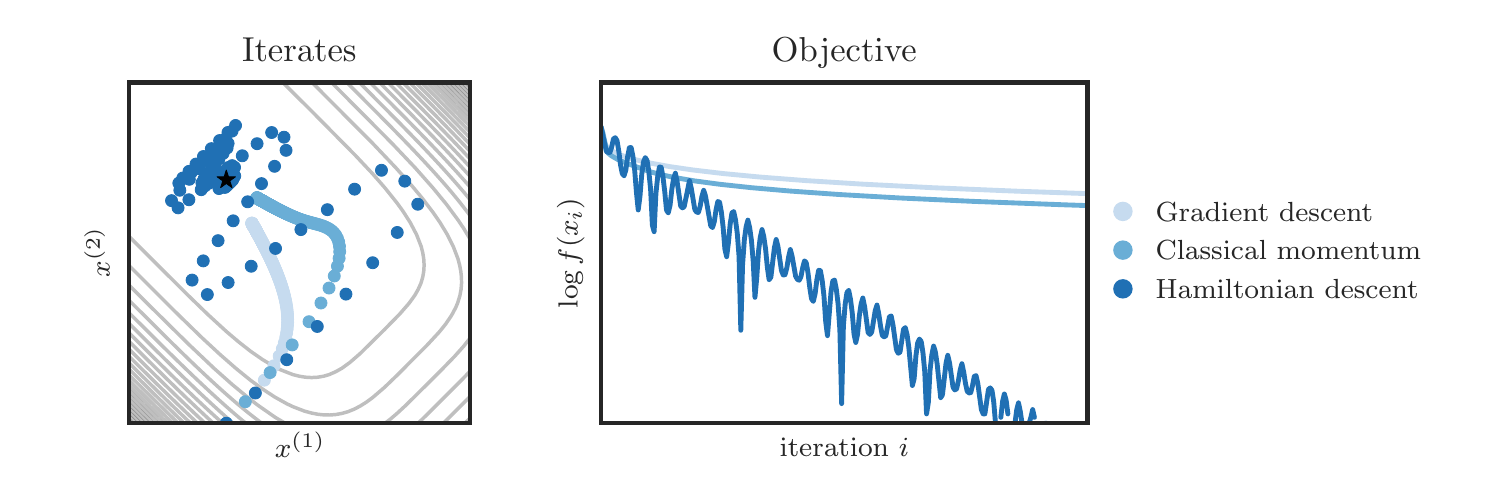}
    \caption{Optimizing $f(x) =  [x^{(1)}+x^{(2)}]^4 +  [x^{(1)}/2 - x^{(2)}/2]^4$ with three methods: gradient descent with fixed step size equal to $1/L_0$ where $L_0 = \lambda_{\max}(\hess f(x_0))$ is the maximum eigenvalue of the Hessian $\hess f$ at $x_0$; classical momentum, which is a particular case of our first explicit method with $\K(p) = [(p^{(1)})^{2} + (p^{(2)})^{2}]/2$ and fixed step size equal to $1/L_0$; and Hamiltonian descent, which is our first explicit method with $\K(p) = (3/4)[(p^{(1)})^{4/3} + (p^{(2)})^{4/3}]$ and a fixed step size.}
    \label{fig:introfig}
\end{figure}

The central assumption in the worst case analyses of first-order methods is that information about $f$ is restricted to black box evaluations of $f$ and $\grad f$ locally at points $x \in \R^d$, see \cite{nemirovsky1983problem, nesterov2013introductory}. In this paper we assume additional access to first-order information of a second differentiable function $\K : \R^d \to \R$ and show how $\grad \K$ can be designed to incorporate information about $f$ to yield practical methods that converge linearly on convex functions. These methods are derived by discretizing the conformal Hamiltonian system \cite{mclachlan2001conformal}. These systems are parameterized by $f, \K : \R^d \to \R$ and $\gamma \in (0, \infty)$ with solutions $(x_t, p_t) \in \R^{2d}$,
\begin{equation}
\label{eq:ode}
\begin{aligned}
    x_t' &= \grad \K(p_t)\\
    p_t' & = -\grad \f(x_t)-\gamma p_t.
\end{aligned}
\end{equation}
From a physical perspective, these systems model the dynamics of a single particle located at $x_t$ with momentum $p_t$ and kinetic energy $\K(p_t)$ being exposed to a force field $\grad \f $ and a dissipative force. For this reason we refer to $\K$ as, the \emph{kinetic energy}, and $\grad \K$, the \emph{kinetic map}. When the kinetic map $\grad \K$ is the identity, $\grad \K(p) = p$, these dynamics are the continuous time analog of Polyak's heavy ball method \cite{polyak1964some}.  Let $\fc(x)=f(x+\xmin)-f(\xmin)$ denote the centered version of $f$, which takes its minimum at $0$, with minimum value $0$. Our key observation in this regard is that when $f$ is convex, and $\K$ is chosen as $\K(p)= (\fc^*(p)+\fc^*(-p))/2$ (where $\fc^*(p)=\sup\{ \inner{x}{p}-\fc(x) : x\in \R^d\}$ is the convex conjugate of $\fc$), these dynamics have linear convergence with rate independent of $f$. In other words, this choice of $\K$ acts as a preconditioner, a generalization of using $\K(p)=\inner{p}{A^{-1} p}/2$ for $f(x)=\inner{x}{A x}/2$. Thus $\grad \K$ can exploit global information provided by the conjugate $\fc^*$ to condition convergence for generic convex functions.

\begin{figure}[t]
    \centering
    \includegraphics[scale=1]{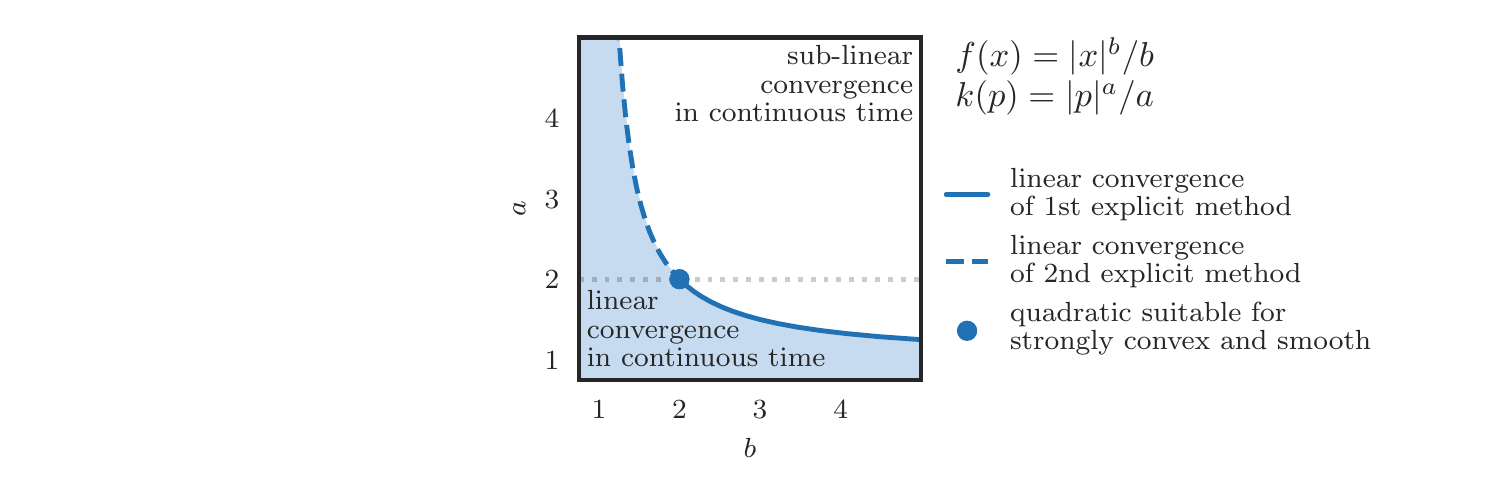}
    \caption{Convergence Regions for Power Functions. Shown are regions of distinct convergence types for Hamiltonian descent systems with $f(x) = |x|^b/b, k(p) = |p|^a/a$ for $x,p \in \R$ and $a,b \in (1, \infty)$. We show in Section \ref{sec:continuous} convergence is linear in continuous time iff $1/a + 1/b \geq 1$. In Section \ref{sec:kin} we show that the assumptions of the explicit discretizations can be satisfied if $1/a + 1/b = 1$, leaving this as the only suitable pairing for linear convergence. Light dotted line is the line occupied by classical momentum with $\K(p) = p^2/2$.}
    \label{fig:regions}
\end{figure}

To preview the flavor of our results in detail, consider the special case of optimizing the power function $f(x) = |x|^b/b$ for $x \in \R$ and $b \in (1, \infty)$ initialized at $x_0 > 0$ using system \eqref{eq:ode} (or discretizations of it) with $\K(p) = |p|^a/a$ for $p \in \R$ and $a \in (1, \infty)$. FOr this choice of $f$, it can be shown that $\fc^*(p) = \fc^*(-p) = \K(p)$ when $a = b/(b-1)$. In line with this, in Section \ref{sec:continuous} we show that \eqref{eq:ode} exhibits linear convergence in continuous time if and only if $1/a + 1/b \geq 1$. In Section \ref{sec:discrete} we propose two explicit discretizations with fixed step sizes; in Section \ref{sec:kin} we show that the first explicit discretization converges if $1/a + 1/b = 1$ and $b \geq 2$, and the second converges if $1/a + 1/b = 1$ and $1 < b \leq 2$. This means that the only suitable pairing corresponds in this case to the choice $\K(p) \propto \fc^*(p) + \fc^*(-p)$. Figure \ref{fig:regions} summarizes this discussion. Returning to Figure \ref{fig:introfig}, we can compare the use of the kinetic energy of Polyak's heavy ball with a kinetic energy that relates appropriately to the convex conjugate of $f(x) =  [x^{(1)}+x^{(2)}]^4 +  [x^{(1)}/2 - x^{(2)}/2]^4$.

Most convex functions are not simple power functions, and computing $\fc^*(p)+\fc^*(-p)$ exactly is rarely feasible. To make our observations useful for numerical optimization, we show that linear convergence is still achievable in continuous time even if $\K(p)\ge \alpha \max\{\fc^*(p),\fc^*(-p)\}$ for some $0 < \alpha \leq 1$ within a region defined by $x_0$. We study three discretizations of \eqref{eq:ode}, one implicit method and two explicit ones (which are suitable for functions that grow asymptotically fast or slow, respectively). We prove linear convergence rates for these under appropriate additional assumptions. We introduce a family of kinetic energies that generalize the power functions to capture distinct power growth near zero and asymptotically far from zero. We show that the additional assumptions of discretization can be satisfied for this family of $\K$. We derive conditions on $f$ that guarantee the linear convergence of our methods when paired with a specific choice of $\K$ from this family. These conditions generalize the quadratic growth implied by smoothness and strong convexity, extending it to general power growth that may be distinct near the minimum and asymptotically far from the minimum, which we refer to as tail and body behavior, respectively. Step sizes can be fixed independently of the initial position (and often dimension), and do not require adaptation, which often leads to convergence problems, see \cite{wilson2017marginal}. Indeed, we analyze a kinetic map $\grad \K$ that resembles the iterate updates of some popular adaptive gradient methods \cite{duchi2011adaptive, zeiler2012adadelta, hinton2014rmsprop, kingma2014adam}, and show that it conditions the optimization of strongly convex functions with very fast growing tails (non-smooth). Thus, our methods provide a framework optimizing potentially non-smooth or non-strongly convex functions with linear rates using first-order computation.

The organization of the paper is as follows. In the rest of this section, we cover notation, review a few results from convex analysis, and give an overview of the related literature.
In Section \ref{sec:continuous}, we show the linear convergence of \eqref{eq:ode} under conditions on the relation between the kinetic energy $\K$ and $f$. We show a partial converse that in some settings our conditions are necessary. In Section \ref{sec:discrete}, we present the three discretizations of the continuous dynamics and study the assumptions under which linear rates can be guaranteed for convex functions. For one of the discretizations, we also provide conditions under which it converges to stationary points of non-convex functions.  In Section \ref{sec:kin}, we study a family of kinetic energies suitable for functions with power growth. We describe the class of functions for which the assumptions of the discretizations can be satisfied when using these kinetic energies.

\subsection{Notation and Convex Analysis Review}
We let $\inner{x}{y} = \sum_{n=1}^d x^{(n)} y^{(n)}$ denote the standard inner product for $x,y \in \R^d$ and $\norm{x}_2 = \sqrt{\inner{x}{x}}$ the Euclidean norm. For a differentiable function $f: \R^d \to \R$, the gradient $\grad f(x) = (\partial f(x) / \partial x^{(n)}) \in \R^d$ is the vector of partial derivatives at $x$. For twice-differentiable $f$, the Hessian $\hess h(x) = (\partial^2 f(x) / \partial x^{(n)} \partial x^{(m)}) \in \R^{d \times d}$ is the matrix of second-order partial derivatives at $x$. The notation $x_t$ denotes the solution $x_t : [0, \infty) \to \R^d$ to a differential equation with derivative in $t$ denoted $x_t'$. $x_i$ denotes the iterates $x_i : \{0, 1, \ldots\} \to \R^d$ of a discrete system.

Consider a convex function $h : C \to \R$ that is defined on a convex domain $C \subseteq \R^d$ and differentiable on the interior $\interior(C)$. The convex conjugate $h^* : \R^d \to \R$ is defined as
\begin{equation}\label{eq:convconj}
h^*(p)=\sup\{ \inner{x}{p}-h(x) : x\in C\}
\end{equation}
and it is itself convex. It is easy to show from the definition that if $g: C \to \R$ is another convex function such that $g(x) \leq h(x)$ for all $x \in C$, then $h^*(p) \leq g^*(p)$ for all $p \in \R^d$. Because we make such extensive use of it, we remind readers of the Fenchel-Young inequality: for $x \in C$ and $p \in \R^d$,
\begin{align}
\label{eq:fenchelyoung} \inner{x}{p} &\leq h(x) + h^*(p),
\intertext{which is easily derived from the definition of $h^*$, or see Section 12 of \cite{rockafellar1970convex}. For $x \in \interior(C)$ by Theorem 26.4 of \cite{rockafellar1970convex},}
\label{eq:fenchelyoungtight} \inner{x}{\grad h(x)} &= h(x) + h^*(\grad h(x)).
\end{align}
Let $y \in \R^{d}$, $c \in \R \setminus \{0\}$. If $g(x) = h(x+y) - c$, then $g^*(p) = h^*(p) - \inner{p}{y} + c$ (Theorem 12.3 \cite{rockafellar1970convex}). If $h(x) = |x|^b/b$ for $x \in \R$ and $b \in (1, \infty)$, then $h^*(p) = |p|^a/a$ where $a = b/(b-1)$ (page 106 of \cite{rockafellar1970convex}). If $g(x) = c h( x)$, then $g^*(p) = c h^*(p/c)$ (Table 3.2 \cite{borwein2010convex}). For these and more on $h^*$, we refer readers to \cite{rockafellar1970convex, boyd2004convex, borwein2010convex}.

\subsection{Related Literature}
Standard references on convex optimization and the convergence analysis of first-order methods include \cite{nemirovsky1983problem, polyak1987introduction, bertsekas2003convex, boyd2004convex, nesterov2013introductory, bubeck2015convex}.

The heavy ball method was introduced by Polyak in his seminal paper \cite{polyak1964some}. In this paper, local convergence with linear rate was shown (\ie{} when the initial position is sufficiently close to the local minimum). For quadratic functions, it can be shown that the convergence rate for optimally chosen step sizes is proportional to the square root of the conditional number of the Hessian, similarly to conjugate gradient descent (see \eg{} \cite{rechtlecturenotes}). As far as we know, global convergence of the heavy ball method for non-quadratic functions was only recently established in \cite{ghadimi2015global} and \cite{lessard2016analysis}, see \cite{gurbuzbalaban2017convergence} for an extension to stochastic average gradients. The heavy ball method forms the basis of the some of the most successful optimization methods for deep learning, see \eg{} \cite{sutskever2013importance, kingma2014adam}, and the recent review \cite{bottou2018optimization}. Hereafter, classical momentum refers to any first-order discretization of the continuous analog of Polyak's heavy ball (with possibly suboptimal step sizes).

Nesterov obtained upper and lower bounds of matching order for first-order methods for smooth convex functions and smooth strongly convex functions, see \cite{nesterov2013introductory}. In Necoara \etal{} \cite{necoara2018linear}, the assumption of strong convexity was relaxed, and under a weaker quadratic growth condition, linear rates were obtained by several well known optimization methods. Several other authors obtained linear rates for various classes of non-strongly convex or non-uniformly smooth functions, see \eg{} \cite{nemirovskii1985optimal, karimi2016linear, drusvyatskiy2018error, yang2015rsg, fazlyab2017analysis, roulet2017sharpness}.

In recent years, there has been interest in the optimization community in looking at the continuous time ODE limit of optimization methods, when the step size tends to zero.
Su \etal{} \cite{su2014differential, su2016differential} have found the continuous time limit of Nesterov's accelerated gradient descent. This result improves the intuition about Nesterov's method, as the proofs of convergence rates in continuous time are rather elegant and clear, while the previous proofs in discrete time are not as transparent. Follow-ups have studied the continuous time counterparts to accelerated mirror descent \cite{krichene2015accelerated} as well as higher order discretizations of such systems \cite{wibisono2016variational, wilson2016lyapunov}. Studying continuous time systems for optimization can separate the concerns of designing an optimizer from the difficulties of discretization. This perspective has resulted in numerous other recent works that propose new optimization methods, and study existing ones via their continuous time limit, see \eg{}
\cite{betancourt2018symplectic, allen2016even, flammarion2015averaging, jin2017accelerated, drusvyatskiy2018optimal, francca2018admm, francca2018relax}.

Conformal Hamiltonian systems \eqref{eq:ode} are studied in geometry \cite{mclachlan2001conformal, bhatt2016second}, because their solutions preserve symplectic area up to a constant; when $\gamma = 0$ symplectic area is exactly preserved, when $\gamma > 0$ symplectic area dissipates uniformly at an exponential rate \cite{mclachlan2001conformal}. In classical mechanics, Hamiltonian dynamics (system \eqref{eq:ode} with $\gamma = 0$) are used to describe the motion of a particle exposed to the force field $\grad f$. Here, the most common form for $\K$ is $\K(p) = \inner{p}{p}/2m$, where $m$ is the mass, or in relativistic mechanics, $\K(p) = c\sqrt{\inner{p}{p} + m^2c^2}$ where $c$ is the speed of light, see \cite{goldstein2011classical}. In the Markov Chain Monte Carlo literature, where (discretized) Hamiltonian dynamics (again $\gamma = 0$) are used to propose moves in a Metropolis--Hastings algorithm \cite{metropolis1953equation, hastings1970monte, duane1987hybrid, neal2011mcmc}, $\K$ is viewed as a degree of freedom that can be used to improve the mixing properties of the Markov chain  \cite{girolami2011riemann, livingstone2017kinetic}. Stochastic differential equations similar to \eqref{eq:ode} with $\gamma > 0$ have been studied from the perspective of designing $\K$ \cite{lu2016relativistic, stoltz2018langevin}.

\section{Continuous Dynamics}\label{sec:continuous}
In this section, we motivate the discrete optimization algorithms by introducing their continuous time counterparts. These systems are differential equations described by a Hamiltonian vector field plus a dissipation field. Thus, we briefly review Hamiltonian dynamics, the continuous dynamics of Hamiltonian descent, and derive convergence rates for convex $f$ in continuous time.

\subsection{Hamiltonian Systems}
 In the Hamiltonian formulation of mechanics, the evolution of a particle exposed to a force field $\grad f$ is described by its location $x_t : [0, \infty) \to \R^d$ and momentum $p_t :  [0, \infty) \to \R^d$ as functions of time. The system is characterized by the total energy, or Hamiltonian,
\begin{align}
	\label{eq:hamiltonian}
	\Ha(x,p) = \K(p) + \f(x) - \f(\xmin),
\end{align}
where $\xmin$ is one of the global minimizers of $f$ and $\K : \R^d \to \R$ is called the kinetic energy. Throughout, we consider kinetic energies $\K$ that are a strictly convex functions with minimum at $\K(0) = 0$. The Hamiltonian $\Ha$ defines the trajectory of a particle $x_t$ and its momentum $p_t$ via the ordinary differential equation,
\begin{equation}
\label{eq:hamiltonianode}
\begin{aligned}
    x_t' &= \grad_p \Ha(x_t, p_t) = \grad \K(p_t)\\
    p_t' &= -\grad_x \Ha(x_t, p_t) = -\grad \f(x_t).
\end{aligned}
\end{equation}
For any solution of this system, the value of the total energy over time $\Ha_t = \Ha(x_t, p_t)$ is conserved as
$\Ha_t' = \inner{\grad \K(p_t)}{p_t'} + \inner{\grad \f(x_t)}{x_t'} = 0.$
Thus, the solutions of the Hamiltonian field oscillate, exchanging energy from $x$ to $p$ and back again.

\begin{figure}[t]
    \centering
    \includegraphics[scale=1]{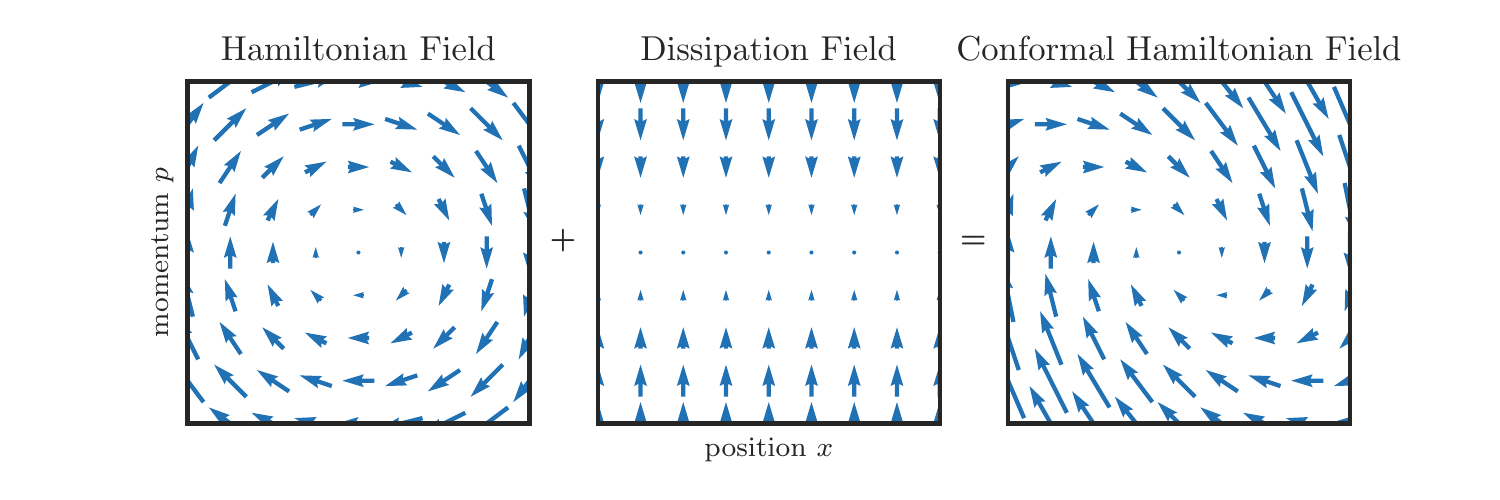}
    \caption{A visualization of a conformal Hamiltonian system.}
    \label{fig:fieldviz}
\end{figure}

\subsection{Continuously Descending the Hamiltonian}
\label{sec_continuous_Lyapunov}
The solutions of a Hamiltonian system remain in the level set $\{(x_t, p_t) : \Ha_t = H_0\}$. To drive
such a system towards stationary points, the total energy must reduce over time. Consider as a motivating example the continuous system $x_t'' = -\grad \f(x_t) - \gamma x_t'$, which describes Polyak's heavy ball algorithm in continuous time \cite{polyak1964some}. Letting $x_t' = p_t$, the heavy ball system can be rewritten as
\begin{align}
\begin{split}
    x_t' &= p_t\\
    p_t' &= -\grad \f(x_t) - \gamma p_t.
\end{split}
\end{align}
Note that this system can be viewed as a combination of a Hamiltonian field with $\K(p) = \inner{p}{p}/2$ and a dissipation field, \ie{} $(x_t', p_t') = F(x_t, p_t) + G(x_t, p_t)$ where $F(x_t, p_t) = (p_t, - \grad f(x_t))$ and $G(x_t, p_t) = (0, - \gamma p_t$), see Figure \ref{fig:fieldviz} for a visualization. This is naturally extended to define the more general conformal Hamiltonian system \cite{mclachlan2001conformal},
\begin{equation}
\begin{aligned}
    x_t' &= \grad \K(p_t)\\
    p_t' &= -\grad \f(x_t) - \gamma p_t.
\end{aligned}
\tag{\ref*{eq:ode} revisited}
\end{equation}
with $\gamma \in (0, \infty)$.
When $\K$ is convex with a minimum $\K(0)=0$, these systems descend the level sets of the Hamiltonian. We can see this by showing that the total energy $\Ha_t$ is reduced along the trajectory $(x_t, p_t)$,
\begin{align}\label{eq:Hdecreases}
    \Ha_t' = \inner{\grad \K(p_t)}{p_t'} + \inner{\grad \f(x_t)}{x_t'} = - \gamma \inner{\grad \K(p_t)}{p_t}\le -\gamma \K(p_t) \leq 0,
\end{align}
where we have used the convexity of $\K$, and the fact that it is minimised at $\K(0)=0$.

The following proposition shows some existence and uniqueness results for the dynamics \eqref{eq:ode}. We say that $\Ha$ is radially unbounded if $\Ha(x,p) \to \infty$ when $\norm{(x, p)}_2 \to \infty$, \eg{} this would be implied if $f$ and $\K$ were strictly convex with unique minima.
\begin{prop}[Existence and uniqueness]
If $\grad f$ and $\grad \K$ are continuous, $\K$ is convex with a minimum $\K(0)=0$, and $\Ha$ is radially unbounded, then for every $x,p\in \R^d$, there exists a solution $(x_t,p_t)$ of  \eqref{eq:ode}  defined for every $t\ge 0$ with $(x_0, p_0) = (x, p)$. If in addition, $\grad f$ and $\grad \K$ are continuously differentiable, then this solution is unique.
\end{prop}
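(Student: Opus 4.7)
The plan is to invoke standard ODE theory in three steps: Peano's theorem for local existence, a coercivity argument using the fact that $\Ha$ is a Lyapunov function for \eqref{eq:ode} to rule out finite-time blow-up, and then Picard--Lindelöf under the additional smoothness assumption for uniqueness.

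First, since $\grad f$ and $\grad \K$ are continuous, the right-hand side of \eqref{eq:ode}, namely the map $(x, p) \mapsto (\grad \K(p), -\grad f(x) - \gamma p)$, is continuous on $\R^{2d}$. Peano's existence theorem then yields, for every initial condition $(x_0, p_0) = (x, p)$, a solution $(x_t, p_t)$ defined on some maximal forward interval $[0, T^*)$ with $T^* \in (0, \infty]$.

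Next, I would argue $T^* = \infty$. By \eqref{eq:Hdecreases} already established in the excerpt, $\Ha_t' \leq -\gamma \K(p_t) \leq 0$ along any trajectory, so $\Ha(x_t, p_t) \leq \Ha(x_0, p_0)$ for all $t \in [0, T^*)$. Because $\Ha$ is radially unbounded, the sublevel set $S = \{(x', p') \in \R^{2d} : \Ha(x', p') \leq \Ha(x_0, p_0)\}$ is bounded: otherwise one could find a sequence $(x_n', p_n')$ with $\|(x_n', p_n')\|_2 \to \infty$ yet $\Ha$ bounded, contradicting the definition of radial unboundedness. Hence the trajectory is confined to the bounded set $S$ for all $t \in [0, T^*)$. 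If $T^*$ were finite, the standard maximality criterion for ODEs with continuous right-hand side would force $\limsup_{t \uparrow T^*} \|(x_t, p_t)\|_2 = \infty$, contradicting boundedness. Therefore $T^* = \infty$.

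Finally, when $\grad f$ and $\grad \K$ are continuously differentiable, the vector field on the right-hand side of \eqref{eq:ode} is $C^1$, hence locally Lipschitz on $\R^{2d}$. The Picard--Lindelöf theorem then gives local uniqueness, which together with the global extension above yields uniqueness of the trajectory on $[0, \infty)$.

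The only nonroutine step is the coercivity argument, and even there the work is done by the assumption: radial unboundedness of $\Ha$ is designed precisely to turn the energy decay inequality into a pointwise a priori bound on $\|(x_t, p_t)\|_2$. The convexity of $\K$ with $\K(0) = 0$ enters only through \eqref{eq:Hdecreases} to guarantee $\Ha_t$ is non-increasing; it plays no further role.
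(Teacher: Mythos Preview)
Your proof is correct and follows essentially the same approach as the paper: Peano for local existence, the energy decay \eqref{eq:Hdecreases} plus radial unboundedness of $\Ha$ to trap the trajectory in a bounded (indeed compact) sublevel set, the standard blow-up criterion to force $T^*=\infty$, and Picard--Lindel\"of for uniqueness under the $C^1$ assumption. The paper cites Perko's escape-from-compact-sets theorem in place of your $\limsup$ blow-up formulation, but these are equivalent here.
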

\begin{proof}
First, only assuming continuity, it follows from Peano's existence theorem \cite{peano1990demonstration} that there exists a local solution on an interval $t\in [-a,a]$ for some $a>0$. Let $[0,A)$ denote the right maximal  interval where a solution of \eqref{eq:ode} satisfying that $x_0=x$ and $p_0=p$ exist. From \eqref{eq:Hdecreases}, it follows that $\Ha_t'\le 0$, and hence $\Ha_t\le \Ha_0$ for every $t\in [0,A)$. Now by the radial unboundedness of $\Ha$, and the fact that $\Ha_t\le \Ha_0$, it follows that the compact set $\{(x,p): \Ha(x,p)\le \Ha_0\}$ is never left by the dynamics, and hence by Theorem 3 of \cite{Perko} (page 91), we must have $A=\infty$. The uniqueness under continuous differentiability follows from the Fundamental Existence--Uniqueness Theorem on page 74 of \cite{Perko}.
\end{proof}

As shown in the next proposition, \eqref{eq:Hdecreases} implies that conformal Hamiltonian systems approach stationary points of $\f$.

\begin{prop}[Convergence to a stationary point]
Let $(x_t, p_t)$ be a solution to the system \eqref{eq:ode} with initial conditions $(x_0, p_0) = (x, p) \in \R^{2d}$, $\f$ continuously differentiable, and $\K$ continuously differentiable, strictly convex with minimum at $0$ and $\K(0)=0$. If $f$ is bounded below and $\Ha$ is radially unbounded, then $\norm{\nabla f(x_t)}_2 \to 0$.
\end{prop}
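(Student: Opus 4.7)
The natural approach is a Lyapunov/LaSalle-type argument using $\Ha$ itself as the Lyapunov function, pushed through with Barbalat's lemma so I do not have to worry about forward uniqueness (which is not assumed here).

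First I would set up boundedness and integrability. By \eqref{eq:Hdecreases}, $\Ha_t' \leq -\gamma \K(p_t) \leq 0$, so $\Ha_t$ is non-increasing. Since $\f$ is bounded below and $\K \geq 0$, $\Ha_t$ is bounded below, hence converges to some $\Ha_\infty \in \R$. Radial unboundedness of $\Ha$ together with $\Ha_t \leq \Ha_0$ implies that the trajectory $(x_t, p_t)$ remains in the compact sublevel set $\{\Ha \leq \Ha_0\}$. Integrating the key inequality yields
\begin{equation*}
\int_0^\infty \K(p_t)\,dt \;\leq\; \frac{\Ha_0 - \Ha_\infty}{\gamma} \;<\; \infty.
\end{equation*}

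Next I would apply Barbalat's lemma to show $p_t \to 0$. On the compact sublevel set, $\grad \f$ and $\grad \K$ are bounded, so $p_t' = -\grad \f(x_t) - \gamma p_t$ is bounded, and therefore $t \mapsto p_t$ is Lipschitz. Since $\K$ is (uniformly) continuous on compact sets, $t \mapsto \K(p_t)$ is uniformly continuous. Combined with the integrability above, Barbalat's lemma gives $\K(p_t) \to 0$. Strict convexity of $\K$ and $\K(0) = 0$ force $\K(p) > 0$ whenever $p \neq 0$, so continuity and the fact that $p_t$ stays in a compact set give $p_t \to 0$.

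The remaining and slightly more delicate step is extracting $\grad \f(x_t) \to 0$ from $p_t \to 0$ via the ODE. I would argue by contradiction: suppose there is $\epsilon > 0$ and $t_n \to \infty$ with $\norm{\grad \f(x_{t_n})}_2 \geq 2\epsilon$. Since $x_t' = \grad \K(p_t)$ is bounded, $x_t$ is Lipschitz, and $\grad \f$ is uniformly continuous on the compact projection of the sublevel set, so I can choose $\delta > 0$ (independent of $n$) with $\norm{\grad \f(x_t) - \grad \f(x_{t_n})}_2 \leq \epsilon$ for $t \in [t_n, t_n + \delta]$. Integrating the momentum equation gives
\begin{equation*}
p_{t_n + \delta} - p_{t_n} \;=\; -\!\!\int_{t_n}^{t_n + \delta} \grad \f(x_t)\,dt \;-\; \gamma \!\int_{t_n}^{t_n + \delta} p_t\,dt.
\end{equation*}
The left-hand side and the second term on the right vanish as $n \to \infty$ since $p_t \to 0$, while by the uniform-continuity bound the first term on the right has norm at least $\delta (2\epsilon) - \delta \epsilon = \delta \epsilon$, a contradiction. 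Hence $\grad \f(x_t) \to 0$.

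The main obstacle I expect is precisely this last step: the relation $p_t' = -\grad \f(x_t) - \gamma p_t$ does not directly let one read off the limit of $\grad \f(x_t)$ from the limit of $p_t$, because a priori $p_t'$ could oscillate. Uniform continuity of $\grad \f \circ x_t$ together with the ODE (used in integrated form over a small window) is what rules this out. A LaSalle-invariance framing would give the same conclusion more quickly, but strictly requires forward uniqueness of \eqref{eq:ode}, which is only available under additional regularity on $\grad \f$ and $\grad \K$ than is stated in the hypothesis.
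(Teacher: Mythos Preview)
Your argument is correct and takes a genuinely different route from the paper. The paper proceeds via LaSalle's invariance principle: it identifies the set $M=\{(x,p):\Ha'=0\}\cap B=\{p=0\}\cap B$ on the compact sublevel set $B$, observes that the largest invariant subset of $M$ under \eqref{eq:ode} is $I=\{p=0,\ \grad f(x)=0\}\cap B$, and concludes that every trajectory approaches $I$. Your proof instead uses the energy decay to get $\int_0^\infty \K(p_t)\,dt<\infty$, applies Barbalat's lemma to force $p_t\to 0$, and then recovers $\grad f(x_t)\to 0$ by integrating the momentum equation over short windows and exploiting uniform continuity of $\grad f\circ x_t$. You even anticipate the comparison yourself: LaSalle is faster once the machinery is in place, but in its standard form it needs forward uniqueness of solutions, which the stated hypotheses (mere continuity of $\grad f$ and $\grad \K$) do not guarantee. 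Your Barbalat-based argument sidesteps that issue entirely and is in that sense more robust to the exact regularity assumed; the price is the extra hands-on step at the end, which you handle cleanly. Both approaches rely on the same two structural facts---monotone decay of $\Ha$ along trajectories and compactness of the sublevel set---so the difference is really in how one passes from ``$p_t$ is asymptotically small'' to ``$\grad f(x_t)$ is asymptotically small.''
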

\begin{proof} Since $\f$ is bounded below, $\Ha_t \geq 0$. Since $\Ha$ is radially unbounded, the set $B:=\{(x,p)\in \R^{2d}: \Ha(x,p)\le \Ha(x_0,p_0)+1\}$ is a compact set that contains $(x_0,p_0)$ in its interior. Moreover, by \eqref{eq:Hdecreases}, we also have $(x_t, p_t) \in B$ for all $t>0$.
Consider the set $M = \{(x_t, p_t) : \Ha_t^{\prime} = 0\} \cap B$. Since $\K$ is strictly convex, this set is equivalent to $\{(x_t, p_t) : \norm{p_t}_2 = 0\} \cap B$.
The largest invariant set of the dynamics \eqref{eq:ode} inside $M$ is $I=\{(x, p)\in \R^{2d} : \norm{p}_2 = 0, \norm{\grad \f(x)}_2 = 0\} \cap B$. By LaSalle's principle \cite{lasalle1960some}, all trajectories started from $B$ must approach $I$. Since $f$ is a continuous bounded function on the compact set $B$, there is a point $x_*\in B$ such that $f(x_*)\le f(x)$ for every $x\in B$ (i.e. the minimum is attained in $B$) by the extreme value theorem (see \cite{rudin}). Moreover, due to the definition of $B$, $x_*$ is in its interior, hence $\norm{\grad \f(x_*)}_2=0$ and therefore $(x_*,0)\in I$. Thus the set $I$ is non-empty (note that $I$ might contain other local minima as well).
\end{proof}

\begin{remark}
This construction can be generalized by modifying the $-\gamma p_t$ component of \eqref{eq:ode} to a more general dissipation field $-\gamma D(p_t)$. If the dissipation field is everywhere aligned with the kinetic map, $\inner{\grad \K(p)}{D(p)} \geq 0$,  then these systems dissipate energy. We have not found alternatives to $D(p) = \gamma p$ that result in linear convergence in general.\end{remark}

\subsection{Continuous Hamiltonian Descent on Convex Functions}
\label{sec_continuous_convex_lyap}

In this section we study how $\K$ can be designed to condition the system \eqref{eq:ode} for linear convergence in $\log(f(x_t) - f(\xmin))$. Although the solutions $x_t, p_t$ of \eqref{eq:ode} approach stationary points under weak conditions, to derive rates we consider the case when $\f$ is convex. To motivate our choice of $\K$, consider the quadratic function $f(x)=\inner{x}{A x}/2$ with $\K(p)= \inner{p}{A^{-1}p}/2$ for positive definite symmetric $A \in \R^{d \times d}$. Now \eqref{eq:ode} becomes,
\begin{align}
\label{eq:odeA}
\begin{split}
x_t' &= A^{-1} p_t\\
p_t' &= -A x_t - \gamma p_t.
\end{split}
\end{align}
By the change of variables $v_t = A^{-1}p_t$, this is equivalent to
\begin{align}
\label{eq:odeA2}
\begin{split}
x_t' &= v_t\\
v_t' &= -x_t - \gamma v_t,
\end{split}
\end{align}
which is a universal equation and hence the convergence rate of \eqref{eq:odeA} is independent of $A$. Although this kinetic energy implements a constant preconditioner for any $f$, for this specific $f$ $\K$ is its convex conjugate $f^*$. This suggests the core idea of this paper: taking $\K$ related in some sense to $f^*$ for more general convex functions may condition the convergence of \eqref{eq:ode}. Indeed, we show in this section that, if the kinetic energy $\K(p)$ upper bounds a centered version of $f^*(p)$, then the convergence of \eqref{eq:ode} is linear.

More precisely, define the following centered function $\fc : \R^d \to \R$,
\begin{align}
\label{eq:centredf}
\fc(x) = f(x + \xmin) - f(\xmin).
\end{align}
The convex conjugate of $\fc$ is given by $\fc^*(p) = f^*(p) - \inner{\xmin}{p} + f(\xmin)$ and is minimized at $\fc^*(0) = 0$. Importantly, as we will show in the final lemma of this section, taking a kinetic energy such that $\K(p) \geq \alpha \max(\fc^*(p),\fc^*(-p))$ for some $\alpha \in (0,1]$ suffices to achieve linear rates on any differentiable convex $f$ in continuous time. The constant $\alpha$ is included to capture the fact that $\K$ may under estimate $\fc^*$ by some constant factor, so long as it is positive. If $\alpha$ does not depend in any fashion on $f$, then the convergence rate of \eqref{eq:ode} is independent of $f$. In Section \ref{sec:lower} we also show a partial converse --- for some simple problems taking a $\K$ not satisfying those assumptions results in sub-linear convergence for almost every path (except for one unique curve and its mirror).

\begin{remark}
	There is an interesting connection to duality theory for a specific choice of $\K$. In a slight abuse of representation, consider rewriting the original problem as
	\begin{equation*}
		\min_{x \in \R^d} f(x) =  \min_{x \in \R^d} \frac{1}{2}(f(x) + f(x)).
	\end{equation*}
	The Fenchel dual of this problem is equivalent to the following problem after a small reparameterization of $p$ (see Chapter 31 of \cite{rockafellar1970convex}),
	\begin{equation*}
		\max_{p \in \R^d} \frac{1}{2}(- f^*(p) - f^*(-p)).
	\end{equation*}
	The Fenchel duality theorem guarantees that for a given pair of primal-dual variables $(x, p) \in \R^d$, the duality gap between the primal objective $f(x)$ and the dual objective $(- f^*(p) - f^*(-p))/2$ is positive. Thus,
	\begin{align*}
		f(x) - (-f^*(p) - f^*(-p))/2 &= f(x) - f(\xmin) + (f^*(p) + f^*(-p))/2 + f(\xmin)\\
		&= f(x) - f(\xmin) + (\fc^*(p) + \fc^*(-p))/2 \geq 0.
	\end{align*}
	Thus, for the choice $\K(p) = (\fc^*(p) + \fc^*(-p))/2$, which as we will show implies linear convergence of \eqref{eq:ode}, the Hamiltonian $\Ha(x,p)$ is exactly the duality gap between the primal and dual objectives.
\end{remark}

Linear rates in continuous time can be derived by a Lyapunov function $\Ly : \R^{d \times d} \to  [0, \infty)$ that summarizes the total energy of the system, contracts exponentially (or linearly in $\log$-space), and is positive unless $(x_t, p_t) = (\xmin, 0)$. Ultimately we are trying to prove a result of the form $\Ly_t' \leq - \lambda \Ly_t$ for some rate $\lambda > 0$. As the energy $\Ha_t$ is decreasing, it suggests using $\Ha_t$ as a Lyapunov function. Unfortunately, this will not suffice, as $\Ha_t$ plateaus instantaneously ($\Ha_t' = 0$) at points on the trajectory where $p_t = 0$ despite $x_t$ possibly being far from $\xmin$. However, when $p_t = 0$, the momentum field reduces to the term $-\grad f(x_t)$ and the derivative of $\inner{x_t-\xmin}{p_t}$ in $t$ is instantaneously strictly negative  $-\inner{x_t- \xmin}{\grad f(x_t)} < 0$ for convex $f$ (unless we are at $(\xmin, 0)$). This suggests the family of Lyapunov functions that we study in this paper,
\begin{align}
\label{eq:lyapdef}\Ly(x,p) = \Ha(x,p) + \beta \inner{x-\xmin}{p},
\end{align}
where $\beta \in (0, \gamma)$ (see the next lemma for conditions that guarantee that it is non-negative).  As with $\Ha$, $\Ly_t$ is used to indicate $\Ly(x_t, p_t)$ at  time $t$ along a solution to \eqref{eq:ode}. Before moving on to the final lemma of the section, we prove two technical lemmas that will give us useful control over $\Ly$ throughout the paper.

The first lemma describes how $\beta$ must be constrained for $\Ly$ to be positive and to track $\Ha$ closely, so that it is useful for the analysis of the convergence of $\Ha$ and ultimately $f$.

\begin{restatable}[Bounding the ratio of $\Ha$ and $\Ly$]{lem}{betachoicealpha}
	\label{lem:betachoicealpha}
	Let  $x \in \R^d$, $f : \R^d \to \R$ convex with unique minimum $\xmin$, $\K : \R^d \to \R$ strictly convex with minimum $\K(0) = 0$, $\alpha \in (0, 1]$ and $\beta\in (0,\alpha]$.\\
	If $p \in \R^d$ is such that $\K(p) \geq \alpha \fc^*(-p)$, then
	\begin{align}
	&\label{eq:Handinnerbound1}  \inner{x-\xmin}{p} \geq -\l(\K(p) / \alpha + f(x) - f(\xmin)\r) \ge -\frac{\Ha(x,p)}{\alpha},\\
	&\label{eq:VandHbound1} \tfrac{\alpha - \beta}{\alpha} \Ha(x,p) \leq \Ly(x,p).
	\end{align}	
	If $p \in \R^d$ is such that $\K(p) \geq \alpha \fc^*(p)$, then
	\begin{align}
	&\label{eq:Handinnerbound2}  \inner{x-\xmin}{p} \leq \K(p) / \alpha + f(x) - f(\xmin) \leq \frac{\Ha(x,p)}{\alpha},\\
	&\label{eq:VandHbound2} \Ly(x,p) \leq \tfrac{\alpha + \beta}{\alpha} \Ha(x,p).
	\end{align}
\end{restatable}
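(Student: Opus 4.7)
The entire lemma is driven by the Fenchel--Young inequality applied to the centred function $\fc(y) = f(y+\xmin) - f(\xmin)$, whose conjugate is $\fc^*$. My plan is to prove the two halves symmetrically: in each case, Fenchel--Young gives a one-sided bound on $\inner{x-\xmin}{\pm p}$ in terms of $\fc(x-\xmin)$ and $\fc^*(\mp p)$; the hypothesis relating $\K(p)$ to $\fc^*(\pm p)$ via $\alpha$ then upgrades this to a bound involving $\K(p)$; finally, plugging into the definition of $\Ly$ and using $\beta \in (0,\alpha]$ yields the Lyapunov bounds.

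\textbf{Step 1 (first inner product bound).} Assume $\K(p) \geq \alpha\fc^*(-p)$. Apply Fenchel--Young (stated in \eqref{eq:fenchelyoung}) to $\fc$ at the point $x-\xmin$ with dual variable $-p$:
\[
    -\inner{x-\xmin}{p} = \inner{x-\xmin}{-p} \leq \fc(x-\xmin) + \fc^*(-p) = f(x) - f(\xmin) + \fc^*(-p).
\]
By hypothesis $\fc^*(-p) \leq \K(p)/\alpha$, which gives the first half of \eqref{eq:Handinnerbound1}. For the second half, since $\alpha \in (0,1]$ and $f(x) - f(\xmin) \geq 0$, we have $f(x) - f(\xmin) \leq (f(x) - f(\xmin))/\alpha$, hence
\[
    \K(p)/\alpha + f(x) - f(\xmin) \leq \l(\K(p) + f(x) - f(\xmin)\r)/\alpha = \Ha(x,p)/\alpha.
\]

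\textbf{Step 2 (first Lyapunov bound).} Insert \eqref{eq:Handinnerbound1} into the definition of $\Ly$ in \eqref{eq:lyapdef}. Since $\beta > 0$,
\[
    \Ly(x,p) = \Ha(x,p) + \beta \inner{x-\xmin}{p} \geq \Ha(x,p) - \tfrac{\beta}{\alpha}\Ha(x,p) = \tfrac{\alpha-\beta}{\alpha}\Ha(x,p),
\]
which is \eqref{eq:VandHbound1}. The factor $(\alpha-\beta)/\alpha$ is non-negative precisely because $\beta \leq \alpha$.

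\textbf{Step 3 (symmetric case).} Under $\K(p) \geq \alpha\fc^*(p)$, Fenchel--Young with dual variable $+p$ gives
\[
    \inner{x-\xmin}{p} \leq \fc(x-\xmin) + \fc^*(p) \leq f(x) - f(\xmin) + \K(p)/\alpha,
\]
and the same $\alpha \leq 1$ argument as in Step 1 upgrades this to $\Ha(x,p)/\alpha$, yielding \eqref{eq:Handinnerbound2}. Substituting into \eqref{eq:lyapdef} produces
\[
    \Ly(x,p) \leq \Ha(x,p) + \tfrac{\beta}{\alpha}\Ha(x,p) = \tfrac{\alpha+\beta}{\alpha}\Ha(x,p),
\]
which is \eqref{eq:VandHbound2}.

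There is no substantive obstacle here; the only subtlety worth flagging is the implicit use of $f(x) - f(\xmin) \geq 0$ together with $\alpha \leq 1$ to pass from the sharper bound $\K(p)/\alpha + f(x) - f(\xmin)$ to the cleaner bound $\Ha(x,p)/\alpha$. This same manoeuvre is why the hypothesis $\alpha \leq 1$ appears in the statement even though the Fenchel--Young step itself is indifferent to the size of $\alpha$.
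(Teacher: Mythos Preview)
Your proof is correct and matches the paper's approach exactly: Fenchel--Young applied to $\fc$ at $x-\xmin$ with dual variable $\pm p$, followed by the hypothesis $\fc^*(\pm p)\le \K(p)/\alpha$, then substitution into the definition of $\Ly$. You are in fact more explicit than the paper, which leaves the passage to $\Ha(x,p)/\alpha$ and the derivation of \eqref{eq:VandHbound1}--\eqref{eq:VandHbound2} as ``rearrangement'' without spelling out the role of $\alpha\le 1$.
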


\begin{proof}
Assuming that  $\K(p) \geq \alpha \fc^*(-p)$, we have
\begin{align*}
    \K(p)/\alpha + \fc(x-\xmin) &\ge \fc^*(-p) + \fc(x-\xmin) \\
    &\geq \inner{x-\xmin}{-p} - \fc(x-\xmin) + \fc(x-\xmin)\\
    &=-\inner{x-\xmin}{p},
\end{align*}
hence we have \eqref{eq:Handinnerbound1}. \eqref{eq:VandHbound1} follows by rearrangement. The proof of \eqref{eq:Handinnerbound2} and \eqref{eq:VandHbound2} is similar.
\end{proof}

Lemma \ref{lem:betachoicealpha} constrains $\beta$ in terms of $\alpha$. For a result like $\Ly_t' \leq - \lambda \Ly_t$ , we will need to control $\beta$ in terms of the magnitude $\gamma$ of the dissipation field. The following lemma provides constraints on $\beta$ and, under those constraints, the optimal $\beta$. The proof can be found in Section \ref{sec:proofscontinuous} of the Appendix.

\begin{restatable}[Convergence rates in continuous time for fixed $\alpha$]{lem}{Vderivativebound}
	\label{lem:Vderivativebound}
	Given $\gamma \in (0, 1)$, $f : \R^d \to \R$ differentiable and convex with unique minimum $\xmin$, $\K : \R^d \to \R$ differentiable and strictly convex with minimum $\K(0) = 0$. Let $x_t,p_t \in \R^d$ be the value at time $t$ of a solution to the system \eqref{eq:ode} such that there exists $\alpha \in (0, 1]$ where $\K(p_t) \geq \alpha \fc^*(-p_t)$. Define
	\begin{equation}
	\label{eq:lamdaabgdef} \lambda(\alpha, \beta, \gamma) = \min\l(\frac{\alpha \gamma - \alpha \beta - \beta \gamma}{\alpha - \beta}, \frac{\beta(1-\gamma)}{1-\beta}\r).
	\end{equation}
	If $\beta \in (0, \min(\alpha, \gamma)]$, then
	\begin{equation*}
		\Ly_t' \leq -  \lambda(\alpha, \beta, \gamma) \Ly_t.
	\end{equation*}
	Finally,
	\begin{enumerate}
	\item \label{enum:optimalbeta} The optimal $\beta \in (0, \min(\alpha, \gamma)]$, $\beta^{\star} = \arg \max_{\beta} \lambda(\alpha, \beta, \gamma)$ and $\lambda^{\star} = \lambda(\alpha, \beta^{\star}, \gamma)$ are given by,
            \begin{align}\label{eq:betastar}
             \beta^{\star}&=\tfrac{1}{1+\alpha} \left(\alpha+\tfrac{\gamma}{2} - \sqrt{(1-\gamma) \alpha^2 + \tfrac{\gamma^2}{4}}\right),\\
            \label{eq:lambdastar}
             \lambda^{\star}&=\begin{cases}
            	&\frac{1}{1-\alpha} \left((1-\gamma) \alpha + \frac{\gamma}{2} - \sqrt{(1-\gamma) \alpha^2 + \frac{\gamma^2}{4}}\right) \text{ for }0<\alpha<1,\\
             	&\frac{\gamma (1-\gamma)}{2-\gamma}\text{ for }\alpha=1,
             \end{cases}
            \end{align}
            \item \label{enum:goodenoughbeta} If $\beta \in (0, \alpha \gamma /2]$, then \begin{align}\label{eq:goodenoughbeta}
			&\lambda(\alpha, \beta, \gamma) =  \frac{\beta(1-\gamma)}{1-\beta},
			\text{ and }\\
			\nonumber
			&-\l(\gamma - \beta-\gamma^2(1-\gamma)/4\r) \K(p_t) - \beta \gamma \inner{x_t-\xmin}{p_t} - \beta\inner{x_t-\xmin}{\grad \f(x_t)}\\
			\label{eq:goodenoughbeta2}
			&\leq -\beta(1-\gamma) (\K(p_t) + \f(x_t)-\f(\xmin)+\beta \inner{x_t-\xmin}{p_t}).
            \end{align}
\end{enumerate}
\end{restatable}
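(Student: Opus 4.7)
The plan is to differentiate the candidate Lyapunov function along the flow, use the convexity of $\K$ and $f$ to replace gradient inner products by function values, and finally use the Fenchel–Young bound of Lemma~\ref{lem:betachoicealpha} to handle the troublesome cross term $\inner{x_t-\xmin}{p_t}$. Concretely, using \eqref{eq:ode} together with $\K(0)=0$ and convexity gives $\inner{\grad\K(p_t)}{p_t}\ge \K(p_t)$ and $\inner{x_t-\xmin}{\grad\f(x_t)}\ge f(x_t)-f(\xmin)$, so a direct computation yields
\begin{equation*}
\Ly_t' \le -(\gamma-\beta)\K(p_t) - \beta(f(x_t)-f(\xmin)) - \beta\gamma\inner{x_t-\xmin}{p_t}.
\end{equation*}
The coefficient $-\beta\gamma$ of the cross term is negative, and this is exactly where \eqref{eq:Handinnerbound1} enters: write $-\inner{x_t-\xmin}{p_t}\le \K(p_t)/\alpha + f(x_t)-f(\xmin)$ and use it to turn the cross term into positive multiples of $\K$ and $f-f(\xmin)$.

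Next I would match coefficients against the desired inequality $\Ly_t'\le -\lambda\Ly_t= -\lambda\K(p_t)-\lambda(f(x_t)-f(\xmin))-\lambda\beta\inner{x_t-\xmin}{p_t}$. This yields exactly two scalar constraints: one from the $\K(p_t)$ coefficient, giving $\lambda(\alpha-\beta)\le \alpha\gamma-\alpha\beta-\beta\gamma$, and one from the $f(x_t)-f(\xmin)$ coefficient, giving $\lambda(1-\beta)\le \beta(1-\gamma)$. Taking the larger $\lambda$ for which both hold gives precisely \eqref{eq:lamdaabgdef}. Non-negativity of $\Ly_t$ under $\beta\le\alpha$ comes from \eqref{eq:VandHbound1}, so Grönwall's inequality then turns the differential inequality into exponential decay of $\Ly_t$.

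For part~\ref{enum:optimalbeta}, the first term of the minimum is strictly decreasing in $\beta$ (tending to $\gamma$ at $\beta=0$ and to $-\infty$ at $\beta=\alpha$), while the second is strictly increasing (from $0$ to $\beta(1-\gamma)/(1-\beta)|_{\beta=\gamma}$). Hence the maximum over $\beta\in(0,\min(\alpha,\gamma)]$ is attained at their intersection, which after clearing denominators becomes the quadratic
\begin{equation*}
(1+\alpha)\beta^2 - (2\alpha+\gamma)\beta + \alpha\gamma = 0,
\end{equation*}
whose smaller root is \eqref{eq:betastar}. Substituting back into $\lambda=\beta(1-\gamma)/(1-\beta)$ and rationalizing by $(1-\gamma/2)-D$, with $D:=\sqrt{(1-\gamma)\alpha^2+\gamma^2/4}$, uses $D^2-\gamma^2/4=(1-\gamma)\alpha^2$ to simplify the denominator to $(1-\gamma)(1-\alpha^2)$ and produces \eqref{eq:lambdastar}; the limiting case $\alpha=1$ is handled separately by noting $D=1-\gamma/2$ and $\beta^\star=\gamma/2$.

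For part~\ref{enum:goodenoughbeta}, the claim that the minimum in \eqref{eq:lamdaabgdef} is achieved by the second term for $\beta\le \alpha\gamma/2$ follows because the quadratic above is non-negative on $[0,\beta^\star]$ and a direct check shows $\alpha\gamma/2\le\beta^\star$. The auxiliary bound \eqref{eq:goodenoughbeta2} is an algebraic refinement of the same computation: starting from the upper bound on $\Ly_t'$ above, I would weaken the $\K(p_t)$ coefficient from $-(\gamma-\beta)$ to the less negative $-(\gamma-\beta-\gamma^2(1-\gamma)/4)$, apply $G\ge f$ and $-B\le \K/\alpha+f$ as in Step~1, and reduce to showing that the resulting $\K(p_t)$ coefficient $-\gamma+2\beta-\beta\gamma+\gamma^2(1-\gamma)/4+(\beta\gamma-\beta^2(1-\gamma))/\alpha$ is non-positive on $\beta\in(0,\alpha\gamma/2]$. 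I expect this last algebraic verification, together with the simplification of $\lambda^\star$ via rationalization, to be the main obstacle; the derivative of this coefficient in $\beta$ is positive on the relevant range, so it suffices to check the boundary $\beta=\alpha\gamma/2$, where it factors as $-(1-\alpha)\gamma\bigl[1-3\gamma/4+\gamma^2/4\bigr]$, manifestly non-positive since the bracket has negative discriminant.
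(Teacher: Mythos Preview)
Your proposal is correct and follows essentially the same route as the paper: differentiate $\Ly_t$, use convexity of $\K$ and $f$, move everything to one side, and absorb the sign-indefinite cross term via the Fenchel--Young bound \eqref{eq:Handinnerbound1}, arriving at the same two scalar constraints. The paper derives \eqref{eq:betastar}--\eqref{eq:lambdastar} by exactly the monotonicity/intersection argument you describe, and your quadratic $(1+\alpha)\beta^2-(2\alpha+\gamma)\beta+\alpha\gamma=0$ is the same equation; your rationalization of $\lambda^\star$ is a bit more explicit than the paper's but yields the same formula.

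The only place where the two presentations diverge slightly is the verification in part~\ref{enum:goodenoughbeta}. For $\alpha\gamma/2\le\beta^\star$, the paper argues that $\beta^\star(\gamma)$ is concave with $\beta^\star(0)=0$ and $\beta^\star(1)=\alpha/(1+\alpha)$, whence $\beta^\star\ge \gamma\alpha/(1+\alpha)\ge \alpha\gamma/2$; your ``direct check'' via non-negativity of the quadratic on $[0,\beta^\star]$ is equivalent. For \eqref{eq:goodenoughbeta2}, the paper reduces to the same $\K$-coefficient inequality you state and then says it suffices to check $\beta=\alpha\gamma/2$ and minimize the left side in $\alpha$; you instead note monotonicity in $\beta$ and produce the clean factorization $-(1-\alpha)\gamma[1-3\gamma/4+\gamma^2/4]$ at the boundary. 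Both arguments are valid and land at the same endpoint; yours is arguably more transparent. (One cosmetic point: your shorthand $G\ge f$ and $-B\le \K/\alpha+f$ is not defined in the text---spell these out as $\inner{x_t-\xmin}{\grad f(x_t)}\ge f(x_t)-f(\xmin)$ and \eqref{eq:Handinnerbound1} when you write it up.)
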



These two lemmas are sufficient to prove the linear contraction of $\Ly$ and the contraction $f(x_t) - f(\xmin) \leq \tfrac{\alpha}{\alpha-\beta^{\star}}\Ha_0\exp(-\lambda^{\star}t)$ under the assumption of constant $\alpha$ and $\beta$. Still, the constant $\alpha$, which controls our approximation of $\fc^*$ may be quite pessimistic if it must hold globally along $x_t, p_t$ as the system converges to its minimum. Instead, in the final lemma that collects the convergence result for this section, we consider the case where $\alpha$ may increase as convergence proceeds. To support an improving $\alpha$, our constant $\beta$ will now have to vary with time and we will be forced to take slightly suboptimal $\beta$ and $\lambda$ given by \eqref{eq:goodenoughbeta} of Lemma \ref{lem:Vderivativebound}. Still, the improving $\alpha$ will be important in future sections for ensuring that we are able to achieve position independent step sizes.

We are now ready to present the central result of this section. Under Assumptions \ref{ass:cont} we show linear convergence of \eqref{eq:ode}. In general, the dependence of the rate of linear convergence on $f$ is via the function $\alpha$ and the constant $C_{\alpha, \gamma}$ in our analysis.

\begin{assumptions}{cont}
    	\item \label{ass:cont:fconvex} $f : \R^d \to \R$ differentiable and convex with unique minimum $\xmin$.
	\item \label{ass:cont:kconvex} $\K : \R^d \to \R$ differentiable and strictly convex with minimum $\K(0) = 0$.
        \item $\gamma \in (0,1)$.
        \item \label{ass:cont:alpha} There exists some differentiable non-increasing convex function $\alpha:  [0, \infty) \to (0,1]$ and constant $\Ca \in (0, \gamma\,]$ such that for every $p \in \R^d$,
	\begin{equation}\label{eq:alphacond_fcstar}
        		\K(p) \geq \alpha(\K(p)) \max(\fc^*(p),\fc^*(-p))
	\end{equation}
	and that for every $y \in [0, \infty)$
	\begin{equation}\label{eq:alphacond_ca}
	-\Ca\alpha'(y)y<\alpha(y).
	\end{equation}
	In particular, if $\K(p) \geq \alphastar \max(\fc^*(p),\fc^*(-p))$ for a constant $\alphastar \in (0, 1]$, then the constant function $\alpha(y) = \alphastar$ serves as a valid, but pessimistic choice.
\end{assumptions}

\begin{remark}
Assumption \ref{ass:cont:alpha} can be satisfied if a symmetric lower bound on $f$ is known. For example, strong convexity implies
\begin{align*}
    f(x+\xmin) - f(\xmin) \geq \frac{\mu}{2}\norm{x}_2^2.
\end{align*}
This in turn implies $\fc^*(p) \leq \norm{p}_2^2/(2\mu)$. Because $\K(p) =  \norm{p}_2^2/(2\mu)$ is symmetric, it satisfies \ref{ass:cont:alpha} which explains why conditions relating to strong convexity are necessary for linear convergence of Polyak's heavy ball.
\end{remark}

\begin{restatable}[Convergence bound in continuous time with general $\alpha$]{thm}{continuouslyap}
	\label{lem:continuouslyap}
	Given $f$, $\K$, $\gamma$, $\alpha$, $\Ca$ satisfying Assumptions \ref{ass:cont}. Let $(x_t, p_t)$ be a solution to the system \eqref{eq:ode} with initial states $(x_0, p_0) = (x, 0)$ where $x \in \R^d$. Let $\alphastar = \alpha(3\Ha_0)$, $\lambda = \tfrac{(1-\gamma)\Ca }{4}$, and $\Wy :  [0, \infty) \to  [0, \infty)$ be the solution of
	\[\Wy_t'=-\lambda \cdot \alpha(2\Wy_t)\Wy_t,\]
	with $\Wy_0:=\Ha_0=f(x_0)-f(\xmin)$.
	Then for every $t \in [0, \infty)$, we have
	\begin{align}\label{eq:Hdecay}	
	f(x_t) - f(\xmin) \le 2 \Ha_0 \exp\l(- \lambda \int_0^t \alpha(2\Wy_t)\r) \leq 2 \Ha_0 \exp\l(- \lambda \alphastar t\r).
	\end{align}
\end{restatable}

\begin{proof}
By \eqref{eq:alphacond_fcstar} in assumption \ref{ass:cont:alpha}, the conditions of Lemma \ref{lem:betachoicealpha} hold, and by \eqref{eq:Handinnerbound1} and \eqref{eq:Handinnerbound2} we have
\begin{align}
    \label{eq:Kperalphaf2sided} |\inner{x_t-\xmin}{p_t}| \leq \K(p_t)/\alpha(\K(p_t)) + \f(x_t) - \f(\xmin) \leq \frac{\Ha_t}{\alpha(\K(p_t))}.
\end{align}
 Instead of defining the Lyapunov function $\Ly_t$ exactly as in \eqref{eq:lyapdef} we take a time-dependent $\beta_t$. Specifically, for every $t \ge 0$ let $\Ly_t$ be the unique solution $v$ of the equation
\begin{equation}
	\label{eq:Vtalphadef}v=\Ha_t+\frac{\Ca \alpha(2v)}{2}  \inner{x_t-\xmin}{p_t}
\end{equation}
in the interval $v \in [\Ha_t/2, 3\Ha_t/2]$.
To see why this equation has a unique solution in $v \in [\Ha_t/2, 3\Ha_t/2]$, note that from \eqref{eq:Kperalphaf2sided} it follows that
\begin{align*}
	|\alpha\l(2v\r) \inner{x_t-\xmin}{p_t}|\le \Ha_t\text{ for every }v\ge \frac{\Ha_t}{2},
\end{align*}
and hence for any such $v$, we have
\begin{align}
	\label{eq:boundonv} \frac{\Ha_t}{2} \leq \Ha_t+\frac{\Ca \alpha(2v)}{2}  \inner{x_t-\xmin}{p_t} \leq \frac{3}{2}\Ha_t.
\end{align}
This means that for $v=\frac{\Ha_t}{2}$, the left hand side of \eqref{eq:Vtalphadef} is smaller than the right hand side, while for $v=\frac{3 \Ha_t }{2}$, it is the other way around.
Now using \eqref{eq:alphacond_ca} in assumption \ref{ass:cont:alpha} and \eqref{eq:Kperalphaf2sided}, we have
\begin{equation}
\label{eq:boundinner} \l|\Ca  \alpha'\l(2\Ly_t\r)  \inner{x_t-\xmin}{p_t}\r|\le \l|\Ca  \frac{ \alpha'(2\Ly_t)  2\Ly_t}{\alpha(2\Ly_t)}\r|<1,
\end{equation}
Thus, by differentiation, we can see that \eqref{eq:boundinner} implies that
\begin{align*}
	\frac{\partial}{\partial v}\l(v-\Ha_t-\tfrac{\Ca}{2} \alpha(2v) \inner{x_t-\xmin}{p_t}\r)>0,
\end{align*}
which implies that \eqref{eq:Vtalphadef} has a unique solution $\Ly_t$ in $[\frac{\Ha}{2},\frac{3 \Ha_t}{2}]$.  Let $\alpha_t = \alpha(2 \Ly_t)$ and $\beta_t = \frac{\Ca}{2} \alpha\l(2\Ly_t\r)$. By the implicit function theorem, it follows that $\Ly_t$ is  differentiable in $t$. Morover, since
\begin{equation}
\label{eq:VtalphadefLyt}\Ly_t=\Ha_t+\frac{\Ca \alpha(2\Ly_t)}{2} \inner{x_t-\xmin}{p_t}
\end{equation}
for every $t\ge 0$, by differentiating both sides, we obtain that
\begin{align*}
	\Ly_t'&=-(\gamma - \beta_t) \inner{\grad \K(p_t)}{p_t} - \beta_t \gamma \inner{x_t-\xmin}{p_t} - \beta_t\inner{x_t-\xmin}{\grad \f(x_t)}+\beta_t' \inner{x_t-\xmin}{p_t}
\intertext{The first three terms are equivalent to the temporal derivative of $\Ly_t$ with constant $\beta = \beta_t$. Since $\alpha_t \leq \alpha(\K(p_t))$ and $\beta_t \leq \gamma$, the assumptions of Lemma \ref{lem:Vderivativebound} are satisfied locally for $\alpha_t$, $\beta_t$ and we get}
	\Ly_t'&\le -\lambda(\alpha_t,\beta_t,\gamma) \Ly_t+\beta_t' \inner{x_t-\xmin}{p_t} =  -\lambda(\alpha_t,\beta_t,\gamma) \Ly_t+\Ca \alpha_t'  \inner{x_t-\xmin}{p_t} \Ly_t'.
\end{align*}
Using \eqref{eq:goodenoughbeta} of Lemma \ref{lem:Vderivativebound} for $\alpha_t, \beta_t$, we have $\lambda(\alpha_t, \beta_t, \gamma) = \tfrac{\beta_t (1-\gamma)}{1-\beta_t} \geq \beta_t (1-\gamma)$ and
\begin{equation*}
	\Ly_t'\le -\beta_t(1-\gamma) \Ly_t+\Ca \alpha_t' \inner{x_t-\xmin}{p_t} \Ly_t'.
\end{equation*}
Using  \eqref{eq:boundinner} we have $\Ly_t'\le -\frac{\beta_t(1-\gamma)}{2} \Ly_t$. Notice that $\Ly_0=\Ha_0$ since we have assumed that $p_0=0$, and the claim of the lemma follows by Gr{\"o}nwall's inequality. The final inequality \eqref{eq:Hdecay} follows from the fact that $\alpha(2 \Ly_t) \geq \alpha(3\Ha_0) = \alphastar$.
\end{proof}

\begin{figure}[t]
    \centering
    \includegraphics[scale=1]{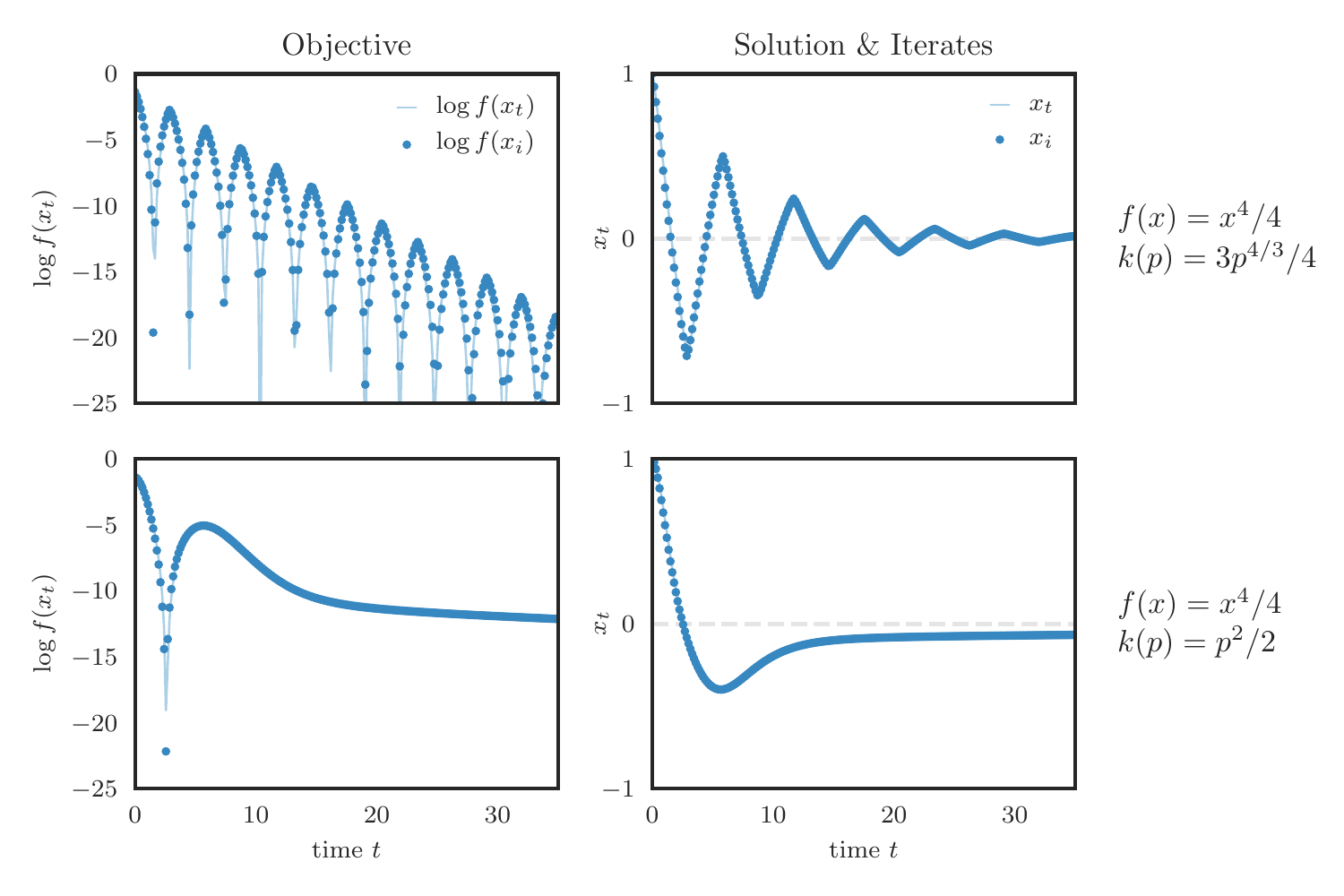}
    \caption{Importance of Assumptions \ref{ass:cont}. Solutions $x_t$ and iterates $x_i$ of our first explicit method on $f(x) = x^4/4$ with two different choices of $\K$. Notice that $\fc^*(p) = 3p^{4/3}/4$ and thus $\K(p) = p^2/2$ cannot be made to satisfy assumption \ref{ass:cont:alpha}.}
    \label{fig:contass}
\end{figure}

\subsection{Partial Lower Bounds}
\label{sec:lower}

In this section we consider a partial converse of Proposition \ref{lem:continuouslyap}, showing in a simple setting that if the assumption $k(p)\ge \alpha \max(f^*_c(p),f^*_c(-p))$ of \ref{ass:cont:alpha} is violated, then the ODE \eqref{eq:ode} contracts sub-linearly. Figure \ref{fig:contass} considers the example $f(x) = x^4/4$. If $\K(p) = |p|^{\ppower}/{\ppower}$, then assumptions \ref{ass:cont} cannot be satisfied for small $p$ unless ${\xpower} \geq 4/3$. Figure \ref{fig:contass} shows that an inappropriate choice of $\K(p) = p^2/2$ leads to sub-linear convergence both in continuous time and for one of the discretizations of Section \ref{sec:discrete}. In contrast, the choice of $\K(p) = 3p^{4/3}/4$ results in linear convergence, as expected.

Let ${\xpower}, {\ppower} >1$ and $\gamma > 0$. For $d=1$ dimension, with the choice $f(x):=|x|^{\xpower}/{\xpower}$ and $k(p):=|p|^{\ppower}/{\ppower}$, \eqref{eq:ode} takes the following form,
\begin{equation}\label{eq:odepowerab}
\begin{aligned}
	x_t' &= |p_t|^{{\ppower}-1} \sgn(p_t), \\
	p_t' &= -|x_t|^{{\xpower}-1} \sgn(x_t) - \gamma p_t.
\end{aligned}
\end{equation}

\begin{figure}[t]
    \centering
    \includegraphics[scale=1]{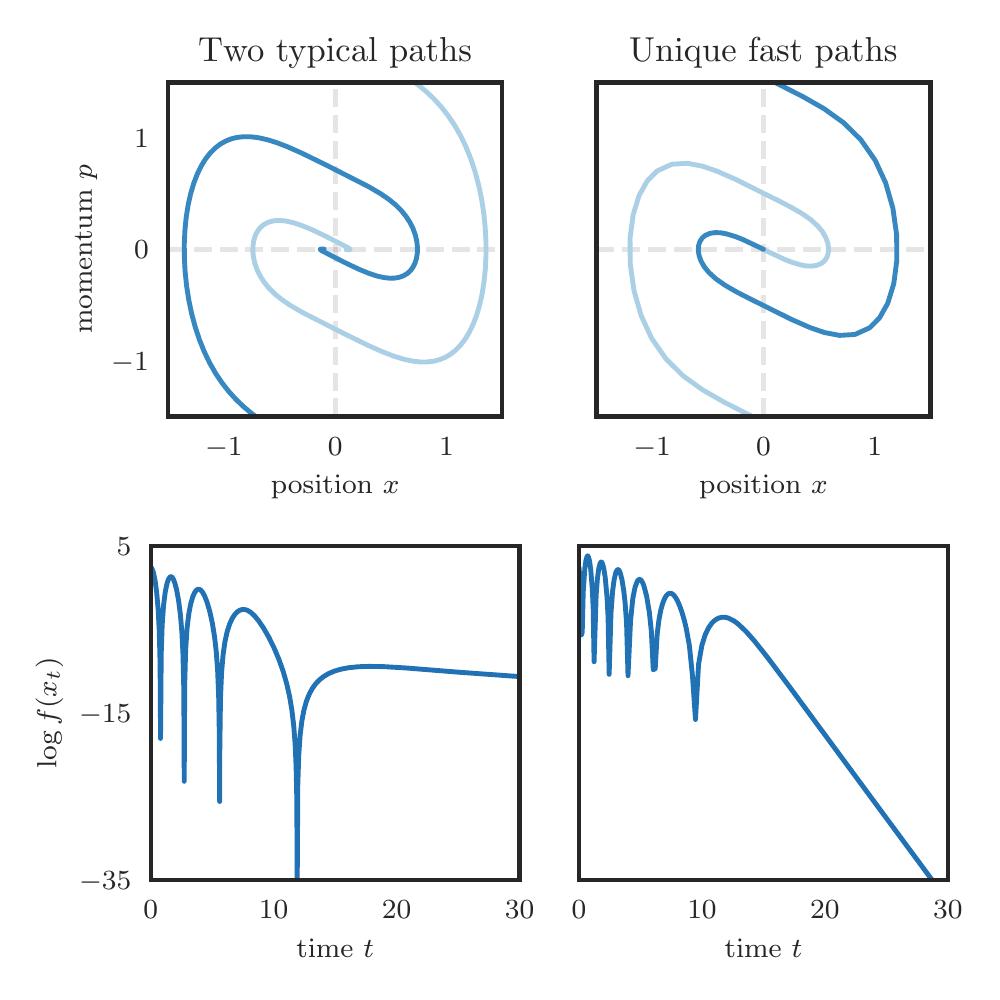}
    \caption{Solutions to the Hamiltonian descent system with $f(x) = x^4/4$ and $k(p) = x^2/2$. The right plots show a numerical approximation of $(x^{(\eta)}_t, p^{(\eta)}_t)$ and $(-x^{(\eta)}_t, -p^{(\eta)}_t)$. The left plots show a numerical approximation of $(x^{(\theta)}_t, p^{(\theta)}_t)$ and $(-x^{(\theta)}_t, -p^{(\theta)}_t)$ for $\theta = \eta + \delta \in \R$, which represent typical paths.}
    \label{fig:fastandslow}
\end{figure}

Since $f(x)$ takes its minimum at $0$, $(x_t,p_t)$ are expected to converge to $(0,0)$ as $t\to \infty$. There is a trivial solution: $x_t = p_t = 0$ for every $t\in \R$. The following Lemma shows an existence and uniqueness result for this equation. The proof is included in Section \ref{sec:proofslowerbounds} of the Appendix.

\begin{restatable}[Existence and uniqueness of solutions of the ODE]{lem}{Lemmasolutionsexistunique}
\label{Lemma-solutions-exist-unique}
	Let ${\ppower},{\xpower},\gamma \in (0, \infty)$. For every $t_0 \in \R$ and $(x, p) \in \R^2$, there is a unique solution $(x_t,p_t)_{t\in \R}$ of the ODE \eqref{eq:odepowerab} with $x_{t_0} = x$, $p_{t_0} = p$.	Either $x_t = p_t = 0$ for every $t \in \R$, or $(x_t, p_t) \neq (0,0)$ for every $t \in \R$.
\end{restatable}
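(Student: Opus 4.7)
My plan is to establish the three claims---global existence, uniqueness, and the zero/nonzero dichotomy---by using the Hamiltonian $\Ha(x,p) = |p|^{\ppower}/\ppower + |x|^{\xpower}/\xpower$ both as an a priori bound and as a monotonicity witness, then handling uniqueness by rectifying the flow away from the origin. Since the right-hand side of \eqref{eq:odepowerab} is continuous in $(x,p)$, Peano's theorem yields a local solution through every initial condition, so the remaining work lies in extending it globally, ruling out non-trivial solutions that touch the origin, and controlling the non-Lipschitz behaviour of the vector field near the axes.

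For global existence I will bound $\Ha$ on any finite interval. Along a solution, $\Ha_t' = -\gamma|p_t|^{\ppower}$, and since $|p_t|^{\ppower} = \ppower\,\K(p_t) \leq \ppower\,\Ha_t$, we have $|\Ha_t'| \leq \ppower\gamma\,\Ha_t$. Gr{\"o}nwall's inequality then gives $\Ha_t \leq \Ha_{t_0}\exp(\ppower\gamma|t-t_0|)$ in both time directions, so $|x_t|$ and $|p_t|$ stay bounded on any finite interval and the local solution extends to all of $\R$.

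For the dichotomy, suppose $(x_{t_0},p_{t_0}) = (0,0)$, so $\Ha_{t_0}=0$. Forward in time, $\Ha_t \leq \Ha_{t_0}=0$ combined with $\Ha \geq 0$ forces $(x_t,p_t) = (0,0)$ for $t \geq t_0$. Backward in time, if there existed $t_1 < t_0$ with $(x_{t_1},p_{t_1}) \neq (0,0)$ then $\Ha_{t_1}>0$, but the lower bound $\Ha_t' \geq -\ppower\gamma\,\Ha_t$ and Gr{\"o}nwall yield $\Ha_{t_0} \geq \Ha_{t_1}\exp(-\ppower\gamma(t_0-t_1)) > 0$, a contradiction. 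So the trivial and the never-zero solutions are the only possibilities, and uniqueness at the origin is then immediate.

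The main obstacle is uniqueness away from the origin, since for $\ppower<2$ or $\xpower<2$ the right-hand side of \eqref{eq:odepowerab} is merely H\"{o}lder continuous and Picard--Lindel\"{o}f does not apply directly. My strategy is to rectify the flow at any non-origin point. At $(x,p) \neq (0,0)$ the vector field is non-zero: if $p \neq 0$ then $x_t' = |p|^{\ppower-1}\sgn(p) \neq 0$, and otherwise $p=0, x\neq 0$ forces $p_t' = -|x|^{\xpower-1}\sgn(x) \neq 0$. In the first case $x_t$ is strictly monotonic near $t_0$, so I will reparametrize the trajectory as $p = p(x)$, obtaining the scalar ODE
\[\frac{dp}{dx} = \frac{-|x|^{\xpower-1}\sgn(x) - \gamma p}{|p|^{\ppower-1}\sgn(p)},\]
whose right-hand side is $C^1$ in $p$ on the region where $p$ is bounded away from zero. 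Picard--Lindel\"{o}f, which requires Lipschitz continuity only in the dependent variable, then gives local uniqueness of $p(x)$, after which $t(x) = \int |p(x')|^{1-\ppower}\sgn(p(x'))\,dx'$ determines the time parametrization. The remaining case $p=0, x\neq 0$ is handled symmetrically by reparametrizing $x = x(p)$. Combining local uniqueness with the dichotomy gives global uniqueness by a standard supremum-of-agreement argument: two solutions sharing an origin initial value are both trivial, and otherwise both remain nonzero for all $t$ by the dichotomy, so local uniqueness propagates along the whole real line.
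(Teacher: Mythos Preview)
Your proof is correct and follows essentially the same approach as the paper: use the Hamiltonian for global existence and the dichotomy via Gr{\"o}nwall bounds, and obtain local uniqueness at non-origin points by rectifying the flow (reparametrizing by the non-vanishing coordinate) so that Picard--Lindel\"{o}f applies to the resulting scalar ODE. The only cosmetic differences are that the paper splits into the three cases $\{x\neq 0,p\neq 0\}$, $\{x=0,p\neq 0\}$, $\{x\neq 0,p=0\}$ rather than your two, and recovers the time parametrization by a second application of Picard--Lindel\"{o}f rather than by direct integration of $dt/dx$.
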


Note that if $(x_t, p_t)$ is a solution, and $\Delta \in \R$, then $(x_{t+\Delta}, p_{t+\Delta})$ is also a solution (time translation), and $(-x_t, -p_t)$ is also a solution (central symmetry).

Note also that $f^*(p)=f^*(-p)=|p|^{{\xpower}^*}/{\xpower}^*$ for ${\xpower}^*:=(1-\frac{1}{{\xpower}})^{-1}$. Hence if ${\ppower}\le {\xpower}^*$, or equivalently, if $\frac{1}{{\xpower}}+\frac{1}{{\ppower}}\ge 1$, the conditions of Proposition \ref{lem:continuouslyap} are satisfied for some $\alpha>0$ (in particular, if ${\ppower}={\xpower}^*$, then $\alpha=1$ independently of $x_0,p_0$). Hence in such cases, the speed of convergence is linear. For ${\ppower}>{\xpower}^*$,  $\lim_{p\to 0}
\frac{K(p)}{f^*(p)}=0$, so the conditions of Proposition \ref{lem:continuouslyap} are violated.

Now we are ready to state the main result in this section, a theorem characterizing the convergence speeds of $(x_t,p_t)$ to $(0,0)$ in this situation. The proof is included in Section \ref{sec:proofslowerbounds} of the Appendix.

\begin{restatable}[Lower bounds on the convergence rate in continuous time]{prop}{thmspeedofconv}
\label{thmspeedofconv}
Suppose that $\frac{1}{{\xpower}}+\frac{1}{{\ppower}}<1$. For any $\theta\in \R$, we denote by $(x^{(\theta)}_t, p^{(\theta)}_t)$ the unique solution of \eqref{eq:odepowerab} with $x_0=\theta, p_0=0$. 
Then there exists a constant $\eta \in (0, \infty)$ depending on ${\ppower}$ and ${\xpower}$ such that the path $(x^{(\eta)}_t, p^{(\eta)}_t)$
and its mirrored version $(x^{(-\eta)}_t, p^{(-\eta)}_t)$ satisfy that
\[
|x^{(-\eta)}_t|=|x^{(\eta)}_t|\le \Oh(\exp(-\alpha t)) \text{ for every }\alpha<\gamma({\ppower}-1) \text{ as }t\to \infty.
\]
For any path $(x_t,p_t)$ that is not a time translation of $(x^{(\eta)}_t, p^{(\eta)}_t)$ or $(x^{(-\eta)}_t, p^{(-\eta)}_t)$, we have
\[\l|x_t^{-1}\r| = O(t^{\frac{1}{{\xpower}{\ppower}-{\xpower}-{\ppower}}})\text{ as }t\to \infty,\]
so the speed of convergence is sub-linear and not linearly fast.
\end{restatable}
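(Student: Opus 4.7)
The plan is to exploit the two-time-scale structure of \eqref{eq:odepowerab} that emerges when $(a-1)(b-1)>1$. Along any shrinking trajectory the dissipation $-\gamma p_t$ dominates the restoring force $-|x_t|^{b-1}\sgn(x_t)$, so $p_t$ is driven onto an approximate slow manifold $\mathcal{S}:=\{p=-|x|^{b-1}\sgn(x)/\gamma\}$; substituting this relation into $x_t'=|p_t|^{a-1}\sgn(p_t)$ yields the scalar comparison ODE $x_t'\approx -\gamma^{-(a-1)}|x_t|^{(a-1)(b-1)}\sgn(x_t)$, which, since the exponent exceeds one, integrates to $|x_t|=\Theta(t^{-1/(ab-a-b)})$. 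The exceptional path $(x^{(\eta)},p^{(\eta)})$ is the unique (up to time translation and central symmetry) trajectory that does \emph{not} relax onto $\mathcal{S}$ but instead approaches the origin tangent to the $p$-axis, driven purely by dissipation, so $p_t=\Theta(e^{-\gamma t})$ and $x_t=\Theta(e^{-(a-1)\gamma t})$.

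\textbf{Shooting for $\eta$.} To isolate $\eta$ I would study the one-parameter family of solutions $(x^{(\theta)}_t,p^{(\theta)}_t)$ for $\theta>0$ through the signed distance to $\mathcal{S}$,
\[
q_t \;:=\; p_t + |x_t|^{b-1}\sgn(x_t)/\gamma,
\]
which satisfies the quasi-linear ODE $q_t'=-\gamma q_t + \tfrac{b-1}{\gamma}|x_t|^{b-2}|p_t|^{a-1}\sgn(p_t)$; along any exponentially decaying trajectory the nonlinear forcing decays at rate $(a-1)(b-1)\gamma>\gamma$, so the $q$-equation is effectively a contraction of rate $\gamma$. Using continuous dependence of the flow on $\theta$ and $\mathcal{S}$ as a barrier, I partition $\theta>0$ into those (small $\theta$) for which $x^{(\theta)}_t>0$ for all $t>0$ and the trajectory asymptotes to $\mathcal{S}$, versus those (large $\theta$) for which $x^{(\theta)}$ eventually crosses zero; an intermediate-value argument on the boundary between these two open sets produces a critical $\eta$. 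Along the critical trajectory neither alternative occurs, so $|x_t|^{b-1}$ decays strictly faster than $e^{-\gamma t}$; Gr{\"o}nwall applied to the $q$-equation then yields $q_t=\Theta(e^{-\gamma t})$ and hence $p_t=\Theta(e^{-\gamma t})$, $x_t=\Theta(e^{-(a-1)\gamma t})$, giving $|x^{(\eta)}_t|=\Oh(\exp(-\alpha t))$ for every $\alpha<(a-1)\gamma$. Central symmetry provides the mirror path $-\eta$.

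\textbf{Polynomial decay for all other paths.} For any solution $(x_t,p_t)$ that is not a time translate of $(x^{(\pm\eta)},p^{(\pm\eta)})$, the previous classification together with the monotone decay of $\Ha_t=|p_t|^a/a+|x_t|^b/b$ (see \eqref{eq:Hdecreases}) forces the trajectory eventually to relax onto $\mathcal{S}$ in its asymptotic half-plane. Writing the $q$-equation as a linear ODE with forcing of order $|x_t|^{b-2}|p_t|^{a-1}$, its large-time particular solution satisfies $q_t=o(|x_t|^{b-1})$, i.e.\ $p_t=-|x_t|^{b-1}\sgn(x_t)/\gamma\cdot(1+o(1))$. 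Substituting into $x_t'=|p_t|^{a-1}\sgn(p_t)$ gives $x_t'=-\gamma^{-(a-1)}|x_t|^{(a-1)(b-1)}\sgn(x_t)(1+o(1))$, and a standard comparison with the separable ODE $y_t'=-Cy_t^{(a-1)(b-1)}$ (integrable since $(a-1)(b-1)-1=ab-a-b>0$) produces $|x_t|^{-(ab-a-b)}=\Theta(t)$, i.e.\ $|x_t^{-1}|=\Oh(t^{1/(ab-a-b)})$, as required.

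\textbf{Main obstacle.} The principal difficulty is the shooting/separation argument for $\eta$: the vector field is only $C^{\min(a-1,b-1)}$ at the origin, so no standard stable-manifold theorem applies and the "small versus large $\theta$'' dichotomy must be carried out by hand. In particular, proving that the common boundary of the two open sets of $\theta$ consists of a single value rather than an interval --- equivalently, the uniqueness of the fast path up to time translation and central symmetry --- will require careful sub-/super-solution arguments using $\mathcal{S}$ as a barrier, together with a sharper analysis of the $q$-equation to rule out configurations in which $q_t$ oscillates across zero or the trajectory approaches $\mathcal{S}$ from both sides infinitely often. Managing the non-smoothness of $|p|^{a-1}\sgn(p)$ and $|x|^{b-1}\sgn(x)$ on the axes, needed to invoke local Lipschitz continuity in all the comparison arguments away from the origin, is a secondary technical annoyance.
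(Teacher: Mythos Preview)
Your slow-manifold picture is closely related to the paper's argument, but the execution diverges at the crucial point and the shooting step as written has a real gap. The paper does not work with $\mathcal{S}$ directly; instead it uses a one-parameter family of trapping regions $\mathcal{R}_A=\{0<x<\xi(A),\,-Ax^{b-1}<p<0\}$ for $A>1/\gamma$, whose lower boundaries are scaled copies of your $\mathcal{S}$. Forward invariance of $\mathcal{R}_A$ is checked by a direct boundary computation, and once a trajectory enters some $\mathcal{R}_A$ the bound $|p_t|\le A x_t^{b-1}$ feeds into $x_t'=-|p_t|^{a-1}$ to give the polynomial rate. The constant $\eta$ is produced not by a shooting argument on ``cross zero vs.\ stay positive'' but as $\eta=\lim_{u\to 0^+}g(u)$, where $g$ is the first-return map on the positive $x$-axis; this yields the parametrisation of every nontrivial solution by its \emph{last} zero of $p$ at some $\theta\in[-\eta,\eta]\setminus\{0\}$.

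The gap in your argument is the sentence ``along the critical trajectory neither alternative occurs''. In fact at $\theta=\eta$ the trajectory \emph{does} stay in $\{x>0\}$ for all $t>0$; it belongs to your ``small $\theta$'' class, not to a third class. What singles out $\eta$ among the non-crossing initial data is that the $\eta$-trajectory is the unique one that does \emph{not} asymptote to $\mathcal{S}$ (equivalently, never enters any $\mathcal{R}_A$). Your dichotomy therefore cannot detect the fast path, and an intermediate-value argument between two open sets cannot by itself show that the boundary is a single point with exponential decay. The paper proves this (its hardest lemma) in two halves: if $\eta$ entered some $\mathcal{R}_A$ one could perturb $p$ slightly more negative, obtain a trapped trajectory whose last axis-crossing exceeds $\eta$, contradiction; conversely, for any $\theta<\eta$ a delicate comparison of the graphs $\phi^{(\theta)}$ and $\phi^{(\eta)}$ in the spatial variable $z=x_t$ rules out $\theta\notin\Theta$. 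Relatedly, your derivation of $p_t=\Theta(e^{-\gamma t})$ on the critical path is circular as stated: you assume $|x_t|^{b-1}=o(e^{-\gamma t})$ to control the forcing in the $q$-equation, then deduce it from $p_t=\Theta(e^{-\gamma t})$. The paper breaks this by working entirely in the spatial variable, showing directly that $\phi^{(\eta)}(z)^{a-1}\sim\gamma(a-1)z$ from the hypothesis $z^{1-b}\phi^{(\eta)}(z)\to\infty$, and only then translating back to exponential decay in $t$.
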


Figure \ref{fig:fastandslow} illustrates the two paths where the convergence is linearly fast for ${\ppower}=2,{\xpower}=4$. The main idea in the proof of Proposition \ref{thmspeedofconv} is that we establish the existence of a class of trapping sets, i.e. once the path of the ODE enters one of them, it never escapes. Convergence rates within such sets can be shown to be logarithmic, and it is established that only two paths (which are symmetric with respect to the origin) avoid each one of the trapping sets, and they have linear convergence rate.

\section{Optimization Algorithms}
\label{sec:discrete}

In this section we consider three discretizations of the continuous system \eqref{eq:ode}, one implicit and two explicit. For these discretizations we must assume more about the relationship between $f$ and $\K$. The implicit method defines the iterates as solution of a local subproblem. The first and second explicit methods are fully explicit, and we must again make stronger assumptions on $f$ and $\K$. The proofs of all of the results in this section are given in Section \ref{sec:proofsdiscretizations} of the Appendix.

\subsection{Implicit Method}
Consider the following discrete approximation $(x_i, p_i)$ to the continuous system, making the fixed $\epsilon > 0$ finite difference approximation, $\tfrac{x_{i+1} - x_i}{\epsilon} = x_t'$ and $\tfrac{p_{i+1} - p_i}{\epsilon} = p_t'$, which approximates the field at the forward points.
\begin{equation}
\label{eq:implicit} \begin{aligned}
\frac{x_{i+1} - x_i}{\epsilon} &= \nabla \K(p_{i+1})\\
\frac{p_{i+1} - p_i}{\epsilon} &= - \gamma p_{i+1} - \nabla f(x_{i+1}).
\end{aligned}
\end{equation}
Since $\grad \K^*(\grad \K(p)) = p$, this system of equations corresponds to the stationary condition of the following subproblem iteration, which we introduce as our implicit method.

\begin{titledbox}{Implicit Method}
Given $f, \K : \R^d \to \R$, $\epsilon, \gamma \in (0, \infty)$, $x_0, p_0 \in \R^d$.

Let $\delta = (1 + \gamma \epsilon)^{-1}$ and
\begin{equation}
\label{eq:implicit2}
\begin{aligned}
x_{i+1} &= \arg \min_{x \in \R^d}\l \{ \epsilon \K^*(\tfrac{x-x_i}{\epsilon}) + \epsilon \delta f(x) - \delta \inner{p_i}{x}\r\}\\
p_{i+1}  &= \delta p_i - \epsilon \delta \grad \f(x_{i+1}).
\end{aligned}
\end{equation}
\end{titledbox}
The following lemma shows that the formulation \eqref{eq:implicit2} is well defined. The proof is included in Section \ref{sec:proofsdiscretizations} of the Appendix.
\begin{restatable}[Well-definedness of the implicit scheme]{lem}{impwelldeflemma}
\label{lem:impwelldef}
	Suppose that $f$ and $\K$  satisfy assumptions \ref{ass:cont:fconvex} and \ref{ass:cont:kconvex}, and $\epsilon,\gamma\in (0,\infty)$. Then \eqref{eq:implicit2} has a unique solution for every $x_i,p_i\in \R^d$, and this solution also satisfies  \eqref{eq:implicit}.
\end{restatable}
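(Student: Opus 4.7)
The argument naturally splits in two: (i) derive the system \eqref{eq:implicit} from the first-order optimality condition of the convex minimization in \eqref{eq:implicit2} together with the formula for $p_{i+1}$; and (ii) show that this minimization has a unique solution. I would address (i) first, as it is essentially algebraic. The first-order condition for the argmin reads
\[
0 = \nabla K^*\!\l(\tfrac{x_{i+1}-x_i}{\epsilon}\r) + \epsilon\delta\,\nabla f(x_{i+1}) - \delta p_i,
\]
and substituting $p_{i+1} = \delta p_i - \epsilon\delta\nabla f(x_{i+1})$ rewrites this as $\nabla K^*(\tfrac{x_{i+1}-x_i}{\epsilon}) = p_{i+1}$. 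By assumption \ref{ass:cont:kconvex}, $K$ is differentiable and strictly convex on $\R^d$, so the Fenchel inversion identity $\nabla K \circ \nabla K^* = \mathrm{id}$ on $\mathrm{int}\,\mathrm{dom}\, K^*$ gives $\tfrac{x_{i+1}-x_i}{\epsilon} = \nabla K(p_{i+1})$, the first equation of \eqref{eq:implicit}. Rearranging $(1+\gamma\epsilon)p_{i+1} = p_i - \epsilon\,\nabla f(x_{i+1})$ yields the second.

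For uniqueness in (ii), the objective $g(x) := \epsilon K^*(\tfrac{x-x_i}{\epsilon}) + \epsilon\delta f(x) - \delta\inner{p_i}{x}$ is convex, and differentiability and strict convexity of $K$ on $\R^d$ make $K$ both essentially smooth and essentially strictly convex in the sense of \cite{rockafellar1970convex}. By Theorem 26.3 there, $K^*$ inherits both properties. Strict convexity of $K^*$ on $\mathrm{int}\,\mathrm{dom}\, K^*$ makes $g$ strictly convex there, precluding multiple minimizers.

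For existence, $g$ is proper (finite at $x = x_i$, since $K^*(0)=0$), convex, and lower-semi-continuous. Finiteness of $K$ on all of $\R^d$ implies, by standard conjugate duality, that $K^*$ is cofinite, so $K^*(q)/\|q\| \to \infty$ as $\|q\| \to \infty$ within $\mathrm{dom}\, K^*$; this superlinear growth dominates the linear term of $g$ along any unbounded direction in the effective domain. Combined with essential smoothness of $K^*$ --- which forces $\|\nabla K^*(q)\| \to \infty$ as $q \to \partial\,\mathrm{dom}\, K^*$ --- the sublevel sets of $g$ lie in a compact subset of $\mathrm{int}\,\mathrm{dom}\, K^*$, where $\nabla K^*$ is defined and the FOC argument of (i) applies verbatim. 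The main obstacle is exactly this existence step in the case $\mathrm{dom}\, K^* \subsetneq \R^d$, where cofiniteness (to coerce in unbounded directions within the domain) and essential smoothness (to push the argmin away from the boundary) must be used simultaneously to produce an interior minimizer at which the conjugate gradient identity is valid.
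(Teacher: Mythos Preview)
Your proposal is correct and follows essentially the same route as the paper: both invoke Theorem~26.3 of \cite{rockafellar1970convex} to transfer essential smoothness and essential strict convexity from $\K$ to $\K^*$, establish existence of a minimizer via a coercivity argument, use essential smoothness (Lemma~26.2 there) to force the minimizer into the interior of the domain, deduce uniqueness from strict convexity on that interior, and derive \eqref{eq:implicit} from the first-order condition via $\grad \K^* = (\grad \K)^{-1}$ (Theorem~26.5). The only substantive difference is the coercivity step: the paper obtains an explicit bound $F(x) \ge \norm{x - x_i} - c$ by inserting the test vector $p = (x-x_i)/\norm{x-x_i} - (\epsilon\delta\,\grad f(x_i) - \delta p_i)$ into the Fenchel inequality $\epsilon \K^*((x-x_i)/\epsilon) \ge \inner{p}{x-x_i} - \K(p)$, whereas you appeal to the cleaner but less self-contained duality fact that $\domain \K = \R^d$ forces $\K^*$ to be cofinite. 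One imprecision to tighten when you write this out: essential smoothness does not force the \emph{sublevel sets} of $g$ into the interior (function values may remain finite at the boundary while the gradient blows up); the correct statement, which the paper uses via Lemma~26.2, is that the \emph{minimizer} cannot lie on the boundary.
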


As this discretization involves solving a potentially costly subproblem at each iteration, it requires a relatively light assumption on the compatibility of $f$ and $\K$.

\begin{assumptions}{imp}
		\item\label{ass:imp:innerproductfk} There exists $\Cfk \in (0, \infty)$ such that for all $x,p\in \R^d$,
    			\begin{equation}\label{eq:gfgKimp}
    			|\inner{\grad f(x)}{\grad \K(p)}| \le \Cfk\Ha(x,p).
	    		\end{equation}
\end{assumptions}

\begin{remark}
Smoothness of $f$ implies $\tfrac{1}{2}\norm{\grad f(x)}_2^2 \leq L (f(x) - f(\xmin))$ (see (2.1.7) of Theorem 2.1.5 of \cite{nesterov2013introductory}). Thus, if $f$ is smooth and $\K(p) = \tfrac{1}{2} \norm{p}_2^2$, then the assumption \ref{ass:imp:innerproductfk} can be satisfied by $\Cfk = \max\{1, L\}$, since
\begin{equation*}
|\inner{\grad f(x)}{\grad \K(p)}| \le \tfrac{1}{2}\norm{\grad f(x)}_2^2 + \tfrac{1}{2}\norm{\grad \K(p)}_2^2 \leq L(f(x) - f(\xmin)) + \K(p).
\end{equation*}
\end{remark}

The following proposition shows a convergence result for the implicit scheme.

\begin{restatable}[Convergence bound for the implicit scheme]{prop}{implemma}
\label{lem:imp}
	Given $f$, $\K$, $\gamma$, $\alpha$, $\Ca$, and $\Cfk$ satisfying assumptions \ref{ass:cont} and \ref{ass:imp}. Suppose that $\epsilon<\frac{1-\gamma}{2\max(\Cfk,1)}$. Let   $\alphastar = \alpha(3\Ha_0)$, and let $\Wy_0=f(x_0)-f(\xmin)$ and for $i\ge 0$,
\[\Wy_{i+1}=\Wy_i \l[1+ \epsilon \Ca   (1-\gamma- 2 \Cfk \epsilon) \alpha(2\Wy_i)/4\r]^{-1}.\]
 Then for any $(x_0,p_0)$ with $p_0=0$, the iterates of \eqref{eq:implicit} satisfy for every $i\ge 0$,
	\[
	f(x_i)-f(\xmin) \le 2\Wy_i \leq 2\Wy_0 [1 + \epsilon \Ca   (1-\gamma- 2 \Cfk \epsilon) \alphastar/4]^{-i}.
	\]
\end{restatable}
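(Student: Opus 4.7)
The plan is to adapt the continuous-time argument of Theorem~\ref{lem:continuouslyap} to the implicit discretization, exploiting the fact that the forward-point evaluation of the field makes the convexity inequalities for $\Ha$ analogous to their continuous counterparts. Throughout, I would mirror the definition of the time-varying Lyapunov function: let $\Ly_i$ be the unique solution $v \in [\Ha_i/2, 3\Ha_i/2]$ of $v = \Ha_i + \tfrac{\Ca \alpha(2v)}{2}\inner{x_i-\xmin}{p_i}$, and set $\alpha_i := \alpha(2\Ly_i)$ and $\beta_i := \Ca \alpha_i/2$. Existence, uniqueness, and the bracketing by $\Ha_i/2$ and $3\Ha_i/2$ follow exactly as in the proof of Theorem~\ref{lem:continuouslyap} using Lemma~\ref{lem:betachoicealpha} and \eqref{eq:alphacond_ca}. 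This yields $f(x_i)-f(\xmin) \le \Ha_i \le 2\Ly_i$, reducing the goal to linear contraction of $\Ly_i$.

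I would then compute the one-step increments. Convexity of $\K$ and $f$ evaluated at the forward points combined with the update rule \eqref{eq:implicit} gives the clean cancellation
\[\Ha_{i+1}-\Ha_i \le \inner{\grad \K(p_{i+1})}{p_{i+1}-p_i} + \inner{\grad f(x_{i+1})}{x_{i+1}-x_i} = -\epsilon\gamma \inner{\grad \K(p_{i+1})}{p_{i+1}},\]
the discrete analogue of $\Ha_t' = -\gamma\inner{\grad \K(p_t)}{p_t}$. Substituting $x_i = x_{i+1}-\epsilon \grad \K(p_{i+1})$ and $p_i = (1+\epsilon\gamma)p_{i+1}+\epsilon \grad f(x_{i+1})$ into $\inner{x_{i+1}-\xmin}{p_{i+1}} - \inner{x_i-\xmin}{p_i}$ produces $\epsilon$ times the continuous-time expression for $(\inner{x_t - \xmin}{p_t})'$ evaluated at $(x_{i+1},p_{i+1})$, plus an $\epsilon^2$ correction whose only new ingredient is $\inner{\grad \K(p_{i+1})}{\grad f(x_{i+1})}$; this is precisely the quantity bounded by $\Cfk \Ha_{i+1}$ in Assumption~\ref{ass:imp}.

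Next I would combine these increments with weight $\beta_{i+1}$ and apply \eqref{eq:goodenoughbeta2} of Lemma~\ref{lem:Vderivativebound} to the $\epsilon$-order continuous-like part (valid since $\beta_{i+1} = \Ca \alpha_{i+1}/2 \le \alpha_{i+1}\gamma/2$). Using $\Ha_{i+1} \le 2\Ly_{i+1}$ on the $\epsilon^2$ remainder, I expect a bound of the form
\[\Ly_{i+1}-\Ly_i \le -\epsilon \beta_{i+1}(1-\gamma)\Ly_{i+1} + 2\epsilon^2 \beta_{i+1} \Cfk \Ly_{i+1} + (\beta_{i+1}-\beta_i)\inner{x_i-\xmin}{p_i}.\]
The last term encodes the drift of the time-varying weight and is the main obstacle, exactly as \eqref{eq:boundinner} was in the continuous proof. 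I would control it by the mean value theorem, $\beta_{i+1}-\beta_i = \Ca \alpha'(\xi)(\Ly_{i+1}-\Ly_i)$ for some $\xi$, and invoke \eqref{eq:alphacond_ca} together with the $\Ha_i$-bound on $|\inner{x_i-\xmin}{p_i}|$ from Lemma~\ref{lem:betachoicealpha} to ensure $|\Ca \alpha'(\xi)\inner{x_i-\xmin}{p_i}|<1$, so that this term can be absorbed onto the left-hand side.

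Rearranging then yields $\bigl(1 + \tfrac{\epsilon \Ca \alpha_{i+1}(1-\gamma-2\Cfk \epsilon)}{4}\bigr)\Ly_{i+1} \le \Ly_i$, which matches the stated recursion after noting $\alpha_{i+1} \ge \alpha(2\Ly_i)$ by monotonicity of $\alpha$ together with $\Ly_{i+1}\le \Ly_i$. The step-size restriction $\epsilon < (1-\gamma)/(2\max(\Cfk,1))$ is exactly what is needed to keep the contraction coefficient positive. Iterating from $\Ly_0 = \Ha_0 = f(x_0)-f(\xmin)$ (valid because $p_0=0$) and using $\alpha(2\Ly_i) \ge \alpha(3\Ha_0) = \alphastar$ delivers the claimed bound $f(x_i)-f(\xmin)\le 2\Ly_i \le 2\Wy_i$.
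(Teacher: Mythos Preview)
Your plan correctly identifies all the main ingredients and mirrors the paper's argument closely: the implicit Lyapunov $\Ly_i$, the clean bound $\Ha_{i+1}-\Ha_i \le -\epsilon\gamma\inner{\grad\K(p_{i+1})}{p_{i+1}}$, the inner-product increment, and the use of \eqref{eq:goodenoughbeta2}. There is, however, a real gap in the absorption of the drift term $(\beta_{i+1}-\beta_i)\inner{x_i-\xmin}{p_i}$.

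Your mean-value step gives $\beta_{i+1}-\beta_i = \Ca\alpha'(\xi)(\Ly_{i+1}-\Ly_i)$ with $\xi$ between $2\Ly_{i+1}$ and $2\Ly_i$, and you want $|\Ca\alpha'(\xi)\inner{x_i-\xmin}{p_i}| < 1$ via \eqref{eq:alphacond_ca}. But the only available bound on the inner product is $|\inner{x_i-\xmin}{p_i}| \le 2\Ly_i/\alpha(2\Ly_i)$, while \eqref{eq:alphacond_ca} controls $-\Ca\alpha'(y)\cdot y/\alpha(y)$ at a \emph{single} argument $y$. These line up only when $\xi \ge 2\Ly_i$ (so that $-\alpha'(\xi) \le -\alpha'(2\Ly_i)$ by convexity of $\alpha$), i.e.\ precisely when $\Ly_{i+1} > \Ly_i$. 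In the regime $\Ly_{i+1} < \Ly_i$ that you ultimately want, one has $\xi < 2\Ly_i$ and the inequality goes the wrong way; indeed $\alpha(2\Ly_{i+1})/\alpha(2\Ly_i) \ge 1 \ge \Ly_{i+1}/\Ly_i$ shows the combined quotient can exceed $1$, so the absorption fails.

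The paper's remedy is to run \emph{two} decompositions of $\Ly_{i+1}-\Ly_i$: one pairing the weight $\beta_{i+1}$ with the residual $\inner{x_i-\xmin}{p_i}$, the other pairing $\beta_i$ with $\inner{x_{i+1}-\xmin}{p_{i+1}}$ (Lemma~\ref{lem:prelimbetai}). The first is used, exactly as you outline, to rule out $\Ly_{i+1} > \Ly_i$ by contradiction: convexity of $\alpha$ (not MVT) places the derivative at $2\Ly_i$, matching the index-$i$ inner product. Once $\Ly_{i+1}\le\Ly_i$ is established, the \emph{second} decomposition is used for the rate: now convexity places the derivative at $2\Ly_{i+1}$, which matches the index-$(i{+}1)$ inner-product bound $|\inner{x_{i+1}-\xmin}{p_{i+1}}|\le 2\Ly_{i+1}/\alpha(2\Ly_{i+1})$. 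This two-step mechanism is packaged as Lemma~\ref{lem:prelimconv}, and the proof of the proposition reduces to verifying its twin hypotheses \eqref{eq:Vdiffass1a}--\eqref{eq:Vdiffass1b}.

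A smaller point: your inner-product expansion also produces a $\gamma\epsilon^2\inner{\grad\K(p_{i+1})}{p_{i+1}}$ term (not only the $\grad f$--$\grad\K$ cross term). After weighting by $\beta\le\gamma/2$ this shifts the $\K(p_{i+1})$-coefficient by at most $\epsilon\gamma^2/2$, and absorbing it into the slack of \eqref{eq:goodenoughbeta2} requires $\epsilon\le(1-\gamma)/2$. This is the origin of the $\max(\Cfk,1)$ in the step-size restriction, not merely positivity of $1-\gamma-2\Cfk\epsilon$.
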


\begin{remark}
\label{remark:imp}
Proposition \ref{lem:imp} means that we can fix any step size $0<\epsilon<\frac{1-\gamma}{2\max(\Cfk,1)}$ independently of the initial point, and have linear convergence with contraction rate that is proportional to $\alpha(3\Ha_0)$ initially and possibly increasing as we get closer to the optimum. In Section \ref{sec:kin} we introduce kinetic energies $\K(p)$ that behave like $\norm{p}_2^a$ near 0 and $\norm{p}_2^A$ in the tails. We will show that for functions $f(x)$ that behave like $\norm{x-\xmin}_2^b$ near their minima and $\norm{x-\xmin}_2^B$ in the tails the conditions of assumptions \ref{ass:imp} are satisfied as long as $\frac{1}{a}+\frac{1}{b}=1$ and $\frac{1}{A}+\frac{1}{B}\ge 1$. In particular, if we choose $\K(p)=\sqrt{\norm{p}_2^2 + 1}-1$ (relativistic kinetic energy), then $a=2$ and $A=1$, and assumptions \ref{ass:imp} can be shown to hold for every $f$ that has quadratic behavior near its minimum and no faster than exponential growth in the tails.
\end{remark}



\subsection{First Explicit Method, with Analysis via the Hessian of $f$}
The following discrete approximation $(x_i, p_i)$ to the continuous system makes a similar finite difference approximation, $\tfrac{x_{i+1} - x_i}{\epsilon} = x_t'$ and $\tfrac{p_{i+1} - p_i}{\epsilon} = p_t'$ for $\epsilon > 0$. In contrast to the implicit method, it approximates the field at the point $(x_i, p_{i+1})$, making it fully explicit without any costly subproblem,
\begin{align*}
\frac{x_{i+1} - x_i}{\epsilon} &= \grad \K(p_{i+1})\\
\frac{p_{i+1} - p_i}{\epsilon} &= - \gamma p_{i+1} - \grad \f(x_i).
\end{align*}
This method can be rewritten as our first explicit method.
\begin{titledbox}{First Explicit Method}
Given $f, \K : \R^d \to \R$, $\epsilon, \gamma \in (0, \infty)$, $x_0, p_0 \in \R^d$.

Let $\delta = (1 + \gamma \epsilon)^{-1}$ and
\begin{equation}
\label{eq:semiA} \begin{aligned}
p_{i+1}  &=\delta p_i - \epsilon \delta\grad \f(x_i)\\
x_{i+1} &= x_i + \epsilon \grad \K(p_{i+1}).
\end{aligned}
\end{equation}
\end{titledbox}
This discretization exploits the convexity of $\K$ by approximating the continuous dynamics at the forward point $p_{i+1}$, but is made explicit by approximating at the backward point $x_i$. Because this method approximates the field at the backward point $x_i$ it requires a kind of smoothness assumption to prevent $f$ from changing too rapidly between iterates. This assumption is in the form of a condition on the Hessian of $f$, and thus we require twice differentiability of $f$ for the first explicit method. Because the accumulation of gradients of $f$ in the form of $p_i$ are modulated by $\K$, this condition in fact expresses a requirement on the interaction between $\grad \K$ and $\hess f$, see assumption \ref{ass:semiA:gKHfsemi}.

\begin{assumptions}{semiA}
	\item \label{ass:semiA:gKpsem}There exists $\Ck \in (0, \infty)$ such that for every $p\in \R^d$,
            	\begin{equation}
			\inner{\grad \K(p)}{p}\le \Ck \K(p).
            	\end{equation}
	\item \label{ass:semiA:ftwicediff} $\f : \R^d \to \R$  convex with a unique minimum at $\xmin$ and twice continuously differentiable for every $x\in \R^d\setminus \{\xmin\}$.
	\item \label{ass:semiA:gKHfsemi} There exists $\Dfk \in (0, \infty)$ such that for every $p\in \R^d$, $x\in \R^d\setminus \{\xmin\}$,
                	\begin{equation}
			\inner{\grad \K(p)}{ \grad^2 f(x) \grad \K(p) }\le \Dfk  \alpha(3\Ha(x,p))  \Ha(x,p).
                	\end{equation}
\end{assumptions}

\begin{remark}
If $f$ smooth and twice differentiable then $\inner{v}{\hess f(x) v}$ is everywhere bounded by $L$ for $v \in \R^d$ such that $\norm{v}_2 = 1$ (see Theorem 2.1.6 of \cite{nesterov2013introductory}). Thus, using $\K(p) = \tfrac{1}{2} \norm{p}_2^2$, this allows us to satisfy assumption \ref{ass:semiA:gKHfsemi} with $\Dfk = \max\{1, 2L\}$, since
\begin{equation*}
\inner{\grad \K(p)}{ \grad^2 f(x) \grad \K(p) } \le L \norm{\grad \K(p)}^2_2  = 2L \K(p) \leq f(x) - f(\xmin) + 2L \K(p).
\end{equation*}
Assumption \ref{ass:semiA:gKpsem} is clearly satisfied in this case by $\Ck = 2$.
\end{remark}


The following lemma shows a convergence result for this discretization.

\begin{restatable}[Convergence bound for the first explicit scheme]{prop}{semiAlemma}
\label{lem:semiA}
	Given $f$, $\K$, $\gamma$, $\alpha$, $\Ca$, $\Cfk$, $\Ck$, $\Dfk$ satisfying assumptions \ref{ass:cont}, \ref{ass:imp}, and \ref{ass:semiA}, and that $0<\epsilon<\min\l(\frac{1-\gamma}{2\max(\Cfk+6\Dfk/\Ca,1)},\frac{\Ca}{10\Cfk+5\gamma \Ck} \r)$.	Let $\alphastar = \alpha(3\Ha_0)$, $\Wy_0:=f(x_0)-f(\xmin)$, and for $i\ge 0$, let
	\[\Wy_{i+1}=\Wy_i \l(1+ \frac{\epsilon  \Ca}{4} \l[1-\gamma-2\epsilon(\Cfk+6\Dfk/\Ca)\r] \alpha(2\Wy_i) \r)^{-1}.\]
 Then for any $(x_0,p_0)$ with $p_0=0$, the iterates \eqref{eq:semiA} satisfy for every $i\ge 0$,
	\[
	f(x_i)-f(\xmin)\le  2\Wy_i\le 2\Wy_0 \l(1+ \frac{\epsilon  \Ca}{4} \l[1-\gamma-2\epsilon(\Cfk+6\Dfk/\Ca)\r] \alphastar \r)^{-i}.
	\]
\end{restatable}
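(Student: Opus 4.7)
The plan is to mirror the continuous-time Lyapunov argument of Theorem \ref{lem:continuouslyap} in the discrete setting. Define $\Ly_i \in [\Ha_i/2, 3\Ha_i/2]$ implicitly as the unique solution of $\Ly_i = \Ha_i + \tfrac{\Ca \alpha(2\Ly_i)}{2} \inner{x_i - \xmin}{p_i}$, with existence and uniqueness following from Lemma \ref{lem:betachoicealpha} and Assumption \ref{ass:cont:alpha} exactly as in the continuous-time case. Writing $\beta_i = \tfrac{\Ca \alpha(2\Ly_i)}{2}$, the target is a one-step inequality $\Ly_{i+1}\l(1 + \tfrac{\epsilon\Ca}{4}\l[1 - \gamma - 2\epsilon(\Cfk + 6\Dfk/\Ca)\r]\alpha(2\Ly_i)\r) \leq \Ly_i$, which iterates to the stated bound because $\alpha$ is non-increasing, $\Ha_i \leq 2\Ly_i$, and $\Ly_0 = \Ha_0$ since $p_0 = 0$.

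To derive the one-step inequality, expand $\Ly_{i+1} - \Ly_i$ as the change in Hamiltonian plus the change in the inner-product term. For the Hamiltonian, convexity of $\K$ gives $\K(p_{i+1}) - \K(p_i) \leq \inner{\grad \K(p_{i+1})}{p_{i+1}-p_i}$, into which I substitute the rearranged momentum update $p_{i+1}-p_i = -\gamma\epsilon p_{i+1} - \epsilon\grad f(x_i)$. For the change in $f$, a second-order Taylor expansion using Assumption \ref{ass:semiA:ftwicediff} yields $f(x_{i+1}) - f(x_i) = \epsilon\inner{\grad f(x_i)}{\grad \K(p_{i+1})} + \tfrac{\epsilon^2}{2}\inner{\grad\K(p_{i+1})}{\hess f(\xi)\grad\K(p_{i+1})}$ for some $\xi \in [x_i,x_{i+1}]$, with the Hessian remainder controlled by Assumption \ref{ass:semiA:gKHfsemi}. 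The two linear cross terms in $\grad f(x_i)$ and $\grad\K(p_{i+1})$ cancel exactly, leaving a dissipation contribution $-\gamma\epsilon\inner{\grad\K(p_{i+1})}{p_{i+1}}$ together with the Hessian error. Meanwhile, the inner-product difference expands as $\inner{x_{i+1}-\xmin}{p_{i+1}} - \inner{x_i-\xmin}{p_i} = \epsilon\inner{\grad\K(p_{i+1})}{p_{i+1}} - \gamma\epsilon\inner{x_i-\xmin}{p_{i+1}} - \epsilon\inner{x_i-\xmin}{\grad f(x_i)}$, mirroring the continuous-time derivative calculation. Applying convexity $\inner{x_i-\xmin}{\grad f(x_i)} \geq f(x_i) - f(\xmin)$, Assumption \ref{ass:semiA:gKpsem}, and inequality \eqref{eq:goodenoughbeta2} of Lemma \ref{lem:Vderivativebound} with $(\alpha,\beta) = (\alpha(2\Ly_i),\beta_i)$, collapses the principal terms into $-\epsilon\beta_i(1-\gamma)\Ly_{i+1}$; the fact that the Hamiltonian that naturally appears carries the forward momentum $p_{i+1}$ (because the update is implicit in $p_{i+1}$) is exactly what lets the contraction be expressed against $\Ly_{i+1}$ rather than $\Ly_i$.

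Two error sources must then be absorbed. The change $\beta_{i+1} - \beta_i$ is handled using \eqref{eq:alphacond_ca} and the discrete analog of \eqref{eq:boundinner}, costing only a factor of $\tfrac{1}{2}$ in the contraction rate, exactly as in the continuous proof. The Hessian remainder involves $\Ha(\xi, p_{i+1})$ at an intermediate position, so it must be compared back to $\Ly_i$ and $\Ly_{i+1}$; this is where Assumption \ref{ass:imp:innerproductfk} enters, letting me bound $|f(\xi) - f(x_i)|$ along the segment and convert the remainder into an expression of order $\epsilon^2(\Cfk + \Dfk/\Ca)\alpha(2\Ly_i)\Ly_i$ that the two step-size restrictions force to be a small fraction of the leading $\epsilon\beta_i(1-\gamma)$ term. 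Once the recursion is in hand, induction gives $\Ly_i \leq \Wy_i$, so $f(x_i) - f(\xmin) \leq \Ha_i \leq 2\Ly_i \leq 2\Wy_i$, and $\alpha(2\Wy_i) \geq \alphastar$ produces the closed-form estimate.

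The main obstacle will be the bookkeeping around the Hessian error: because $\hess f$ is evaluated at an intermediate $\xi$, comparing $\Ha(\xi, p_{i+1})$ with $\Ly_i$ and $\Ly_{i+1}$ is what produces the explicit constants $6\Dfk/\Ca$ and $10\Cfk + 5\gamma\Ck$ appearing in the step-size hypothesis, and tracking these constants faithfully through the expansion — while also verifying that $x_i \neq \xmin$ so that Assumption \ref{ass:semiA:gKHfsemi} applies (trivial otherwise, since then $\grad f(x_i) = 0$ and the iterates stagnate) — is the most delicate bookkeeping step. Everything else follows the templates of Theorem \ref{lem:continuouslyap} and Proposition \ref{lem:imp}.
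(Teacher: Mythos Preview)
Your proposal is correct and follows the paper's argument closely: the paper defines $\Ly_i$ and $\beta_i$ exactly as you do (Lemma~\ref{lem:prelimbetai}), obtains the same cancellation in $\Ha_{i+1}-\Ha_i$, controls the second-order remainder by comparing $\Ha(\cdot,p_{i+1})$ along the segment $[x_i,x_{i+1}]$ back to $\Ha_i$ and $\Ha_{i+1}$ via Assumptions~\ref{ass:imp:innerproductfk} and~\ref{ass:semiA:gKpsem} (Lemma~\ref{lem:prelimsemiA}; this is where $\Ck$ and the constant $10\Cfk+5\gamma\Ck$ arise), and then applies \eqref{eq:goodenoughbeta2}. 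The one place where the discrete argument is more involved than ``exactly as in the continuous proof'' is the $(\beta_{i+1}-\beta_i)$ term: to apply \eqref{eq:goodenoughbeta2} with $\beta=\beta_i$ at the forward point $(x_{i+1},p_{i+1})$ one needs $\beta_i\le\tfrac{\gamma}{2}\alpha(2\Ly_{i+1})$, which presupposes $\Ly_{i+1}\le\Ly_i$, so the paper runs a two-pass argument (Lemma~\ref{lem:prelimconv}) --- first establishing $\Ly_{i+1}\le\Ly_i$ by contradiction using the expansion carrying coefficient $\beta_{i+1}$ and remainder $(\beta_{i+1}-\beta_i)\inner{x_i-\xmin}{p_i}$, and only then extracting the quantitative rate using the expansion with $\beta_i$ --- rather than a direct rearrangement as in the continuous case.
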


\begin{remark}
Similar to Remark \ref{remark:imp}, Proposition \ref{lem:semiA} implies that, under suitable assumptions and position independent step sizes, the first explicit method can achieve linear convergence with contraction rate that is proportional to $\alpha(3\Ha_0)$ initially and possibly increasing as we get closer to the optimum. In particular, again as remarked in Remark \ref{remark:imp}, for $f(x)$ that behave like $\norm{x-\xmin}_2^b$ near their minima and $\norm{x-\xmin}_2^B$ in the tails the conditions of assumptions \ref{ass:semiA} can be satisfied for kinetic energies that grow like $\norm{p}_2^a$ in the body and $\norm{p}_2^A$ in the tails as long as $\frac{1}{a}+\frac{1}{b}=1$, $\frac{1}{A}+\frac{1}{B}\ge 1$. The distinction here is that for the first explicit method we will require $b, B \ge 2$.
\end{remark}


\subsection{Second Explicit Method, with Analysis via the Hessian of $\K$}
Our second explicit method inverts relationship between $f$ and $\K$ from the first. Again, it makes a fixed $\epsilon$ step approximation $\tfrac{x_{i+1} - x_i}{\epsilon} = x_t'$ and $\tfrac{p_{i+1} - p_i}{\epsilon} = p_t'$. In contrast to the implicit \eqref{eq:implicit} and first explicit \eqref{eq:semiA} methods, it approximates the field at the point $(x_{i+1}, p_{i})$.

\begin{titledbox}{Second Explicit Method}
Given $f, \K : \R^d \to \R$, $\epsilon, \gamma \in (0, \infty)$, $x_0, p_0 \in \R^d$. Let,
\begin{equation}
\label{eq:semiB} \begin{aligned}
x_{i+1} &= x_i + \epsilon \grad \K(p_{i})\\
p_{i+1} &= (1 - \epsilon \gamma) p_i -\epsilon \grad \f(x_{i+1}).
\end{aligned}
\end{equation}
\end{titledbox}

This discretization exploits the convexity of $f$ by approximating the continuous dynamics at the forward point $x_{i+1}$, but is made explicit by approximating at the backward point $p_i$. As with the other explicit method, it requires a smoothness assumption to prevent $\K$ from changing too rapidly between iterates, which is expressed as a requirement on the interaction between $\grad f$ and $\hess \K$, see assumption \ref{ass:semiB:gfHKsemi}. These assumptions can be satisfied for $\K$ that have quadratic or higher power growth and are suitable for $f$ that may have unbounded second derivatives at their minima (for such $f$, Assumptions \ref{ass:semiA} can not hold).

\begin{assumptions}{semiB}
	\item \label{ass:semiB:Ktwicediff}$\K : \R^d \to \R$  strictly convex with minimum $\K(0) = 0$ and twice continuously differentiable for every $p\in \R^d\setminus \{0\}$.
	\item \label{ass:semiB:gKpsem}There exists $\Ck \in (0, \infty)$ such that for every $p\in \R^d$,
	\begin{equation}
	\inner{\grad \K(p)}{p}\le \Ck \K(p).
	\end{equation}
	\item \label{ass:semiB:gHpsem} There exists $\Dk \in (0, \infty)$ such that for every $p\in \R^d\setminus \{0\}$,
            	\begin{equation}
			\inner{p}{\hess \K(p)p}\le \Dk \K(p).
            	\end{equation}
	\item \label{ass:semiB:weird}There exists $E_{\K}, F_{\K} \in (0, \infty)$ such that for every $p,q\in \R^d$,
            	\begin{equation}
			\K(p) - \K(q) \leq E_{\K} \K(q) + F_{\K} \inner{\grad \K(p) - \grad \K(q)}{p-q}.
            	\end{equation}
	\item \label{ass:semiB:gfHKsemi} There exists $\Dfk \in (0, \infty)$ such that for every $x\in \R^d$, $p\in \R^d\setminus \{0\}$,
                	\begin{equation}
			\inner{\grad f(x)}{ \hess \K(p) \grad f(x) }\le \Dfk  \alpha(3\Ha(x,p))  \Ha(x,p).
                	\end{equation}
\end{assumptions}

\begin{remark}
Smoothness of $f$ implies $\tfrac{1}{2}\norm{\grad f(x)}_2^2 \leq L (f(x) - f(\xmin))$ (see (2.1.7) of Theorem 2.1.5 of \cite{nesterov2013introductory}). Thus, if $f$ is smooth and $\K(p) = \tfrac{1}{2} \norm{p}_2^2$, then the assumption \ref{ass:semiB:gfHKsemi} can be satisfied by $\Dfk = \max\{1, 2L\}$, since $\hess \K(p) = I$ and
\begin{equation*}
\inner{\grad f(x)}{ \hess \K(p) \grad f(x) } = \norm{\grad f(x)}_2^2 \leq 2L(f(x) - f(\xmin)) \leq 2L(f(x) - f(\xmin)) + \K(p).
\end{equation*}
The $\K$-specific assumptions \ref{ass:semiB:gKpsem} and \ref{ass:semiB:gHpsem} can clearly be satisfied with $\Ck = \Dk = 2$ in this case. We show that \ref{ass:semiB:weird} can be satisfied in Section \ref{sec:kin}.
\end{remark}

\begin{restatable}[Convergence bound for the second explicit scheme]{prop}{semiBlemma}
\label{lem:semiB}
	Given $f$, $\K$, $\gamma$, $\alpha$, $\Ca$, $\Cfk$, $\Ck$, $\Dk$, $\Dfk$, $E_{\K}$, $F_{\K}$ satisfying assumptions \ref{ass:cont}, \ref{ass:imp}, and \ref{ass:semiB}, and that
	\[0<\epsilon<\min\l(\frac{1-\gamma}{2(\Cfk+6\Dfk/\Ca)},\frac{1-\gamma}{8\Dk(1+E_{\K})},\frac{\Ca}{6 (5\Cfk+2\gamma \Ck)+12\gamma\Ca },\sqrt{\frac{1}{6\gamma^2 \Dk F_{\K}}}  \r).\]
	Let  $\alphastar=\alpha(3 \Ha_0)$, $\Wy_0:=f(x_0)-f(\xmin)$, and for $i\ge 0$, let
	\[\Wy_{i+1}=\Wy_i \l(1- \frac{\epsilon  \Ca}{4} \l[1-\gamma-2\epsilon(\Cfk+6\Dfk/\Ca)  \r] \alpha(2\Wy_i) \r).\]
	Then for any $(x_0,p_0)$ with $p_0=0$, the iterates \eqref{eq:semiB} satisfy for every $i\ge 0$,
	\[
	f(x_i)-f(\xmin)\le  2\Wy_i\le 2\Wy_0\cdot  \l(1- \frac{\epsilon  \Ca}{4}\l[1-\gamma -2\epsilon(\Cfk+6\Dfk/\Ca)  \r] \alphastar\r)^{i}.
	\]
\end{restatable}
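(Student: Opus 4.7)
The plan is to mirror the continuous-time Lyapunov argument of Theorem~\ref{lem:continuouslyap} and produce a discrete analog of the contraction $\Ly_t' \le -\beta_t(1-\gamma)\Ly_t/2$, namely
\[
\Ly_{i+1} \le \Ly_i\Bigl(1 - \tfrac{\epsilon\Ca}{4}\bigl[1-\gamma - 2\epsilon(\Cfk+6\Dfk/\Ca)\bigr]\alpha(2\Ly_i)\Bigr),
\]
with the extra discretization terms being $O(\epsilon^2)$ and absorbable by the dissipation when $\epsilon$ is small enough. The stated bound on $f(x_i) - f(\xmin)$ will then follow from $f(x_i) - f(\xmin) \le \Ha_i \le 2\Ly_i$ (Lemma~\ref{lem:betachoicealpha}), together with $\alpha(2\Ly_i) \ge \alphastar$ along the trajectory because $\Ly_i$ is non-increasing and $\alpha$ is non-increasing.

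First I would construct a discrete time-varying Lyapunov sequence $\Ly_i$ as the unique solution $v \in [\Ha_i/2, 3\Ha_i/2]$ of $v = \Ha_i + (\Ca/2)\alpha(2v)\inner{x_i-\xmin}{p_i}$, using verbatim the implicit-function argument around~\eqref{eq:Vtalphadef} together with \eqref{eq:boundinner}. I would then set $\alpha_i = \alpha(2\Ly_i)$ and $\beta_i = (\Ca/2)\alpha_i$. Since $2\Ly_i \ge \Ha_i \ge \K(p_i)$ and $\alpha$ is non-increasing, $\alpha_i \le \alpha(\K(p_i))$, so the hypotheses of Lemmas~\ref{lem:betachoicealpha} and~\ref{lem:Vderivativebound} hold pointwise in $i$ for these $\alpha_i,\beta_i$.

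Next I would expand $\Ly_{i+1} - \Ly_i$ as the sum of three differences. For $f(x_{i+1}) - f(x_i)$, convexity gives $\le \inner{\grad f(x_{i+1})}{x_{i+1}-x_i} = \epsilon \inner{\grad f(x_{i+1})}{\grad \K(p_i)}$. For $\K(p_{i+1}) - \K(p_i)$, I would Taylor-expand at $p_i$ using $\hess \K$; the quadratic remainder decomposes into terms of the form $\epsilon^2\gamma^2\inner{p_i}{\hess \K(\xi) p_i}$, $\epsilon^2\inner{\grad f(x_{i+1})}{\hess \K(\xi)\grad f(x_{i+1})}$, and a cross term, which assumptions \ref{ass:semiB:gHpsem}, \ref{ass:semiB:gfHKsemi}, and \ref{ass:imp:innerproductfk} control as $O(\epsilon^2\alpha_i\Ly_i)$. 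For the inner-product difference $\beta_{i+1}\inner{x_{i+1}-\xmin}{p_{i+1}} - \beta_i\inner{x_i-\xmin}{p_i}$, I would expand using the update rules and handle $\beta_{i+1}-\beta_i$ via monotonicity of $\alpha$ and \eqref{eq:alphacond_ca}, exactly as the continuous proof handled $\beta_t'$. Assumption~\ref{ass:semiB:weird} then lets us convert $\K(p_{i+1})$-valued leading terms back into $\K(p_i)$-valued ones uniformly, so that after collection the contraction factor is expressible in terms of $\Ly_i$ alone. Matching the leading-order terms with $\epsilon$ times the continuous-time estimate from Lemma~\ref{lem:Vderivativebound} yields the desired multiplicative bound.

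The main obstacle is the bookkeeping in the Taylor expansion of $\K(p_{i+1}) - \K(p_i)$: the backward evaluation of $\grad \K$ paired against the forward evaluation of $\grad f$ produces cross-terms of mixed sign that must be carefully combined with the dissipative $-\gamma p_i$ contribution before assumption~\ref{ass:semiB:weird} can absorb them. This is likely the source of the step-size caps $\epsilon < (1-\gamma)/(8\Dk(1+E_\K))$ and $\epsilon < \sqrt{1/(6\gamma^2\Dk F_\K)}$, which ensure that the $\hess \K$-remainder and the $F_\K$-cross term are each strictly dominated by the linear dissipation; the remaining caps match those already forced in Propositions~\ref{lem:imp} and~\ref{lem:semiA} to control the $\grad f$/$\grad \K$ interaction.
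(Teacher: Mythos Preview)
Your outline is essentially the paper's proof: the same implicit Lyapunov $\Ly_i$ (the paper packages this as Lemma~\ref{lem:prelimbetai}), the same one-step decomposition with the $\hess\K$-integral controlling $\K(p_{i+1})-\K(p_i)$ (packaged as Lemma~\ref{lem:prelimsemiB}), and the same $\beta_{i+1}-\beta_i$ bootstrap via convexity of $\alpha$ and \eqref{eq:alphacond_ca} (packaged as Lemma~\ref{lem:prelimconv}). Two points of detail you underspecify but that the paper handles carefully: first, the Hessian integral is evaluated at interpolated points $p_i^{(t)}$, so assumption~\ref{ass:semiB:gfHKsemi} yields $\alpha(3\Ha(x_{i+1},p_i^{(t)}))$ rather than $\alpha_i$ directly, and one must show $\Ha_i^{(t)}$ stays within a constant factor of both $\Ha_i$ and $\Ha_{i+1}$---this is the real content of Lemma~\ref{lem:prelimsemiB} and the origin of the cap $\epsilon<\Ca/(6(5\Cfk+2\gamma\Ck)+12\gamma\Ca)$; second, the $\Dk\K$-term from \ref{ass:semiB:gHpsem} is \emph{not} $O(\epsilon^2\alpha_i\Ly_i)$ but only $O(\epsilon^2\K(p_i))$, and is absorbed into the $(\gamma-\beta_i-C_1\epsilon)\K(p_i)$ coefficient rather than into $\beta_i\Ly_i$---you seem to understand this given your correct attribution of the $(1-\gamma)/(8\Dk(1+E_\K))$ cap, but the earlier sentence is imprecise.
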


\begin{remark}
Similar to Remark \ref{remark:imp}, Proposition \ref{lem:semiB} implies that, under suitable assumptions and for a fixed step size independent of the initial point, the second explicit method can achieve linear convergence with contraction rate that is proportional to $\alpha(3\Ha_0)$ initially and possibly increasing as we get closer to the optimum. In particular, again as remarked in Remark \ref{remark:imp}, for $f(x)$ that behave like $\norm{x-\xmin}_2^b$ near their minima and $\norm{x-\xmin}_2^B$ in the tails the conditions of assumptions \ref{ass:semiB} can be satisfied for kinetic energies that grow like $\norm{p}_2^a$ in the body and $\norm{p}_2^A$ in the tails as long as $\frac{1}{a}+\frac{1}{b}=1$, $\frac{1}{A}+\frac{1}{B}\ge 1$. The distinction here is that for the second explicit method we will require $b, B \le 2$.
\end{remark}

\begin{figure}[t]
    \centering
    \includegraphics[scale=1]{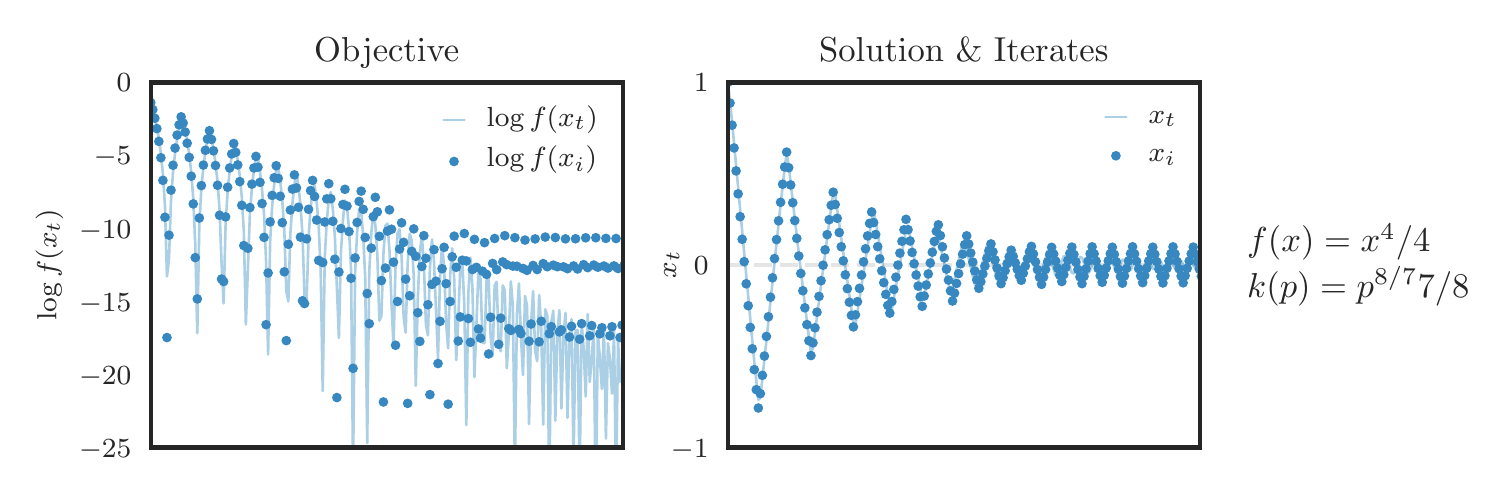}
    \caption{Importance of discretization assumptions. Solutions $x_t$ and iterates $x_i$ of our first explicit method on $f(x) = x^4/4$. With an inappropriate choice of kinetic energy, $k(p) = p^{8/7}7/8$, the continuous solution converges at a linear rate but the iterates do not.}
    \label{fig:discass}
\end{figure}

To conclude the analysis of our methods on convex functions, consider the example $f(x) = x^4/4$ from Figure \ref{fig:contass}. If we take $\K(p) = |p|^a/a$, then assumption \ref{ass:cont:alpha} requires that $a \leq 4/3$. Assumptions \ref{ass:imp} and \ref{ass:semiA} cannot be satisfied as long as $a < 4/3$, which suggests that $\K(p) = f^*(p)$ is the only suitable choice in this case. Indeed, in Figure \ref{fig:discass}, we see that the choice of $\K(p) = p^{8/7}7/8$ results in a system whose continuous dynamics converge at a linear rate and whose discrete dynamics fail to converge. Note that as the continuous systems converge the oscillation frequency increases dramatically, making it difficult for a fixed step size scheme to approximate.

\subsection{First Explicit Method on Non-Convex $f$}

We close this section with a brief analysis of the convergence of the first explicit method on non-convex $f$.  A traditional requirement of discretizations is some degree of smoothness to prevent the function changing too rapidly between points of approximation. The notion of Lipschitz smoothness is the standard one, but the use of the kinetic map $\grad \K$ to select iterates allows Hamiltonian descent methods to consider the broader definition of uniform smoothness, as discussed in \cite{zalinescu1983uniformly, penot1995unifconvex, zalinescu2002convex} but specialized here for our purposes.

Uniform smoothness is defined by a norm $\norm{\c}$ and a convex non-decreasing function $\sigma : [0, \infty) \to [0, \infty]$ such that $\sigma(0) = 0$. A function $f : \R^d \to \R$ is $\sigma$-uniformly smooth, if for all $x,y \in \R^d$,
\begin{equation}
\label{eq:defunifsmooth} f(y) \leq f(x) + \inner{\grad f(x)}{y-x} + \sigma(\norm{y-x}).
\end{equation}
Lipschitz smoothness corresponds to $\sigma(t) = \tfrac{1}{2}t^2$, and generally speaking there exist non-trivial uniformly smooth functions for $\sigma(t) = \tfrac{1}{b}t^b$ for $1 < b \leq 2$, see, e.g., \cite{nesterov2008accelerating, zalinescu1983uniformly, penot1995unifconvex, zalinescu2002convex}.

\begin{assumptions}{smoothnonconvex}
	\item $\f : \R^d \to \R$ differentiable.
	\item $\gamma \in (0, \infty)$.
	\item \label{ass:semiAsmoothness} There exists a norm $\norm{\cdot}$ on $\R^d$, $b  \in (1, \infty)$, $\Dk \in (0, \infty)$, $\Df \in (0, \infty)$, $\sigma : [0, \infty) \to [0, \infty]$ non-decreasing convex such that $\sigma(0) = 0$ and $\sigma(ct) \leq c^b\sigma(t)$ for $c,t \in (0, \infty)$; for all $p \in \R^d$,
		\begin{equation}
			\sigma(\norm{\grad \K(p)}) \leq \Dk \K(p);
		\end{equation}
		and for all $x,y \in \R^d$,
		\begin{equation}
			f(y) \leq f(y) + \inner{\grad f(x)}{y-x} + \Df \sigma(\norm{y-x}).
		\end{equation}
\end{assumptions}

\begin{restatable}[Convergence of the first explicit scheme without convexity]{lem}{explicitmethodnonconvex}
\label{lem:explicitmethodnonconvex}
Given $\norm{\cdot}$, $f$, $\K$, $\gamma$, $b$, $\Dk$, $\Df$, $\sigma$ satisfying assumptions \ref{ass:smoothnonconvex} and \ref{ass:cont:kconvex}. If $\epsilon \in (0,  \sqrt[b-1]{\gamma/\Df\Dk}]$, then the iterates \eqref{eq:semiA} of the first explicit method satisfy
\begin{equation}
\Ha_{i+1}  - \Ha_i \leq (\epsilon^b \Df\Dk - \epsilon \gamma) \K(p_{i+1}) \leq 0,
\end{equation}
and $\norm{\grad \f(x_i)}_2 \to 0$.
\end{restatable}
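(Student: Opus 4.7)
The plan is to mirror the continuous-time energy identity at the discrete level, using the Hamiltonian $\Ha_i = \K(p_i) + f(x_i) - f(\xmin)$ as a Lyapunov function and splitting $\Ha_{i+1} - \Ha_i$ into its kinetic and potential pieces. The two ingredients needed are: convexity of $\K$ (to turn the kinetic increment into an inner product), and uniform smoothness of $f$ (to control the potential increment by a gradient inner product plus a curvature penalty). If the cross inner-product terms produced by these two bounds cancel exactly, as they do in continuous time via $\Ha_t' = -\gamma \inner{\grad \K(p_t)}{p_t}$, then only a single $\K(p_{i+1})$ term survives, with a coefficient rendered non-positive by the stated step-size restriction.

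For the kinetic piece I would first rewrite the momentum update as $p_{i+1} - p_i = -\epsilon \gamma p_{i+1} - \epsilon \grad f(x_i)$ by a direct algebraic manipulation using $\delta = (1+\gamma\epsilon)^{-1}$; this is the discrete analog of $p_t' = -\gamma p_t - \grad f(x_t)$. Convexity of $\K$ gives $\K(p_{i+1}) - \K(p_i) \leq \inner{\grad \K(p_{i+1})}{p_{i+1} - p_i}$, and convexity with minimum at $0$ gives $\inner{\grad \K(p_{i+1})}{p_{i+1}} \geq \K(p_{i+1})$, so the kinetic increment is bounded by $-\epsilon \gamma \K(p_{i+1}) - \epsilon \inner{\grad \K(p_{i+1})}{\grad f(x_i)}$. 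For the potential piece I would apply $\sigma$-uniform smoothness at the step $x_{i+1} - x_i = \epsilon \grad \K(p_{i+1})$, then use $\sigma(ct) \leq c^b \sigma(t)$ with $c = \epsilon$ to pull out a factor of $\epsilon^b$, and finally the compatibility assumption $\sigma(\norm{\grad \K(p)}) \leq \Dk \K(p)$ to bound the smoothness remainder by $\epsilon^b \Df \Dk \K(p_{i+1})$.

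Adding the two bounds, the $\epsilon \inner{\grad \K(p_{i+1})}{\grad f(x_i)}$ terms cancel with opposite signs, producing exactly the claimed one-step descent $\Ha_{i+1} - \Ha_i \leq (\epsilon^b \Df \Dk - \epsilon \gamma) \K(p_{i+1})$; the step-size bound $\epsilon \leq (\gamma/\Df\Dk)^{1/(b-1)}$ makes the coefficient non-positive. To conclude $\norm{\grad f(x_i)}_2 \to 0$, I would telescope this descent (using boundedness below of $\Ha_i$) to obtain $\sum_i \K(p_{i+1}) < \infty$, hence $\K(p_i) \to 0$ and, by strict convexity and continuity of $\K$ at its unique minimum, $p_i \to 0$; rearranging the momentum update as $\grad f(x_i) = \epsilon^{-1}(p_i - p_{i+1}) - \gamma p_{i+1}$ then forces the gradient to vanish (equivalence of norms in finite dimensions gives the result in $\norm{\cdot}_2$). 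The only genuine obstacle is bookkeeping: deriving the clean identity $p_{i+1} - p_i = -\epsilon \gamma p_{i+1} - \epsilon \grad f(x_i)$ from the damped update so that the cross-term cancellation between the kinetic and potential bounds is exact rather than approximate; every other step is a routine application of a single convexity or smoothness inequality.
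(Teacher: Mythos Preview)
Your proposal is correct and follows essentially the same route as the paper: split $\Ha_{i+1}-\Ha_i$ into kinetic and potential pieces, bound the kinetic increment via convexity of $\K$ and the rewritten update $p_{i+1}-p_i=-\epsilon\gamma p_{i+1}-\epsilon\grad f(x_i)$, bound the potential increment via uniform smoothness and the scaling $\sigma(\epsilon t)\le\epsilon^b\sigma(t)$ together with $\sigma(\norm{\grad\K(p)})\le\Dk\K(p)$, observe the exact cancellation of the cross term, and finish by telescoping and reading off $\grad f(x_i)$ from the momentum update. The paper's writeup is terser but structurally identical; your telescoping argument for $\K(p_i)\to 0$ tacitly needs the coefficient $\epsilon^b\Df\Dk-\epsilon\gamma$ to be strictly negative (i.e.\ $\epsilon$ strictly below the endpoint), a minor point the paper also glosses over.
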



\begin{remark}
$L$-Lipschitz continuity of the gradients $\norm{\grad f(x) - \grad f(y)}_2 \leq L \norm{x-y}_2$ for $L > 0$ with Euclidean norm $\norm{\c}_2$ implies both $f(y) \leq f(y) + \inner{\grad f(x)}{y-x} + \tfrac{L}{2}\norm{y-x}_2^2$ and $\tfrac{1}{2} \norm{\grad f(x)}_2^2 \leq L(f(x) - f(\xmin))$. Thus, if $f, \K$ are $\Lf, \Lk$ smooth, respectively, then the condition for convergence simplifies to  $\epsilon \leq \gamma / \Lf \Lk$.
\end{remark}

\section{Kinetic Maps for Functions with Power Behavior}
\label{sec:kin}

In this section we design a family of kinetic maps $\grad \K$ suitable for a class of functions $f$ that exhibit power growth, which we will describe precisely as a set of assumptions. This class includes strongly convex and smooth functions. However, it is much broader, including functions with possibly non-quadratic power behavior and singular or unbounded Hessians. First, we show that this family of kinetic energies satisfies the $\K$-specific assumptions of Section \ref{sec:discrete}. Then we use the generic analysis of Section \ref{sec:discrete} to provide a specific set of assumptions on $f$s and their match to the choice of $\K$. As a consequence, this analysis greatly extends the class of functions for which linear convergence is possible with fixed step size first order computation. Still, this analysis is not meant to be an exhaustive catalogue of possible kinetic energies for Hamiltonian descent. Instead, it serves as an example of how known properties of $f$ can be used to design $\K$. Note that, with a few exceptions, the proofs of all of our results in this section are deferred to Section \ref{sec:proofskinetic} of the Appendix.

\subsection{Power Kinetic Energies}

\begin{figure}[t]
    \centering
    \includegraphics[scale=1]{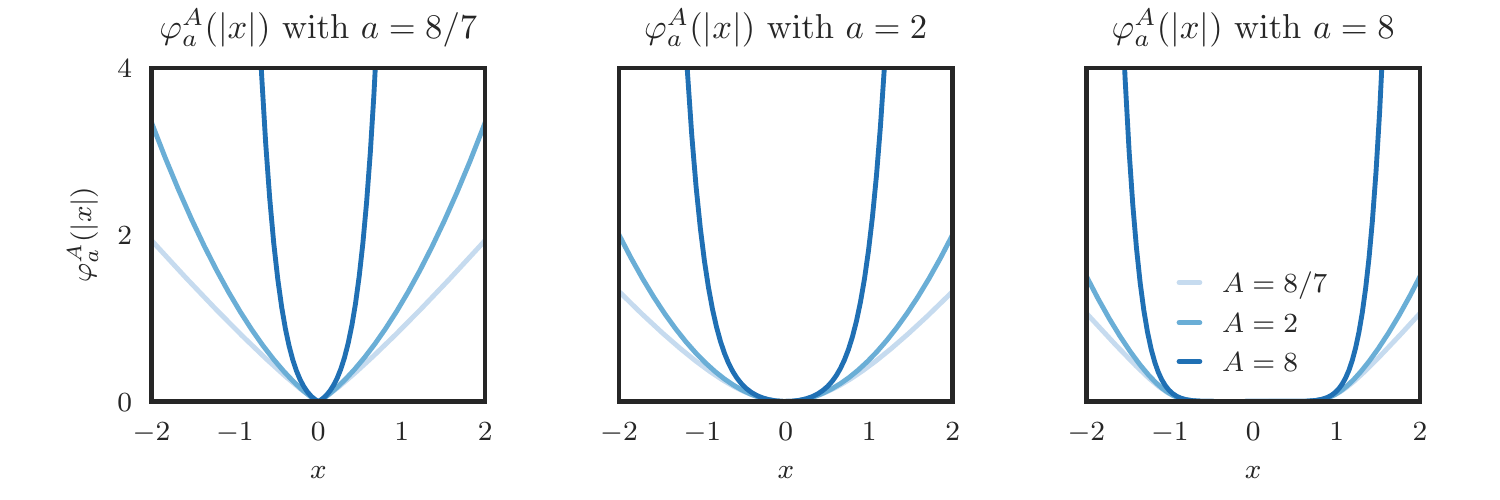}
    \caption{Power kinetic energies in one dimension.}
    \label{fig:kin}
\end{figure}

We assume a given norm $\norm{x}$ and its dual $\norm{p}_* = \sup \{ \inner{x}{p} : \norm{x} \leq 1\}$ for $x,p \in \R^d$. Define the family of power kinetic energies $\K$,
\begin{equation}
\label{eq:kindef} \K(p) = \varphi_a^A(\norm{p}_*) \text{ where } \varphi_a^A(t) = \tfrac{1}{A}\l(t^a + 1\r)^{\tfrac{A}{a}} - \tfrac{1}{A} \text{ for } t \in [0, \infty) \text{ and } a,A \in [1, \infty).
\end{equation}
For $a = A$ we recover the standard power functions, $\varphi_a^a(t) = t^a/a$. For distinct $a \neq A$, we have $(\varphi_a^A)'(t) \sim t^{A-1}$ for large $t$ and $(\varphi_a^A)'(t)  \sim t^{a-1}$ for small $t$. Thus, $\K(p) \sim  \norm{p}_*^{A}/A$ as $\norm{p}_* \uparrow \infty$ and $\K(p) \sim \norm{p}_*^a/a$ as $\norm{p}_* \downarrow 0$. See Figure \ref{fig:kin} for examples from this family in one dimension.

Broadly speaking, this family of kinetic energies must be matched in a conjugate fashion to the body and tail behavior of $f$. Informally, for this choice of $\K$ we will require conditions on $f$ that  correspond to requiring that it grows like $\norm{x-\xmin}^{b}$ in the body (as $\norm{x-\xmin} \to 0$) and $\norm{x-\xmin}^{B}$ in the tails (as $\norm{x-\xmin} \to \infty$) for some $b,B \in (1, \infty)$. In particular, our growth conditions in the case of $f$ ``growing like'' $\norm{x}_2^2 = \inner{x}{x}$ everywhere will be necessary conditions of strong convexity and smoothness. More generally, $a,A,b,B$ will be well-matched if $1/a + 1/b = 1/A + 1/B = 1$, but other scenarios are possible. Of these, the conjugate relationship between $a$ and $b$ is the most critical; it captures the asymptotic match between $f$ and $\K$ as $(x_i, p_i) \to (\xmin, 0)$, and our analysis requires that $1/a + 1/b = 1$. The match between $A$ and $B$ is less critical. In the ideal case, $B$ is known and $A = B/(B-1)$. In this case, the discretizations will converge at a constant fast linear rate. If $B$ is not known, it suffices for $1/A + 1/B \geq 1$. The consequence of underestimating $A < B/(B-1)$ will be reflected in a linear, but non-constant, rate of convergence (via $\alpha$ of Assumption \ref{ass:cont:alpha}), which depends on the initial $x_0$ and slowly improves towards a fast rate as the system converges and the regime switches. We present a complete analysis and set of conditions on $f$ for two of the most useful scenarios. In Proposition \ref{lem:assvarphiaA} we consider the case that $f$ grows like $\varphi_b^B(\norm{x-\xmin})$ where $b,B > 1$ are exactly known. In this case convergence proceeds at a fast constant linear rate when matched with $\K(p) = \varphi_{a}^{A}(\norm{p}_*)$ where $a = b/(b-1)$ and $A = B/(B-1)$. In Proposition \ref{lem:assvarphi21} we consider the case that $f$ grows like $\varphi_2^B(\norm{x-\xmin})$ where $B \geq 2$ is unknown. Here, the convergence is linear with a non-constant rate when matched with the relativistic kinetic energy $\K(p) = \varphi_2^1(\norm{p}_*)$. The case covered by relativistic kinetic $\grad \K$ is particularly valuable, as it covers a large class of globally non-smooth, but strongly convex functions. Table \ref{table:conditionsummary} summarizes this, and throughout the remaining subsections we flesh out the details of these claims.

For these kinetic energies to be suitable in our analysis, they must at minimum satisfy assumptions \ref{ass:cont:kconvex}, \ref{ass:semiA:gKpsem}, \ref{ass:semiB:Ktwicediff}, \ref{ass:semiB:gHpsem}, and \ref{ass:semiB:weird}. Assumptions \ref{ass:semiA:gKpsem} and \ref{ass:semiB:gHpsem} are clearly satisfied by $\K(p) = |p|^a/a$ for $p \in \R$ with constants $C_{\K} = a$ and $D_{\K} = a(a-1)$. In the remainder of this subsection, we provide conditions on the norms and $a, A$ under which assumptions like these hold for $\varphi_a^A$ with multiple power behavior in any finite dimension.

In general, the problematic terms of $\grad \K(p)$ and $\hess \K(p)$ that arise in high dimensions involve the gradient and Hessian of the norm. The gradient of norm can be dealt with cleanly, but our analysis requires additional control on the Hessian of the norm. To control terms involving $\hess \norm{p}_*$ we define a generalization of the maximum eigenvalue induced by the norm $\norm{\c}$. Let $\maxeigenf^{\norm{\c}} : \R^{d\times d} \to \R$ be the function defined by
\begin{equation}
\label{eq:matrixnorm} \maxeigen{M} = \sup \{ \inner{v}{Mv} : v \in \R^d, \norm{v} = 1\}.
\end{equation}
For symmetric $M \in \R^{d\times d}$ and Euclidean $\norm{\c}$ this is exactly the maximum eigenvalue of $M$. Now we are able to state our lemma analyzing power kinetic energies.

\begin{table}[t]
\renewcommand{\arraystretch}{1.25}
\begin{tabular}{@{}llllrr@{}}
\toprule
	&\multicolumn{3}{c}{$f(x)$ grows like $\varphi_b^B(\norm{x})$}& \multicolumn{2}{c}{appropriate $\K(p)=\varphi_a^A(\norm{p}_*)$}\\
	\cmidrule(rl){2-4} \cmidrule(rl){5-6}
	method & powers known? & body power $b$ & tail power $B$ & body power $a$  & tail power $A$\\
	\midrule
	implicit & known& $b > 1$ & $B > 1$  & $a = b/(b-1)$  & $A=B/(B-1)$\\
	& unknown & $b = 2$ & $B \geq 2$  & $a = 2$  & $A=1$\\
	\midrule
	1st explicit & known & $b \geq 2$ & $B \geq 2$  & $a = b/(b-1)$  & $A=B/(B-1)$\\
	& unknown & $b = 2$ & $B \geq 2$  & $a = 2$  & $A=1$\\
	\midrule
	2nd explicit & known & $1 < b \leq 2$ & $1 < B \leq 2$  & $a = b/(b-1)$  & $A=B/(B-1)$\\
\bottomrule
\end{tabular}
\caption{A summary of the conditions on $f$ and power kinetic $\K$ considered in this section that satisfy the assumptions of Section \ref{sec:discrete}. Here ``grows like'' is an imprecise term meaning that $f$s growth can be bounded in an appropriate way by $\varphi_b^B(\norm{x})$ ($\varphi_b^B$ is defined in \eqref{eq:kindef}). The full precise assumptions on $f$ are laid out in Propositions \ref{lem:assvarphiaA} and \ref{lem:assvarphi21}. In particular, $b=B=2$ corresponds to assumptions similar in spirit to strong convexity and smoothness. Other combinations of $b,B$ and $a,A$ are possible.}
\label{table:conditionsummary}
\end{table}

\begin{restatable}[Verifying assumptions on $\K$]{lem}{kinrobust}
\label{kin:robust}
Given a norm $\norm{p}_*$ on $p \in \R^d$, $a, A \in [1, \infty)$, and $\varphi_a^A$ in \eqref{eq:kindef}. Define the constant,
\begin{equation}
C_{a,A} = \l(1 - \l(\tfrac{a-1}{A-1}\r)^{\tfrac{a-1}{A-a}} + \l(\tfrac{a-1}{A-1}\r)^{\tfrac{A-1}{A-a}}\r)^{\tfrac{B-b}{b}}.
\end{equation}
$\K(p) = \varphi_a^A(\norm{p}_*)$ satisfies the following.
\begin{enumerate}
\item Convexity. If $a > 1$ or $A > 1$, then $\K$ is strictly convex with a unique minimum at $0 \in \R^d$.
\item Conjugate. For all $x \in \R^d$, $\K^*(x) = (\varphi_a^A)^*(\norm{x})$.
\item Gradient. If $\norm{p}_*$ is differentiable at $p \in \R^d \setminus \{0\}$ and $a > 1$, then $\K$ is differentiable for all $p \in \R^d$, and for all $p \in \R^d$,
\begin{align}
\label{kin:robust:innerbound} \inner{\grad \K(p)}{p} &\leq \MaA\K(p),\\
\label{kin:robust:slowgrowth1} (\varphi_a^A)^*(\norm{\grad \K(p)}) &\leq (\MaA-1) \K(p).
\intertext{Additionally, if $a,A > 1$, define $B = A/(A-1)$, $b = a/(a-1)$, and then}
\label{kin:robust:slowgrowth2} \varphi_b^B(\norm{\grad \K(p)}) &\leq C_{a,A}  (\MaA-1)  \K(p).
\intertext{Additionally, if $a,A \geq 2$, then for all $p,q \in \R^d$,}
\label{kin:robust:unifineq} \K(p) &\leq \inner{\grad \K(q)}{q} + \inner{\grad \K(p)  - \grad \K(q)}{p-q}.
\end{align}
\item Hessian. If $\norm{p}_*$ is twice continuously differentiable at $p \in \R^d \setminus \{0\}$, then $\K$ is twice continuously differentiable for all $p \in \R^d \setminus \{0\}$, and for all $p \in \R^d \setminus \{0\}$,
\begin{align}
\label{kin:robust:hessianpp} \inner{p}{\hess \K(p) p} \leq \MaA (\MaA - 1) \K(p).
\intertext{Additionally, if $a, A \geq 2$ and there exists $N \in [0, \infty)$ such that $\norm{p}_*\maxeigenconj{\hess \norm{p}_*} \leq N $ for $p \in \R^d \setminus \{0\}$, then for all $p \in \R^d \setminus \{0\}$}
\label{kin:robust:hessiannorm}  (\varphi_{a/2}^{A/2})^*\l(\frac{\maxeigenconj{\hess \K(p)}}{\MaA - 1 + N}\r) \leq (\MaA - 2) \K(p).
\end{align}
\end{enumerate}
\end{restatable}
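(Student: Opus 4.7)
The plan is to reduce each of the four parts to scalar inequalities involving the profile $\varphi_a^A$, combined with standard identities for convex functions of a norm and Euler's homogeneity identity $\inner{p}{\grad\norm{p}_*}=\norm{p}_*$ for $p\neq 0$.

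For convexity and the conjugate formula (parts 1 and 2), I would first compute
\[
(\varphi_a^A)''(t) = (t^a+1)^{A/a-2}\,t^{a-2}\bigl[(A-1)t^a+(a-1)\bigr],
\]
which is nonnegative on $[0,\infty)$ and strictly positive on $(0,\infty)$ whenever $a>1$ or $A>1$. Combined with strict monotonicity of $\varphi_a^A$, composition with the norm yields convexity of $\K$ and identifies $0$ as its unique minimum (strict convexity in the directions where the norm is strictly convex). The conjugate identity $\K^*(x)=(\varphi_a^A)^*(\norm{x})$ is then the standard result that for any non-decreasing convex $\phi:[0,\infty)\to\R$ with $\phi(0)=0$, the conjugate of $\phi\circ\norm{\cdot}_*$ is $\phi^*\circ\norm{\cdot}$.

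For the gradient inequalities (part 3), the chain rule gives $\grad \K(p) = (\varphi_a^A)'(\norm{p}_*)\grad\norm{p}_*$ for $p\neq 0$, and Euler's identity together with $\norm{\grad\norm{p}_*}=1$ yields $\inner{\grad\K(p)}{p}=(\varphi_a^A)'(\norm{p}_*)\norm{p}_*$ and $\norm{\grad\K(p)}=(\varphi_a^A)'(\norm{p}_*)$. Thus \eqref{kin:robust:innerbound} reduces to the one-variable claim $t(\varphi_a^A)'(t)\le \MaA\,\varphi_a^A(t)$, which I would verify by the substitution $u=t^a+1\ge 1$, splitting into $a\le A$ (trivial, using $u^{A/a-1}\ge 1$) and $a\ge A$ (where the auxiliary function $(1-r)u^r+ru^{r-1}-1$ for $r=A/a\le 1$ vanishes at $u=1$ and has nonnegative derivative). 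Then \eqref{kin:robust:slowgrowth1} drops out of the Fenchel--Young equality $(\varphi_a^A)^*((\varphi_a^A)'(t))=t(\varphi_a^A)'(t)-\varphi_a^A(t)$. For \eqref{kin:robust:slowgrowth2}, I would compare $\varphi_b^B$ and $(\varphi_a^A)^*$ pointwise: both coincide asymptotically as $s\downarrow 0$ and $s\uparrow\infty$, so their ratio attains its maximum at a unique interior critical point, whose closed form gives exactly $C_{a,A}$. Finally, for the uniform convexity inequality \eqref{kin:robust:unifineq} (assuming $a,A\ge 2$), I would recast it as a Bregman-divergence identity using $\K(q)=\inner{\grad\K(q)}{q}-\K^*(\grad\K(q))$ and reduce via the chain rule to a scalar inequality on $\varphi_a^A$ that holds precisely once both exponents are at least $2$.

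For the Hessian bounds (part 4), the chain rule yields
\[
\hess\K(p)=(\varphi_a^A)''(\norm{p}_*)\,\grad\norm{p}_*\otimes\grad\norm{p}_* + (\varphi_a^A)'(\norm{p}_*)\hess\norm{p}_*.
\]
The key observation is that differentiating $\inner{p}{\grad\norm{p}_*}=\norm{p}_*$ in $p$ gives $\hess\norm{p}_*\cdot p = 0$, so $\inner{p}{\hess\K(p)p}=(\varphi_a^A)''(\norm{p}_*)\norm{p}_*^2$, and \eqref{kin:robust:hessianpp} reduces to the scalar inequality $t^2(\varphi_a^A)''(t)\le \MaA(\MaA-1)\varphi_a^A(t)$, handled by the same $u$-substitution case split. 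For \eqref{kin:robust:hessiannorm} I would bound $\maxeigenconj{\hess\K(p)}$ subadditively by the matrix-norm contribution of the rank-one tensor (which is $(\varphi_a^A)''(\norm{p}_*)$, since the dual norm of $\grad\norm{p}_*$ is $1$) plus $(\varphi_a^A)'(\norm{p}_*)\maxeigenconj{\hess\norm{p}_*}$, invoke the hypothesis $\norm{p}_*\maxeigenconj{\hess\norm{p}_*}\le N$, and then apply Fenchel--Young with the function $\varphi_{a/2}^{A/2}$ to obtain the desired inequality; the assumption $a,A\ge 2$ is exactly what ensures $\varphi_{a/2}^{A/2}$ has both exponents $\ge 1$ and so is itself a valid convex profile.

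The main obstacles will be the explicit identification of $C_{a,A}$ in \eqref{kin:robust:slowgrowth2}, since the sharp constant arises from solving a transcendental first-order condition for the critical point of $\varphi_b^B(s)/(\varphi_a^A)^*(s)$, and the parallel analysis in \eqref{kin:robust:hessiannorm} that must carefully couple the rank-one and norm-Hessian contributions through the non-Hilbertian matrix norm $\maxeigenconj{\cdot}$. By contrast, the one-dimensional inequalities on $\varphi_a^A$ are routine once the right $u$-substitution and case split are identified, and Euler's identities for the norm do all the heavy lifting linking the $d$-dimensional claims to their one-dimensional reductions.
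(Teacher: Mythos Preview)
Your reduction strategy is essentially the same as the paper's: both proofs push everything through the norm identities $\inner{p}{\grad\norm{p}_*}=\norm{p}_*$, $\norm{\grad\norm{p}_*}=1$, $(\hess\norm{p}_*)p=0$, and then verify scalar inequalities for $\varphi=\varphi_a^A$. Parts 1, 2, and the bounds \eqref{kin:robust:innerbound}, \eqref{kin:robust:slowgrowth1}, \eqref{kin:robust:hessianpp} match the paper almost step-for-step.

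Two places differ in substance. For \eqref{kin:robust:slowgrowth2}, the paper does \emph{not} compare $\varphi_b^B$ to $(\varphi_a^A)^*$ directly. Instead it differentiates $t\mapsto \varphi_b^B(\varphi'(t))$, obtaining $\varphi_b^{B\,\prime}(\varphi'(t))\varphi''(t)=\rho(t)\,t\,\varphi''(t)$ with $\rho(t)=\varphi_b^{B\,\prime}(\varphi'(t))/t$, bounds $\rho(t)\le C_{a,A}$ by locating its unique interior maximizer, and integrates. So $C_{a,A}$ is the supremum of the \emph{derivative} ratio $\varphi_b^{B\,\prime}/( (\varphi_a^A)^*)'$, not of the function ratio $\varphi_b^B/(\varphi_a^A)^*$ that you propose to analyze. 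Your route still proves the inequality (by Cauchy's mean value theorem the function ratio is dominated by the derivative ratio, hence by $C_{a,A}$), but your claim that the critical point of $\varphi_b^B/(\varphi_a^A)^*$ yields \emph{exactly} $C_{a,A}$ is not obvious and is not what the paper computes.

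For \eqref{kin:robust:hessiannorm}, ``apply Fenchel--Young with $\varphi_{a/2}^{A/2}$'' is too vague. The paper first sharpens your subadditive bound to $\maxeigenconj{\hess\K(p)}\le(\MaA-1+N)\,\varphi'(t)/t$ with $t=\norm{p}_*$ (using $t\varphi''(t)\le(\MaA-1)\varphi'(t)$), then exploits the algebraic identity $\varphi_{a/2}^{A/2}(t^2)=2\varphi_a^A(t)$, whose derivative gives $(\varphi_{a/2}^{A/2})'(t^2)=\varphi'(t)/t$. This lets the Fenchel--Young \emph{equality} evaluate the conjugate exactly: $(\varphi_{a/2}^{A/2})^*(\varphi'(t)/t)=t^2(\varphi_{a/2}^{A/2})'(t^2)-\varphi_{a/2}^{A/2}(t^2)\le(\MaA-2)\varphi(t)$. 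You would need this identity (or an equivalent) to close the argument; a generic Fenchel--Young inequality does not give the right-hand side.
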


\begin{remark}
\eqref{kin:robust:innerbound}, \eqref{kin:robust:unifineq}, and \eqref{kin:robust:hessianpp} together directly confirm that these $\K$ satisfy \ref{ass:semiA:gKpsem}, \ref{ass:semiB:gHpsem}, and \ref{ass:semiB:weird} with constants $C_{\K} = \max\{a, A\}$,  $D_{\K} = \max\{a,A\}( \max\{a, A\} - 1)$, $E_{\K} = \max\{a, A\} - 1$, and $F_{\K}  = 1$. The other results \eqref{kin:robust:slowgrowth1}, \eqref{kin:robust:slowgrowth2}, and \eqref{kin:robust:hessiannorm} will be used in subsequent lemmas along with assumptions on $f$ to satisfy the remaining assumptions of discretization.
\end{remark}

The assumption that $\norm{p}_*\maxeigenconj{\hess \norm{p}_*} \leq N$ in Lemma \ref{kin:robust} is satisfied by $b$-norms for $b \in [2, \infty)$, as the following lemma confirms. It implies that if $\|p \|_{*}=\|p\|_{b}$ for $b\ge 2$, we can take $N=b-1$ in \eqref{kin:robust:hessiannorm}.

\begin{restatable}[Bounds on $\maxeigenconj{\hess \norm{p}_*}$ for $b$-norms]{lem}{normhess}
\label{lem:normhess}
Given  $b \in [2, \infty)$, let $\norm{x}_b = \l(\sum_{n=1}^d |x^{(n)}|^b\r)^{1/b}$ for $x \in \R^d$. Then for $x \in \R^d \setminus \{0\}$,
\begin{equation*}
\norm{x}_b \maxeigennorm{\hess \norm{x}_b}{\norm{\c}_b} \leq (b-1).
\end{equation*}
\end{restatable}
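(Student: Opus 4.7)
My plan is to explicitly compute the Hessian of $g(x) := \norm{x}_b$ on $\R^d \setminus \{0\}$ in coordinates, obtain a clean expression for the quadratic form $\inner{v}{\hess g(x)\, v}$, and then bound this via a single application of H\"older's inequality.

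First, I would compute the derivatives. Writing $s_n := |x^{(n)}|^{b-2} x^{(n)}$, a direct calculation gives
\begin{equation*}
\partial_n g(x) = \frac{s_n}{g(x)^{b-1}}, \qquad \partial_m \partial_n g(x) = (b-1)\left[\frac{\delta_{mn} |x^{(n)}|^{b-2}}{g(x)^{b-1}} - \frac{s_n s_m}{g(x)^{2b-1}}\right].
\end{equation*}
Here $b \geq 2$ ensures that $s_n$ is $C^1$ (with the convention that $|x^{(n)}|^{b-2}=1$ when $b=2$ and $x^{(n)}=0$), so $g$ is twice continuously differentiable on $\R^d \setminus \{0\}$.

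Next, for any $v \in \R^d$, the quadratic form becomes
\begin{equation*}
\inner{v}{\hess g(x)\, v} = (b-1)\left[\frac{\sum_{n} v_n^2 |x^{(n)}|^{b-2}}{g(x)^{b-1}} - \frac{\bigl(\sum_n v_n s_n\bigr)^2}{g(x)^{2b-1}}\right].
\end{equation*}
The second term is non-negative, so dropping it and multiplying through by $g(x)$ gives
\begin{equation*}
g(x)\, \inner{v}{\hess g(x)\, v} \leq (b-1) \cdot \frac{\sum_n v_n^2 |x^{(n)}|^{b-2}}{g(x)^{b-2}}.
\end{equation*}

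Finally, I would apply H\"older's inequality with conjugate exponents $b/2$ and $b/(b-2)$, both of which lie in $[1,\infty]$ for $b \geq 2$:
\begin{equation*}
\sum_n v_n^2 |x^{(n)}|^{b-2} \leq \Bigl(\sum_n |v_n|^b\Bigr)^{2/b} \Bigl(\sum_n |x^{(n)}|^b\Bigr)^{(b-2)/b} = \norm{v}_b^{2}\, g(x)^{b-2}.
\end{equation*}
Taking the supremum over $\norm{v}_b = 1$ then yields $g(x)\, \maxeigennorm{\hess g(x)}{\norm{\c}_b} \leq b-1$, which is the claimed bound. The boundary case $b=2$ is handled either by the same argument with $b/(b-2) = \infty$ (H\"older reduces to the trivial identity $\sum_n v_n^2 = \norm{v}_2^2$) or by direct computation from the closed form of $\hess \norm{x}_2$.

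I do not anticipate a real obstacle here: the only subtle points are verifying that the Hessian formula is valid on $\R^d \setminus \{0\}$ for all $b \geq 2$ (immediate from $b-2 \geq 0$) and checking that the H\"older exponents are admissible, which is exactly where the hypothesis $b \geq 2$ is used.
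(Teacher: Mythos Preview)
Your proof is correct and follows essentially the same route as the paper: both compute the Hessian of $\norm{\cdot}_b$ explicitly, discard the rank-one negative term $-\grad\norm{x}_b\grad\norm{x}_b^T$ (equivalently, your $-s_ns_m/g(x)^{2b-1}$), and bound the remaining diagonal part via H\"older's inequality with the conjugate pair $b/2$ and $b/(b-2)$, treating $b=2$ as the degenerate case where the diagonal reduces to the identity. The only cosmetic difference is that the paper writes the argument in matrix notation while you work coordinatewise with the quadratic form.
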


The remaining assumptions \ref{ass:imp:innerproductfk}, \ref{ass:semiA:gKHfsemi}, and \ref{ass:semiB:gfHKsemi} involve inner products between derivatives of $f$ and $\K$. To control these terms we will use the Fenchel-Young inequality. To this end, the conjugates of $\varphi_a^A$ will be a crucial component of our analysis.

\begin{restatable}[Convex conjugates of $\varphi_a^A$]{lem}{robustonedimconj}
\label{kin:robustonedimconj}
Given $a, A \in (1, \infty)$ and $\varphi_a^A$ in \eqref{eq:kindef}. Define $B = A/(A-1)$, $b = a/(a-1)$. The following hold.
\begin{enumerate}
\item Near Conjugate. $\varphi_b^B$ upper bounds the conjugate $(\varphi_a^A)^*$ for all $t \in [0, \infty)$,
\begin{equation}
\label{kin:robustonedim:conjugatebound} (\varphi_a^A)^*(t) \leq \varphi_b^B(t).
\end{equation}
\item Conjugate. For all $t \in [0, \infty)$,
\begin{align}
\label{kin:robustonedim:conjugatebound1} (\varphi_a^a)^*(t) &= \varphi_b^b(t).\\
\label{kin:robustonedim:conjugatebound3} (\varphi_1^A)^*(t) &= \begin{cases}
0 &\quad t \in [0,1]\\
\tfrac{1}{B}t^{B} - t + \tfrac{1}{A} &\quad t \in (1, \infty)
\end{cases}.\\
\label{kin:robustonedim:conjugatebound4} (\varphi_a^1)^*(t) &= \begin{cases}
1 - (1 - t^b)^{\tfrac{1}{b}} & \quad t \in [0,1]\\
\infty & \quad t \in (1,\infty)
\end{cases}.\\
\label{kin:robustonedim:conjugatebound5} (\varphi_1^1)^*(t) &= \begin{cases}
0 &\quad t \in [0,1]\\
\infty & \quad t \in (1,\infty)
\end{cases}.
\end{align}
\end{enumerate}
\end{restatable}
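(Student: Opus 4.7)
My plan is to treat the five assertions separately. The main near-conjugate inequality \eqref{kin:robustonedim:conjugatebound} is the only one that requires genuine insight; the four exact-conjugate formulas \eqref{kin:robustonedim:conjugatebound1}--\eqref{kin:robustonedim:conjugatebound5} are boundary cases in which $a$ or $A$ equals $1$, and they reduce to direct calculations of where the supremum of $pt - \varphi_a^A(t)$ is attained.

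For \eqref{kin:robustonedim:conjugatebound}, I will prove the equivalent Fenchel--Young-style statement
\begin{equation*}
pt \leq \varphi_a^A(t) + \varphi_b^B(p) \qquad \text{for all } t,p \geq 0,
\end{equation*}
from which $(\varphi_a^A)^*(p) \leq \varphi_b^B(p)$ follows by taking the supremum over $t \geq 0$. The key substitution is $X := (t^a+1)^{1/a}$ and $Y := (p^b+1)^{1/b}$, both of which are $\geq 1$, so that $\varphi_a^A(t) = (X^A-1)/A$ and $\varphi_b^B(p) = (Y^B-1)/B$. Since $a,A > 1$ forces $b,B > 1$ with $1/a+1/b = 1$ and $1/A+1/B = 1$, Young's inequality applied to $X,Y$ with exponents $A,B$ gives
\begin{equation*}
\varphi_a^A(t) + \varphi_b^B(p) = \frac{X^A}{A} + \frac{Y^B}{B} - 1 \geq XY - 1,
\end{equation*}
and Hölder's inequality applied to the pairs $(t,1)$ and $(p,1)$ with conjugate exponents $a,b$ gives $tp + 1 \leq (t^a+1)^{1/a}(p^b+1)^{1/b} = XY$. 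Chaining these two estimates yields $\varphi_a^A(t) + \varphi_b^B(p) \geq tp$.

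For the exact conjugates, each case comes down to solving $(\varphi_a^A)'(t) = p$ and plugging back. Case \eqref{kin:robustonedim:conjugatebound1} is the textbook identity $(t^a/a)^* = t^b/b$. For \eqref{kin:robustonedim:conjugatebound3}, $(\varphi_1^A)'(t) = (t+1)^{A-1} = p$ has a non-negative root $t = p^{1/(A-1)} - 1$ exactly when $p \geq 1$; substitution and the identity $1 - 1/A = 1/B$ yields $p^B/B - p + 1/A$, while for $p \in [0,1]$ the supremum is attained at the boundary $t = 0$ with value $0$. For \eqref{kin:robustonedim:conjugatebound4}, the substitution $r := (t^a+1)^{1/a}$ recasts the stationarity condition as $p = (t/r)^{a-1}$, which rearranges to $r = (1-p^b)^{-1/a}$ and $t = p^{b/a}(1-p^b)^{-1/a}$ for $p \in [0,1)$; using $1+b/a = b$ throughout, the value $pt - \varphi_a^1(t) = pt - r + 1$ collapses to $1 - (1-p^b)^{1/b}$, and since $(\varphi_a^1)' < 1$ everywhere the sup is $+\infty$ for $p > 1$. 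Finally \eqref{kin:robustonedim:conjugatebound5} is immediate from $\varphi_1^1(t) = t$.

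The main obstacle is \eqref{kin:robustonedim:conjugatebound}: a priori one might expect a conjugate of the form $\varphi_B^b$ (with body and tail indices swapped), and recognizing that $\varphi_b^B$ is in fact an upper bound requires noticing the crucial pivot quantity $XY - 1$, which Young's inequality turns into a lower bound for $\varphi_a^A(t) + \varphi_b^B(p)$ and Hölder's inequality turns into an upper bound for $tp$. Once this pair of inequalities is identified, the remaining steps are one-line manipulations, and the exact-conjugate formulas are straightforward calculus.
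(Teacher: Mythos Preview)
Your proof is correct. For part~1, you take a genuinely different and more elegant route than the paper. The paper's argument relies on an auxiliary result (Lemma~\ref{kin:rho}) that identifies the explicit error function $\rho(t) = \phi'(\varphi'(t))/t \geq 1$ between the composition $\phi' \circ \varphi'$ and the identity; from this the paper deduces $(\phi')^{-1} \leq \varphi'$, and then integrates to obtain $\phi((\phi')^{-1}(s)) - s(\phi')^{-1}(s) \geq -\varphi(s)$, which combines with a tangent-line estimate on $\phi$ to give $\phi(t) \geq st - \varphi(s)$ for all $s$. Your approach bypasses this machinery entirely: the substitution $X = (t^a+1)^{1/a}$, $Y = (p^b+1)^{1/b}$ followed by Young's inequality (with exponents $A,B$) and H\"older's inequality (with exponents $a,b$) yields the Fenchel--Young-type bound $tp \leq \varphi_a^A(t) + \varphi_b^B(p)$ directly in two lines. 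This is more self-contained and does not require the differentiability analysis of Lemma~\ref{kin:rho}. On the other hand, the paper's approach produces as a byproduct the explicit pointwise comparison $\phi'(\varphi'(t)) = \rho(t)\,t$ together with the uniform upper bound $\rho(t) \leq C_{a,A}$, which is reused elsewhere (e.g., in proving \eqref{kin:robust:slowgrowth2} of Lemma~\ref{kin:robust}); your argument delivers only the one-sided inequality needed for the present lemma. For part~2, both proofs proceed essentially identically, solving the stationarity condition for each boundary case and substituting back.
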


\subsection{Matching power kinetic $\grad \K$ with assumptions on $f$}
\label{sec:singlepower}

In this subsection and the next we study assumptions on $f$ that imply the suitability of $\K(p) = \varphi_a^A(\norm{p}_*)$ with the discretizations of Section \ref{sec:discrete}. The preceding subsection is an analysis that verifies that such $\K$ satisfy the $\K$-specific assumptions \ref{ass:cont}, \ref{ass:semiA}, and \ref{ass:semiB}. We now consider the remaining assumptions of \ref{ass:cont}, \ref{ass:imp}, \ref{ass:semiA}, and \ref{ass:semiB}, which require an appropriate match between $f$ and $\K$. This includes the derivation of $\alpha$ and control of terms of the form $\inner{\grad f(x)}{\grad \K(p)}$ and $\inner{\grad \K(p)}{\hess f(x)\grad \K(p)}$ by the total energy $\Ha(x, p)$. Here we consider the case that $f$ exhibits power behavior with known, but possibly distinct, powers in the body and the tails.

\begin{figure}[t!]
    \centering
    \includegraphics[scale=1]{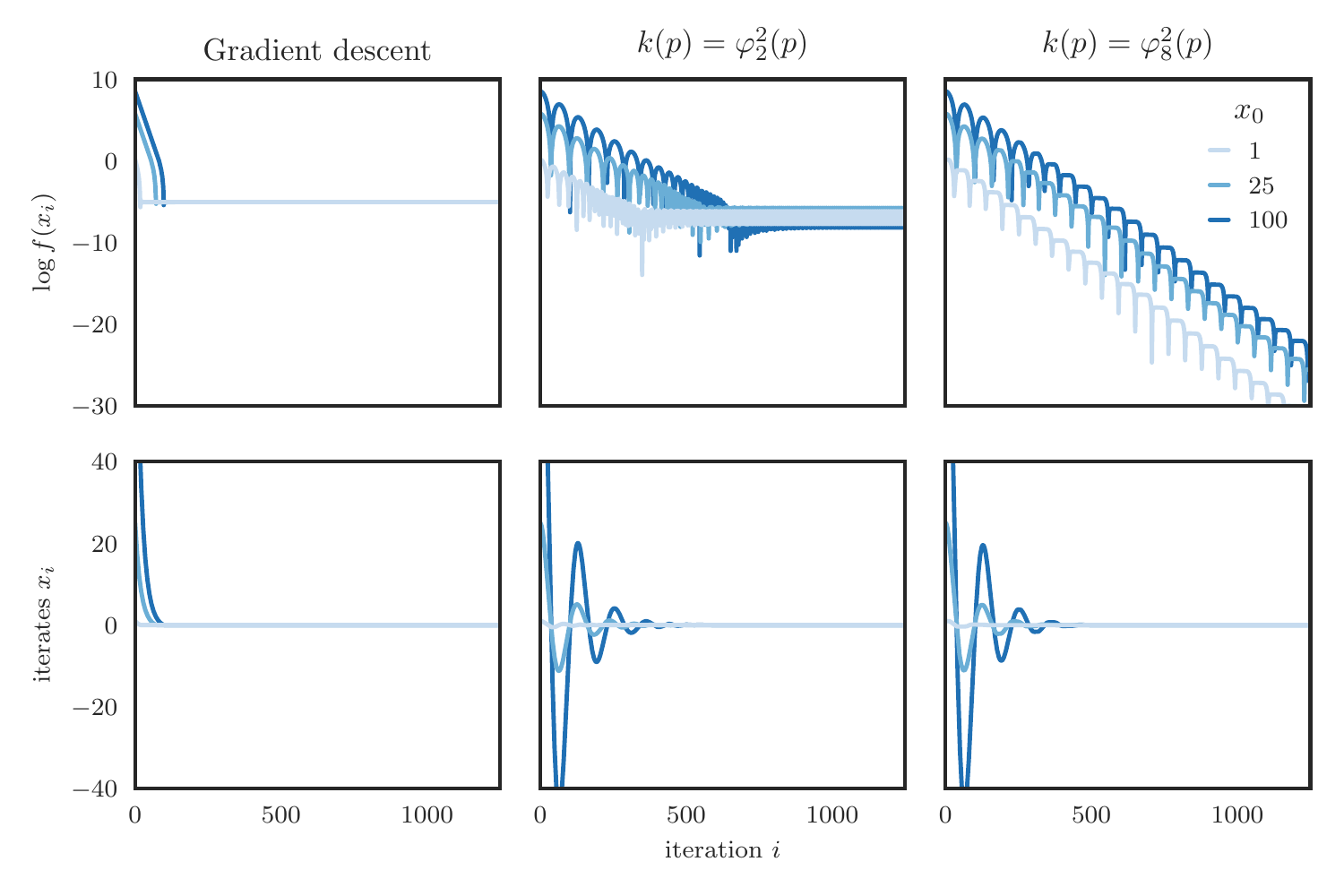}
    \caption{Optimizing $f(x) = \varphi_{8/7}^2(x)$ with three different methods with fixed step size: gradient descent, classical momentum, and our second explicit method. Because the second derivative of $f$ is infinite at its minimum, only second explicit method with $\K(p) = \varphi_8^2(p)$ is able to converge with a fixed step size.}
    \label{fig:sharp}
\end{figure}

To see a complete example of this type of analysis, take the case $f(x) = |x|^b/b$ and $\K(p) = |p|^a/a$ with $x,p \in \R$,  $b > 2$, $a < 2$, and $1/a + 1/b = 1$. For $\alpha$ the strategy will be to find a lower bound on $f$ that is symmetric about $f$'s minimum. The conjugate of the centered lower bound can be used to construct an upper bound on $\fc^*$ with which the gap between $\K$ and $\fc^*$ can be studied. In this case it is simple, as we have $\fc^*(p) = \K(p)$ and $\alpha = 1$. The strategy for terms of the form $\inner{\grad f(x)}{\grad \K(p)}$ and $\inner{\grad \K(p)}{\hess f(x)\grad \K(p)}$ will be a careful application of the Fenchel-Young inequality. Using $a-1 = a/b$, the conjugacy of $b$ and $b/(b-1)$, and the Fenchel-Young inequality,
\begin{align*}
|\inner{\grad f(x)}{\grad \K(p)}| = |x|^{b-1} |p|^{a/b} \leq \tfrac{b-1}{b}(|x|^{b-1})^{\frac{b}{b-1}} +  \tfrac{1}{b}(|p|^{a/b})^{b} &= (b-1)f(x) + (a-1)\K(p)\\
&\leq (\max\{a, b\} - 1) \Ha(x,p).
\end{align*}
Finally, using the conjugacy of $b/2$ and $b/(b-2)$ and again the Fenchel-Young inequality,
\begin{align*}
\inner{\grad \K(p)}{\hess f(x) \grad \K(p)} = (b-1) |x|^{b-2} |p|^{2a/b} &\leq \tfrac{(b-1)(b-2)}{b} (|x|^{b-2})^{\frac{b}{b-2}} + \tfrac{(b-1)2}{b}(|p|^{2a/b})^{\frac{b}{2}} \\
&= (b-1)(b-2) f(x)  + 2(b-1)(a-1) \K(p) \\
&\leq (b-1)\max\{b-2, 2(a-1)\} \Ha(x,p).
\end{align*}
Along with Lemma \ref{kin:robust}, this covers Assumptions \ref{ass:cont}, \ref{ass:imp}, and \ref{ass:semiA}. Thus, we can justify the use of the first explicit method for this $f, \K$. All of the analyses of this section essentially follow this outline.

\begin{remark} These strategies apply naturally when $f$ is twice differentiable and smooth. In this case, we have $\tfrac{1}{2}\norm{\grad f(x)}_2^2 \leq L (f(x) - f(\xmin))$ and $\maxeigennorm{\hess f(x)}{\norm{\c}_2} \leq L$. Thus, using $\K(p) = \tfrac{1}{2} \norm{p}_2^2$ is appropriate and $\inner{\grad f(x)}{\grad \K(p)} \leq \max\{L,1\} \Ha(x,p)$ and $\inner{\grad \K(p)}{\hess f(x)\grad \K(p)} \leq 2 L \K(p)$.
\end{remark}

We are now ready to consider the case of $f$ growing like $\varphi_b^B(\norm{x-\xmin})$ matched with $\K(p) = \varphi_a^A(\norm{p}_*)$ for $1/a + 1/b = 1/A + 1/B = 1$. Assumptions \ref{ass:knownpower}, below, will be used in different combinations to confirm that the assumptions of the different discretizations are satisfied. Assumptions \ref{knownpower:fconvex}, \ref{knownpower:norm}, \ref{knownpower:fpower}, and \ref{knownpower:impcondition} are required for all methods. The explicit methods each require an additional assumption: \ref{knownpower:semiAcondition} for the first explicit method and \ref{knownpower:semiBcondition} for the second. Thus, for $f:\R^d\to \R$ and $\K(p)=\varphi_2^1(\norm{p}_*)$, Proposition \ref{lem:assvarphiaA} can be summarised as
\begin{align*}
\mathref{knownpower:fconvex}\wedge\mathref{knownpower:norm}\wedge\mathref{knownpower:fpower}\wedge\mathref{knownpower:impcondition} &\Rightarrow \mathref{ass:cont}\wedge\mathref{ass:imp},\\
\mathref{knownpower:fconvex}\wedge\mathref{knownpower:norm}\wedge\mathref{knownpower:fpower}\wedge\mathref{knownpower:impcondition}\wedge\mathref{knownpower:semiAcondition}&\Rightarrow
\mathref{ass:cont}\wedge\mathref{ass:imp}\wedge\mathref{ass:semiA},\\
\mathref{knownpower:fconvex}\wedge\mathref{knownpower:norm}\wedge\mathref{knownpower:fpower}\wedge\mathref{knownpower:impcondition}\wedge\mathref{knownpower:semiBcondition}&\Rightarrow
\mathref{ass:cont}\wedge\mathref{ass:imp}\wedge\mathref{ass:semiB}.
\end{align*}
Note, that Lemma \ref{kin:robust} implies that the power kinetic energies are themselves examples of functions satisfying Assumptions \ref{ass:knownpower}. Figure \ref{fig:sharp} illustrates a consequence of this proposition; $f(x) = \varphi_{8/7}^2(x)$ for $x \in \R$ is a difficult function to optimize with a first order method using a fixed step size; the second derivative grows without bound as $x \to 0$. As shown, Hamiltonian descent with the matched $\K(p) = \varphi_8^2(p)$ converges, while gradient descent and classical momentum do not.


\begin{assumptions}{knownpower}
	\item \label{knownpower:fconvex} $f : \R^d \to \R$ differentiable and convex with unique minimum $\xmin$.
	\item \label{knownpower:norm} $\norm{p}_*$ is differentiable at $p \in \R^d \setminus \{0\}$ with dual norm $\norm{x} = \sup \{ \inner{x}{p} : \norm{p}_* = 1\}$.
	\item \label{knownpower:fpower} $B= A/(A-1)$, and $b = a/(a-1)$.
	\item \label{knownpower:impcondition} There exist $\mu,L \in (0, \infty)$ such that for all $x\in\R^d$
            \begin{equation}
            \label{knownpower:alphaimpcondition}
                \begin{aligned}
                    f(x) - f(\xmin) &\geq \mu \varphi_b^B(\norm{x - \xmin}) \\
                    \varphi_a^A(\norm{\grad f(x)}_*) &\leq L(f(x) - f(\xmin)).
                \end{aligned}
            \end{equation}
	\item \label{knownpower:semiAcondition} $b \geq 2$ and $B \geq 2$. $\f : \R^d \to \R$ is twice continuously differentiable for all $x \in \R^d \setminus \{\xmin\}$ and there exists $\Lf , \Df \in (0, \infty)$ such that for all $x\in\R^d \setminus \{\xmin\}$
            \begin{equation}
            	\label{knownpower:ass:semiA:condition} \l(\varphi_{b/2}^{B/2}\r)^*\l(\frac{\maxeigen{\hess f(x)}}{\Lf}\r) \leq \Df (f(x) - f(\xmin)).
            \end{equation}
	\item \label{knownpower:semiBcondition} $b \leq 2$ and $B \leq 2$. $\norm{p}_*$ is twice continuously differentiable at $p \in \R^d \setminus \{0\}$, and there exists $N \in (0, \infty)$ such that $\maxeigenconj{\hess \norm{p}_*} \leq N \norm{p}_*^{-1}$ for all $p \in \R^d \setminus \{0\}$.
\end{assumptions}

\begin{remark}
Assumption \ref{knownpower:impcondition} can be read as the requirement that $f$ is bounded above and below by $\varphi_b^B$, and in the $b=B=2$ case it is a necessary condition of strong convexity and smoothness.
\end{remark}

\begin{remark}
Assumption \ref{knownpower:semiAcondition} generalizes a sufficient condition for smoothness. Consider for simplicity the Euclidean norm case $\norm{\c} = \norm{\c}_2$ and let $\lambda_{\max}(M)$ be the maximum eigenvalue of $M \in \R^{d \times d}$. If $b=B=2$, then $(\varphi_{b/2}^{B/2})^*$ is finite only on $[0, 1]$ where it is zero. Moreover, \ref{knownpower:semiAcondition} simplifies to there existing $\Lf \in (0, \infty)$ such that $\lambda_{\max}(\hess f(x)) \leq \Lf$ everywhere, the standard smoothness condition. When $b > 2, B=2$, $(\varphi_{b/2}^{B/2})^*$ is finite on $[0,1]$ where it behaves like a power $b/(b-2)$ function for small arguments. Thus, \ref{knownpower:semiAcondition} can be satisfied in the Euclidean norm case by a function whose maximum eigenvalue is shrinking like $\norm{x - \xmin}_2^{b-2}$ as $x \to \xmin$; the balance of where the behavior switches can be controlled by $\Lf$. When $b = 2, B > 2$, the role is switched and \ref{knownpower:semiAcondition} can be satisfied by a function whose maximum eigenvalue is bounded near the minimum and grows like $\norm{x - \xmin}_2^{B-2}$ as $\norm{x - \xmin}_2 \to \infty$. When $b, B > 2$, this can be satisfied by a function whose maximum eigenvalue shrinks like $\norm{x - \xmin}_2^{b-2}$ in the body and grows like $\norm{x - \xmin}_2^{B-2}$ in the tail.
\end{remark}

\begin{restatable}[Verifying assumptions for $f$ with known power behavior and appropriate $\K$]{prop}{assvarphiaA}
\label{lem:assvarphiaA}
Given a norm $\norm{\c}_*$ satisfying \ref{knownpower:norm} and $a, A \in (1,\infty)$, take
\begin{equation*}
\K(p) = \varphi_a^A(\norm{p}_*).
\end{equation*}
with $\varphi_a^A$ defined in \eqref{eq:kindef}. The following cases hold with this choice of $\K$ on $f : \R^d \to \R$ convex.
\begin{enumerate}
    \item For the implicit method \eqref{eq:implicit}, assumptions \ref{ass:cont}, \ref{ass:imp} hold with constants
            \begin{equation}
            \label{knownpower:ass:imp:constants} \alpha=\min\{\mu^{a-1}, \mu^{A-1}, 1\} \qquad \Ca = \gamma  \qquad \Cfk = \max\{a-1, A-1, L\},
            \end{equation}
            if $f, a, A, \mu, L, \norm{\c}_*$ satisfy assumptions \ref{knownpower:fconvex}, \ref{knownpower:norm}, \ref{knownpower:fpower}, \ref{knownpower:impcondition}.
    \item For the first explicit method \eqref{eq:semiA}, assumptions \ref{ass:cont},  \ref{ass:imp}, and \ref{ass:semiA} hold with constants \eqref{knownpower:ass:imp:constants} and
            \begin{equation}
            \label{knownpower:ass:semiA:constants} \Ck = \MaA \qquad \Dfk = \Lf\alpha^{-1}\max\l\{\Df, \, 2C_{a,A} (\max\{a,A\} -1) \r\},
            \end{equation}
            if $f, a, A, \mu, L, \Lf, \Df, \norm{\c}_*$ satisfy assumptions \ref{knownpower:fconvex}, \ref{knownpower:norm}, \ref{knownpower:fpower}, \ref{knownpower:impcondition}, and \ref{knownpower:semiAcondition}.
    \item For the second explicit method \eqref{eq:semiB}, assumptions \ref{ass:cont}, \ref{ass:imp}, and \ref{ass:semiB} hold with constants  \eqref{knownpower:ass:imp:constants} and
            \begin{equation}
            \label{knownpower:ass:semiB:constants}
            \begin{aligned}
            \Ck &= \MaA \qquad \Dk = \MaA (\MaA - 1)\\
            E_{\K} &= \MaA - 1 \qquad F_{\K} = 1\\
            \Dfk &= \alpha^{-1} (\max\{a,A\} - 1 + N)\max\l\{2L, a- 2, A-2\r\},
            \end{aligned}
            \end{equation}
            if  $f, a, A, \mu, L, N, \norm{\c}_*$ satisfy assumptions \ref{knownpower:fconvex}, \ref{knownpower:norm}, \ref{knownpower:fpower}, \ref{knownpower:impcondition}, and \ref{knownpower:semiBcondition}.
\end{enumerate}
\end{restatable}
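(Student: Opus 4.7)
My plan is to verify the assumptions required by each of the three discretizations in coordinated groups. First, the $\K$-only assumptions (\ref{ass:cont:kconvex}, \ref{ass:semiA:gKpsem}, \ref{ass:semiB:Ktwicediff}, \ref{ass:semiB:gKpsem}, \ref{ass:semiB:gHpsem}, \ref{ass:semiB:weird}) can be read off directly from Lemma \ref{kin:robust} applied to $\K(p)=\varphi_a^A(\norm{p}_*)$, with Lemma \ref{lem:normhess} supplying the constant $N$ in the second explicit case; this immediately produces the claimed $\Ck$, $\Dk$, $E_{\K}$, and $F_{\K}$. Second, I will verify \ref{ass:cont:alpha} by taking the constant function $\alpha(\cdot)\equiv\min\{\mu^{a-1},\mu^{A-1},1\}$, for which \eqref{eq:alphacond_ca} is trivial and I may set $\Ca=\gamma$. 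Third, the three interaction-style assumptions \ref{ass:imp:innerproductfk}, \ref{ass:semiA:gKHfsemi}, and \ref{ass:semiB:gfHKsemi} will all follow from judicious Fenchel--Young inequalities in the conjugate pairs produced by Lemma \ref{kin:robustonedimconj}, combined with \ref{knownpower:impcondition}, \ref{knownpower:semiAcondition}, \ref{knownpower:semiBcondition}.

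For the $\alpha$ bound, the lower bound $\fc(x)\ge \mu\varphi_b^B(\norm{x})$ from \ref{knownpower:impcondition} transfers to $\fc^*(\pm p)\le \mu(\varphi_b^B)^*(\norm{p}_*/\mu)$ by conjugate reversal and the standard identity that the conjugate of a norm composed with an increasing convex function evaluates to the scalar conjugate at the dual norm. Invoking the conjugate pairings in \ref{knownpower:fpower} and the inequality \eqref{kin:robustonedim:conjugatebound} upgrades this to $\fc^*(\pm p)\le \mu\varphi_a^A(\norm{p}_*/\mu)$, reducing the task to the one-dimensional inequality $\varphi_a^A(t)\ge \alpha\mu\varphi_a^A(t/\mu)$ for all $t\ge 0$. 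I will split into the cases $\mu\ge 1$, where $\alpha=1$ follows immediately since $\mu\varphi_a^A(t/\mu)\le \varphi_a^A(t)$, and $\mu\le 1$, where the two asymptotic power regimes of $\varphi_a^A$ (behaving like $t^a/a$ near zero and like $t^A/A$ at infinity) produce the two candidate ratios $\mu^{a-1}$ and $\mu^{A-1}$ and the minimum is the uniformly valid constant.

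For the interaction bounds I will follow the template sketched at the start of Section \ref{sec:singlepower}. Assumption \ref{ass:imp:innerproductfk} is obtained by $|\inner{\grad f(x)}{\grad \K(p)}|\le \norm{\grad f(x)}_*\norm{\grad \K(p)}$ followed by Fenchel--Young for the pair $(\varphi_a^A,(\varphi_a^A)^*)$, where the two terms are controlled by $L(f(x)-f(\xmin))$ via \ref{knownpower:impcondition} and by $(\MaA-1)\K(p)$ via \eqref{kin:robust:slowgrowth1}, yielding $\Cfk=\max\{L,a-1,A-1\}$. For \ref{ass:semiA:gKHfsemi} I start from $\inner{\grad \K(p)}{\hess f(x)\grad \K(p)}\le \maxeigen{\hess f(x)}\norm{\grad \K(p)}^2$ and apply Fenchel--Young for the pair $(\varphi_{b/2}^{B/2},(\varphi_{b/2}^{B/2})^*)$; the conjugate side is bounded by \eqref{knownpower:ass:semiA:condition}, and the primal side is reshaped using the elementary identity $\varphi_{b/2}^{B/2}(t^2)=2\varphi_b^B(t)$ so that \eqref{kin:robust:slowgrowth2} applies. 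Assumption \ref{ass:semiB:gfHKsemi} is the mirror image: start from $\inner{\grad f(x)}{\hess \K(p)\grad f(x)}\le \maxeigenconj{\hess \K(p)}\norm{\grad f(x)}_*^2$, Fenchel--Young with $(\varphi_{a/2}^{A/2},(\varphi_{a/2}^{A/2})^*)$, then apply \eqref{kin:robust:hessiannorm} on the Hessian side (this is where $N$ enters) and $\varphi_{a/2}^{A/2}(t^2)=2\varphi_a^A(t)$ combined with \ref{knownpower:impcondition} on the gradient side. The $\alpha^{-1}$ factor appearing in the stated $\Dfk$ is exactly what is needed to absorb a remaining $\K(p)$ term into $\Dfk\,\alpha(3\Ha)\,\Ha$.

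The main obstacle is the uniform one-dimensional inequality in the $\alpha$ step: verifying the interpolation between the two asymptotic power regimes of $\varphi_a^A$ so that $\min\{\mu^{a-1},\mu^{A-1},1\}$ is valid for every $t\ge 0$ rather than only asymptotically. Once that is in place, the remaining work is bookkeeping: each constant in \eqref{knownpower:ass:imp:constants}--\eqref{knownpower:ass:semiB:constants} is traceable to one Fenchel--Young inequality, one identity from Lemma \ref{kin:robustonedimconj}, and one invocation of either Lemma \ref{kin:robust} or \ref{knownpower:impcondition}, so the three discretization cases assemble in parallel from the same ingredients.
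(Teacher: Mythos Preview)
Your plan matches the paper's proof essentially step for step: the $\K$-only constants come from Lemma~\ref{kin:robust}, the interaction bounds \ref{ass:imp:innerproductfk}, \ref{ass:semiA:gKHfsemi}, \ref{ass:semiB:gfHKsemi} are each a single Fenchel--Young application in the conjugate pairs you name (using the identities $\varphi_{b/2}^{B/2}(t^2)=2\varphi_b^B(t)$ and $\varphi_{a/2}^{A/2}(t^2)=2\varphi_a^A(t)$ together with \eqref{kin:robust:slowgrowth2} and \eqref{kin:robust:hessiannorm}), and the $\alpha^{-1}$ absorbs the constant $\alpha$ back into the form $\Dfk\,\alpha(3\Ha)\Ha$ required by \ref{ass:semiA:gKHfsemi} and \ref{ass:semiB:gfHKsemi}.

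The one place where your proposal is incomplete is precisely the step you flag as the main obstacle, and there the paper does not argue via asymptotic regimes and interpolation. It instead invokes the subhomogeneity inequality \eqref{kin:robustonedim:subhomo} of Lemma~\ref{kin:robustonedim}: for any $\epsilon\ge 0$,
\[
\varphi_a^A(\epsilon t)\le \max\{\epsilon^a,\epsilon^A\}\,\varphi_a^A(t).
\]
Applied with $\epsilon=\mu^{-1}$ this gives $\mu\,\varphi_a^A(t/\mu)\le \max\{\mu^{1-a},\mu^{1-A}\}\,\varphi_a^A(t)$ uniformly in $t\ge 0$, so after $(\varphi_b^B)^*\le \varphi_a^A$ from \eqref{kin:robustonedim:conjugatebound} one obtains $\fc^*(\pm p)\le \max\{\mu^{1-a},\mu^{1-A}\}\K(p)$ directly, with no case split on $\mu$ and no interpolation argument required. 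Once you cite that inequality, the obstacle disappears and the remainder of your plan goes through verbatim.
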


\begin{figure}[t!]
    \centering
    \includegraphics[scale=1]{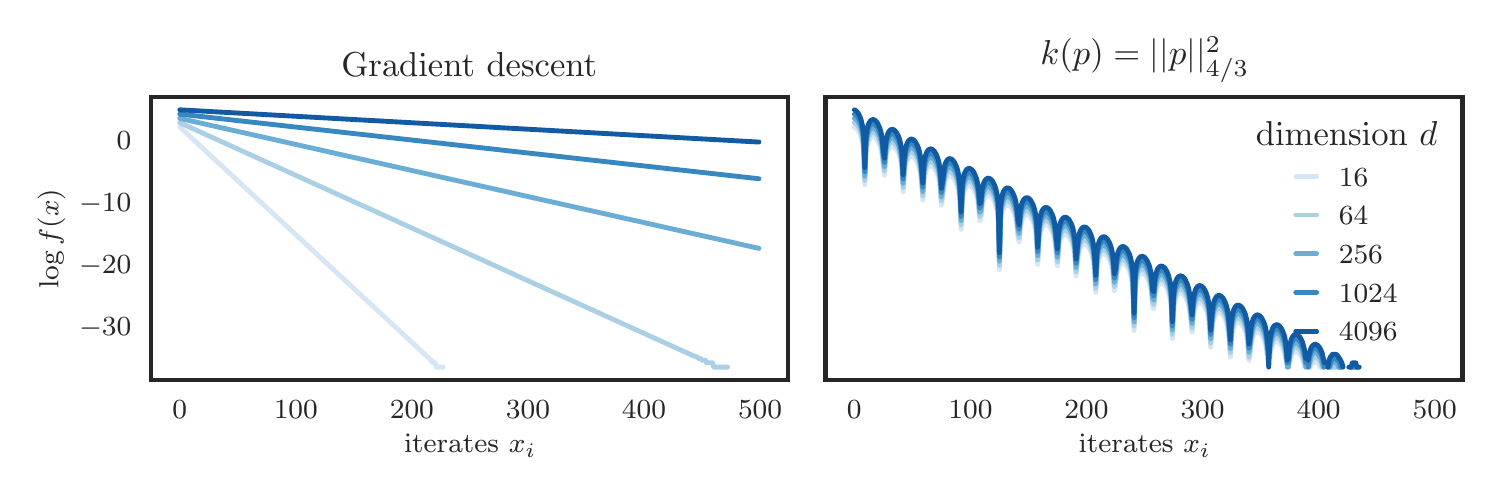}
    \caption{Dimension dependence on $f(x) = \norm{x}_4^2/2$ initialized at $x_0 = (2, 2, \ldots, 2) \in \R^{\mathrm{d}}$. Left: Gradient descent with fixed step size equal to the inverse of the smoothness coefficient, $L = 3$. Right: Hamiltonian descent with $\K(p) = \norm{p}_{4/3}^2/2$ and a fixed step size (same for all dimensions). Gradient descent convergences linearly with rate $\lambda = 3/(3  - 1/\sqrt{d})$ while Hamiltonian descent converges with dimension-free linear rates.}
    \label{fig:dimdepend}
\end{figure}

We highlight an interesting consequence of Proposition \ref{lem:assvarphiaA} for high dimensional problems. For many first-order methods using standard gradients on smooth $f$, linear convergence can be guaranteed by the Polyak-{\L}ojasiewicz (PL) inequality, $\norm{\grad f(x)}_2^2/2 \geq \mu (f(x) - f(\xmin))$, see \eg{} \cite{karimi2016linear}. The rates of convergence generally depend on $\mu$ and the smoothness constant $L$. Unfortunately, for some functions the constant $L$ or the constant $\mu$ may depend on the dimensionality of the space. Although smoothness and the PL inequality can be defined with respect to non-Euclidean norms, this does not generally overcome the issue of dimension dependence if standard gradients are used, see \cite{juditsky2014deterministic, nesterov2005smooth} for a discussion and methods using non-standard gradients. The situation is distinct for Hamiltonian descent. If $f$ is smooth with respect to a non-Euclidean norm $\norm{\c}$, then, by taking $\K(p) = \norm{p}_*^2/2$, Proposition \ref{lem:assvarphiaA} may guarantee, under appropriate assumptions, dimension independent rates when using standard gradients (dependence on the dimensionality is mediated by the constant $N$). For example, consider $f(x) = \norm{x}_4^2/2 = (\sum_{n=1}^d (x^{(n)})^4)^{1/2}/2$ defined for $d$-dimensional vectors $x \in \R^d$. It is possible to show that $f$ is smooth with respect to $\norm{\c}_2$ with constant $L = 3$ (our Lemma \ref{lem:normhess} together with an analysis analogous to Lemma 14 in Appendix A of \cite{shalev2007online} and the fact that $\norm{x}_4 \leq \norm{x}_2$) and that $f$ satisfies the PL inequality with $\mu = 1/ \sqrt{d}$ (the fact that $ \norm{x}_{4/3}^2 \leq  \sqrt{d} \norm{x}_2^2$ and Lemma \ref{kin:robust}). The iterates of a gradient descent algorithm with fixed step size $1/L$ on this $f$ will therefore satisfy the following,
\begin{align*}
	f(x_{i+1}) - f(x_i) \leq - \frac{1}{6} \norm{\grad f(x_i)}_2^2 \leq - \frac{1}{3 \sqrt{d}} f(x_i).
\end{align*}
From which we conclude that gradient descent converges linearly with rate $\lambda = 3/(3  - 1/\sqrt{d})$, worsening as $d \to \infty$. Figure \ref{fig:dimdepend} illustrates this, along with a comparison to Hamiltonian descent with $\K(p) = \norm{p}_{4/3}^2$, which enjoys dimension independence as $N = 3$.

\subsection{Matching relativistic kinetic $\grad \K$ with assumptions on $f$}
\label{sec:relativistic}

The strongest assumption of Proposition \ref{lem:assvarphiaA} is that the power behaviour of $f$ captured in the constant $b$ is exactly known. This is generally not the case and usually hard to determine. The only possible exception is $b = 2$, which can be guaranteed by lower bounding the eigenvalues of the Hessian. In our second analysis, we consider a kinetic energy generically suitable for such strongly convex functions that may not be smooth. Crucially, less information needs to be known about $f$ for this kinetic energy to be applicable. The cost imposed by this lack of knowledge is a non-constant rate of linear convergence, which begins slowly and improves towards a faster rate as $(x_i, p_i) \to (\xmin, 0)$.

In particular, we consider the use of the relativistic kinetic energy,
\begin{equation*}
	\K(p) = \varphi_2^1(\norm{p}_*) = \sqrt{\norm{p}_*^2 + 1} - 1,
\end{equation*}
which was studied by Lu \etal{} \cite{lu2016relativistic} and Livingstone \etal{} \cite{livingstone2017kinetic} in the context of Hamiltonian Monte Carlo. Consider for the moment the Euclidean norm case. In this case, we have
\begin{equation*}
	\grad \K(p) = \frac{p}{\sqrt{\norm{p}_2^2 + 1}}.
\end{equation*}
As noted by Lu \etal{} \cite{lu2016relativistic}, this kinetic map resembles the iterate updates of popular adaptive gradient methods \cite{duchi2011adaptive, zeiler2012adadelta, hinton2014rmsprop, kingma2014adam}. Because the iterate updates $x_{i+1} - x_i$ of Hamiltonian descent are proportional to $\grad \K(p)$ from some $p \in \R^d$, this suggests that the relativistic map may have favorable properties. Notice that $\norm{\grad \K(p)}_2^2 = \norm{p}_2^2/(\norm{p}_2^2 + 1) < 1$, implying that $\norm{x_{i+1} - x_i}_2 < \epsilon$ uniformly and regardless of the magnitudes of $\grad f$. The fact that the magnitude of iterate updates is uniformly bounded makes the relativistic map suitable for functions with very fast growing tails, even if the rate of growth is not exactly known.

More precisely, we consider the case of $f$ growing like $\varphi_2^B(\norm{x-\xmin})$ matched with $\K(p) = \varphi_2^1(\norm{p}_*)$ for $B \geq 2$. Assumptions \ref{ass:fastgrowth}, below, will be used in different combinations to confirm that the assumptions of the different discretizations are satisfied. Assumptions \ref{fastgrowth:fconvex}, \ref{fastgrowth:norm}, \ref{fastgrowth:fpower}, and \ref{fastgrowth:impcondition} are required for all methods. The first explicit method requires additional assumptions \ref{fastgrowth:semiAconditionBg2} for $B > 2$ and \ref{fastgrowth:semiAconditionB2} for $B = 2$. We do not include an analysis for the second explicit method. Thus, for $f:\R^d\to \R$ and $\K(p)=\varphi_2^1(\norm{p}_*)$, Proposition \ref{lem:assvarphi21} can be summarised as
\begin{align*}
\mathref{fastgrowth:fconvex}\wedge\mathref{fastgrowth:norm}\wedge\mathref{fastgrowth:fpower}\wedge\mathref{fastgrowth:impcondition}&\Rightarrow \mathref{ass:cont}\wedge\mathref{ass:imp}\\
\mathref{fastgrowth:fconvex}\wedge\mathref{fastgrowth:norm}\wedge\mathref{fastgrowth:fpower}\wedge\mathref{fastgrowth:impcondition}\wedge\mathref{fastgrowth:semiAconditionBg2}&\Rightarrow
\mathref{ass:cont}\wedge\mathref{ass:imp}\wedge\mathref{ass:semiA}\\
\mathref{fastgrowth:fconvex}\wedge\mathref{fastgrowth:norm}\wedge\mathref{fastgrowth:fpower}\wedge\mathref{fastgrowth:impcondition}\wedge\mathref{fastgrowth:semiAconditionB2}&\Rightarrow
\mathref{ass:cont}\wedge\mathref{ass:imp}\wedge\mathref{ass:semiA}
\end{align*}
Figure \ref{fig:relativistic} illustrates a consequence of this proposition; $f(x) = \varphi_{8}^2(x)$ for $x \in \R$ is a difficult function to optimize with a first order method using a fixed step size; the second derivative grows without bound as $|x| \to \infty$. Thus if the initial point is taken to be very large, gradient descent must take a very conservative choice of step size. As shown, Hamiltonian descent with the matched $\K(p) = \varphi_{8/7}^2(p)$ converges quickly and uniformly, while gradient descent with a fixed step size suffers a very slow rate for $|x_0| \gg 0$. In the middle panel, the relativistic choice converges slowly at first, but speeds up as convergence proceeds, making it a suitable agnostic choice in cases such as this.

\begin{figure}[t!]
    \centering
    \includegraphics[scale=1]{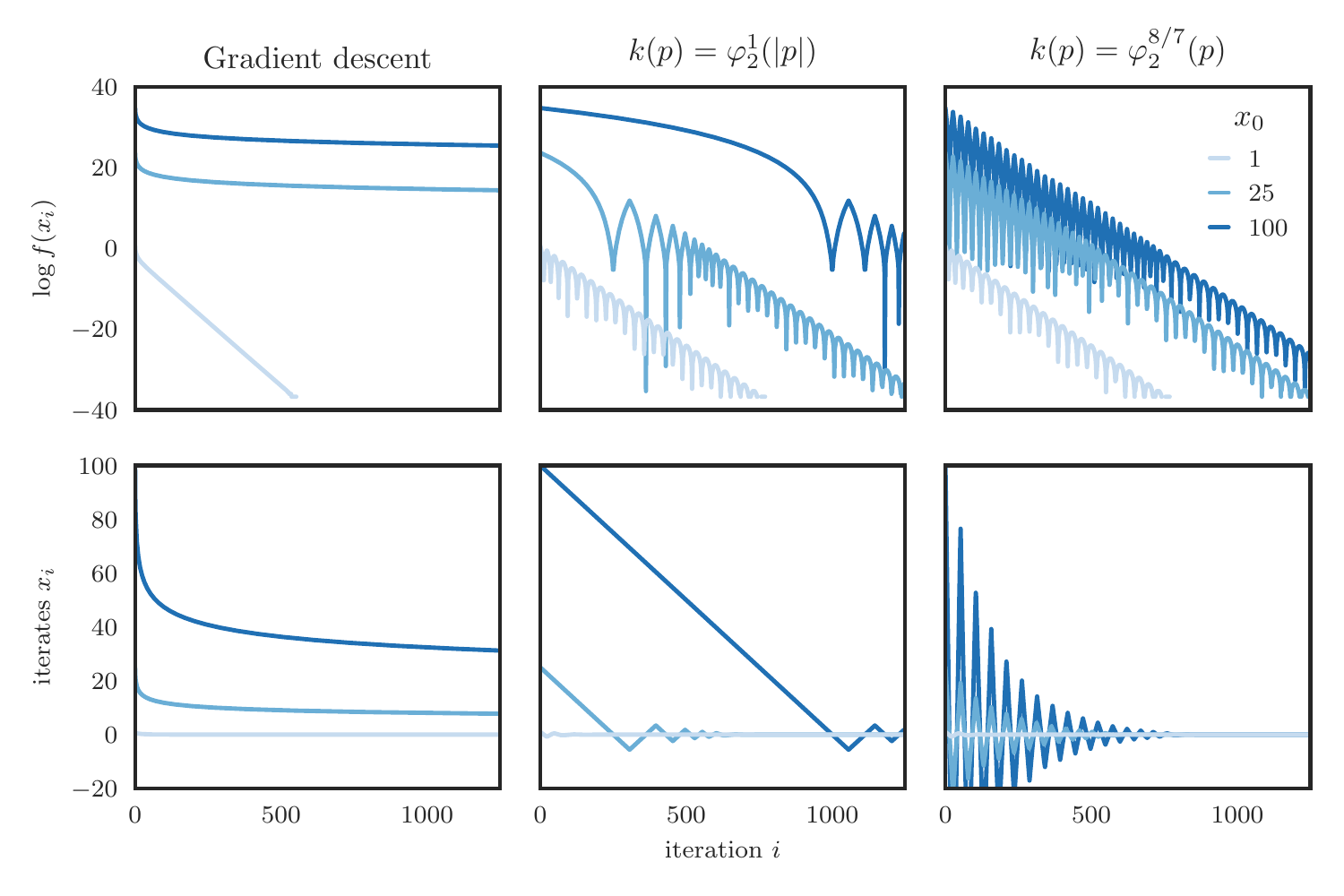}
    \caption{$f(x) = \varphi_{2}^8(x)$ with three different methods: gradient descent with the optimal fixed step size, Hamiltonian descent with relativistic kinetic energy, and Hamiltonian descent with the near dual kinetic energy.}
    \label{fig:relativistic}
\end{figure}


\begin{assumptions}{fastgrowth}
	\item \label{fastgrowth:fconvex} $f : \R^d \to \R$ differentiable and convex with unique minimum $\xmin$.
	\item \label{fastgrowth:norm} $\norm{p}_*$ is differentiable at $p \in \R^d \setminus \{0\}$ with dual norm $\norm{x} = \sup \{ \inner{x}{p} : \norm{p}_* = 1\}$.
	\item \label{fastgrowth:fpower} $B \in [2,\infty)$ and $A= B/(B-1)$.
	\item \label{fastgrowth:impcondition} There exist $\mu,L \in (0, \infty)$ such that for all $x\in\R^d$
                \begin{equation}
                \label{fastgrowth:ass:imp:condition}
                \begin{aligned}
                f(x) - f(\xmin)& \ge \mu\varphi_2^B(\norm{x - \xmin}) \\
                \varphi_2^1(\norm{\grad f(x)}_*) &\leq L(f(x) - f(\xmin)).
                \end{aligned}
                \end{equation}
	\item \label{fastgrowth:semiAconditionBg2} $B > 2$. Define
                \begin{equation}
                \label{eq:kininv} \psi(t) = \begin{cases}
                0 & 0 \leq t < 1\\
                t - 3 t^{\tfrac{1}{3}} +2 &  1  \leq t \\
                \end{cases}.
                \end{equation}
                $f : \R^d \to \R$ is twice continuously differentiable for all $x \in \R^d \setminus \{\xmin\}$ and there exists $\Lf \in (0, \infty)$ such that for all $x\in\R^d\setminus \{\xmin\}$
                \begin{equation}
                 \psi\l(\tfrac{B-1}{B-2} \varphi_{1}^{\tfrac{B-1}{B-2}}\l(\frac{ \maxeigen{\hess f(x)}}{\Lf}\r)\r) \leq 3 (f(x) - f(\xmin)).
                \end{equation}
	\item \label{fastgrowth:semiAconditionB2} $B = 2$. $f : \R^d \to \R$ is twice continuously differentiable for all $x \in \R^d \setminus \{\xmin\}$ and there exists $\Lf \in (0, \infty)$ such that for all $x\in\R^d\setminus \{\xmin\}$
                \begin{equation}
                \maxeigen{\hess f(x)} \leq \Lf.
                \end{equation}
\end{assumptions}
\begin{remark}
Assumptions \ref{ass:fastgrowth} hold in general for convex functions $f$ that grow quadratically at their minimum, and as power $B$ in the tails, for some $B\ge 2$.
\end{remark}

We include the proof of this proposition below, as it highlights every aspect of our analysis, including non-constant $\alpha$.

\begin{prop}[Verifying assumptions for $f$ with unknown power behavior and relativistic $\K$]\label{lem:assvarphi21}
Given a norm $\norm{\c}_*$ satisfying \ref{fastgrowth:norm}, take
\begin{equation*}
\K(p) = \varphi_2^1(\norm{p}_*)
\end{equation*}
with $\varphi_a^A$ in \eqref{eq:kindef}. The following cases hold with this choice of kinetic energy $\K$ on $f : \R^d \to \R$ convex.
\begin{enumerate}
    \item For the implicit method \eqref{eq:implicit}, assumptions \ref{ass:cont}, \ref{ass:imp}  hold with constants
            \begin{equation}
            \label{fastgrowth:ass:imp:constants} \Ca = \gamma \qquad \Cfk = \max\{1, L\},
            \end{equation}
            and $\alpha$ non-constant, equal to
            \begin{equation}
            \label{fastgrowth:ass:imp:alpha} \alpha(y) = \min\{\mu^{A-1}, \mu, 1\} (y+1)^{1-A},
            \end{equation}
            if $f, B, \mu, L, \norm{\c}_*$ satisfy assumptions \ref{fastgrowth:fconvex}, \ref{fastgrowth:norm}, \ref{fastgrowth:fpower}, \ref{fastgrowth:impcondition}.
    \item For the first explicit method \eqref{eq:semiA}, assumptions \ref{ass:cont}, \ref{ass:imp}, and \ref{ass:semiA} hold with constants \eqref{fastgrowth:ass:imp:constants}, $\alpha$ equal to \eqref{fastgrowth:ass:imp:alpha}, and
            \begin{equation}
            \Ck = 2 \qquad \Dfk = \frac{3\Lf}{\min\{\mu^{A-1}, \mu, 1\}},
            \end{equation}
            if $f, B, \mu, L, \Lf, \norm{\c}_*$ satisfy assumptions \ref{fastgrowth:fconvex}, \ref{fastgrowth:norm}, \ref{fastgrowth:fpower}, \ref{fastgrowth:impcondition}, and \ref{fastgrowth:semiAconditionBg2}.
    \item For the first explicit method  \eqref{eq:semiA}, assumptions \ref{ass:cont},\ref{ass:imp}, and \ref{ass:semiA} hold with constants \eqref{fastgrowth:ass:imp:constants}, $\alpha$ equal to \eqref{fastgrowth:ass:imp:alpha}, and
            \begin{equation}
            \Ck = 2 \qquad \Dfk = \frac{6\Lf}{\min\{\mu, 1\}},
            \end{equation}
            if $f, B, \mu, L, \Lf, \norm{\c}_*$ satisfy assumptions \ref{fastgrowth:fconvex},\ref{fastgrowth:norm}, \ref{fastgrowth:fpower}, \ref{fastgrowth:impcondition}, and \ref{fastgrowth:semiAconditionB2}.
\end{enumerate}
\end{prop}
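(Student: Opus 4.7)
The plan is to verify the assumptions of Section \ref{sec:discrete} piece by piece, following the template of the proof of Proposition \ref{lem:assvarphiaA} but adapted to the case where $\K = \varphi_2^1(\norm{\c}_*)$ has slower tail growth than $\fc^*$. This mismatch is what forces a non-constant $\alpha$. All three parts share the verification of \ref{ass:cont} and \ref{ass:imp}; parts 2 and 3 then add \ref{ass:semiA} under two different regimes of $B$.

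For \ref{ass:cont:alpha}, I would first take conjugates of the lower bound $\fc(x) \ge \mu \varphi_2^B(\norm{x})$ in \ref{fastgrowth:impcondition}. Using the identity $[h(\norm{\c})]^*(p) = h^*(\norm{p}_*)$ for non-decreasing convex $h:[0,\infty)\to[0,\infty]$ vanishing at $0$, together with the scaling $[\mu h(\c)]^*(s) = \mu h^*(s/\mu)$, one obtains $\fc^*(p) \le \mu (\varphi_2^B)^*(\norm{p}_*/\mu)$; the same holds for $\fc^*(-p)$ because $\norm{\c}_*$ is even. The near-conjugate bound $(\varphi_2^B)^*(t) \le \varphi_2^A(t)$ from Lemma \ref{kin:robustonedimconj}(\ref{kin:robustonedim:conjugatebound}) upgrades this to $\fc^*(\pm p) \le \mu \varphi_2^A(\norm{p}_*/\mu)$. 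Reparameterising via $y = \K(p)$, so that $\norm{p}_*^2 = y(y+2)$, the inequality $\K(p) \ge \alpha(\K(p)) \mu \varphi_2^A(\norm{p}_*/\mu)$ for $\alpha$ as in \eqref{fastgrowth:ass:imp:alpha} becomes an algebraic one-variable inequality, verified by splitting on whether $\mu \le 1$ or $\mu > 1$ and using the concavity of $x \mapsto x^{A/2}$. The technical clause $-\Ca \alpha'(y)\, y < \alpha(y)$ with $\Ca = \gamma$ reduces to $\gamma(A-1) y/(y+1) < 1$, which is immediate since $\gamma \in (0,1)$ and $A-1 \in (0,1]$. For \ref{ass:imp:innerproductfk}, I apply Fenchel-Young with the pair $\varphi_2^1$ and $(\varphi_2^1)^*$ to $|\inner{\grad f(x)}{\grad \K(p)}| \le \norm{\grad f(x)}_* \norm{\grad \K(p)}$, bound the $f$-factor by $L\fc(x)$ using the second line of \ref{fastgrowth:impcondition}, and bound the $\K$-factor by $\K(p)$ using Lemma \ref{kin:robust}(\ref{kin:robust:slowgrowth1}) with $\MaA - 1 = 1$, giving $\Cfk = \max\{1, L\}$.

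For parts 2 and 3, assumption \ref{ass:semiA:gKpsem} follows at once from Lemma \ref{kin:robust}(\ref{kin:robust:innerbound}) with $\Ck = 2$, and \ref{ass:semiA:ftwicediff} is given. The main task is \ref{ass:semiA:gKHfsemi}, which amounts to bounding $\inner{\grad \K(p)}{\hess f(x) \grad \K(p)} \le \maxeigen{\hess f(x)} \norm{\grad \K(p)}^2$ in the form $\Dfk \alpha(3\Ha)\Ha$. From the explicit formula $\norm{\grad \K(p)}^2 = \norm{p}_*^2/(\norm{p}_*^2+1) = \K(p)(\K(p)+2)/(\K(p)+1)^2$ one gets $\norm{\grad \K(p)}^2 \le 2\K(p)/(\K(p)+1)$. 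In part 3, \ref{fastgrowth:semiAconditionB2} provides $\maxeigen{\hess f(x)} \le \Lf$ globally, so the product is at most $2\Lf \K(p)/(\K(p)+1) \le 2\Lf \Ha/(\Ha+1) \le 6\Lf \Ha/(3\Ha+1)$ using $\K(p) \le \Ha$ and $3\Ha+1 \le 3(\Ha+1)$; since $A = 2$ gives $\alpha(3\Ha) = \min\{\mu,1\}(3\Ha+1)^{-1}$, this matches $\Dfk \alpha(3\Ha)\Ha$ with $\Dfk = 6\Lf/\min\{\mu,1\}$.

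The main obstacle is part 2, where $B > 2$ allows $\maxeigen{\hess f(x)}$ to be unbounded, forcing one to trade growth in $x$ against the extra factor $\alpha(3\Ha) \propto (3\Ha+1)^{1-A}$ on the right-hand side of \ref{ass:semiA:gKHfsemi}. The plan is to Fenchel-Young-split $\maxeigen{\hess f(x)} \cdot \norm{\grad \K(p)}^2$ through the conjugate pair $\varphi_1^{(B-1)/(B-2)}$ and its conjugate \eqref{kin:robustonedim:conjugatebound3}, arranging the split so that the $x$-side is absorbed into $3\fc(x)$ via \ref{fastgrowth:semiAconditionBg2} while the $p$-side is absorbed into a multiple of $\K(p)/(\K(p)+1)^{A-1}$, which in turn is at most $\alpha(3\Ha)\Ha/\min\{\mu^{A-1},\mu,1\}$ after using $\K(p) \le \Ha$. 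The key matching identity is $2 - A = (B-2)/(B-1)$, which is precisely the exponent at which the assumed control on $\psi(\tfrac{B-1}{B-2} \varphi_1^{(B-1)/(B-2)}(\maxeigen{\hess f(x)}/\Lf))$ yields the bound $\maxeigen{\hess f(x)} \lesssim \fc(x)^{2-A}$ in the large-eigenvalue regime, while the definition of $\psi$ in \eqref{eq:kininv} as zero on $[0,1]$ trivialises the low-eigenvalue regime. Tracking constants through this decomposition should deliver $\Dfk = 3\Lf/\min\{\mu^{A-1},\mu,1\}$ as claimed.
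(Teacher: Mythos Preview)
Your plan for part~1 matches the paper's proof essentially line for line, and your argument for part~3 is actually cleaner than the paper's: the paper still runs a Fenchel--Young split through the auxiliary pair $\psi,\psi^*$ from \eqref{eq:kininv}--\eqref{eq:kininvconj}, whereas your direct estimate $\norm{\grad \K(p)}^2 \le 2\K(p)/(\K(p)+1)$ followed by monotonicity of $t\mapsto t/(t+1)$ and the trivial $3\Ha+1\le 3(\Ha+1)$ delivers the constant $\Dfk=6\Lf/\min\{\mu,1\}$ with no detour. So parts~1 and~3 are fine.

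Part~2 has a genuine gap. Your proposed Fenchel--Young pair $\varphi_1^{(B-1)/(B-2)}$ and its conjugate \eqref{kin:robustonedim:conjugatebound3} cannot carry the $p$-side: by \eqref{kin:robustonedim:conjugatebound3}, $(\varphi_1^{A'})^*$ vanishes identically on $[0,1]$, and since $\norm{\grad \K(p)}^2<1$ always, the split $M\,\norm{\grad\K(p)}^2/\Lf \le \varphi_1^{A'}(M/\Lf)+(\varphi_1^{A'})^*(\norm{\grad\K(p)}^2)$ discards all $p$-dependence. You are then left needing $\Lf\,\varphi_1^{A'}(M/\Lf)\le \Dfk\,\alpha(3\Ha)\Ha$ uniformly in $p$; taking $p\to 0$ with $x$ fixed forces $\Lf\,\varphi_1^{A'}(M/\Lf)\lesssim \fc(x)^{2-A}$, but assumption \ref{fastgrowth:semiAconditionBg2} only gives $\psi\bigl(\tfrac{B-1}{B-2}\varphi_1^{A'}(M/\Lf)\bigr)\le 3\fc(x)$, i.e.\ $\varphi_1^{A'}(M/\Lf)\lesssim \fc(x)$, which is too weak by a full power. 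Reversing the roles of the pair fails for the symmetric reason.

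The paper's mechanism is different and uses $\psi$ itself in the Fenchel--Young step, not $\varphi_1^{A'}$. First it shows $\alpha(y)y\ge \min\{\mu^{A-1},\mu,1\}\bigl((y+1)^{2-A}-1\bigr)$ (by integrating $[\alpha(y)y]'\ge(2-A)\alpha(y)$), so that the target \ref{ass:semiA:gKHfsemi} is implied by
\[
\tfrac{B-1}{B-2}\,\varphi_1^{(B-1)/(B-2)}\!\bigl(\norm{\grad\K(p)}^2\,M/\Lf\bigr)\ \le\ 3\Ha,
\]
using that $\tfrac{B-1}{B-2}\varphi_1^{(B-1)/(B-2)}$ is the inverse of $y\mapsto(y+1)^{2-A}-1$. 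Second, since $\norm{\grad\K(p)}^2<1$, the subhomogeneity \eqref{kin:robustonedim:subhomo} of $\varphi_1^{A'}$ pulls the factor $\norm{\grad\K(p)}^2$ outside. Third, Fenchel--Young is applied to the resulting product with the pair $\psi,\psi^*$, exploiting the exact identity $\psi^*\bigl(\norm{\grad\K(p)}^2\bigr)=\psi^*\bigl((\varphi_2^1)'(\norm{p}_*)^2\bigr)=2\K(p)$; the $x$-side is then exactly the left-hand side of \ref{fastgrowth:semiAconditionBg2}, bounded by $3\fc(x)$, and the two pieces add to at most $3\Ha$. Your outline correctly identifies the exponent match $2-A=(B-2)/(B-1)$ and the role of $\psi$ being zero on $[0,1]$, but the inverse-function reduction and the $\psi/\psi^*$ split (rather than $\varphi_1^{A'}/(\varphi_1^{A'})^*$) are the missing load-bearing steps.
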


\begin{proof}[Proof of Proposition \ref{lem:assvarphi21}]
	
First, by Lemma \ref{kin:robust}, this choice of $\K$ satisfies assumptions \ref{ass:cont:kconvex} and \ref{ass:semiA:gKpsem} with constant $C_{\K} = 2$. We consider the remaining assumptions of \ref{ass:cont}, \ref{ass:imp}, and \ref{ass:semiA}.

\begin{enumerate}
\item \label{fastgrowth:impproof}
Our first goal is to derive $\alpha$. By assumption \ref{fastgrowth:impcondition}, we have $\mu \varphi_b^B(\norm{x}) \leq \fc(x)$.  Lemma \ref{kin:robustonedim} in Appendix \ref{sec:proofskinetic} implies that $\varphi_2^A(\mu^{-1} t) \leq \max\{\mu^{-2}, \mu^{-A}\} \varphi_2^A(t)$ for $t \geq 0$. Since $(\mu \varphi_b^B(\norm{\c}))^* = \mu (\varphi_b^B)^*(\mu^{-1}\norm{\c}_*)$ by Lemma \ref{kin:robust} and the results discussed in the review of convex analysis, we have by assumption \ref{fastgrowth:fpower} and Lemma \ref{kin:robustonedimconj},
\begin{align*}
\fc^*(p) \leq \mu (\varphi_2^B)^*\l(\mu^{-1}\norm{p}_*\r) \leq  \mu \varphi_2^A\l(\mu^{-1}\norm{p}_*\r) \leq \max\{\mu^{-1}, \mu^{1-A}\} \varphi_2^A(\norm{p}_*).
\end{align*}
Since $\varphi_2^A(0) = \varphi_2^1(0)$, any $\alpha$ satisfies \eqref{eq:alphacond_fcstar} for $p=0$. Assume $p \neq 0$. First, for $y \in [0, \infty)$, we have by rearrangement and convexity,
\begin{equation*}
\varphi_2^A((\varphi_2^1)^{-1}(y)) = \tfrac{1}{A}(y + 1)^A -  \tfrac{1}{A} \leq y(y+1)^{A-1}.
\end{equation*}
Thus,
\begin{align*}
\frac{\K(p)}{\varphi_2^A(\norm{p}_*)} = \frac{\K(p)}{\varphi_2^A((\varphi_2^1)^{-1}(\K(p)))} = \frac{A k(p)}{(\K(p) + 1)^A - 1} \geq (\K(p) + 1)^{1-A}.
\end{align*}
From this we conclude
\begin{align*}
\K(p) &\geq (\K(p) + 1)^{1-A} \varphi_2^A(\norm{p}_*) \geq \alpha(\K(p)) \fc^*(p).
\end{align*}
Since $\K$ is symmetric, we have \eqref{eq:alphacond_fcstar} of assumption \ref{ass:cont:alpha}. To see that $\alpha$ satisfies the remaining conditions of assumption \ref{ass:cont:alpha}, note that $(y+1)^{1-A}$ is convex and decreasing for $A > 1$; $(y+1)^{1-A}$ is non-negative and $\alpha(0) = \min\{\mu^{A-1}, \mu, 1\} \leq 1$. Finally, \ref{fastgrowth:fpower} implies $1 < A \leq 2$, for which,
\begin{align}
\label{eq:xalphaxboundhelper}-\alpha'(y)y  =  \min\{\mu^{A-1}, \mu, 1\} (A-1)(y+1)^{-A}y < (A-1) \alpha(y) \leq \alpha(y).
\end{align}
So we can take $\Ca = \gamma$ and $\alpha$ satisfies assumptions \ref{ass:cont}. This implies that $\K$ satisfies assumptions \ref{ass:cont}. Assumption \ref{fastgrowth:fconvex} is the same as assumption \ref{ass:cont:fconvex}, therefore $f$ and $\K$ satisfy assumptions \ref{ass:cont}.

Now by Fenchel-Young, the symmetry of norms, Lemma \ref{kin:robust}, and assumption \ref{fastgrowth:impcondition},
\begin{align*}
|\inner{\grad \K(p)}{\grad f(x)}| \leq (\varphi_2^1)^*(\norm{\grad \K(p)}) + \varphi_2^1(\norm{\grad f(x)}_*) \leq \Cfk \Ha(x,p),
\end{align*}
where $\Cfk = \max\{1, L\}$ for assumptions \ref{ass:imp}.

\item Assume $B > 2$, so that $A < 2$. The analysis of case \ref{fastgrowth:impproof}. follows and therefore assumptions \ref{ass:cont} and \ref{ass:imp} hold along with the constants just derived. \eqref{eq:xalphaxboundhelper} implies
\begin{align*}
[\alpha(y) y]' = \alpha'(y) y + \alpha(y) =  \alpha'(y) y + (2-A)\alpha(y) + (A-1)\alpha(y) \geq (2-A)\alpha(y).
\end{align*}
Thus, $((y+1)^{2-A} - 1) \leq \alpha(y) y$. Since $2-A = \tfrac{B-2}{B-1}$ and $\tfrac{B-1}{B-2}\varphi^{\tfrac{B-1}{B-2}}_1(y)$ is the inverse function of $(y+1)^{2-A} - 1$, it would be enough to show for $p \in \R^d$ and $x \in \R^d \setminus \{\xmin\}$ that
\begin{align*}
\tfrac{B-1}{B-2}\varphi_{1}^{\tfrac{B-1}{B-2}}\l(\frac{\norm{\grad \K(p)}^2 \maxeigen{\hess f(x)}}{\Lf}\r) \leq 3\Ha(x,p),
\end{align*}
for assumptions \ref{ass:semiA} to hold with constant $\Dfk = 3\Lf/\min\{\mu^{A-1}, \mu, 1\}$. First, for $\psi$ in \ref{eq:kininv} note that for $t \in [0, 1)$,
\begin{equation}
\label{eq:kininvconj} \psi^*(t) = 2(1-t)^{-\tfrac{1}{2}} - 2,
\end{equation}
and that $\psi^*((\varphi_2^1)'(t)^2) = 2\varphi_2^1(t)$. Furthermore, by Lemma \ref{lem:norms} of Appendix \ref{sec:proofskinetic}, we have that $\norm{\grad \K(p)} = (\varphi_2^1)'(\norm{p}_*) < 1$. Lemma \ref{kin:robustonedim} in Appendix \ref{sec:proofskinetic} implies that $\varphi_1^{\tfrac{B-1}{B-2}}(\epsilon t) \leq \epsilon \varphi_1^{\tfrac{B-1}{B-2}}(t)$ for  $\epsilon < 1$ and $t \geq 0$.  All together this implies,
\begin{align*}
\tfrac{B-1}{B-2}\varphi_{1}^{\tfrac{B-1}{B-2}}\l(\frac{\norm{\grad \K(p)}^2 \maxeigen{\hess f(x)}}{\Lf}\r) &\leq \norm{\grad \K(p)}^2 \tfrac{B-1}{B-2}\varphi_{1}^{\tfrac{B-1}{B-2}}\l(\frac{ \maxeigen{\hess f(x)}}{\Lf}\r)\\
&\leq \psi^*(\norm{\grad \K(p)}^2) + \psi\l( \tfrac{B-1}{B-2}\varphi_{1}^{\tfrac{B-1}{B-2}}\l(\frac{ \maxeigen{\hess f(x)}}{\Lf}\r)\r)\\
&\leq 2\K(p) + \psi\l( \tfrac{B-1}{B-2}\varphi_{1}^{\tfrac{B-1}{B-2}}\l(\frac{ \maxeigen{\hess f(x)}}{\Lf}\r)\r) \leq 3\Ha(x,p).
\end{align*}

\item For $B = 2$, the analysis of case \ref{fastgrowth:impproof}. follows and therefore assumptions \ref{ass:cont} and \ref{ass:imp} hold along with the constants just derived.. Here $\alpha$ is equal to
\begin{equation}
\alpha(y) =  \frac{\min(\mu, 1)}{y + 1}.
\end{equation}
Considering that $z / (1-z)$ is the inverse function of $y /(y+1)$ for $z \in [0, 1)$, it would be enough to show for $p \in \R^d$ and $x \in \R^d \setminus \{\xmin\}$ that
\begin{align*}
\frac{\norm{\grad \K(p)}^2 \maxeigen{\hess f(x)}}{2\Lf - \norm{\grad \K(p)}^2 \maxeigen{\hess f(x)}} \leq 3 \Ha(x, p),
\end{align*}
for assumptions \ref{ass:semiA} to hold with constant $\Dfk = 6\Lf/\min\{\mu, 1\}$. Indeed, taking $\psi, \psi^*$ from \eqref{eq:kininv} and \eqref{eq:kininvconj}, we have again $\psi^*((\varphi_2^1)'(t)^2) = 2\varphi_2^1(t)$. Again, by Lemma \ref{lem:norms} of Appendix \ref{sec:proofskinetic}, we have that $\norm{\grad \K(p)} = (\varphi_2^1)'(\norm{p}_*) < 1$. Moreover $z / (2L-z) \leq 1$ for $z \leq L$. All together,
\begin{align*}
\frac{\norm{\grad \K(p)}^2 \maxeigen{\hess f(x)}}{2\Lf - \norm{\grad \K(p)}^2 \maxeigen{\hess f(x)}} &\leq \norm{\grad \K(p)}^2 \frac{ \maxeigen{\hess f(x)}}{2\Lf - \maxeigen{\hess f(x)}}\\
&\leq \psi^*(\norm{\grad \K(p)}^2) + \psi\l(\frac{\maxeigen{\hess f(x)}}{2\Lf - \maxeigen{\hess f(x)}}\r)\\
&\leq 2\K(p) \leq 3 \Ha(x,p).
\end{align*}
\end{enumerate}
\end{proof}

\section{Conclusion}
\label{sec:conclusion}

The conditions of strong convexity and smoothness guarantee the linear convergence of most first-order methods. For a convex function $f$ these conditions are essentially quadratic growth conditions. In this work, we introduced a family of methods, which require only first-order computation, yet extend the class of functions on which linear convergence is achievable. This class of functions is broad enough to capture non-quadratic power growth, and, in particular, functions $f$ whose Hessians may be singular or unbounded. Although our analysis provides ranges for the step size and other parameters sufficient for linear convergence, it does not necessarily provide the optimal choices. It is a valuable open question to identify those choices.

The insight motivating these methods is that the first-order information of a second function, the kinetic energy $\K$, can be used to incorporate global bounds on the convex conjugate $f^*$ in a manner that achieves linear convergence on $f$. This opens a series of theoretical questions about the computational complexity of optimization. Can meaningful lower bounds be derived when we assume access to the first order information of two functions $f$ and $\K$? Clearly, any meaningful answer would restrict $\K$---otherwise the problem of minimizing $f$ could be solved instantly by assuming first-order access to $\K = f^*$ and evaluating $\grad \K(0) = \grad f^*(0) = \xmin$. Exactly what that restriction would be is unclear, but a satisfactory answer would open yet more questions: is there a meaningful hierarchy of lower bounds when access is given to the first-order information of $N > 2$ functions? When access is given to the second-order information of $N > 1$ functions?

From an applied perspective, first-order methods are playing an increasingly important role in the era of large datasets and high-dimensional non-convex problems. In these contexts, it is often impractical for methods to require exact first-order information. Instead, it is frequently assumed that access is limited to unbiased estimators of derivative information. It is thus important to investigate the properties of the Hamiltonian descent methods described in this paper under such stochastic assumptions. For non-convex functions, the success of adaptive gradient methods, which bear a resemblance to our methods using a relativistic kinetic energy, suggests there may be gains from an exploration of other kinetic energies.  Can kinetic energies be designed to condition Hamiltonian descent methods when the Hessian of $f$ is not positive semi-definite everywhere and to encourage iterates to escape saddle points? Finally, the main limitation of the work presented herein is the requirement that a practitioner have knowledge about the behavior of $f$ near its minimum. Therefore, it would be valuable to investigate adaptive methods that do not require such knowledge, but instead estimate it on-the-fly.

\section*{Acknowledgements}
We thank David Balduzzi for reading a draft and his helpful suggestions.
This material is based upon work supported in part by the U.S. Army Research Laboratory and the U. S. Army Research Office, and by the U.K. Ministry of Defence (MoD) and the U.K. Engineering and Physical Research Council (EPSRC) under grant number EP/R013616/1. A part of this research was done while A. Doucet and D. Paulin were hosted by the Institute for Mathematical Sciences in Singapore. C.J. Maddison acknowledges the support of the Natural Sciences and Engineering Research Council of Canada (NSERC) under reference number PGSD3-460176-2014.

\bibliographystyle{plain}
\bibliography{refs} 

\appendix

\section{Proofs for convergence of continuous systems}
\label{sec:proofscontinuous}

\Vderivativebound*

\begin{proof}
\begin{align*}
{\Ly_t}' &= -\gamma \inner{\grad \K(p_t)}{p_t} + \beta\inner{\grad \K(p_t)}{p_t} - \beta \gamma \inner{x_t- \xmin}{p_t} - \beta\inner{x_t-\xmin}{\grad \f(x_t)}\\
    &= -(\gamma - \beta) \inner{\grad \K(p_t)}{p_t} - \beta \gamma \inner{x_t-\xmin}{p_t} - \beta\inner{x_t-\xmin}{\grad \f(x_t)}\\
    &\leq -(\gamma - \beta) \K(p_t) - \beta \gamma \inner{x_t-\xmin}{p_t} - \beta (\f(x_t)-\f(\xmin))
\end{align*}
by convexity and $\beta \leq \gamma$. Our goal is to show that ${\Ly_t}' \le -\lambda \Ly_t$ for some $\lambda>0$, which would hold if 
\begin{align}
\nonumber&-(\gamma - \beta) \K(p_t) - \beta \gamma \inner{x_t-\xmin}{p_t} - \beta (\f(x_t)-\f(\xmin))\le -\lambda (\K(p_t) + \f(x_t)-\f(\xmin)+\beta \inner{x_t-\xmin}{p_t})\\
\intertext{which is equivalent by rearrangement to}
\label{eq:goal}&-\beta(\gamma-\lambda)\inner{x_t-\xmin}{p_t}\le (\gamma - \beta-\lambda) \K(p_t)+(\beta-\lambda) (\f(x_t)-\f(\xmin)).
\end{align}
Assume that $\lambda \le \gamma$.
By assumption on $f$, $\K$, and $\alpha$ we have \eqref{eq:Handinnerbound1}, which implies by rearrangement that $k(p_t) \ge -\alpha \inner{x_t-\xmin}{p_t} - \alpha (f(x_t)-\f(\xmin))$, so
\begin{equation}\label{eq:crosstermbnd}
-\beta (\gamma - \lambda) \inner{x_t-\xmin}{p_t} \le \frac{\beta}{\alpha} (\gamma - \lambda) (k(p_t) + \alpha (f(x_t)-\f(\xmin))),
\end{equation}
and $k(p_t) \ge 0$ and $f(x_t)-\f(\xmin) \ge 0$, hence it is enough to have $\frac{\beta}{\alpha} (\gamma - \lambda) \le \gamma-\beta-\lambda$ and $\beta (\gamma - \lambda) \le \beta-\lambda$ for showing \eqref{eq:goal}.
Thus we need $\lambda \le \min(\gamma, \frac{\alpha \gamma - \alpha \beta - \beta \gamma}{\alpha - \beta}, \frac{\beta(1-\gamma)}{1-\beta} )$.
Here $\frac{\beta}{1-\beta} (1-\gamma) \le \gamma$ for $0 < \beta \le \gamma < 1$, therefore ${\Ly_t}'\le -\lambda(\alpha,\beta,\gamma) \Ly_t$ for
\begin{equation*}
\lambda(\alpha,\beta,\gamma)=\min\l(\frac{\alpha \gamma - \alpha \beta - \beta \gamma}{\alpha - \beta}, \frac{\beta(1-\gamma)}{1-\beta} \r).
\end{equation*}
In order to obtain the optimal contraction rate, we need to maximize $\lambda(\alpha,\beta,\gamma)$ in $\beta$. Without loss of generality, we can assume that $0<\beta<\frac{\alpha \gamma}{\alpha +\gamma}$, and it is easy to see that on this interval, $\frac{\alpha \gamma - \alpha \beta - \beta \gamma}{\alpha - \beta}$ is strictly monotone decreasing, while $\frac{\beta(1-\gamma)}{1-\beta}$ is strictly monotone increasing. Therefore, the maximum will be taken when the two terms are equal. This leads to a quadratic equation with two solutions 
\[
\beta_{\pm} = \frac{1}{1+\alpha} \left(\alpha+\frac{\gamma}{2} \pm \sqrt{(1-\gamma) \alpha^2 + \frac{\gamma^2}{4}}\right).
\]
One can check that $\beta_+ > \frac{\alpha \gamma}{\alpha+\gamma}$, while $0<\beta_-<\frac{\alpha \gamma}{\alpha+\gamma}$, hence 
\[\max_{\beta \in [0,\alpha]} 
\lambda(\alpha,\beta,\gamma)=\lambda(\alpha,\beta_{-},\gamma)=\frac{1}{1-\alpha} \left((1-\gamma) \alpha + \frac{\gamma}{2} - \sqrt{(1-\gamma) \alpha^2 + \frac{\gamma^2}{4}}\right)
\]
for $\alpha<1$. For $\alpha=1$, we obtain that $\beta^{\star} = \beta_-=\frac{\gamma}{2}$, and $\lambda^{\star} = \frac{\gamma (1-\gamma)}{2-\gamma}.$  

Now assume $\beta \in (0, \alpha\gamma/2]$. Since we have shown that $\lambda(\alpha, \gamma, \beta) = \tfrac{\beta(1-\gamma)}{1-\beta}$ for $\beta < \beta_{-}$ it is enough to show that $\beta_{-} > \alpha \gamma / 2$ to get our result. Notice that $\beta_{-}$ as a function of $\gamma$, $\beta_{-}(\gamma)$, is strictly concave with $\beta_{-}(0) = 0$, and $\beta_{-}(1) = \tfrac{\alpha}{1 + \alpha}$, thus
\begin{align*}
\beta_{-} = \beta_{-}(\gamma) > \gamma \beta_{-}(1) = \frac{\gamma \alpha}{1 + \alpha} \geq \frac{\alpha \gamma}{2} \geq \beta. 
\end{align*}
Finally, the proof of \eqref{eq:goodenoughbeta2} is equivalent by rearrangement to showing that for $\lambda=(1-\gamma)\beta$,
\begin{equation}
\label{eq:goal2}-\beta(\gamma-\lambda)\inner{x_t-\xmin}{p_t}\le (\gamma - \beta-\gamma^2(1-\gamma)/4-\lambda) \K(p_t)+(\beta-\lambda) (\f(x_t)-\f(\xmin)),
\end{equation}
hence by \eqref{eq:crosstermbnd} it suffices to show that we have $\frac{\beta}{\alpha} (\gamma - \lambda) \le \gamma-\gamma^2(1-\gamma)/4-\beta-\lambda$ and $\beta (\gamma - \lambda) \le \beta-\lambda$. The latter one was already verified in the previous section, and the first one is equivalent to 
\[
\gamma-\beta-\lambda-\frac{\beta}{\alpha} (\gamma - \lambda)\ge \gamma^2(1-\gamma)/4 \text{ for every }0<\gamma<1,\, 0<\alpha\le 1,\, 0<\beta\le \alpha\gamma/2.
\]
It is easy to see that we only need to check this for $\beta=\alpha\gamma/2$, and in this case by minimizing the left hand side for $0\le \alpha\le 1$ and using the fact that $\lambda=(1-\gamma)\beta$, we obtain the claimed result.
\end{proof}

\section{Proofs for partial lower bounds}\label{sec:proofslowerbounds}

In this section, we present the proofs of the lower bounds. First, we show the existence and uniqueness of solutions. 

\Lemmasolutionsexistunique*

\begin{proof}
	Let $\Ha_t := \frac{|x_t|^{\xpower}}{{\xpower}} + \frac{|p_t|^{\ppower}}{{\ppower}}$, then $\Ha_t\ge 0$ and
	\begin{align*}
	\Ha_t' = |x_t|^{{\xpower}-1} \sgn(x_t) x_t' + |p_t|^{{\ppower}-1} \sgn(p) p_t' = - \gamma |p_t|^{\ppower},
	\end{align*}
	so $0\ge \Ha_t'\ge -\gamma {\ppower} \Ha_t$. By Grönwall's inequality, this implies that for any solution of \eqref{eq:ode},
	\begin{align}\label{eqHabnd1}
	&\Ha_t\le \Ha_0\text{ for }t\ge 0\text{, and }\\
	\label{eqHabnd2}
	&\Ha_t\le \Ha_0 \exp(-\gamma {\ppower} t)\text{ for }t<0. 
	\end{align}

	The derivatives $x'$, $p'$ are continuous functions of $(x,p)$, and these functions are locally Lipschitz if $x \neq 0$ and $p \neq 0$.
	So by the Picard-Lindel\"of theorem, if $x_{t_0} \neq 0$, $p_{t_0} \neq 0$, then there exists a unique solution in the interval $(t_0-\epsilon, t_0+\epsilon)$ for some $\epsilon>0$. 
	
	Now we will prove local existence and uniqueness for $(x_{t_0},p_{t_0}) = (x_0,0)$ with $x_0 \neq 0$, and for $(x_{t_0},p_{t_0}) = (0,p_0)$ with $p_0 \neq 0$. Because of the central symmetry, we may assume that $x_0 > 0$ and $p_0 > 0$, and we may also assume that $t_0 = 0$.
	
	First let $x_0 = 0$ and $p_0 > 0$.
	We take $t$ close enough to $0$ so that $p_t > 0$.
	Then $x'_t = p_t^{{\ppower}-1} > 0$ and $p'_t = -|x_t|^{{\xpower}-1} \sgn(x_t) - \gamma p_t$. 
	Then $p_t = \phi(x_t)$ for some function $\phi \colon(-\epsilon, \epsilon) \to \R_{>0}$, where $t$ is close enough to $0$.
	Here $\phi(0) = p_0$ and $p'_t = \phi'(x_t) x'_t$, so
	\begin{align*}
		\phi'(x_t) &= \frac{p'_t}{x'_t} = -(|x_t|^{{\xpower}-1} \sgn(x_t) + \gamma p_t) p_t^{1-{\ppower}} \\
		&= -(|x_t|^{{\xpower}-1} \sgn(x_t) + \gamma \phi(x_t)) \phi(x_t)^{1-{\ppower}},
	\end{align*}
	and hence
	\[
	\phi'(u) = -(|u|^{{\xpower}-1} \sgn(u) + \gamma \phi(u)) \phi(u)^{1-{\ppower}} \quad \textrm{and} \quad \phi(0) = p_0.
	\]
	This ODE satisfies the conditions of the Picard-Lindel\"of theorem, so $\phi$ exists and is unique in a neighborhood of $0$.
	Then $x_0 = 0$ and $x'_t = \phi(x_t)^{{\ppower}-1}$, so for the Picard-Lindel\"of theorem we just need to check that $u \mapsto \phi(u)^{{\ppower}-1}$ is Lipschitz in a neighborhood of $0$.
	This is true, because $\phi$ is $C^1$ in a neighborhood of $0$.
	So $x_t$ exists and is unique when $t$ is near $0$, hence $p_t = \phi(x_t)$ also exists and is unique there.

	Now let $x_0 = x_0 > 0$ and $p_0 = 0$.
	We take $t$ close enough to $0$ so that $x_t > 0$.
	Then $x'_t = |p_t|^{{\ppower}-1} \sgn(p_t)$ and $p'_t = -x_t^{{\xpower}-1} - \gamma p_t < 0$ for $t$ close enough to $0$.
	Then $x_t = \psi(p_t)$ for some function $\psi \colon(-\epsilon, \epsilon) \to \R_{>0}$, where $t$ is close enough to $0$.
	Here $\psi(0) = x_0$ and $x'_t = \phi'(p_t) p'_t$, so
	\[
	\psi'(p_t) = \frac{x'_t}{p'_t} = -\frac{|p_t|^{{\ppower}-1} \sgn(p_t)}{x_t^{{\xpower}-1} + \gamma p_t} = -\frac{|p_t|^{{\ppower}-1} \sgn(p_t)}{\psi(p_t)^{{\xpower}-1} + \gamma p_t},
	\]
	and thus
	\[
	\psi'(u) = -\frac{|u|^{{\ppower}-1} \sgn(u)}{\psi(u)^{{\xpower}-1} + \gamma u} \quad \textrm{and} \quad \psi(0) = x_0.
	\]
	This ODE satisfies the conditions of the Picard-Lindel\"of theorem, so $\psi$ exists and is unique in a neighborhood of $0$.
	Then $p_0 = 0$ and $p'_t = -\psi(p_t)^{{\xpower}-1} - \gamma p_t$, so for the Picard-Lindel\"of theorem we just need to check that $u \mapsto -\psi(u)^{{\xpower}-1} - \gamma u$ is Lipschitz in a neighborhood of $0$.
	This is true, because $\psi$ is $C^1$ in a neighborhood of $0$.
	So $p_t$ exists and is unique when $t$ is near $0$, hence $x_t = \psi(p_t)$ also exists and is unique there.

	Let $[0, t_{\max})$ and $(-t_{\min},0]$ be the longest intervals where the solution exists and unique. If $t_{\max}<\infty$ or $t_{\min}<\infty$, then by Theorem 3 of \cite{Perko}, page 91, the solution would have to be able to leave any compact set $K$ in the interval $[0,t_{\max})$ or $(-t_{\max},0]$, respectively. However, due to the \eqref{eqHabnd1} and \eqref{eqHabnd2}, the energy function $\Ha_t$ cannot converge to infinity in finite amount of time, so this is not possible. Hence, the existence and uniqueness for every $t\in \R$ follows.
\end{proof}

Before proving Theorem \ref{thmspeedofconv}, we need to show a few preliminary results.

\begin{lem} \label{Lemma-H-limits}
	If $(x,p)$ is not constant zero, then $\lim_{t \to -\infty} \Ha_t = \infty$ and $\lim_{t \to \infty} \Ha_t = \lim_{t \to \infty} x_t = \lim_{t \to \infty} p_t = 0$.
\end{lem}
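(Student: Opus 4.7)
The plan is to exploit the monotonicity $\Ha_t' = -\gamma |p_t|^a$ of the energy, combined with Barbalat's lemma and an accumulation-point argument in the spirit of LaSalle's invariance principle.

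First I would record that $\Ha_t$ is non-increasing, so $\Ha_\infty := \lim_{t\to\infty}\Ha_t \in [0,\Ha_0]$ and $\Ha_{-\infty} := \lim_{t\to -\infty}\Ha_t \in [\Ha_0,\infty]$ both exist, the latter a priori possibly $+\infty$. On any half-line on which $\Ha_t$ is bounded by some constant $C$, the trajectory lies in the compact sublevel set $\{|x|^{\xpower}/{\xpower}+|p|^{\ppower}/{\ppower}\le C\}$, hence $x_t'$ and $p_t'$ are bounded; in particular $t\mapsto |p_t|^{\ppower}$ is Lipschitz, hence uniformly continuous, on that half-line.

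For the forward direction, integrating $\Ha_t' = -\gamma|p_t|^{\ppower}$ gives $\int_0^\infty \gamma|p_t|^{\ppower}\,dt = \Ha_0-\Ha_\infty <\infty$, so Barbalat's lemma yields $p_t\to 0$. Next I would show that any accumulation point of $(x_t,p_t)$ as $t\to\infty$ must be $(0,0)$. Pick $t_n\to\infty$ with $(x_{t_n},p_{t_n})\to (x^*,0)$. By continuous dependence on initial conditions (available on compact time windows thanks to uniqueness of solutions, Lemma \ref{Lemma-solutions-exist-unique}, via Kamke's convergence theorem even though the vector field is not Lipschitz at the axes), the shifted trajectories $(x_{t_n+s},p_{t_n+s})$ converge for every fixed $s\ge 0$ to the solution $(\tilde x_s,\tilde p_s)$ of \eqref{eq:odepowerab} starting at $(x^*,0)$. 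Since $\Ha(x_{t_n+s},p_{t_n+s})\to \Ha_\infty$, one has $\Ha(\tilde x_s,\tilde p_s)=\Ha_\infty$ for all $s\ge 0$; differentiating forces $\tilde p_s\equiv 0$, and then the equation $\tilde p_s' = -|\tilde x_s|^{{\xpower}-1}\sgn(\tilde x_s)=0$ forces $\tilde x_s\equiv 0$, so $x^*=0$. Hence $(x_t,p_t)\to(0,0)$ and $\Ha_\infty=0$.

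For the backward direction I would argue by contradiction: suppose $\Ha_{-\infty}<\infty$. Then the trajectory on $(-\infty,0]$ stays in a compact set and $\int_{-\infty}^0 \gamma|p_t|^{\ppower}\,dt = \Ha_{-\infty}-\Ha_0<\infty$, so Barbalat again gives $p_t\to 0$ as $t\to -\infty$. Repeating the accumulation-point argument with $t_n\to-\infty$ and fixed $s\ge 0$ (so $t_n+s\to -\infty$) shows every $\alpha$-limit point equals $(0,0)$, whence $\Ha_{-\infty}=0$; but then $\Ha_0\le \Ha_{-\infty}=0$, forcing $(x_0,p_0)=(0,0)$ and, by uniqueness in Lemma \ref{Lemma-solutions-exist-unique}, $(x,p)$ constant zero — contradicting the hypothesis. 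Therefore $\Ha_{-\infty}=\infty$.

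The main subtlety is justifying continuous dependence on initial conditions at points where $(|p|^{{\ppower}-1}\sgn p,\,-|x|^{{\xpower}-1}\sgn x-\gamma p)$ fails to be Lipschitz (i.e.\ near the axes when ${\ppower},{\xpower}<2$); uniqueness from Lemma \ref{Lemma-solutions-exist-unique} is exactly what allows the standard continuity-only convergence theorem to apply, and I would cite it explicitly rather than reprove it.
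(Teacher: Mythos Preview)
Your proof is correct, but it takes a heavier route than the paper's. Both proofs begin the same way: monotonicity of $\Ha_t$ gives existence of the two limits, boundedness on the relevant half-line gives Lipschitz continuity of $\Ha'_t=-\gamma|p_t|^{\ppower}$, and then a Barbalat-type argument yields $p_t\to 0$ (the paper phrases it as ``$\Ha'$ Lipschitz plus $\Ha$ having a finite limit implies $\Ha'\to 0$'').

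The divergence is in the second step. You invoke a LaSalle/invariance argument requiring continuous dependence on initial conditions, which---as you correctly note---is delicate here because the vector field is not Lipschitz near the axes for ${\ppower},{\xpower}<2$, and you have to fall back on uniqueness from Lemma~\ref{Lemma-solutions-exist-unique} plus a Kamke-type convergence theorem. The paper instead exploits the one-dimensional structure directly: once $p_t\to 0$ and $\Ha_t\to M$, one has $|x_t|^{\xpower}/{\xpower}\to M$, so $|x_t|\to ({\xpower}M)^{1/{\xpower}}$; if $M>0$ then eventually $x_t$ does not change sign, hence $x_t\to x_0\neq 0$, and the ODE gives $p'_t\to -|x_0|^{{\xpower}-1}\sgn(x_0)\neq 0$, contradicting $p_t\to 0$. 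This sidesteps the continuous-dependence issue entirely. Your approach is more general in spirit (it would transfer to higher dimensions or more general fields), but for this specific 1D lemma the paper's direct contradiction is shorter and avoids the very subtlety you flagged.
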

\begin{proof}
	The limits of $\Ha$ exist, because $\Ha'_t = - \gamma |p_t|^{\ppower} \le 0$.
	First suppose that $\lim_{t \to -\infty} \Ha_t = M < \infty$.
	Then $\Ha_t \le M$ for every $t$, so $x$ and $p$ are bounded functions, therefore $x'$ and $p'$ are also bounded by the differential equation.
	Then $\Ha''_t = - \gamma {\ppower} |p_t|^{{\ppower}-1} \sgn(p_t) p'_t$ is also bounded, so $\Ha'$ is Lipschitz.
	This together with $\lim_{t \to -\infty} \Ha_t = M \in \R$ implies that $\lim_{t \to -\infty} \Ha'_t = 0$.
	So $\lim_{t \to -\infty} p_t = 0$.
	Then we must have $\lim_{t \to -\infty} x_t = x_0$ for some $x_0 \in \R \setminus \{0\}$.
	But then $\lim_{t \to -\infty} p'_t = -|x_0|^{{\xpower}-1} \sgn(x_0) \neq 0$, which contradicts $\lim_{t \to -\infty} p_t = 0$.
	So indeed $\lim_{t \to -\infty} \Ha_t = \infty$.
	
	Now suppose that $\lim_{t \to \infty} \Ha_t > 0$.
	For $t \in [0, \infty)$ we have $\Ha_t \le \Ha_0$, so for $t \ge 0$ the functions $x$ and $p$ are bounded, hence also $x'$, $p'$, $\Ha'$, $\Ha''$ are bounded there.
	So $\lim_{t \to \infty} \Ha_t \in \R$, and $H'$ is Lipschitz for $t \ge 0$, therefore $\lim_{t \to \infty} \Ha'_t = 0$, thus $\lim_{t \to \infty} p_t = 0$.
	Then we must have $\lim_{t \to \infty} x_t = x_0$ for some $x_0 \in \R \setminus \{0\}$.
	But then $\lim_{t \to \infty} p'_t = -|x_0|^{{\xpower}-1} \sgn(x_0) \neq 0$, which contradicts $\lim_{t \to \infty} p_t = 0$.
	So indeed $\lim_{t \to \infty} \Ha_t = 0$, thus $\lim_{t \to \infty} x_t = \lim_{t \to \infty} p_t = 0$.
\end{proof}

From now on we assume that $\frac{1}{{\ppower}} +\frac{1}{{\xpower}} < 1$.

\begin{lem} \label{Lemma-p-is-zero-many-times}
	If $(x_t,p_t)$ is a solution, then for every $t_0 \in \R$ there is a $t \le t_0$ such that $p_t = 0$.
\end{lem}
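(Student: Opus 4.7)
The plan is to argue by contradiction. Suppose $p_t \neq 0$ for all $t \le t_0$. By continuity and the central symmetry $(x_t, p_t) \to (-x_t, -p_t)$ of \eqref{eq:odepowerab}, we may assume $p_t > 0$ on $(-\infty, t_0]$. Then $x_t' = p_t^{{\ppower}-1} > 0$, so $x_t$ is strictly increasing and $L := \lim_{t \to -\infty} x_t \in [-\infty, x_{t_0})$. By Lemma \ref{Lemma-H-limits}, $\Ha_t \to \infty$ as $t \to -\infty$. The case $L > -\infty$ is immediate: $|x_t|$ is bounded but $\Ha_t \to \infty$ forces $p_t \to \infty$, whence $x_t' = p_t^{{\ppower}-1} \ge 1$ for $t$ sufficiently negative, so integrating backwards drives $x_t \to -\infty$, contradicting finiteness of $L$.

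For $L = -\infty$, eventually $x_t < 0$, and writing $u_t := -x_t > 0$ the momentum equation becomes $p_t' = u_t^{{\xpower}-1} - \gamma p_t$ with $u_t \to \infty$ as $t \to -\infty$. I split on the position of $(u_t, p_t)$ relative to the nullcline $\gamma p = u^{{\xpower}-1}$. If the trajectory stays above the nullcline on some half-line $(-\infty, t_2]$, then $p_t^{{\ppower}-1} \ge u_t^{({\ppower}-1)({\xpower}-1)}/\gamma^{{\ppower}-1}$. Setting $c := ({\ppower}-1)({\xpower}-1)$, the hypothesis $\frac{1}{{\ppower}} + \frac{1}{{\xpower}} < 1$ gives $c > 1$, and $u_t' = -p_t^{{\ppower}-1} \le -u_t^c / \gamma^{{\ppower}-1}$. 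Then $G(t) := u_t^{1-c}/(1-c)$ satisfies $G'(t) \le -\gamma^{-({\ppower}-1)}$, so integrating backwards forces $G(t) \to +\infty$ as $t \to -\infty$. But $u_t \to \infty$ together with $1-c<0$ drives $G(t) \to 0^-$, a contradiction.

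Otherwise the trajectory lies strictly below the nullcline at some $t_3 \le t_0$. Because $du/dt = -p^{{\ppower}-1} < 0$, I may treat $p$ as a function of $u$ on $u \ge u_{t_3}$, with $dp/du = (\gamma p - u^{{\xpower}-1})/p^{{\ppower}-1}$. Setting $r(u) := u^{{\xpower}-1}/\gamma - p(u)$, a direct computation gives $r'(u) = ({\xpower}-1)u^{{\xpower}-2}/\gamma + \gamma r(u)/p(u)^{{\ppower}-1}$, which is strictly positive whenever $r \ge 0$ and $p > 0$. So, starting from $r(u_{t_3}) > 0$, one has $r(u) > 0$ for all $u \ge u_{t_3}$, and keeping only the first-term contribution yields $r(u) \ge r(u_{t_3}) + (u^{{\xpower}-1} - u_{t_3}^{{\xpower}-1})/\gamma$, which rearranges to $p(u) \le p(u_{t_3}) =: M$. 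With $p_t \le M$ uniformly, $p_t' \ge u_t^{{\xpower}-1} - \gamma M \ge 1$ for all $t$ sufficiently negative, and integrating backwards forces $p_t \to -\infty$, contradicting $p_t > 0$.

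The main obstacle will be the nullcline-based case split in the $L = -\infty$ branch: the argument hinges on the sign computation $r' > 0$ whenever $r \ge 0$ and $p > 0$, which precludes trajectories from crossing the nullcline from below to above and thereby makes the above/below dichotomy clean. Once that monotone-crossing property is secured, both sub-cases collapse to elementary one-variable differential-inequality estimates.
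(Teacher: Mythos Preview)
Your proof is correct and takes a somewhat different route from the paper in the $L = -\infty$ branch. The paper (working with the sign convention $p_t < 0$, $x_t \to +\infty$) introduces a single monotone quantity $G_t := |p_t|^{{\ppower}-1}/({\ppower}-1) - \gamma x_t$; showing $G_t' = |p_t|^{{\ppower}-2} x_t^{{\xpower}-1} > 0$ yields a bound $|p_t| \le (A x_t + B)^{1/({\ppower}-1)}$, and then the power comparison $\tfrac{1}{{\ppower}-1} < {\xpower}-1$ (equivalent to your $c>1$) forces the trajectory eventually below the nullcline, after which $p_t' \le 0$ gives $p_t$ a finite limit while simultaneously $p_t' \to -\infty$, a contradiction. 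You instead split directly on the nullcline position: the ``always above'' sub-case is dispatched by the blowup estimate on $u_t^{1-c}$, and the ``sometime below'' sub-case uses the monotone-crossing computation $r'>0$ to trap the trajectory below and bound $p$, whence $p_t' \ge 1$ forces $p_t \to -\infty$. Your argument is more hands-on and avoids having to discover the auxiliary function $G_t$, at the cost of an extra case; the paper's $G_t$ device is slicker in that it shows directly the ``always above'' scenario cannot persist, so no case split is needed. Both proofs invoke the hypothesis ${\ppower}{\xpower} > {\ppower}+{\xpower}$ exactly once, just at different stages.
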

\begin{proof}
	The statement is trivial for the constant zero solution, so assume that $(x,p)$ is not constant zero.
	Then $\lim_{t \to -\infty} \Ha_t = \infty$.
	Suppose indirectly that $p_t < 0$ for every $t \le t_0$.
	Then $x'_t = -|p_t|^{{\ppower}-1} < 0$ for $t \le t_0$.
	If $\lim_{t \to -\infty} x_t = x_{-\infty} < \infty$, then $\lim_{t \to -\infty} p_t = -\infty$, because $\lim_{t \to -\infty} \Ha_t = \infty$.
	Then $x \to x_{-\infty} \in \R$ and $x' = -|p|^{{\ppower}-1} \to -\infty$ when $t \to -\infty$, which is impossible.
	So $\lim_{t \to -\infty} x_t = \infty$, hence there is a $t_1 \le t_0$ such that $p_t < 0 < x_t$ for every $t \le t_1$.
	Let $G_t := \frac{|p_t|^{{\ppower}-1}}{{\ppower}-1} - \gamma x_t$, then for $t \le t_1$ we have
	\[
	G_t' = -|p_t|^{{\ppower}-2} p_t' - \gamma x_t' = |p_t|^{{\ppower}-2} (x_t^{{\xpower}-1} - \gamma |p_t|) + \gamma |p_t|^{{\ppower}-1} = |p_t|^{{\ppower}-2} x_t^{{\xpower}-1} > 0.
	\]
	So $G_t \le G(t_1)$ for every $t \le t_1$.
	Thus
	\[
	|p_t| \le (({\ppower}-1) (\gamma x_t + G(t_1)))^{\frac{1}{{\ppower}-1}} = (A x_t + B)^{\frac{1}{{\ppower}-1}},
	\]
	for $t \le t_1$, where $A > 0$.
	For big enough $x$ we have $(A x + B)^{\frac{1}{{\ppower}-1}} < \frac{1}{\gamma} x^{{\xpower}-1}$, because $\frac{1}{{\ppower}-1} < {\xpower}-1$, since ${\xpower}{\ppower}>{\xpower}+{\ppower}$.
	So there is a $t_2 \le t_1$ such that $p_t < 0 < x_t$ and $|p_t| \le \frac{1}{\gamma} x_t^{{\xpower}-1}$ for every $t \le t_2$.
	Then $p_t' = -x_t^{{\xpower}-1} - \gamma p_t \le 0$, so $p_t < 0$ is monotone decreasing for $t \in (-\infty, t_2]$, hence $p_{-\infty} = \lim_{t \to -\infty} p_t \in \R$.
	But then $p'_t = -x_t^{{\xpower}-1} - \gamma p_t \to -\infty$ when $t \to -\infty$, which together with $p_{-\infty} \in \R$ is impossible.
	This contradiction shows that indeed there is a $t \le t_0$ such that $p_t \ge 0$.
	Applying this for $(-x,-p)$, we get that there is a $t \le t_0$ such that $p_t \le 0$.
	So by continuity, there is a $t \le t_0$ such that $p_t = 0$.
\end{proof}

For $A>\frac{1}{\gamma}$ let $\xi(A) := (\frac{\gamma A - 1}{({\xpower}-1) A^{\ppower}})^{\frac{1}{{\xpower}{\ppower}-{\xpower}-{\ppower}}}$ and
\begin{equation}\label{eqRAdef}
\mathcal{R}_A := \{(x,p) \in \R^2; \, 0 < x < \xi(A), \, -A x^{{\xpower}-1} < p < 0 \},
\end{equation}
\begin{lem} \label{Lemma-trapping-set}
	Let $A > \frac{1}{\gamma}$.
	If $(x,p)$ is a solution, $t_0 \in \R$ and $(x_{t_0},p_{t_0}) \in \mathcal{R}_A$, then $(x_t,p_t) \in \mathcal{R}_A$ for every $t \ge t_0$.
\end{lem}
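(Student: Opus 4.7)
The strategy is a standard ``invariant region'' argument: I verify that on every point of the boundary $\partial \mathcal{R}_A$ (except possibly the fixed point at the origin and isolated corners), the vector field defined by \eqref{eq:odepowerab} points strictly into $\mathcal{R}_A$. Then, using uniqueness from Lemma \ref{Lemma-solutions-exist-unique}, any trajectory that starts in $\mathcal{R}_A$ at time $t_0$ stays in $\mathcal{R}_A$ for all $t \geq t_0$: if not, there is a first exit time $t^\star > t_0$ at which $(x_{t^\star}, p_{t^\star}) \in \partial \mathcal{R}_A$ with the vector field having an outward component, contradicting the inward-pointing property.

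The boundary of $\mathcal{R}_A$ decomposes into three open arcs plus isolated corner points: the top arc $T = \{(x,0) : 0 < x < \xi(A)\}$, the right arc $R = \{(\xi(A), p) : -A\xi(A)^{{\xpower}-1} < p < 0\}$, and the curved bottom arc $C = \{(x, -Ax^{{\xpower}-1}) : 0 < x < \xi(A)\}$. On $T$ the first equation gives $x_t' = 0$ while $p_t' = -x^{{\xpower}-1} < 0$, so the trajectory moves strictly downward into $\mathcal{R}_A$. On $R$ we have $x_t' = -|p|^{{\ppower}-1} < 0$, so it moves strictly leftward into $\mathcal{R}_A$.

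The decisive computation is on the curved arc $C$: I differentiate the quantity $p_t + A x_t^{{\xpower}-1}$ along a solution and show it is strictly positive on $C$. Substituting $p = -A x^{{\xpower}-1}$ one obtains
\begin{align*}
\frac{d}{dt}\bigl(p_t + A x_t^{{\xpower}-1}\bigr)\Big|_{C}
&= -x^{{\xpower}-1} - \gamma p + A({\xpower}-1) x^{{\xpower}-2} |p|^{{\ppower}-1}\sgn(p) \\
&= (\gamma A - 1)\, x^{{\xpower}-1} - ({\xpower}-1) A^{{\ppower}}\, x^{{\xpower}-2 + ({\xpower}-1)({\ppower}-1)},
\end{align*}
and $b-2 + ({\xpower}-1)({\ppower}-1) = {\xpower}{\ppower} - {\ppower} - 1$, so this is non-negative exactly when $x^{{\xpower}{\ppower} - {\xpower} - {\ppower}} \leq (\gamma A - 1)/(({\xpower}-1) A^{{\ppower}})$, i.e.\ exactly when $x \leq \xi(A)$. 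Thus on $C$ (with $0 < x < \xi(A)$) the quantity $p + Ax^{{\xpower}-1}$ is strictly increasing, so the trajectory moves into the interior where $p + Ax^{{\xpower}-1} > 0$.

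It remains to handle the corners. The corner $(\xi(A),0)$ is covered already, because $x_t' = 0$ and $p_t' = -\xi(A)^{{\xpower}-1} < 0$ push strictly into $\mathcal{R}_A$. The corner $(\xi(A), -A\xi(A)^{{\xpower}-1})$ sits at the intersection of $R$ and $C$, where both of the inward inequalities above hold as non-strict, but combining them (the vertical component strictly decreases $x$ while $p + Ax^{{\xpower}-1}$ is non-decreasing along the field) yields strict entry into the interior. The corner $(0,0)$ is the equilibrium, and by the uniqueness part of Lemma \ref{Lemma-solutions-exist-unique} a non-trivial trajectory in $\mathcal{R}_A$ cannot reach it in finite forward time anyway. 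Assembling these observations gives the trapping property. I expect the algebraic identification of the exponent $ab-a-b$ and the matching of the critical $x$-value with $\xi(A)$ to be the only delicate step; everything else is sign-checking of the vector field on the boundary.
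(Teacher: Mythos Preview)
Your argument is correct and follows essentially the same first-exit-time approach as the paper, with the identical key computation of $\tfrac{d}{dt}\bigl(p_t + A x_t^{{\xpower}-1}\bigr)$ on the curved arc. The paper streamlines the right-hand side by noting that $x_t' < 0$ throughout $\mathcal{R}_A$, so $x_T \le x_{t_0} < \xi(A)$ at any candidate exit time and the entire right boundary (including both right corners) is unreachable; this sidesteps your per-corner check, where in fact the field is only tangent to $C$ at the bottom-right corner (your derivative vanishes at $x=\xi(A)$), though this is harmless since that point can never be a first exit point.
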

\begin{proof}
	Suppose indirectly that there is a $t > t_0$ such that $(x_t, p_t) \notin \mathcal{R}_A$.
	Let $T$ be the infimum of these $t$'s.
	Then $T > t_0$, and $(x(T),p(T))$ is on the boundary of the region $\mathcal{R}_A$.
	We cannot have $(x(T),p(T)) = (0,0)$, because $(0,0)$ is unreachable in finite time.
	Since $x'_t = -|p_t|^{{\ppower}-1} \le 0$ for $t \in [t_0, T)$, we have $x(T) \le x_{t_0} < \xi(A)$.
	So we have $0 < x(T) < \xi(A)$ and either $p(T) = 0$ or $p(T) = -A x(T)^{{\xpower}-1}$.
	
	Suppose that $p(T) = 0$.
	Then $p'(T) = -x(T)^{{\xpower}-1} < 0$, so if $t \in (t_0, T)$ is close enough to $T$, then $p_t > 0$, which contradicts $(x_t, p_t) \in \mathcal{R}_A$.
	So $p(T) = -A x(T)^{{\xpower}-1}$.
	Let $U_t := p_t + A x_t^{{\xpower}-1}$.
	Then $U(T) = 0$, and by the definition of $T$, we must have $U'(T) \le 0$.
	Using $|p(T)| = A x(T)^{{\xpower}-1}$ we get
	\begin{align*}
		0 &\ge U'(T) = p'(T) + ({\xpower}-1) A x(T)^{{\xpower}-2} x'(T) \\
		&= \gamma |p(T)| - x(T)^{{\xpower}-1} - ({\xpower}-1) A x(T)^{{\xpower}-2} |p(T)|^{{\ppower}-1} \\
		&= x(T)^{{\xpower}-1} (\gamma A - 1 - ({\xpower}-1) A^{\ppower} x(T)^{{\xpower}{\ppower}-{\xpower}-{\ppower}}),
	\end{align*}
	so $\xi(T) = (\frac{\gamma A - 1}{({\xpower}-1) A^{\ppower}})^{\frac{1}{{\xpower}{\ppower}-{\xpower}-{\ppower}}} \le x(T) < \xi(T)$.
	This contradiction proves that $(x_t,p_t) \in \mathcal{R}_A$ for every $t \ge t_0$.
\end{proof}

The following lemma characterises the paths of every solution of the ODE in terms of single parameter $\theta$.
\begin{lem} \label{Lemma-parametrization-of-solutions}
	There is a constant $\eta>0$ such that every solution $(x_t,p_t)$ which is not constant zero is of the form $(x^{(\theta)}_{t+\Delta}, p^{(\theta)}_{t+\Delta})$ for exactly one $\theta \in [-\eta, \eta] \setminus \{0\}$ and $\Delta \in \R$.
\end{lem}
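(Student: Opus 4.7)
The plan is to parametrize each non-zero solution by its last $p=0$ crossing. I would first define
\[
\eta := \sup\{\theta > 0 : p^{(\theta)}_t < 0 \text{ for all } t > 0\}
\]
and establish $\eta\in(0,\infty)$. For $\eta>0$: for sufficiently small $\theta>0$, the point $(\theta,0)$ lies on the $p=0$ boundary of some trapping region $\mathcal{R}_A$ with $A>1/\gamma$ (since $\xi(A)$ attains positive values); as $(p^{(\theta)})'(0)=-\theta^{{\xpower}-1}<0$, the solution instantly enters $\mathcal{R}_A$, and Lemma~\ref{Lemma-trapping-set} keeps it there for all $t>0$, forcing $p_t<0$. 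For $\eta<\infty$: for $\theta$ sufficiently large, the initial energy $\Ha_0=\theta^{{\xpower}}/{\xpower}$ is so large that the trajectory reaches $\{x=0,p<0\}$ in finite time inside $Q_{+-}$ (a comparison with the undamped $\gamma=0$ system suffices, as then dissipation is too weak to prevent the crossing), and hence $p$ must vanish again in the future. Central symmetry gives the negative-$\theta$ analogue.

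Next, I would show every non-zero solution has a finite last zero $T^*:=\sup\{t:p_t=0\}<\infty$. Non-emptiness of $\{t:p_t=0\}$ is Lemma~\ref{Lemma-p-is-zero-many-times}. If the set were unbounded above, we would have zeros $t_1<t_2<\cdots\to\infty$; Lemma~\ref{Lemma-H-limits} then forces $x_{t_i}\to 0$, so for some $i$ we have $0<|x_{t_i}|<\xi(A)$ for a suitable $A>1/\gamma$. This places $(x_{t_i},0)$ on the boundary $p=0$ of $\mathcal{R}_A$ (or its reflection), and since $p'(t_i)=-|x_{t_i}|^{{\xpower}-1}\sgn(x_{t_i})\neq 0$ has the correct sign, the trajectory enters the trapping region just after $t_i$. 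Lemma~\ref{Lemma-trapping-set} then precludes any further $p=0$ crossing, contradicting the existence of $t_{i+1}$.

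For existence of the representation, set $\theta:=x_{T^*}$ and $\Delta:=-T^*$. Since $(x_{T^*},p_{T^*})=(\theta,0)$ agrees with the initial condition of $(x^{(\theta)},p^{(\theta)})$, uniqueness of ODE solutions (Lemma~\ref{Lemma-solutions-exist-unique}) gives $(x_t,p_t)=(x^{(\theta)}_{t+\Delta},p^{(\theta)}_{t+\Delta})$ for all $t$. We have $\theta\neq 0$, else the solution would be the constant zero. Assuming $\theta>0$ (the case $\theta<0$ is symmetric), continuity of $p$ together with $p'(T^*)<0$ and the absence of further zeros forces $p_t<0$ for all $t>T^*$, so by the very definition of $\eta$ we obtain $\theta\leq\eta$.

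For uniqueness, suppose two pairs $(\theta_1,\Delta_1)\neq(\theta_2,\Delta_2)$ in $([-\eta,\eta]\setminus\{0\})\times\R$ represent the same solution. Then $-\Delta_j\in\{t:p_t=0\}$ with $x_{-\Delta_j}=\theta_j$, so $-\Delta_j\leq T^*$. If $-\Delta_j<T^*$ for some $j$, then $p^{(\theta_j)}$ vanishes at the positive time $T^*+\Delta_j$, contradicting $|\theta_j|\leq\eta$ (which, by the definition of $\eta$, implies $p^{(\theta_j)}_t\neq 0$ for $t>0$). Hence $-\Delta_1=-\Delta_2=T^*$, forcing $\theta_1=\theta_2$ --- a contradiction. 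The main obstacle is the existence of the finite last zero $T^*$: one must rule out an infinite sequence of future $p=0$ crossings by carefully combining the convergence $x_t\to 0$ from Lemma~\ref{Lemma-H-limits} with the trapping lemma (Lemma~\ref{Lemma-trapping-set}), paying attention to the boundary case $p=0$ on $\partial\mathcal{R}_A$.
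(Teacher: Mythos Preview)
Your parametrization by the last $p$-zero matches the paper's strategy, and your argument that this last zero exists (via the trapping lemma together with $x_t\to0$) is correct and in fact cleaner than the paper's. Two steps, however, are not justified. First, the argument for $\eta<\infty$ fails as written: in the region $\{x>0,\,p<0\}$ the dissipative term $-\gamma p=\gamma|p|>0$ makes $p$ \emph{less} negative than in the undamped flow, so $|x'|=|p|^{a-1}$ is smaller and $x$ decreases \emph{more slowly} --- the comparison with $\gamma=0$ goes the wrong way. A valid fix is an energy budget: if the orbit from $(\theta,0)$ stayed in $\{x>0,\,p<0\}$ for all $t>0$, then $\theta^b/b=\int_0^\infty\gamma|p_t|^a\,dt\le\gamma\,(\sup_t|p_t|)\int_0^\infty|p_t|^{a-1}\,dt\le\gamma(a\theta^b/b)^{1/a}\,\theta$, impossible for large $\theta$ since $1/a+1/b<1$ forces $b-1-b/a>0$. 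The paper takes a different route: it runs time backward from $(-u,0)$, uses Lemma~\ref{Lemma-p-is-zero-many-times} to locate the previous $p$-zero at some $(g(u),0)$ with $g(u)>u$, shows $g$ is strictly increasing, and sets $\eta=\lim_{u\to0}g(u)\le g(1)<\infty$.

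Second, your uniqueness step invokes that $|\theta|\le\eta$ implies $p^{(\theta)}_t\neq0$ for all $t>0$, but your definition of $\eta$ as a supremum yields only the converse. You still need that the set $S=\{\theta>0:p^{(\theta)}_t<0\text{ for all }t>0\}$ is downward-closed (if $0<\theta_1<\theta_2\in S$, the orbit of $\theta_1$ is confined between the orbit of $\theta_2$ and the segment $[0,\theta_2]\times\{0\}$ by non-crossing of trajectories) and that $\eta\in S$ (if not, the orbit from $(\eta,0)$ would cross $p=0$ transversally at some $(-u,0)$, and continuous dependence on initial data would force the same for $\theta$ slightly below $\eta$, contradicting $\eta=\sup S$). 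Without $S=(0,\eta]$ the lemma's uniqueness claim would actually be false. The paper obtains this identity automatically from the bijectivity of $g\colon(0,\infty)\to(\eta,\infty)$.
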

\begin{proof}
For $u>0$ let us take the solution $(x_t,p_t)$ with $(x_{t_0},p_{t_0}) = (-u,0)$ for some $t_0 \in \R$.
Then $p'_{t_0} = u^{{\xpower}-1} > 0$, so $p_{t_0} < 0$ if $t<t_0$ is close enough to $t_0$.
By Lemma \ref{Lemma-p-is-zero-many-times}, there is a smallest $\mathcal{T}(u) \in \R_{>0}$ such that $p(t_0-\mathcal{T}(u)) = 0$.
We may take $t_0 = \mathcal{T}(u)$, and call this solution $(X^{(u)}(t), P^{(u)}(t))$. Then $X^{(u)}_{\mathcal{T}(u)} = -u$, $P^{(u)}_{\mathcal{T}(u)} = P^{(u)}_0 = 0$, and $P^{(u)}_t < 0$ for $t \in (0, \mathcal{T}(u))$. Let $g(u) = X^{(u)}_0$. Here $g(u) \neq 0$, because we cannot reach $(0,0)$ in finite time due to \eqref{eqHabnd1}-\eqref{eqHabnd2}. We cannot have $g(u)<0$, because then $P_u'(0) = -|g(u)|^{{\xpower}-1} \sgn(g(u)) > 0$.
So $g(u)>0$, thus we have defined a function $g \colon \R_{>0} \to \R_{>0}$.
For $u > 0$ let us take the continuous path
\[
\mathcal{C}_u := \{(X^{(u)}_t,P^{(u)}_t); \, t \in [0, \mathcal{T}(u)]\}.
\]
Note that this path is below the $x$-axis except for the two endpoints, which are on the $x$-axis.
If $0 < u < v$, then $\mathcal{C}_u \cap \mathcal{C}_v = \varnothing$, so we must have $g(u) < g(v)$ (otherwise the two paths would have to cross).
So $g$ is strictly increasing.
Let
\[
\eta := \lim_{u \to 0} g(u) \in \R_{\ge 0}.
\]
If $0<u<v$ and $z \in (g(u), g(v))$, then going forward in time after the point $(z,0)$, the solution must intersect the $x$-axis first somewhere between the points $(-v,0)$ and $(-u,0)$, thus $z$ is in the image of $\eta$.
So $\eta \colon \R_{>0} \to (\eta, \infty)$ is a strictly increasing bijective function.
We have $g(u) > u$ for every $u>0$, because if $g(u) \le u$, then for the solution $(X^{(u)}_t, P^{(u)}_t)$ we have $\Ha_0 \le H(\mathcal{T}(u))$ and $\Ha'_t = - \gamma |P^{(u)}_t|^{\ppower} < 0$ for $t \in (0, \mathcal{T}(u))$, which is impossible.

Let $A > \frac{1}{\gamma}$ and $0 < z < \xi(A)$, and take the solution $(x_t,p_t)$ with $(x_0,p_0) = (z,0)$.
Then $p'_0 < 0$, so $(x_t,p_t) \in \mathcal{R}_A$ for $t>0$ close enough to $0$, and then by Lemma \ref{Lemma-trapping-set}, $(x_t, p_t) \in \mathcal{R}_A$ for every $t>0$.
So $z$ is not in the image of $\eta$, hence $\eta \ge \xi(A) > 0$.

Let $(x_t,p_t)$ be a solution, which is not constant zero.
Let $\mathcal{S} = \{t \in \R; \, p_t = 0\}$.
This is a closed, nonempty subset of $\R$.
Suppose that $\sup(\mathcal{S}) = \infty$.
Since $\lim_{t \to \infty} x_t = 0$, this means that there are infinitely many $t \in \R$ such that $p_t = 0$ and $|x_t| \in (0, \eta)$.
This is impossible, since there can be only one such $t$.
So $\sup(\mathcal{S}) = \max(\mathcal{S}) = T \in \R$.
We may translate time so that $T = 0$.
Then $p_0 = 0$ and $p_t \neq 0$ for every $t > 0$.
If $|x_0| > \eta$, then later we again intersect the $x$-axis, so we must have $0 < |x_0| \le \eta$.
So the not constant zero solutions can be described by their last intersection with the $x$-axis, and this intersection has its $x$-coordinate in $[-\eta, \eta] \setminus \{0\}$.
\end{proof}

By symmetry, we have $x^{(-\theta)}_t = -x^{(\theta)}_t$ and $p^{(-\theta)}_t = -p^{(\theta)}_t$. We now study the solutions $(x^{(\theta)}_t, p^{(\theta)}_t)$ for $0 < \theta \le \eta$ and $t \ge 0$.
Then $p^{(\theta)}_t < 0 < x^{(\theta)}_t$ for $t \ge 0$ and $(x^{(\theta)})'_t < 0$ and $\lim_{t \to \infty} x^{(\theta)}_t = 0$, so $x^{(\theta)}_t \in (0, \theta)$ for $t > 0$.
So we can write $p^{(\theta)}_t = -\phi^{(\theta)}(x^{(\theta)}_t)$, where $\phi^{(\theta)} \colon (0,\theta) \to \R_{>0}$ and $\lim_{z \to 0} \phi^{(\theta)}(z) = \lim_{z \to \theta} \phi^{(\theta)}(z) = 0$.
Let
\[
\Theta := \l\{\theta \in (0, \eta]; \, (z, -\phi^{(\theta)}(z)) \in \mathcal{R}_A \textrm{ for some } A > \frac{1}{\gamma} \textrm{ and } z \in (0, \theta) \r\}.
\]
The following lemmas characterize this set.

\begin{lem} \label{Lemma-non-Theta-asymptotic}
	If $\theta \in (0, \eta] \setminus \Theta$, then
	\[
	\lim_{z \to 0} \frac{\phi^{(\theta)}(z)}{(\gamma ({\ppower}-1) z)^{\frac{1}{{\ppower}-1}}} = 1.
	\]
\end{lem}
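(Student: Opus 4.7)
The plan is to convert the topological hypothesis $\theta \notin \Theta$ into a quantitative lower bound on $\phi^{(\theta)}$, and then to apply L'H\^opital's rule to a first-order ODE for $\phi^{(\theta)}$.

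First I would unpack the hypothesis. For $z \in (0, \theta)$ we have $\phi^{(\theta)}(z) > 0$, so the point $(z, -\phi^{(\theta)}(z))$ automatically satisfies the $p < 0$ constraint in the definition \eqref{eqRAdef} of $\mathcal{R}_A$. Hence it fails to lie in $\mathcal{R}_A$ only if $z \ge \xi(A)$ or $\phi^{(\theta)}(z) \ge A z^{\xpower-1}$. Since $\theta \notin \Theta$ means no such membership ever occurs, for every $A > 1/\gamma$ and every $z \in (0, \min(\theta, \xi(A)))$ we must have $\phi^{(\theta)}(z) \ge A z^{\xpower - 1}$. Taking $A$ arbitrarily large yields
\[
\lim_{z \to 0^+} \frac{z^{\xpower-1}}{\phi^{(\theta)}(z)} = 0.
\]

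Next I would rewrite the dynamics as an ODE for $\phi^{(\theta)}$ on $(0, \theta)$. Differentiating $p_t = -\phi^{(\theta)}(x_t)$ and substituting $x'_t = -\phi^{(\theta)}(x_t)^{\ppower - 1}$ and $p'_t = -x_t^{\xpower - 1} + \gamma \phi^{(\theta)}(x_t)$ from \eqref{eq:odepowerab} gives
\[
(\phi^{(\theta)})'(z) = \frac{\gamma \phi^{(\theta)}(z) - z^{\xpower - 1}}{\phi^{(\theta)}(z)^{\ppower - 1}}.
\]
Multiplying both sides by $(\ppower - 1)\phi^{(\theta)}(z)^{\ppower - 2}$ rearranges this into the clean identity
\[
\frac{d}{dz}\bigl(\phi^{(\theta)}(z)^{\ppower - 1}\bigr) = (\ppower - 1)\left(\gamma - \frac{z^{\xpower - 1}}{\phi^{(\theta)}(z)}\right),
\]
whose right-hand side tends to $(\ppower - 1)\gamma$ as $z \to 0^+$ by the previous paragraph.

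To conclude, I would verify that $\phi^{(\theta)}(z) \to 0$ as $z \to 0^+$. Lemma~\ref{Lemma-H-limits} gives $x_t \downarrow 0$ and $p_t \to 0$; since $t \mapsto x_t$ is a strictly decreasing continuous bijection onto $(0, \theta]$, this translates to $\phi^{(\theta)}(z) \to 0$ as $z \to 0^+$. Both $\phi^{(\theta)}(z)^{\ppower - 1}$ and $z$ thus tend to zero, and $\phi^{(\theta)}$ is $C^1$ on $(0, \theta)$ since the ODE right-hand side is smooth wherever $\phi > 0$. L'H\^opital's rule therefore applies and delivers
\[
\lim_{z \to 0^+} \frac{\phi^{(\theta)}(z)^{\ppower - 1}}{z} = (\ppower - 1)\gamma,
\]
which is equivalent to the claimed asymptotic $\phi^{(\theta)}(z) \sim (\gamma(\ppower - 1) z)^{1/(\ppower - 1)}$. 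The only step that genuinely uses the hypothesis $\theta \notin \Theta$ is the first one—converting the avoidance of every trapping set into $z^{\xpower - 1}/\phi^{(\theta)}(z) \to 0$—and everything afterward is a routine application of L'H\^opital; there is no substantial obstacle.
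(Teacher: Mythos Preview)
Your proof is correct and follows essentially the same route as the paper: both extract from $\theta\notin\Theta$ that $z^{\xpower-1}/\phi^{(\theta)}(z)\to 0$, then differentiate $\phi^{(\theta)}(z)^{\ppower-1}$ to see its derivative tends to $(\ppower-1)\gamma$, and conclude the ratio limit. The only cosmetic differences are that the paper first recasts $\Theta$ as $\{\theta:\liminf_{z\to 0} z^{1-\xpower}\phi^{(\theta)}(z)<\infty\}$ before drawing the same conclusion, and replaces your explicit L'H\^opital step by the equivalent hand argument that $F(z)=\gamma^{-1}(\ppower-1)^{-1}\phi^{(\theta)}(z)^{\ppower-1}-z$ has $F'(z)\to 0$ and hence $F(z)/z\to 0$.
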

\begin{proof}
By the definition of $\phi^{(\theta)}$,
\begin{align*}
-(x^{(\theta)}_t)^{{\xpower}-1} + \gamma \phi^{(\theta)}(x^{(\theta)}_t) &= (p^{(\theta)}_t)'= (\phi^{(\theta)})'(x^{(\theta)}_t) \phi^{(\theta)}(x^{(\theta)}_t)^{{\ppower}-1}, \text{ so}\\
(\phi^{(\theta)})'(z) &=\phi^{(\theta)}(z)^{1-{\ppower}} (\gamma \phi^{(\theta)}(z) - z^{{\xpower}-1}).
\end{align*}
Because the orbits are disjoint for different $\theta$'s, we have $\phi^{(\theta_1)}(z) < \phi^{(\theta_2)}(z)$ for $0 < \theta_1 < \theta_2 \le \eta$ and $z \in (0, \theta_1)$. If $(z_0, -\phi^{(\theta)}(z_0)) \in \mathcal{R}_A$ for some $z_0 \in (0, \theta)$, then by Lemma \ref{Lemma-trapping-set}, $(z, -\phi^{(\theta)}(z)) \in \mathcal{R}_A$ for every $z \in (0,z_0]$.
So
\[
\Theta = \l\{\theta \in (0, \eta]; \, \liminf_{z \to 0} z^{1-{\xpower}} \phi^{(\theta)}(z) < \infty \r\}.
\]
If $0 < \theta_1 < \theta_2$ and $\theta_2 \in \Theta$, then $\theta_1 \in \Theta$ too, since $\phi^{(\theta_1)}(z) < \phi^{(\theta_2)}(z)$ for $z \in (0, \theta_1)$.
If $A > \frac{1}{\gamma}$ and $\theta < \xi(A)$, then $(x^{(\theta)}_t, p^{(\theta)}_t) \in \mathcal{R}_A$ for $t>0$, so $(z,-\phi^{(\theta)}(z)) \in \mathcal{R}_A$ for $z \in (0, \theta)$.
So $(0, \xi(A)) \subseteq \Theta$ for every $A > \frac{1}{\gamma}$. Let
	\[
	F(z) := \gamma^{-1} ({\ppower}-1)^{-1} \phi^{(\theta)}(z)^{{\ppower}-1} - z,
	\]
	then $\lim_{z \to 0} F(z) = 0$, and
	\[
	F'(z) = \frac{(\phi^{(\theta)})'(z)}{\gamma \phi^{(\theta)}(z)^{2-{\ppower}}} - 1 = - \gamma^{-1} (z^{1-{\xpower}} \phi^{(\theta)}(z))^{-1},
	\]
	so $\lim_{z \to 0} F'(z) = 0$, because $\lim_{z \to 0} z^{1-{\xpower}} \phi^{(\theta)}(z) = \infty$, since $\theta \notin \Theta$.
	Then for every $\epsilon>0$ there is a $\delta>0$ such that $F$ is $\epsilon$-Lipschitz in $(0,\delta)$, and then $|F(z)| \le \epsilon z$ for $z \in (0,\delta)$.
	So $\lim_{z \to 0} \frac{F(z)}{z} = 0$.
\end{proof}

\begin{lem} \label{Lemma-Theta}
	$\Theta = (0, \eta)$.
\end{lem}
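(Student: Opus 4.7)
I plan to prove $\Theta = (0, \eta)$ by establishing $(0, \eta] \setminus \Theta = \{\eta\}$. This splits into two independent claims: (i) $\eta \notin \Theta$, and (ii) $(0, \eta] \setminus \Theta$ contains at most one element. Combined they force $(0, \eta] \setminus \Theta = \{\eta\}$, hence $\Theta = (0, \eta)$ since $\Theta \subseteq (0, \eta]$ by definition.

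For claim (i), I argue by contradiction. Suppose $\eta \in \Theta$, so $(x^{(\eta)}_{t_1}, p^{(\eta)}_{t_1}) \in \mathcal{R}_A$ for some $A > \tfrac{1}{\gamma}$ and $t_1 > 0$. I then transfer this trapping to nearby orbits via continuous dependence on initial conditions. Consider the orbits $(X^{(u)}_t, P^{(u)}_t)$ from the proof of Lemma \ref{Lemma-parametrization-of-solutions}, with $X^{(u)}_0 = g(u)$ and $X^{(u)}_{\mathcal{T}(u)} = -u$. Since $g(u) \to \eta$ as $u \to 0^+$ and $(x^{(\eta)}, p^{(\eta)})$ never returns to the $x$-axis, continuous dependence gives uniform convergence $(X^{(u)}, P^{(u)}) \to (x^{(\eta)}, p^{(\eta)})$ on $[0, t_1]$ and $\mathcal{T}(u) \to \infty$. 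By openness of $\mathcal{R}_A$, for $u$ small the orbit enters $\mathcal{R}_A$ at time $t_1 < \mathcal{T}(u)$, so by Lemma \ref{Lemma-trapping-set} it is trapped there; but it reaches $(-u, 0)$ at time $\mathcal{T}(u)$, contradicting $x > 0$ in $\mathcal{R}_A$.

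For claim (ii), suppose $\theta_1 < \theta_2$ lie in $(0, \eta] \setminus \Theta$. Lemma \ref{Lemma-non-Theta-asymptotic} gives $\phi^{(\theta_i)}(z) \sim (\gamma({\ppower}-1)z)^{1/({\ppower}-1)}$. Setting $\psi_i := (\phi^{(\theta_i)})^{{\ppower}-1}$, the ODE for $\phi^{(\theta)}$ from the proof of Lemma \ref{Lemma-non-Theta-asymptotic} transforms into $\psi_i'(z) = \gamma({\ppower}-1) - ({\ppower}-1) z^{{\xpower}-1} \psi_i^{-1/({\ppower}-1)}$, with $\psi_i(z) \sim \gamma({\ppower}-1)z$. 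Since the orbits are disjoint and $\psi_2(\theta_1) > 0 = \psi_1(\theta_1)$, one has $\psi_2 > \psi_1 > 0$ on $(0, \theta_1)$. Applying the mean value theorem to the decreasing map $\psi \mapsto \psi^{-1/({\ppower}-1)}$, the difference $D := \psi_2 - \psi_1 > 0$ satisfies $(\log D)'(z) = z^{{\xpower}-1} \xi(z)^{-{\ppower}/({\ppower}-1)}$ for some $\xi(z) \in (\psi_1(z), \psi_2(z))$.

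The main obstacle is to verify this right-hand side is integrable near $z = 0$, which is precisely where the hypothesis $\tfrac{1}{{\xpower}} + \tfrac{1}{{\ppower}} < 1$ enters. Since $\psi_i(z) \sim \gamma({\ppower}-1)z$, so does $\xi(z)$, and the integrand behaves like $z^{\alpha-1}$ where $\alpha := {\xpower} - 1 - \tfrac{1}{{\ppower}-1}$. The condition $\tfrac{1}{{\xpower}} + \tfrac{1}{{\ppower}} < 1$ is equivalent to $({\xpower}-1)({\ppower}-1) > 1$, i.e., $\alpha > 0$, making $z^{\alpha-1}$ integrable near $0$. Consequently $\log D(z)$ has a finite limit as $z \to 0^+$, so $D(z)$ stays bounded below by a positive constant. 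This contradicts $\psi_1(z), \psi_2(z) \to 0$, which force $D(z) \to 0$. Hence $(0, \eta] \setminus \Theta$ has at most one element, proving claim (ii).
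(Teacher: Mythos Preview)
Your proof is correct and the overall strategy matches the paper's: show $\eta\notin\Theta$ and then rule out a second element of $(0,\eta]\setminus\Theta$ by comparing two orbits with the asymptotic from Lemma~\ref{Lemma-non-Theta-asymptotic}. The execution differs in both halves, however.

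For claim (i) the paper perturbs a point on the $\eta$-orbit \emph{downward} to $(z,-\phi_\eta(z)-\epsilon)\in\mathcal{R}_A$, runs time \emph{backward} to the $x$-axis, and contradicts the parametrization of Lemma~\ref{Lemma-parametrization-of-solutions} directly. Your argument instead perturbs from \emph{above} via the orbits $X^{(u)}$ with $g(u)\downarrow\eta$ and invokes continuous dependence on initial data. This is valid because Lemma~\ref{Lemma-solutions-exist-unique} gives global uniqueness, and for continuous vector fields uniqueness implies continuous dependence (Kamke's theorem); but note that this step does need justification, since the field $|p|^{\ppower-1}\sgn(p)$ is not locally Lipschitz at $p=0$ when $\ppower<2$. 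The paper's route sidesteps this issue entirely.

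For claim (ii) the paper compares $\phi_\eta$ and $\phi^{(\theta)}$ \emph{horizontally} via $h(z)=(\phi^{(\theta)})^{-1}(\phi_\eta(z))-z$, derives $(\log h)'(z)\sim Cz^{\lambda-1}$ with the same exponent $\lambda=\xpower-1-\tfrac{1}{\ppower-1}>0$, and obtains the contradiction through an (informally phrased) L'H\^opital argument. Your \emph{vertical} comparison after the power change $\psi_i=(\phi^{(\theta_i)})^{\ppower-1}$ is cleaner: it turns the ODE into one whose leading term is the constant $\gamma(\ppower-1)$, so the mean-value step and the integrability of $(\log D)'\sim C z^{\lambda-1}$ are transparent. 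The two arguments are the same idea in different coordinates; yours is arguably the more direct of the two.
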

\begin{proof}
	Suppose indirectly that $\eta \in \Theta$.
	Then there is an $A > \frac{1}{\gamma}$ and a $z \in (0, \eta)$ such that $(z, -\phi_{\eta}(z)) \in \mathcal{R}_A$.
	Then for $\epsilon>0$ small enough we have $(z, -\phi_{\eta}(z)-\epsilon) \in \mathcal{R}_A$ too.
	Let $(x,p)$ be the solution with $(x_0,p_0) = (z, -\phi_{\eta}(z)-\epsilon)$.
	By Lemma \ref{Lemma-p-is-zero-many-times}, there is a $T < 0$ such that $p(T) = 0$ and $p_t < 0$ for $t \in (T,0]$.
	Then $x'_t < 0$ for $t \in (T,0]$.
	Since this orbit cannot cross $\{(u, -\phi^{(\theta)}(u)); \, u \in (0, \eta) \}$, we must have $x(T) > \eta$.
	However $(x_0,p_0) \in \mathcal{R}_A$, so $(x_t,p_t) \in \mathcal{R}_A$ for every $t \ge 0$ by Lemma \ref{Lemma-trapping-set}.
	So $(x(T),0)$ is the last intersection of the solution $(x,p)$ with the $x$-axis, hence $x(T)$ is not in the image of $\eta$, so $\eta < x(T) \le \eta$.
	This contradiction proves that $\eta \notin \Theta$.
	
	Now suppose indirectly that there is a $\theta \in (0,\eta) \setminus \Theta$.
	Let us write $\phi = \phi_{\eta}$ and $\psi = \phi^{(\theta)}$ for simplicity.
	We have
	\[
	\psi'(z) = \psi(z)^{1-{\ppower}} (\gamma \psi(z) - z^{{\xpower}-1}) > 0
	\]
	for $z>0$ close enough to $0$, because $\lim_{z \to 0} \frac{\psi(z)}{z^{{\xpower}-1}} = \infty$, since $\eta \notin \Theta$.
	So $\psi$ has an inverse function $\psi^{-1}$ near $0$.
	So we can define a function $G(z) := \psi^{-1}(\phi(z))$ for $z \in (0,c)$, for some $c>0$.
	We have $\psi(z) < \phi(z)$ for every $z \in (0,\theta)$, so $G(z) > z$ for $z \in (0,c)$.
	Then
	\begin{align*}
		G'(z) &= \psi'(G(z))^{-1} \phi'(z) = \frac{\phi(z)^{1-{\ppower}}  (\gamma \phi(z) - z^{{\xpower}-1})}{\psi(G(z))^{1-{\ppower}}(\gamma \psi(G(z)) - G(z)^{{\xpower}-1})} \\
		&= \frac{\gamma \phi(z) - z^{{\xpower}-1}}{\gamma \phi(z) - G(z)^{{\xpower}-1}}.
	\end{align*}
	Let $h(z) = G(z)-z$ for $z \in (0,c)$, then $h(z)>0$, $\lim_{z \to 0} h(z) = 0$, and
	\[
	h'(z) = \frac{(z+h(z))^{{\xpower}-1} - z^{{\xpower}-1}}{\gamma \phi(z) - G(z)^{{\xpower}-1}} = z^{{\xpower}-1} \frac{(1+\frac{h(z)}{z})^{{\xpower}-1} - 1}{\gamma \phi(z) - G(z)^{{\xpower}-1}}.
	\]
	If $z \to 0$, then $\phi(z)^{{\ppower}-1} \sim \psi(z)^{{\ppower}-1} \sim \gamma ({\ppower}-1) z$ by Lemma \ref{Lemma-non-Theta-asymptotic}.
	Since $G(z) \to 0$, we also have $\gamma ({\ppower}-1) z \sim \phi(z)^{{\ppower}-1} = \psi(G(z))^{{\ppower}-1} \sim \gamma ({\ppower}-1) G(z)$.
	So $\lim_{z \to 0} \frac{G(z)}{z} = 1$ and $\lim_{z \to 0} \frac{h(z)}{z} = 0$.
	Then $\phi(z)/G(z)^{{\xpower}-1} \sim (\gamma ({\ppower}-1))^{\frac{1}{{\ppower}-1}} z^{\frac{1}{{\ppower}-1}-({\xpower}-1)}$, so $\lim_{z \to 0} \gamma \phi(z)/G(z)^{{\xpower}-1} = \infty$, because $\frac{1}{{\ppower}-1} < {\xpower}-1$.
	Therefore
	\[
	h'(z) \sim z^{{\xpower}-1} \frac{(1+\frac{h(z)}{z})^{{\xpower}-1} - 1}{\gamma \phi(z)} \sim z^{{\xpower}-1} ({\xpower}-1) \gamma^{-1} \frac{h(z)}{z} \frac{1}{(\gamma ({\ppower}-1) z)^{\frac{1}{{\ppower}-1}}} = C z^{\lambda-1} h(z),
	\]
	where $C = ({\xpower}-1) \gamma^{-1} (\gamma ({\ppower}-1))^{-\frac{1}{{\ppower}-1}} > 0$ and $\lambda = {\xpower}-1-\frac{1}{{\ppower}-1} > 0$ are constants.
	
	We know that $\phi(z) \sim (\gamma ({\ppower}-1))^{\frac{1}{{\ppower}-1}} z^{\frac{1}{{\ppower}-1}} = 1$.
	Note that $\frac{1}{{\ppower}-1} < {\xpower}-1$, so $\lim_{z \to 0} \gamma \phi(z)/G(z)^{{\xpower}-1} = \infty$.
	We also have $\lim_{z \to 0} \frac{h(z)}{z} = 0$ and $(1+\frac{h(z)}{z})^{{\xpower}-1} - 1 \sim ({\xpower}-1) \frac{h(z)}{z}$.
	So $h'(z) \sim \frac{\gamma^{-1} ({\xpower}-1)}{(\gamma ({\ppower}-1))^{\frac{1}{{\ppower}-1}}} z^{{\xpower} - 2 - \frac{1}{{\ppower}-1}} h(z)$.
	Thus $h'(z) \sim C z^{\lambda-1} h(z)$, where $C>0$ and $\lambda = {\xpower} - 2 - \frac{1}{{\ppower}-1} + 1 > 0$, because ${\xpower}{\ppower} - {\xpower} - {\ppower} > 0$.
	So $\log(h(z))' = \frac{h'(z)}{h(z)} \sim C z^{\lambda-1} = (\frac{C}{\lambda} z^{\lambda})'$.
	Applying L'H\^ospital's rule, we get
	\[
	1 = \lim_{z \to 0} \frac{\log(h(z))'}{(\frac{C}{\lambda} z^{\lambda})'} = \lim_{z \to 0} \frac{\log(h(z))}{\frac{C}{\lambda} z^{\lambda}} = -\infty.
	\]
	This contradiction proves that $\Theta = (0,\eta)$.
\end{proof}

Now we are ready to prove our lower bound.

\thmspeedofconv*

\begin{proof}
First let $\theta \in (0, \eta)$. Then $\theta \in \Theta$ by Lemma \ref{Lemma-Theta}, so there is an $A > \frac{1}{\gamma}$ and a $t_0 \in \R$ such that $(x^{(\theta)}_t, p^{(\theta)}_t) \in \mathcal{R}_A$ for $t \ge t_0$.
Then $x^{(\theta)}_t>0$ and $(x^{(\theta)}_t)' = -|p^{(\theta)}_t|^{{\ppower}-1} \ge -A^{{\ppower}-1} (x^{(\theta)}_t)^{({\xpower}-1)({\ppower}-1)}$ for $t \ge t_0$. Here $({\xpower}-1)({\ppower}-1)>1$, so
\[
((x^{(\theta)}_t)^{-({\xpower}{\ppower}-{\xpower}-{\ppower})})' = -({\xpower}{\ppower}-{\xpower}-{\ppower}) (x^{(\theta)}_t)^{-({\xpower}-1)({\ppower}-1)} (x^{(\theta)}_t)' \le ({\xpower}{\ppower}-{\xpower}-{\ppower}) A^{{\ppower}-1}
\]
for $t \ge t_0$. Let $K:= ({\xpower}{\ppower}-{\xpower}-{\ppower}) A^{{\ppower}-1}$, then $(x^{(\theta)}_t)^{-({\xpower}{\ppower}-{\xpower}-{\ppower})} \le K t + L$ for $t \ge t_0$ and some $L \in \R$.
So $(x^{(\theta)}_t)^{-1} = O(t^{\frac{1}{{\xpower}{\ppower}-{\xpower}-{\ppower}}})$ when $t \to \infty$, therefore the convergence is not linear.
	
By Lemma \ref{Lemma-non-Theta-asymptotic}, we have $p^{(\eta)}_t \sim - (\gamma ({\ppower}-1) x^{(\eta)}_t)^{\frac{1}{{\ppower}-1}}$ when $t \to \infty$.
So $(x^{(\eta)})'_t = -|p^{(\eta)}_t|^{{\ppower}-1} \sim -\gamma ({\ppower}-1) x^{(\eta)}_t$, hence $(\log(x^{(\eta)}_t))' \sim -\gamma ({\ppower}-1) = (-\gamma ({\ppower}-1) t)'$, when $t \to \infty$. So by L'H\^ospital's rule, $\log(x^{(\eta)}_t) \sim -\gamma ({\ppower}-1) t$, thus $|x^{(\eta)}_t| = O(e^{-\alpha t})$ when $t \to \infty$, for every $\alpha < \gamma ({\ppower}-1)$.
Then also $|p^{(\eta)}_t| = O(e^{-\beta t})$ when $t \to \infty$, for every $\beta < \gamma$.
So the convergence is linear.

So up to time translation there are only two solutions, $(x^{(\eta)}, p^{(\eta)})$ and $(-x^{(\eta)}, -p^{(\eta)})$, where the convergence to $(0,0)$ is linear.
\end{proof}

\section{Proofs of convergence for discrete systems}\label{sec:proofsdiscretizations}

\subsection{Implicit Method}
Firstly, we show the well-definedness of the implicit scheme.
\impwelldeflemma*
\begin{proof}
The proof is based on Theorem 26.3 of \cite{rockafellar1970convex}. We start by introducing some concepts from \cite{rockafellar1970convex} that are useful for dealing with convex functions on $\R^n$ taking values in $[-\infty, \infty]$. We say that $g:\R^n\to [-\infty,\infty]$ is \emph{convex} if the epigraph of g, $\{(x,\mu): \mu\ge g(x), x\in \R^n, \mu\in [-\infty,\infty]\}$ is convex. A convex function $g:\R^n\to [-\infty,\infty]$ is called \emph{proper convex} if $g(x)\neq -\infty$ for every $x\in \R^n$, and there is at least one $x\in \R^n$ where $g(x)<\infty$. We say that $g:\R^n\to [-\infty,\infty]$ is \emph{lower-semicontinuous} if $\lim_{x_i\to x}g(x_i)\ge g(x)$ for every sequence $x_i\to x$ such that $\lim_{x_i\to x}g(x_i)$ exists.
The \emph{relative interior} of a set $S\subset \R^n$, denoted by $\mathrm{ri}\, S$, is the interior of the set within its affine closure. We define the \emph{essential domain} of a function $g:\R^n\to [-\infty,\infty]$, denoted by $\domain g$, as the set of points $x\in \R^n$ where $g(x)$ is finite. We call a proper convex function $g:\R^n\to [-\infty,\infty]$ \emph{essentially smooth} if it satisfies the following 3 conditions for $C=\interior(\domain g)$:
\begin{enumerate}
\item[(a)] $C$ is non-empty
\item[(b)] $g$ is differentiable throughout $C$
\item[(c)]\label{conditionc} $\lim_{i\to \infty} \|\grad g(x_i)\|=+\infty$ whenever $x_1,x_2,\ldots$ is a sequence in $C$ converging to a boundary point of $C$.
\end{enumerate}
Let $\partial g(x)$ denote the subdifferential of $g$ at $x$ (which is the set of subgradients of $g$ at $x$), and denote $\domain \partial g:= \{x|\partial g(x)\ne \emptyset\}$. We say that a proper convex function $g:\R^n\to [-\infty,\infty]$ is \emph{essentially strictly convex} if $g$ is strictly convex for every convex subset of $\domain \partial g$.

By assumption \ref{ass:cont:kconvex}, $\K$ is differentiable everywhere, and it is strictly convex, hence it is both essentially smooth and essentially strictly convex (since its domain is $\domain \K=\R^n$). Moreover, since $\K$ is a proper convex function, and it is lower semicontinuous everywhere (hence closed, see page 52 of \cite{rockafellar1970convex}), it follows from Theorem 12.2 of \cite{rockafellar1970convex} that $(\K^*)^*=\K$. Therefore, by Theorem 26.3 of \cite{rockafellar1970convex}, it follows that $\K^*$ is both essentially strictly convex and essentially smooth. Since $f$ is convex and differentiable everywhere in $\R^n$, based on the definitions and the assumption $\epsilon,\gamma\in (0,\infty)$, it is straightforward to show that
\[F(x):=\epsilon \K^*(\tfrac{x-x_i}{\epsilon}) + \epsilon \delta f(x) - \delta \inner{p_i}{x}\]
is also essentially strictly convex and  essentially smooth. Now we are going to show that its infimum is reached at a unique point in $\R^n$. First, using the convexity of $f$, it follows that $f(x)\ge f(x_i)+\inner{\grad f(x_i)}{x-x_i}$, hence
\begin{align*}
F(x)&\ge \epsilon \K^*(\tfrac{x-x_i}{\epsilon}) + \epsilon \delta \inner{\grad f(x_i)}{x-x_i} - \delta \inner{p_i}{x-x_i}-\delta \inner{p_i}{x_i}+\epsilon\delta f(x_i)
\intertext{using the definition \eqref{eq:convconj} of the convex conjugate $\K^*$}
&\ge \inner{p}{x-x_i} -\K(p)+ \epsilon \delta \inner{\grad f(x_i)}{x-x_i} - \delta \inner{p_i}{x-x_i}-\delta \inner{p_i}{x_i}+\epsilon\delta f(x_i)\\
&=\inner{p+\epsilon \delta \grad f(x_i)-\delta p_i}{x-x_i}-\K(p)-\delta \inner{p_i}{x_i}+\epsilon\delta f(x_i),
\end{align*}
for any $p\in \R^n$. By setting $p=\frac{x-x_i}{\|x-x_i\|}-\l(\epsilon \delta \grad f(x_i)-\delta p_i\r)$ for $\|x-x_i\|>0$, and $p=-\l(\epsilon \delta \grad f(x_i)-\delta p_i\r)$ for $\|x-x_i\|=0$, using the continuity and finiteness of $\K$, it follows that $F(x)\ge \|x-x_i\|-c$ for some $c<\infty$ depending only on $\epsilon$, $\delta$, $x_i$ and $p_i$. Together with the lower semicontinuity of $F$, this implies that there exists at least one $y\in \R^n$ such that  $F(y)=\inf_{x\in \R^n} F(x)$, and $-\infty<\inf_{x\in \R^n} F(x)<\infty$.

It remains to show that this $y$ is unique. First, we are going to show that it falls within the interior of the domain of $F$. Let $C:=\interior (\domain F)$, then using the essential smoothness of $F$, it follows that $C\subseteq \domain F\subseteq \mathrm{cl} C$ is a non-empty convex set ($\mathrm{cl}$ refers to closure). If $y$ would fall on the boundary of $C$, then by Lemma 26.2 of \cite{rockafellar1970convex}, $F(y)$ could not be equal to the infimum of $F$. Hence every such $y$ falls within $C$. By
 Theorem 23.4 of \cite{rockafellar1970convex}, $\mathrm{ri} (\domain F)\subseteq \domain \partial F\subseteq \domain F$. Since $C$ is a non-empty open convex set, $C=\interior \domain F=\mathrm{ri} (\domain F)$, therefore from the definition of essential strict convexity, it follows that $F$ is strictly convex on $C$. This means that there the infimum $\inf_{x\in \R^n} F(x)$ is achieved at a unique $y\in \R^n$, thus \eqref{eq:implicit2} is well-defined.

Finally, we show the equivalence with \eqref{eq:implicit}. First, note that using the fact that $\K$ is essentially smooth and essentially convex, it follows from Theorem 26.5 of \cite{rockafellar1970convex} that $\grad (\K^*)(x)=(\grad \K)^{-1}(x)$ for every $x\in \interior (\domain \K^*)$. Since $F$ is differentiable in the open set $C=\interior (\domain F)$, and the infimum of $F$ is taken at some $y\in C$, it follows that $\grad F(y)=0$. From the fact that $f(x)$ and $\inner{p_i}{x}$ are differentiable for every $x\in \R^n$, it follows that  for every point $z\in C$, $\frac{z-x_i}{\epsilon}\in \interior (\domain \K^*)$. Thus in particular, using the definition $x_{i+1}=y$, we have
\begin{align*}
\grad (\K^*)\l(\frac{x_{i+1}-x_i}{\epsilon}\r)+\epsilon\delta \grad f(x_{i+1})-\delta p_i=0,
\intertext{ which can be rewritten equivalently using the second line of \eqref{eq:implicit2} as}
\grad (\K^*)\l(\frac{x_{i+1}-x_i}{\epsilon}\r)=p_{i+1}.
\end{align*}
Using the expression $\grad (\K^*)(x)=(\grad \K)^{-1}(x)$ for
$x=\frac{x_{i+1}-x_i}{\epsilon}\in \interior (\domain \K^*)$, we obtain that $(\grad \K)^{-1}\l(\frac{x_{i+1}-x_i}{\epsilon}\r)=p_{i+1}$, and hence the first line of \eqref{eq:implicit} follows by applying $\grad \K$ on both sides. The second line follows by rearrangement of the second line of \eqref{eq:implicit2}.
\end{proof}

The following two lemmas are preliminary results that will be used in deriving convergence results for both this scheme and the two explicit schemes in the next sections.
\begin{lem}\label{lem:prelimbetai}
Given $f$, $\K$, $\gamma$, $\alpha$, $\Ca$, and $\Cfk$ satisfying Assumptions \ref{ass:cont} and \ref{ass:imp}, and a sequence of points $x_i, p_i\in \R^d$ for $i\ge 0$,  we define $\Ha_i:=f(x_i)-f(\xmin)+\K(p_i)$ . Then the equation
	\begin{equation}
		\label{eq:Vkalphadef}v=\Ha_i+\frac{\Ca}{2} \alpha(2v) \inner{x_i-\xmin}{p_i}.
	\end{equation}
has a unique solution in the interval $v \in [\Ha_i/2, 3\Ha_i/2]$, which we denote by $\Ly_i$. In addition, let
\begin{equation}\label{eq:betaidef}
\beta_i:=\frac{\Ca}{2} \alpha(2\Ly_i),
\end{equation}
then	 $\Ly_i=\Ha_i+\beta_{i}\inner{x_{i}-\xmin}{p_{i}}$ and the differences $\Ly_{i+1}-\Ly_{i}$ can be expressed as
	\begin{align}
		\Ly_{i+1}-\Ly_i&=\Ha_{i+1}-\Ha_i+\beta_{i+1}\inner{x_{i+1}-\xmin}{p_{i+1}}-\beta_i\inner{x_{i}-\xmin}{p_{i}}\\
		&\label{eq:Lydiff1}=\Ha_{i+1}-\Ha_i+\beta_i(\inner{x_{i+1}-\xmin}{p_{i+1}}-
		\inner{x_{i}-\xmin}{p_{i}})+
		(\beta_{i+1}-\beta_i) \inner{x_{i+1}-\xmin}{p_{i+1}}\\
		&\label{eq:Lydiff2}=\Ha_{i+1}-\Ha_i+\beta_{i+1}(\inner{x_{i+1}-\xmin}{p_{i+1}}-
		\inner{x_{i}-\xmin}{p_{i}})+
		(\beta_{i+1}-\beta_i) \inner{x_{i}-\xmin}{p_{i}}.
	\end{align}
\end{lem}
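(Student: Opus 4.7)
The plan is to mirror the implicit construction of the Lyapunov function $\Ly_t$ from the proof of Theorem \ref{lem:continuouslyap}, adapted to the discrete setting. I would introduce $G(v) := v - \Ha_i - \tfrac{\Ca}{2}\alpha(2v)\inner{x_i-\xmin}{p_i}$ and argue that $G$ has a unique root $\Ly_i$ in $[\Ha_i/2,\, 3\Ha_i/2]$; the two difference formulas then reduce to algebraic rearrangement.

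For existence, assumption \ref{ass:cont:alpha} applied at $p = p_i$ guarantees $\K(p_i) \ge \alpha(\K(p_i)) \max(\fc^*(p_i),\fc^*(-p_i))$, so the hypotheses of Lemma \ref{lem:betachoicealpha} are met with constant $\alpha(\K(p_i))$. Combining \eqref{eq:Handinnerbound1} and \eqref{eq:Handinnerbound2} yields
\begin{equation*}
|\inner{x_i-\xmin}{p_i}| \le \Ha_i/\alpha(\K(p_i)).
\end{equation*}
For any $v \in [\Ha_i/2,\, 3\Ha_i/2]$ we have $2v \ge \Ha_i \ge \K(p_i)$, so the monotonicity of $\alpha$ gives $\alpha(2v) \le \alpha(\K(p_i))$ and hence $|\alpha(2v)\inner{x_i-\xmin}{p_i}| \le \Ha_i$. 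Since $\Ca \le \gamma < 1$, this shows $G(\Ha_i/2) \le 0 \le G(3\Ha_i/2)$, and the intermediate value theorem supplies a root.

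For uniqueness, I would check that $G'(v) = 1 - \Ca\alpha'(2v)\inner{x_i-\xmin}{p_i} > 0$ on this interval. Using the same bound on $|\inner{x_i-\xmin}{p_i}|$, together with $\alpha(\K(p_i)) \ge \alpha(2v)$ and condition \eqref{eq:alphacond_ca} at $y = 2v$,
\begin{equation*}
|\Ca\alpha'(2v)\inner{x_i-\xmin}{p_i}| \le -\Ca\alpha'(2v)\cdot 2v/\alpha(2v) < 1,
\end{equation*}
so $G$ is strictly increasing. The root $\Ly_i$ is therefore unique.

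Given $\Ly_i$, the identity $\Ly_i = \Ha_i + \beta_i\inner{x_i-\xmin}{p_i}$ is immediate from \eqref{eq:Vkalphadef} and the definition $\beta_i = \tfrac{\Ca}{2}\alpha(2\Ly_i)$. Writing $\Ly_{i+1} - \Ly_i = \Ha_{i+1} - \Ha_i + \beta_{i+1}\inner{x_{i+1}-\xmin}{p_{i+1}} - \beta_i\inner{x_i-\xmin}{p_i}$, the forms \eqref{eq:Lydiff1} and \eqref{eq:Lydiff2} follow by adding and subtracting $\beta_i\inner{x_{i+1}-\xmin}{p_{i+1}}$ or $\beta_{i+1}\inner{x_i-\xmin}{p_i}$, respectively. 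The only real subtlety is ensuring $2v \ge \K(p_i)$ throughout $[\Ha_i/2, 3\Ha_i/2]$ so that the non-increasing $\alpha$ can be bounded by $\alpha(\K(p_i))$; everything else is direct algebra.
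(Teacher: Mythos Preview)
Your proposal is correct and matches the paper's own approach: the paper simply cites the bound $|\inner{x_i-\xmin}{p_i}|\le \Ha_i/\alpha(\K(p_i))$ from Lemma \ref{lem:betachoicealpha} and then defers existence and uniqueness of the fixed point to the identical argument already carried out in the proof of Theorem \ref{lem:continuouslyap}, with the difference identities dismissed as rearrangement. You have just written that argument out in full rather than cross-referencing it.
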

\begin{proof}
Similarly to \eqref{eq:Kperalphaf2sided}, we have by Lemma \ref{lem:betachoicealpha},
	\begin{align}
		\label{eq:Kperalphaf2sideddisc} |\inner{x_i-\xmin}{p_i}|\le  \K(p_i)/\alpha(\K(p_i)) + \f(x_i) - \f(\xmin) \le \frac{\Ha_i}{\alpha(\K(p_i))}.
	\end{align}
	For every $i\ge 0$, we define $\Ly_i$ as the unique solution $v \in [\Ha_i/2, 3\Ha_i/2]$ of the equation
	\begin{equation}
		\label{eq:Vkalphadef}v=\Ha_i+\frac{\Ca}{2} \alpha(2v) \inner{x_i-\xmin}{p_i}.
	\end{equation}
	The existence and uniqueness of this solution was shown in the proof of Theorem \ref{lem:continuouslyap}. The fact that $\Ly_i=\Ha_i+\beta_{i}\inner{x_{i}-\xmin}{p_{i}}$ immediately follows from equation \eqref{eq:Vkalphadef}, and \eqref{eq:Lydiff1}-\eqref{eq:Lydiff2} follow by rearrangement.
\end{proof}

\begin{lem}\label{lem:prelimconv}
Under the same assumptions and definitions as in Lemma \ref{lem:prelimbetai}, if in addition we assume that for some constants $C_1, C_2\ge 0$, for every $i\ge 0$,
\begin{align}
\nonumber\Ly_{i+1}-\Ly_i&\le -\epsilon (\gamma - \beta_{i+1}-C_1 \epsilon)\K(p_{i+1})-\epsilon \gamma \beta_{i+1}\inner{x_{i+1}-\xmin}{p_{i+1}}-\epsilon \beta_{i+1} (f(x_{i+1})-f(\xmin))\\
\label{eq:Vdiffass1a}&\quad\, +C_2 \epsilon^2 \beta_{i+1} \Ly_{i+1}+(\beta_{i+1}-\beta_i)\inner{x_i-\xmin}{p_i}, \text{ and}\\
\nonumber\Ly_{i+1}-\Ly_i&\le -\epsilon (\gamma - \beta_{i}-C_1 \epsilon)\K(p_{i+1})-\epsilon \gamma \beta_{i}\inner{x_{i+1}-\xmin}{p_{i+1}}-\epsilon \beta_{i} (f(x_{i+1})-f(\xmin))\\
\label{eq:Vdiffass1b}&\quad\, +C_2 \epsilon^2 \beta_{i} \Ly_{i+1}+(\beta_{i+1}-\beta_i)\inner{x_{i+1}-\xmin}{p_{i+1}},
\end{align}
then for every $0<\epsilon\le \min\l(\frac{1-\gamma}{C_2},  \frac{\gamma^2 (1-\gamma)}{4 C_1}\r)$, for every $i\ge 0$, we have
\begin{equation}\label{eq:Vdiffass1bnd}
\Ly_{i+1}\le \l[1+\epsilon  \beta_i(1-\gamma-\epsilon C_2)/2 \r]^{-1} \Ly_i.
\end{equation}
Similarly, if in addition to the assumptions of Lemma \ref{lem:prelimbetai}, we assume that for some constants $C_1, C_2\ge 0$, for every $i\ge 0$,
\begin{align}
\nonumber\Ly_{i+1}-\Ly_i&\le -\epsilon (\gamma - \beta_{i+1}-C_1 \epsilon)\K(p_{i})-\epsilon \gamma \beta_{i+1}\inner{x_{i}-\xmin}{p_{i}}-\epsilon \beta_{i+1} (f(x_{i})-f(\xmin))\\
\label{eq:Vdiffass2a}&\quad\, +C_2 \epsilon^2 \beta_{i+1} \Ly_{i}+(\beta_{i+1}-\beta_i)\inner{x_i-\xmin}{p_i}, \text{ and}\\
\nonumber\Ly_{i+1}-\Ly_i&\le -\epsilon (\gamma - \beta_{i}-C_1 \epsilon)\K(p_{i})-\epsilon \gamma \beta_{i}\inner{x_{i}-\xmin}{p_{i}}-\epsilon \beta_{i} (f(x_{i})-f(\xmin))\\
\label{eq:Vdiffass2b}&\quad\, +C_2 \epsilon^2 \beta_{i} \Ly_{i}+(\beta_{i+1}-\beta_i)\inner{x_{i+1}-\xmin}{p_{i+1}},
\end{align}
then for every $0<\epsilon\le \min\l(\frac{1-\gamma}{C_2},  \frac{\gamma^2 (1-\gamma)}{4 C_1}\r)$, we have
\begin{equation}\label{eq:Vdiffass2bnd}
\Ly_{i+1}\le \l[1-  \epsilon \beta_i(1-\gamma-\epsilon C_2)/2 \r] \Ly_i.
\end{equation}
\end{lem}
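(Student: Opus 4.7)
The plan is to mimic, step by step, the continuous-time argument used inside Theorem \ref{lem:continuouslyap}, treating each of \eqref{eq:Vdiffass1a}, \eqref{eq:Vdiffass1b} (and likewise \eqref{eq:Vdiffass2a}, \eqref{eq:Vdiffass2b}) as a discrete surrogate for the derivative calculation there. Concretely, the first step is to absorb the $C_1 \epsilon$ term: the hypothesis $\epsilon \le \gamma^2(1-\gamma)/(4C_1)$ gives $C_1\epsilon \le \gamma^2(1-\gamma)/4$, so in \eqref{eq:Vdiffass1a} (resp.\ \eqref{eq:Vdiffass1b}) the bracket in front of $\K(p_{i+1})$ is at least $\gamma - \beta - \gamma^2(1-\gamma)/4$ with $\beta \in \{\beta_i, \beta_{i+1}\}$. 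Apply \eqref{eq:goodenoughbeta2} from Lemma \ref{lem:Vderivativebound} at the point $(x_{i+1}, p_{i+1})$, with $\beta$ chosen to match the inequality in use, together with convexity $\inner{x-\xmin}{\grad f(x)} \ge f(x)-f(\xmin)$. The hypotheses of that lemma are verified using $\Ca \le \gamma$ and $\K(p_{i+1}) \le \Ha_{i+1} \le 2\Ly_{i+1}$, so that $\beta_j = \Ca \alpha(2\Ly_j)/2 \le \alpha(\K(p_{i+1}))\gamma/2$. This converts each of the two assumed inequalities into a bound of the shape
\[
\Ly_{i+1} - \Ly_i \;\le\; -\epsilon\, \beta (1 - \gamma - C_2\epsilon)\, \Ly' \;+\; (\beta_{i+1}-\beta_i)\inner{x_j - \xmin}{p_j},
\]
with $(\beta, \Ly', j) = (\beta_{i+1}, \Ly_{i+1}, i)$ from \eqref{eq:Vdiffass1a} and $(\beta_i, \Ly^{(i)}_{i+1}, i{+}1)$ from \eqref{eq:Vdiffass1b}, where $\Ly^{(i)}_{i+1} := \Ly_{i+1} - (\beta_{i+1}-\beta_i)\inner{x_{i+1}-\xmin}{p_{i+1}}$.

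Next, control the cross term. From the construction $\Ly_j \in [\Ha_j/2, 3\Ha_j/2]$ one has $|\beta_j \inner{x_j - \xmin}{p_j}| \le \Ha_j/2 \le \Ly_j$, so $|\inner{x_j-\xmin}{p_j}| \le \Ly_j/\beta_j$. Convexity of $\alpha$ gives $|\beta_{i+1}-\beta_i| \le \Ca\, \max_{j\in\{i, i+1\}}|\alpha'(2\Ly_j)| \cdot |\Ly_{i+1}-\Ly_i|$, and condition \eqref{eq:alphacond_ca} upgrades this to $\Ca|\alpha'(2\Ly_j)| < \beta_j/(\Ca \Ly_j)$. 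Combining, the cross term is strictly bounded in absolute value by $|\Ly_{i+1} - \Ly_i|$, up to a factor that is strictly less than $1$ (this is the discrete analog of $|\Ca\alpha'(2\Ly_t)\inner{x_t-\xmin}{p_t}| < 1$ used in \eqref{eq:boundinner}).

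The third step is a case split on the sign of $\beta_{i+1}-\beta_i$. Since $\alpha$ is non-increasing, $\beta_{i+1} \ge \beta_i$ iff $\Ly_{i+1} \le \Ly_i$. Pick whichever of \eqref{eq:Vdiffass1a} or \eqref{eq:Vdiffass1b} makes the cross term work in our favor (or at worst dominated by the main term): specifically, choose the inequality so that the main coefficient becomes $\beta_{\min} = \min(\beta_i, \beta_{i+1}) = \beta_i$ after monotonicity (or so that $\Ly'$ and $\Ly_{i+1}$ agree up to an already-controlled defect). Rewriting $\Ly^{(i)}_{i+1}$ in terms of $\Ly_{i+1}$ in case \eqref{eq:Vdiffass1b}, this yields
\[
\Ly_{i+1} - \Ly_i \;\le\; -\epsilon\, \beta_i (1 - \gamma - C_2\epsilon)\, \Ly_{i+1} \;+\; \tfrac12\bigl(\Ly_i - \Ly_{i+1}\bigr),
\]
and collecting terms produces the factor $\tfrac12$ on the main term, exactly mirroring the $(1 - \Ca\alpha'\inner{x,p}) \in (0, 2)$ absorption in the continuous proof. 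Solving for $\Ly_{i+1}$ gives \eqref{eq:Vdiffass1bnd}.

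For the second half of the lemma, assumptions \eqref{eq:Vdiffass2a}, \eqref{eq:Vdiffass2b} evaluate the right-hand side at iterate $i$ rather than $i{+}1$. The identical strategy applies but with \eqref{eq:goodenoughbeta2} used at $(x_i, p_i)$ and with $\Ly'$ equal to $\Ly_i$ or $\Ly^{(i+1)}_i := \Ly_i - (\beta_{i+1}-\beta_i)\inner{x_i-\xmin}{p_i}$; the same cross-term bound from step two absorbs the $\beta_{i+1}-\beta_i$ discrepancy, and one obtains the multiplicative (forward) bound \eqref{eq:Vdiffass2bnd} rather than the implicit (backward) bound. The main technical obstacle is book-keeping in step three: ensuring that with time-varying $\beta_i$ and non-monotone $\Ly_i$ (\emph{a priori}), the cross term from the discrepancy $\beta_{i+1} \neq \beta_i$ can always be absorbed into half of the main term regardless of the sign configuration, which is precisely what the pair of assumptions \eqref{eq:Vdiffass1a}--\eqref{eq:Vdiffass1b} (respectively \eqref{eq:Vdiffass2a}--\eqref{eq:Vdiffass2b}) is engineered to supply.
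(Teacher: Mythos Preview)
Your approach is essentially the same as the paper's: absorb $C_1\epsilon$ into the $\gamma^2(1-\gamma)/4$ slack of \eqref{eq:goodenoughbeta2}, use \eqref{eq:goodenoughbeta2} at the relevant iterate to compress the three-term right-hand side into $-\epsilon\beta(1-\gamma)\Ly$, and control the $(\beta_{i+1}-\beta_i)\inner{\cdot}{\cdot}$ defect via convexity of $\alpha$ and \eqref{eq:alphacond_ca} so that it is strictly dominated by $|\Ly_{i+1}-\Ly_i|$.

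The one place where your sketch is looser than the paper is the order of operations in step three. The paper does not run a symmetric case split. It first uses \eqref{eq:Vdiffass1a} with $\beta=\beta_{i+1}$ purely to establish $\Ly_{i+1}\le\Ly_i$ by contradiction (assume $\Ly_{i+1}>\Ly_i$, then the cross term at index $i$ is strictly smaller than $\Ly_{i+1}-\Ly_i$, contradiction). Only \emph{after} monotonicity is in hand does it invoke \eqref{eq:Vdiffass1b} with $\beta=\beta_i$; monotonicity is what legitimises $\K(p_{i+1})\le 2\Ly_{i+1}\le 2\Ly_i$, which you need so that $\beta_i\le \tfrac{\gamma}{2}\alpha(2\Ly_i)\le \tfrac{\gamma}{2}\alpha(\K(p_{i+1}))$ and \eqref{eq:goodenoughbeta2} actually applies at $(x_{i+1},p_{i+1})$ with $\beta=\beta_i$. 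Your verification ``$\beta_j\le\alpha(\K(p_{i+1}))\gamma/2$'' only goes through for $j=i+1$ a priori; for $j=i$ it requires $\Ly_{i+1}\le\Ly_i$, so the contradiction step must come first. Your auxiliary object $\Ly^{(i)}_{i+1}$ is not needed once you adopt this ordering.

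Finally, the coefficient $\tfrac12$ on $(\Ly_i-\Ly_{i+1})$ in your displayed inequality should be $1$: the cross-term bound is $|(\beta_{i+1}-\beta_i)\inner{x_{i+1}-\xmin}{p_{i+1}}|<\Ly_i-\Ly_{i+1}$, not half of it, and that is precisely what yields the factor $\tfrac12$ in front of the main term after moving $\Ly_i-\Ly_{i+1}$ to the left.
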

\begin{proof}
First suppose that assumptions \eqref{eq:Vdiffass1a} and \eqref{eq:Vdiffass1b} hold. Using \eqref{eq:goodenoughbeta2} of Lemma \ref{lem:Vderivativebound} with $\alpha= \alpha(2\Ly_{i+1})$ and $\beta=\beta_{i+1}$, it follows that for $\epsilon\le \frac{\gamma^2 (1-\gamma)}{4 C_1}$,
	\begin{align*}
		&-\epsilon (\gamma - \beta_{i+1}-C_1 \epsilon) \K(p_{i+1}) - \epsilon \beta_{i+1} (\f(x_{i+1})-\f(\xmin)) -\epsilon\beta_{i+1} \gamma \inner{x_{i+1}-\xmin}{p_{i+1}}\\
		&\le -\epsilon\beta_{i+1}(1-\gamma) \Ly_{i+1},
	\end{align*}
	and by combining the terms in \eqref{eq:Vdiffass1a}, we have
	\begin{align}\label{eq:Lydiffbnd1}
		&\Ly_{i+1}-\Ly_i\le -\epsilon\beta_{i+1}[1-\gamma- \epsilon C_2] \Ly_{i+1}+(\beta_{i+1}-\beta_i) \inner{x_{i}-\xmin}{p_{i}}.
	\end{align}
Now we are going to prove that $\Ly_{i+1}\le \Ly_i$ under the assumptions of the lemma. We argue by contradiction, suppose that $\Ly_{i+1}>\Ly_i$. Then by the non-increasing property of $\alpha$, and the definition $\beta_i=\frac{\Ca}{2} \alpha(2\Ly_i)$, we have $\beta_{i+1}\le \beta_i$. Using the convexity of $\alpha$, we have $\alpha(y)-\alpha(x)\le \alpha'(y)(y-x)$ for any $x,y\ge 0$, hence we obtain that
\[|\beta_{i+1}-\beta_i|=\beta_i-\beta_{i+1}=\frac{\Ca}{2} (\alpha(2\Ly_i)-\alpha(2\Ly_{i+1}))\le \Ca (\Ly_{i+1}-\Ly_i)(-\alpha'(2\Ly_{i})),\]
and by \eqref{eq:Kperalphaf2sideddisc} and assumption \ref{ass:cont:alpha} we have
\begin{align*}
	\l|\beta_{i+1}-\beta_i\r|  \l|\inner{x_{i}-\xmin}{p_{i}}\r|\le \Ca (\Ly_{i+1}-\Ly_i)(-\alpha'(2\Ly_{i}))  \frac{2\Ly_i}{\alpha(2\Ly_i)}< \Ly_{i+1}-\Ly_i.
\end{align*}
Combining this with \eqref{eq:Lydiffbnd1} we obtain that $\Ly_{i+1}-\Ly_i<\Ly_{i+1}-\Ly_i$, which is a contradiction. Hence we have shown that $\Ly_{i+1}\le \Ly_i$, which implies that $\beta_{i+1}\ge \beta_i$.

Using \eqref{eq:goodenoughbeta2} of Lemma \ref{lem:Vderivativebound}  with $\alpha= \alpha(2\Ly_{i})$ and $\beta=\beta_{i}$, it follows that for $0< \epsilon\le \frac{\gamma^2 (1-\gamma)}{4 C_1}$,
\begin{align*}
	&-\epsilon (\gamma - \beta_{i}-C_1 \epsilon) \K(p_{i+1}) - \epsilon \beta_{i} (\f(x_{i+1})-\f(\xmin)) -\epsilon\beta_{i} \gamma \inner{x_{i+1}-\xmin}{p_{i+1}}\\
	&\le -\epsilon\beta_{i}(1-\gamma) \Ly_{i+1},
\end{align*}
and hence by substituting this to \eqref{eq:Vdiffass1a}, it follows that
\begin{align}\label{eq:Lydiffbnd2}
	&\Ly_{i+1}-\Ly_i\le -\epsilon\beta_{i}[1-\gamma-\epsilon C_2] \Ly_{i+1}+(\beta_{i+1}-\beta_i) \inner{x_{i+1}-\xmin}{p_{i+1}}.
\end{align}
Now using the convexity of $\alpha$, and the fact that $\beta_{i+1}\ge \beta_i$, we have
\[|\beta_{i+1}-\beta_i|=\beta_{i+1}-\beta_i =\frac{\Ca}{2} (\alpha(2\Ly_{i+1})-\alpha(2\Ly_{i}))\le \Ca (\Ly_{i}-\Ly_{i+1})(-\alpha'(2\Ly_{i+1})),\]
and by \eqref{eq:Kperalphaf2sideddisc} and assumption \ref{ass:cont:alpha} we have
\begin{align*}
	\l|\beta_{i+1}-\beta_i\r|  \l|\inner{x_{i+1}-\xmin}{p_{i+1}}\r|\le \Ca (\Ly_{i}-\Ly_{i+1})(-\alpha'(2\Ly_{i+1}))  \frac{2\Ly_{i+1}}{\alpha(2\Ly_{i+1})}< \Ly_{i}-\Ly_{i+1}.
\end{align*}
By combining this with \eqref{eq:Lydiff2}, we obtain that
\begin{align*}
	&\Ly_{i+1}-\Ly_i\le -\frac{\epsilon\beta_{i}}{2}[1-\gamma-\epsilon C_2] \Ly_{i+1},
\end{align*}
and the first claim of the lemma follows by rearrangement and monotonicity.

The proof of the second claim based on assumptions \eqref{eq:Vdiffass2a} and \eqref{eq:Vdiffass2b} is as follows. As previously, in the first step, we show that $\Ly_{i+1}\le \Ly_i$ by contradiction. Suppose that $\Ly_{i+1}>\Ly_{i}$, then $\beta_{i+1}\le \beta_i$. Using \eqref{eq:goodenoughbeta2} of Lemma \ref{lem:Vderivativebound} with $\alpha= \alpha(2\Ly_{i})$ and $\beta=\beta_{i+1}\le \beta_i\le \frac{\alpha \gamma}{2}$, it follows that for $\epsilon\le \frac{\gamma^2 (1-\gamma)}{4 C_1}$,
	\begin{align*}
		&-\epsilon (\gamma - \beta_{i+1}-C_1 \epsilon) \K(p_{i+1}) - \epsilon \beta_{i+1} (\f(x_{i+1})-\f(\xmin)) -\epsilon\beta_{i+1} \gamma \inner{x_{i+1}-\xmin}{p_{i+1}}\\
		&\le -\epsilon\beta_{i+1}(1-\gamma) \Ly_{i+1},
	\end{align*}
	and by combining the terms in \eqref{eq:Vdiffass2a}, we have
	\begin{align}\label{eq:Lydiffbnd1}
		&\Ly_{i+1}-\Ly_i\le -\epsilon\beta_{i+1}[1-\gamma- \epsilon C_2] \Ly_{i}+(\beta_{i+1}-\beta_i) \inner{x_{i}-\xmin}{p_{i}}.
	\end{align}
The rest of the proof follows the same steps as for assumptions \eqref{eq:Vdiffass1a} and \eqref{eq:Vdiffass1b}, hence it is omitted.
\end{proof}

Now we are ready to prove the main result of this section.

\implemma*
\begin{proof}
	We follow the notations of Lemma \ref{lem:prelimbetai}, and the proof is based on Lemma \ref{lem:prelimconv}. By rearrangement of the \eqref{eq:implicit}, we have
	\begin{equation}
	\label{eq:implicit3} \begin{aligned}
	x_{i+1} - x_i &= \epsilon\nabla \K(p_{i+1})\\
	p_{i+1} - p_i &= - \gamma\epsilon p_{i+1} - \epsilon\nabla f(x_{i+1})
	\end{aligned}
	\end{equation}
	For the Hamiltonian terms, by the convexity of $f$ and $\K$, we have
	\begin{align}
		\nonumber	\Ha_{i+1} - \Ha_i &\leq \inner{\grad \K(p_{i+1})}{p_{i+1}-p_{i}}+\inner{\grad \f(x_{i+1})}{x_{i+1}-x_{i}}\\
		&= \inner{\grad \K(p_{i+1})}{- \gamma\epsilon p_{i+1} - \epsilon\nabla f(x_{i+1})} + \epsilon \inner{\grad f(x_{i+1})}{\grad \K(p_{i+1})} \\
		\label{eq:Habnd1}	&= - \gamma \epsilon \inner{\grad \K(p_{i+1})}{p_{i+1}}
	\end{align}
	For the inner product terms, we have
	\begin{align*}
		&\inner{x_{i+1}-\xmin}{p_{i+1}}-\inner{x_i-\xmin}{p_i}=\inner{x_{i+1}-\xmin}{p_{i+1}}-\inner{x_{i+1}-\xmin-(x_{i+1}-x_i)}{p_{i+1}-(p_{i+1}-p_i)}\\
		&=\inner{x_{i+1}-\xmin}{p_{i+1}}-\inner{x_{i+1}-\xmin-\epsilon \nabla \K(p_{i+1})}{p_{i+1}+\epsilon\gamma p_{i+1} + \epsilon \nabla f(x_{i+1})}\\
		&=(\epsilon+\gamma \epsilon^2) \inner{p_{i+1}}{\nabla \K(p_{i+1})}-\epsilon\inner{x_{i+1}-\xmin}{\nabla f(x_{i+1})}-\epsilon\gamma\inner{x_{i+1}-\xmin}
		{p_{i+1}}+\epsilon^2 \inner{\nabla \K(p_{i+1})}{\nabla f(x_{i+1})},
	\end{align*}
	and by assumption \ref{ass:imp:innerproductfk} we have
	\begin{align}\label{eq:gradKgradfimp}
		\inner{\nabla \K(p_{i+1})}{\nabla f(x_{i+1})}&\le   \Cfk \Ha_{i+1}\le 2  \Cfk \Ly_{i+1},
	\end{align}
	and hence
	\begin{align}\label{eq:xpdiffimp}
	\inner{x_{i+1}-\xmin}{p_{i+1}}-\inner{x_i-\xmin}{p_i}&\le (\epsilon+\gamma \epsilon^2) \inner{p_{i+1}}{\nabla \K(p_{i+1})}
	-\epsilon\inner{x_{i+1}-\xmin}{\nabla f(x_{i+1})}\\
	\nonumber&-\epsilon\gamma\inner{x_{i+1}-\xmin}
	{p_{i+1}}+2\epsilon^2 \Cfk \Ly_{i+1}.
	\end{align}
	By assumption \ref{ass:cont:alpha} on $\Ca$ we have $\beta_{i+1} \le \frac{\gamma}{2}$, and using the condition $\epsilon<\frac{1-\gamma}{2(\Cfk+\gamma)}$ of the lemma, we have
	\begin{equation}\label{eq:gammabetaeps}\gamma-\beta_{i+1}-\epsilon\gamma \beta_{i+1}\ge \gamma-\frac{\gamma}{2}-\frac{ (1-\gamma)}{2\gamma} \gamma \frac{\gamma}{2}>0.\end{equation}
	By \eqref{eq:Lydiff2}, \eqref{eq:Habnd1}, \eqref{eq:xpdiffimp}, we have
		\begin{align*}
		\Ly_{i+1}-\Ly_i &\le
		- \epsilon(\gamma-\beta_{i+1}-\epsilon\gamma\beta_{i+1})  \inner{\grad \K(p_{i+1})}{p_{i+1}}-\epsilon\beta_{i+1}\inner{x_{i+1}-\xmin}{\nabla f(x_{i+1})}\\&
		-\epsilon\gamma \beta_{i+1} \inner{x_{i+1}-\xmin}
		{p_{i+1}}
		+2\epsilon^2 \Cfk \beta_{i+1} \Ly_{i+1}
		+(\beta_{i+1}-\beta_i) \inner{x_{i}-\xmin}{p_{i}}
		\intertext{ using the convexity of $f$ and $\K$, and inequality \eqref{eq:gammabetaeps}}
		&\le	
		 -\epsilon (\gamma - \beta_{i+1} - \epsilon \gamma \beta_{i+1}) \K(p_{i+1})- \epsilon \beta_{i+1} (\f(x_{i+1})-\f(\xmin))  -\epsilon \gamma \beta_{i+1} \inner{x_{i+1}-\xmin}{p_{i+1}} \\
		&+2\epsilon^2 \Cfk \beta_{i+1} \Ly_{i+1}+(\beta_{i+1}-\beta_i) \inner{x_{i}-\xmin}{p_{i}}.
		\end{align*}
Using the fact that $\beta_{i+1}\le \frac{C_{\alpha,\gamma}}{2}\le \frac{\gamma}{2}$, it follows that \eqref{eq:Vdiffass1a} holds with $C_1=\frac{\gamma^2}{2}$ and $C_2=2 \Cfk$.
	
By \eqref{eq:Lydiff1}, \eqref{eq:Habnd1}, \eqref{eq:xpdiffimp}, it follows that
\begin{align*}
	&\Ly_{i+1}-\Ly_i\le
		\Ha_{i+1}-\Ha_i+\beta_i(\inner{x_{i+1}-\xmin}{p_{i+1}}-\inner{x_{i}-\xmin}{p_{i}})+
	(\beta_{i+1}-\beta_i) \inner{x_{i+1}-\xmin}{p_{i+1}}\\
	&\le - \gamma \epsilon \inner{\grad \K(p_{i+1})}{p_{i+1}}+
	\beta_i(\inner{x_{i+1}-\xmin}{p_{i+1}}-\inner{x_{i}-\xmin}{p_{i}})+
	(\beta_{i+1}-\beta_i) \inner{x_{i+1}-\xmin}{p_{i+1}}\\		
	&\le - \gamma \epsilon \inner{\grad \K(p_{i+1})}{p_{i+1}}+(\beta_{i+1}-\beta_i) \inner{x_{i+1}-\xmin}{p_{i+1}}+(\beta_i\epsilon+\beta_i\gamma \epsilon^2) \inner{p_{i+1}}{\nabla \K(p_{i+1})}\\
	&-\beta_i\epsilon\inner{x_{i+1}-\xmin}{\nabla f(x_{i+1})}
	-\beta_i\epsilon\gamma\inner{x_{i+1}-\xmin}
	{p_{i+1}}+2\beta_i \epsilon^2 \Cfk \Ly_{i+1}	
\intertext{using the convexity of $f$ and $\K$, and inequality \eqref{eq:gammabetaeps}}
	&\le -\epsilon (\gamma - \beta_{i}-\epsilon \gamma \beta_i) \K(p_{i+1}) -\epsilon\beta_{i} \gamma \inner{x_{i+1}-\xmin}{p_{i+1}} - \epsilon \beta_{i} (\f(x_{i+1})-\f(\xmin))\\
	&+2\epsilon^2 \Cfk \beta_{i} \Ly_{i+1}+(\beta_{i+1}-\beta_i) \inner{x_{i+1}-\xmin}{p_{i+1}},	
\end{align*}
implying that \eqref{eq:Vdiffass1b} holds with $C_1=\frac{\gamma^2}{2}$ and $C_2=2 \Cfk$. The claim of the Lemma now follows from Lemma \ref{lem:prelimconv}.
\end{proof}

\subsection{First Explicit Method}
The following lemma is a preliminary result that will be useful for proving our convergence bounds for this discretization.

\begin{lem}\label{lem:prelimsemiA}
Given $f$, $\K$, $\gamma$, $\alpha$, $\Ca$, $\Cfk$, $\Ck$, $\Dfk$ satisfying assumptions \ref{ass:cont}, \ref{ass:imp}, and \ref{ass:semiA}, and $0<\epsilon\le \frac{\Ca}{10\Cfk+5\gamma \Ck}$, the iterates \eqref{eq:semiA} satisfy that for every $i\ge 0$,
\begin{equation}\label{eq:gradfdiffcond}
\inner{\grad f(x_{i+1})-\grad f(x_{i})}{x_{i+1}-x_i}\le 3\epsilon^2 \Dfk \min(\alpha(3\Ha_i),\alpha(3\Ha_{i+1}))  \Ha_{i+1}.
\end{equation}
\end{lem}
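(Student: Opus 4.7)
The plan is to convert the left-hand side into an integral over the line segment between $x_i$ and $x_{i+1}$, apply assumption \ref{ass:semiA:gKHfsemi} pointwise, and then use the step size constraint to compare the Hamiltonian at intermediate points to $\Ha_i$ and $\Ha_{i+1}$.

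First I would exploit the update rule $x_{i+1}-x_i = \epsilon\grad \K(p_{i+1})$ from \eqref{eq:semiA} together with the fundamental theorem of calculus. Setting $y_s := x_i + s(x_{i+1}-x_i)$ and using twice continuous differentiability of $f$ (assumption \ref{ass:semiA:ftwicediff}), we obtain
\begin{equation*}
\inner{\grad f(x_{i+1})-\grad f(x_i)}{x_{i+1}-x_i} = \epsilon^2\int_0^1 \inner{\grad \K(p_{i+1})}{\hess f(y_s)\grad \K(p_{i+1})}\,ds.
\end{equation*}
Applying assumption \ref{ass:semiA:gKHfsemi} at each $(y_s,p_{i+1})$ bounds the integrand by $\Dfk\,\alpha(3\Ha(y_s,p_{i+1}))\Ha(y_s,p_{i+1})$. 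So the goal reduces to showing that for $j\in\{i,i+1\}$,
\begin{equation*}
\int_0^1 \alpha(3\Ha(y_s,p_{i+1}))\Ha(y_s,p_{i+1})\,ds \le 3\,\alpha(3\Ha_j)\Ha_{i+1}.
\end{equation*}

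The key observation is that the map $g(y):=\alpha(3y)y$ is non-decreasing: by assumption \ref{ass:cont:alpha} and $\Ca\le\gamma<1$, we have $-\alpha'(y)y<\alpha(y)/\Ca\le\alpha(y)$, so $g'(y)=\alpha'(y)y+\alpha(y)>0$. Combined with monotonicity of $\alpha$ (non-increasing), if I can show $\Ha(y_s,p_{i+1})\le C\max(\Ha_i,\Ha_{i+1})$ for a constant $C$ with $C\le 3$, then
\begin{equation*}
g(\Ha(y_s,p_{i+1}))\le g(C\,M) = \alpha(3CM)\,CM\le C\,\alpha(3M)M,
\end{equation*}
where $M:=\max(\Ha_i,\Ha_{i+1})$. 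To bound $\Ha(y_s,p_{i+1})$, I use convexity of $f$ to get $f(y_s)\le(1-s)f(x_i)+sf(x_{i+1})$, which yields
\begin{equation*}
\Ha(y_s,p_{i+1})\le \max(\Ha_i,\Ha_{i+1}) + \K(p_{i+1}) \le 2M.
\end{equation*}

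The main obstacle is the bookkeeping required in the final step: we need $M\le\tfrac{3}{2}\Ha_{i+1}$ (equivalently $\Ha_i\le\tfrac{3}{2}\Ha_{i+1}$) so that $C\cdot M\le 3\Ha_{i+1}$ gives the claimed factor. This requires showing that under the step-size constraint $\epsilon\le\Ca/(10\Cfk+5\gamma\Ck)$, the Hamiltonians at consecutive iterates are close. I would use convexity of $\K$ to control $\K(p_i)-\K(p_{i+1})$ and $\K(p_{i+1})-\K(p_i)$ via the update $p_i-p_{i+1}=(\gamma\epsilon p_i+\epsilon\grad f(x_i))/(1+\gamma\epsilon)$, together with assumption \ref{ass:semiA:gKpsem} ($\inner{\grad \K(p)}{p}\le\Ck\K(p)$) and assumption \ref{ass:imp:innerproductfk} ($|\inner{\grad f}{\grad \K}|\le\Cfk\Ha$), and similarly convexity of $f$ to control $f(x_{i+1})-f(x_i)$ via $\grad f(x_{i+1})$ and $\grad f(x_i)$. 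Each of these produces inequalities of the form $\Ha_{i+1}(1-O(\epsilon\Cfk))\le(1+O(\epsilon\Cfk))\Ha_i$ and $\Ha_i(1-O(\epsilon\Cfk+\gamma\epsilon\Ck))\le$ (constant)$\cdot\Ha_{i+1}$, and the specific choice of denominators $10$ and $5$ in the step-size bound is tuned so that both ratios are at most $\tfrac{3}{2}$. Putting these pieces together and integrating over $s\in[0,1]$ yields the claimed bound, including taking $\min$ over $\alpha(3\Ha_i)$ and $\alpha(3\Ha_{i+1})$ since $\alpha$ is non-increasing and $M$ serves as the larger argument for either choice of $j$.
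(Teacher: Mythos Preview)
Your integral setup and use of assumption \ref{ass:semiA:gKHfsemi} are exactly right, and the subsequent strategy of comparing Hamiltonians along the segment to $\Ha_i$ and $\Ha_{i+1}$ is the correct idea. However, the central step ``$g(y)=\alpha(3y)y$ is non-decreasing'' is false in general, and the justification contains a sign error: from $-\Ca\alpha'(y)y<\alpha(y)$ you only get $-\alpha'(y)y<\alpha(y)/\Ca$, and since $\Ca\le\gamma<1$ this gives $\alpha(y)/\Ca\ge\alpha(y)$, not $\le$. Concretely, for $\alpha(y)=(y+1)^{-2}$ (which satisfies \eqref{eq:alphacond_ca} with any $\Ca<1/2$) one computes $g'(y)=(3y+1)^{-3}(1-3y)<0$ for $y>1/3$, so $g$ is not monotone. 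Without monotonicity of $g$, your chain $g(\Ha(y_s,p_{i+1}))\le g(CM)$ breaks and the argument does not close.

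The paper avoids this by bounding the two factors $\Ha_{i+1}^{(t)}$ and $\alpha(3\Ha_{i+1}^{(t)})$ separately rather than their product. The factor $\Ha_{i+1}^{(t)}$ is bounded above by $(1-\epsilon\Cfk)^{-1}\Ha_{i+1}$ using convexity and assumption \ref{ass:imp:innerproductfk}. For $\alpha(3\Ha_{i+1}^{(t)})$ the paper needs a \emph{lower} bound on $\Ha_{i+1}^{(t)}$ (since $\alpha$ is non-increasing): it shows $\Ha_{i+1}-\Ha_{i+1}^{(t)}\le \tfrac{\epsilon\Cfk}{1-\epsilon\Cfk}\Ha_{i+1}^{(t)}$ and $\Ha_i-\Ha_{i+1}^{(t)}\le \tfrac{\epsilon(3\Cfk+\gamma\Ck)}{1-(2\Cfk+\gamma\Ck)\epsilon}\Ha_{i+1}^{(t)}$, then combines each with the convexity of $\alpha$ and \eqref{eq:alphacond_ca} to get $\alpha(3\Ha_{i+1}^{(t)})\le(1+O(\epsilon/\Ca))\,\alpha(3\Ha_j)$ for $j\in\{i,i+1\}$. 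This is precisely where the $1/\Ca$ factor enters, explaining why the step-size bound scales with $\Ca$. Your proposal would need a similar two-sided control of $\Ha(y_s,p_{i+1})$ and a convexity-of-$\alpha$ argument replacing the monotonicity claim.
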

\begin{proof}
Let $x_{i+1}^{(t)}:=x_{i+1}-t \epsilon \grad \K(p_{i+1})$ and $\Ha_{i+1}^{(t)}:=\Ha(x_{i+1}^{(t)},p_{i+1})$. Using the assumptions that $f$ is 2 times continuously differentiable, and assumption \ref{ass:semiA:gKHfsemi}, we have
\begin{align}
\nonumber&\inner{\grad f(x_{i+1})-\grad f(x_{i})}{x_{i+1}-x_i}=\int_{t=0}^{1} \inner{x_{i+1}-x_i}{\grad^2 f(x_{i+1}-t(x_{i+1}-x_i)) (x_{i+1}-x_i) }dt\\
\label{eq:semiAprelimmain}&=\epsilon^2 \int_{t=0}^{1}\inner{\grad \K(p_{i+1})}{\grad^2 f(x_{i+1}^{(t)}) \grad \K(p_{i+1})}dt\le \epsilon^2 \Dfk \int_{t=0}^{1}
\alpha(3\Ha_{i+1}^{(t)}) \Ha_{i+1}^{(t)}dt,
\end{align}
where in the last step we have used the fundamental theorem of calculus, which is applicable since $\inner{\grad \K(p_{i+1})}{\grad^2 f(x_{i+1}^{(t)}) \grad \K(p_{i+1})}$ is piecewise continuous by assumption \ref{ass:semiA:ftwicediff}.
We are going to show the following inequalities based on the assumptions of the Lemma,
\begin{align}
\label{eq:prelimsemiAin1}\Ha_{i+1}^{(t)}&\le \frac{1}{1-\epsilon \Cfk} \Ha_{i+1},\\
\label{eq:prelimsemiAin2}\alpha(3\Ha_{i+1}^{(t)})&\le \alpha(3\Ha_{i+1})\cdot \frac{1-\epsilon \Cfk}{1-\epsilon \Cfk(1+1/\Ca)},\\
\label{eq:prelimsemiAin3}\alpha(3\Ha_{i+1}^{(t)})&\le \alpha(3\Ha_{i})\cdot \frac{1-\epsilon(2 \Cfk +\gamma \Ck)}{1-\epsilon [\Cfk(2+3/\Ca)+\gamma \Ck (1+1/\Ca)]}.
\end{align}
The claim of the lemma follows directly by combining these 3 inequalities with \eqref{eq:semiAprelimmain} and using the assumptions on $\epsilon$.

First, by convexity and assumption \ref{ass:imp:innerproductfk}, we have
\begin{align*}
	&\Ha_{i+1}^{(t)}-\Ha_{i+1}=f(x_{i+1}^{(t)})-f(x_{i+1})\le -\inner{\grad f(x_{i+1}^{(t)})}{t\epsilon \grad \K(p_{i+1})}\le t\epsilon \Cfk \Ha_{i+1}^{(t)},
\end{align*}
and \eqref{eq:prelimsemiAin1} follows by rearrangement. In the other direction, by convexity and assumption \ref{ass:imp:innerproductfk}, we have \begin{align*}&\Ha_{i+1}-\Ha_{i+1}^{(t)}=f(x_{i+1})-f(x_{i+1}^{(t)})\le \inner{\grad f(x_{i+1})}{t\epsilon \grad \K(p_{i+1})}\le t\epsilon \Cfk \Ha_{i+1},
	\intertext{so by rearrangement, it follows that}
	&\Ha_{i+1}-\Ha_{i+1}^{(t)}\le \frac{t\epsilon \Cfk}{1-t\epsilon \Cfk} \Ha_{i+1}^{(t)}.
\end{align*}
Using this, and the convexity of $\alpha$, and Assumption \ref{ass:cont:alpha}, we have
\begin{align*}
	&\alpha(3\Ha_{i+1}^{(t)})-\alpha(3\Ha_{i+1})\le -3\alpha'(3\Ha_{i+1}^{(t)}) (\Ha_{i+1}-\Ha_{i+1}^{(t)})\le  -\alpha'(3\Ha_{i+1}^{(t)})3\Ha_{i+1}^{(t)}\frac{t\epsilon \Cfk}{1-t\epsilon \Cfk}\\
	&\le \frac{1}{\Ca}  \frac{t\epsilon \Cfk}{1-t\epsilon \Cfk} \alpha(3\Ha_{i+1}^{(t)}),
\end{align*}
and \eqref{eq:prelimsemiAin2} follows by rearrangement. Finally, using the convexity of $f$ and $\K$, we have
\begin{align*}
\Ha_i-\Ha_{i+1}^{(t)}&=\K(p_i)-\K(p_{i+1})+f(x_i)-f(x_{i+1})\\
&\le \inner{\grad \K(p_i)}{\frac{\gamma \epsilon}{1+\gamma \epsilon} p_i+\frac{\epsilon}{1+\gamma \epsilon}f(x_i)}+\inner{\grad f(x_i)}{-\epsilon(1-t) \grad \K(p_{i+1})}\\
\intertext{using Assumptions \ref{ass:imp:innerproductfk} and \ref{ass:semiA:gKpsem}}
&\le \gamma \epsilon \Ck \K(p_i)+\epsilon \Cfk \Ha_i+\epsilon \Cfk (\K(p_{i+1})+f(x_i)-f(\xmin))\\
&\le \epsilon[(2\Cfk+\gamma \Ck)\Ha_i+\Cfk \Ha_{i+1}^{(t)}].
\end{align*}
By rearrangement, this implies that
\[\Ha_i-\Ha_{i+1}^{(t)}\le \frac{\epsilon(3 \Cfk+\gamma \Ck)}{1-(2\Cfk+\gamma \Ck)\epsilon}\cdot \Ha_{i+1}^{(t)}.\]
Using this, the convexity of $\alpha$, and Assumption \ref{ass:cont:alpha}, we have
\begin{align*}
	&\alpha(3\Ha_{i+1}^{(t)})-\alpha(3\Ha_{i})\le -3\alpha'(3\Ha_{i+1}^{(t)}) (\Ha_{i}-\Ha_{i+1}^{(t)})\le  -\alpha'(3\Ha_{i+1}^{(t)})3\Ha_{i+1}^{(t)}\cdot
	\frac{\epsilon(3 \Cfk+\gamma \Ck)}{1-(2\Cfk+\gamma \Ck)\epsilon}\\
	&\le \frac{1}{\Ca} \cdot \frac{\epsilon(3 \Cfk+\gamma \Ck)}{1-(2\Cfk+\gamma \Ck)\epsilon}\cdot \alpha(3\Ha_{i+1}^{(t)}),
\end{align*}
and \eqref{eq:prelimsemiAin3} follows by rearrangement.
\end{proof}

Now we are ready to prove our convergence bound for this discretization.
\semiAlemma*

\begin{proof}
	We follow the notations of Lemma \ref{lem:prelimbetai}, and the proof is based on Lemma \ref{lem:prelimconv}.
	For the Hamiltonian terms, by the convexity of $f$ and $\K$, we have
	\begin{align}
		\nonumber	\Ha_{i+1} - \Ha_i &=f(x_{i+1})-f(x_i)+\K(p_{i+1})-\K(p_i)\\		
		\nonumber&\leq \inner{\grad f(x_{i+1})}{x_{i+1}-x_i}+
		\inner{\grad \K(p_{i+1})}{p_{i+1}-p_i}\\	
		\nonumber &= \inner{\grad f(x_{i})}{x_{i+1}-x_i}+
		\inner{\grad \K(p_{i+1})}{p_{i+1}-p_i}+\inner{\grad f(x_{i+1})-\grad f(x_{i})}{x_{i+1}-x_i}\\
		\nonumber &=  \epsilon\inner{\grad f(x_{i})}{\grad \K(p_{i+1})}-\epsilon\inner{\grad \K(p_{i+1})}{\grad f(x_{i})+\gamma p_{i+1}}
		+\inner{\grad f(x_{i+1})-\grad f(x_{i})}{x_{i+1}-x_i}\\		
		\label{eq:Habnd1A}	&= - \gamma \epsilon \inner{\grad \K(p_{i+1})}{p_{i+1}}	+\inner{\grad f(x_{i+1})-\grad f(x_{i})}{x_{i+1}-x_i}
	\end{align}
	for any $\epsilon>0$.
	Note that by convexity and assumption \ref{ass:imp:innerproductfk}, we have
	\begin{align*}-f(x_i)&=-f(x_{i+1})+f(x_{i+1})-f(x_i)\le -f(x_{i+1})+\epsilon \inner{\grad f(x_{i+1})}{\grad \K(p_{i+1})}\\
		&\le -f(x_{i+1})+\epsilon \Cfk \Ha_{i+1}\le -f(x_{i+1})+2\epsilon \Cfk \Ly_{i+1}.
	\end{align*}
	For the inner product terms, using the above inequality and convexity, we have
	\begin{align}
\nonumber		&\inner{x_{i+1}-\xmin}{p_{i+1}}-\inner{x_i-\xmin}{p_i}\\
\nonumber		&=
		\inner{x_{i+1}-\xmin}{p_{i+1}}-\inner{x_{i}-\xmin}{p_{i+1}}+\inner{x_{i}-\xmin}{p_{i+1}}-\inner{x_i-\xmin}{p_i}\\
\nonumber		&=\epsilon \inner{\grad \K(p_{i+1})}{p_{i+1}}-\epsilon \inner{x_i-\xmin}{\grad f(x_i)}-\gamma \epsilon \inner{x_i-\xmin}{p_{i+1}}{p_{i+1}}\\
\nonumber		&=	(\epsilon+\gamma \epsilon^2) \inner{\grad \K(p_{i+1})}{p_{i+1}}-\epsilon \inner{x_i-\xmin}{\grad f(x_i)}-\gamma \epsilon \inner{x_{i+1}}{p_{i+1}}
		\\
\nonumber		&\le (\epsilon+\gamma \epsilon^2) \inner{\grad \K(p_{i+1})}{p_{i+1}}-\epsilon (f(x_i)-f(\xmin))-\gamma \epsilon \inner{x_{i+1}}{p_{i+1}}\\
\label{eq:xpdiffexp}		&\le (\epsilon+\gamma \epsilon^2) \inner{\grad \K(p_{i+1})}{p_{i+1}}-\epsilon (f(x_{i+1})-f(\xmin))-\gamma \epsilon \inner{x_{i+1}}{p_{i+1}}+2\epsilon^2 \Cfk \Ly_{i+1}.
	\end{align}
	Since $\Ca\le \gamma$, it follows that $\beta_{i+1}= \frac{\Ca}{2}\alpha(2\Ly_{i+1})\le \frac{\gamma}{2}$, and using the assumption on $\epsilon$, we have
	\begin{equation}\label{eq:gammabetaeps2}\gamma-\beta_{i+1}-\epsilon\gamma \beta_{i+1}\ge \gamma-\frac{\gamma}{2}-\frac{ 1-\gamma}{2\gamma} \gamma \frac{\gamma}{2}>0.
	\end{equation}
	By \eqref{eq:Lydiff2}, \eqref{eq:Habnd1A}, and \eqref{eq:xpdiffexp}, it follows that
	\begin{align*}
		&\Ly_{i+1}-\Ly_i= \Ha_{i+1}-\Ha_i+\beta_{i+1}(\inner{x_{i+1}-\xmin}{p_{i+1}}-
		\inner{x_{i}-\xmin}{p_{i}})+
		(\beta_{i+1}-\beta_i) \inner{x_{i}-\xmin}{p_{i}}\\
		&\le
		- \gamma \epsilon \inner{\grad \K(p_{i+1})}{p_{i+1}}	+\inner{\grad f(x_{i+1})-\grad f(x_{i})}{x_{i+1}-x_i}+
		(\beta_{i+1}-\beta_i) \inner{x_{i}-\xmin}{p_{i}}\\
		&+\beta_{i+1}\l( (\epsilon+\gamma \epsilon^2) \inner{\grad \K(p_{i+1})}{p_{i+1}}-\epsilon (f(x_{i+1})-f(\xmin))-\gamma \epsilon \inner{x_{i+1}}{p_{i+1}}+2\epsilon^2 \Cfk \Ly_{i+1}\r)
		\\
		&\le
		- \epsilon(\gamma-\beta_{i+1}-\epsilon\gamma\beta_{i+1})  \inner{\grad \K(p_{i+1})}{p_{i+1}}-\epsilon\beta_{i+1}f(x_{i+1})-\epsilon\gamma \beta_{i+1} \inner{x_{i+1}-\xmin}
		{p_{i+1}}\\
		&+2\epsilon^2 \beta_{i+1}  \Cfk \Ly_{i+1}+\inner{\grad f(x_{i+1})-\grad f(x_{i})}{x_{i+1}-x_i}+(\beta_{i+1}-\beta_i) \inner{x_{i}-\xmin}{p_{i}}
		\intertext{which can be further bounded using \eqref{eq:gammabetaeps2}, the convexity of $\K$, and Lemma \ref{lem:prelimsemiA} as}
		&\le	
		-\epsilon (\gamma - \beta_{i+1}-\epsilon\frac{\gamma^2}{2}) \K(p_{i+1}) -\epsilon\beta_{i+1} \gamma \inner{x_{i+1}-\xmin}{p_{i+1}} - \epsilon \beta_{i+1} (\f(x_{i+1})-\f(\xmin))\\
		&+2\epsilon^2(\Cfk+6 \Dfk/\Ca) \beta_{i+1} \Ly_{i+1}+(\beta_{i+1}-\beta_i) \inner{x_{i}-\xmin}{p_{i}},
	\end{align*}
	implying that \eqref{eq:Vdiffass1a} holds with $C_1=\frac{\gamma^2}{2}$ and $C_2=2(\Cfk+6 \Dfk/\Ca)$.

	Since $2\Ly_i\le 3\Ha_i$, and by applying Lemma \ref{lem:prelimsemiA} it follows that
	\begin{equation}\label{eq:gradfdiffbndA}
	\inner{\grad f(x_{i+1})-\grad f(x_{i})}{x_{i+1}-x_i}\le 6\epsilon^2 \Dfk \frac{\beta_i}{\Ca} \Ha_{i+1}\le 12\epsilon^2 \frac{\Dfk}{\Ca} \beta_i \Ly_{i+1}.
	\end{equation}
	By \eqref{eq:Lydiff1}, \eqref{eq:Habnd1A}, \eqref{eq:xpdiffexp}, and assumption \ref{ass:imp:innerproductfk}, we have
	\begin{align*}
		&\Ly_{i+1}-\Ly_i=\Ha_{i+1}-\Ha_i+\beta_i(\inner{x_{i+1}-\xmin}{p_{i+1}}-
		\inner{x_{i}-\xmin}{p_{i}})+
		(\beta_{i+1}-\beta_i) \inner{x_{i+1}-\xmin}{p_{i+1}}\\
		&\le - \gamma \epsilon \inner{\grad \K(p_{i+1})}{p_{i+1}}	+\inner{\grad f(x_{i+1})-\grad f(x_{i})}{x_{i+1}-x_i}+
		(\beta_{i+1}-\beta_i) \inner{x_{i+1}-\xmin}{p_{i+1}}	\\	
		&+(\beta_{i}\epsilon+\gamma \beta_i\epsilon^2) \inner{\grad \K(p_{i+1})}{p_{i+1}}-\beta_i\epsilon (f(x_{i+1})-f(\xmin))-\gamma \beta_i \epsilon \inner{x_{i+1}}{p_{i+1}}+2\epsilon^2\beta_i \Cfk \Ly_{i+1}		\\
		\intertext{using \eqref{eq:gradfdiffbndA} and the convexity of $f$ and $\K$}
		&\le -\epsilon \l(\gamma - \beta_{i}-\epsilon \frac{\gamma^2}{2}\r) \K(p_{i+1}) -\epsilon\beta_{i} \gamma \inner{x_{i+1}-\xmin}{p_{i+1}} - \epsilon \beta_{i} (\f(x_{i+1})-\f(\xmin))\\
		&+2\epsilon^2(\Cfk+6\Dfk/\Ca) \beta_{i} \Ly_{i+1}+(\beta_{i+1}-\beta_i) \inner{x_{i+1}-\xmin}{p_{i+1}},
	\end{align*}
	implying that \eqref{eq:Vdiffass1b} holds with $C_1=\frac{\gamma^2}{2}$ and $C_2=2(\Cfk+6 \Dfk/\Ca)$. The claim of the lemma now follows by Lemma \ref{lem:prelimconv}.
\end{proof}

\subsection{Second Explicit Method}
The following preliminary result will be used in the proof of the convergence bound.
\begin{lem}\label{lem:prelimsemiB}
	Given $f$, $\K$, $\gamma$, $\alpha$, $\Ca$, $\Cfk$, $\Ck$, $\Dfk$ satisfying assumptions \ref{ass:cont}, \ref{ass:imp}, and \ref{ass:semiB}, and $0<\epsilon\le \min\left(\frac{\Ca}{6 (5\Cfk+2\gamma \Ck)+12\gamma\Ca },\sqrt{\frac{1}{6\gamma^2 \Dk F_{\K}}} \right)$, the iterates \eqref{eq:semiB} satisfy that for every $i\ge 0$,
	\begin{equation}\label{eq:gradfdiffcondB}
	\inner{\grad \K(p_{i+1})-\grad \K(p_{i})}{p_{i+1}-p_i}\le \epsilon^2 C \min(\alpha(3\Ha_i),\alpha(3\Ha_{i+1}))  \Ha_{i}+\epsilon^2 D \K (p_{i}),
	\end{equation}
	where
	\begin{equation}\label{eq:gradfdiffcondBdefCD}
	C=3\Dfk, \qquad  D=2\gamma^2 D_k (1+E_k).
	\end{equation}
\end{lem}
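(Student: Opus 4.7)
The strategy mirrors Lemma \ref{lem:prelimsemiA}, with the Hessian of $\K$, rather than of $f$, now playing the leading role. The starting point is the integral identity
$$\inner{\grad \K(p_{i+1}) - \grad \K(p_i)}{p_{i+1}-p_i} = \int_0^1 \inner{p_{i+1}-p_i}{\hess \K(p^{(t)})(p_{i+1}-p_i)}\,dt,$$
with $p^{(t)} := p_i + t(p_{i+1}-p_i)$, justified by assumption \ref{ass:semiB:Ktwicediff} on sub-intervals avoiding $0$. Substituting the update $p_{i+1} - p_i = -\epsilon\gamma p_i - \epsilon\grad f(x_{i+1})$ into the integrand and applying the weighted quadratic inequality $\inner{a+b}{M(a+b)} \leq (1+\lambda)\inner{a}{Ma} + (1+\lambda^{-1})\inner{b}{Mb}$ (valid since $\hess \K(p^{(t)})$ is PSD) with an appropriately chosen $\lambda$ splits the integrand into a $p_i$-quadratic piece and a $\grad f(x_{i+1})$-quadratic piece, to be handled separately.

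The $\grad f(x_{i+1})$ piece is handled just as the analogous term in Lemma \ref{lem:prelimsemiA}: bound it by a constant multiple of $\alpha(3\Ha(x_{i+1}, p^{(t)}))\Ha(x_{i+1}, p^{(t)})$ via assumption \ref{ass:semiB:gfHKsemi}, and then transfer from the intermediate point to the endpoints $(x_i, p_i)$ and $(x_{i+1}, p_{i+1})$ using analogues of \eqref{eq:prelimsemiAin1}--\eqref{eq:prelimsemiAin3}. The relevant comparisons rely on the convexity of $\K$ (to relate $\K(p^{(t)})$ with $\K(p_i)$ and $\K(p_{i+1})$), assumption \ref{ass:imp:innerproductfk}, the monotonicity and convexity of $\alpha$ from \ref{ass:cont:alpha}, and the step-size constraint $\epsilon \leq \Ca/(6(5\Cfk+2\gamma\Ck) + 12\gamma\Ca)$, ultimately yielding the contribution $\epsilon^2\cdot 3\Dfk\min(\alpha(3\Ha_i),\alpha(3\Ha_{i+1}))\Ha_i$.

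The main obstacle is the $p_i$ piece, $\epsilon^2\gamma^2 \inner{p_i}{\hess \K(p^{(t)})p_i}$, because assumption \ref{ass:semiB:gHpsem} only controls $\inner{p}{\hess \K(p)p}$ with the same $p$ in both slots. To bridge this mismatch, write $p_i = p^{(t)} - t(p_{i+1}-p_i)$ and apply Cauchy-Schwarz to get
$$\inner{p_i}{\hess \K(p^{(t)})p_i} \leq 2\inner{p^{(t)}}{\hess \K(p^{(t)})p^{(t)}} + 2t^2\inner{p_{i+1}-p_i}{\hess \K(p^{(t)})(p_{i+1}-p_i)};$$
invoke \ref{ass:semiB:gHpsem} on the first summand to produce $2\Dk\K(p^{(t)})$, and then apply \ref{ass:semiB:weird} to rewrite $\K(p^{(t)})$ as $(1+E_\K)\K(p_i) + F_\K\inner{\grad \K(p^{(t)}) - \grad \K(p_i)}{p^{(t)}-p_i}$. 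The fundamental theorem of calculus reexpresses this remaining inner product as a further integral of $\inner{p_{i+1}-p_i}{\hess \K\,(p_{i+1}-p_i)}$ along the segment.

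Integrating the combined bound over $t \in [0,1]$ and applying Fubini yields a self-referential inequality of the form
$$I \leq D\K(p_i) + (\text{gradient-of-}f\text{ contribution to }C) + c_0\,\epsilon^2\gamma^2\Dk F_\K\cdot I,$$
where $I$ denotes the original integral. The step-size constraint $\epsilon \leq \sqrt{1/(6\gamma^2\Dk F_\K)}$ forces $c_0\,\epsilon^2\gamma^2\Dk F_\K < 1$, so the $I$ on the right absorbs into the left and the estimate closes with the claimed constants $C = 3\Dfk$ and $D = 2\gamma^2\Dk(1+E_\K)$. The delicate part of the argument is choosing the splitting weights (both in the initial $(a,b)$-split and in the secondary $p_i = p^{(t)} - t(p_{i+1}-p_i)$ split) so that the coefficients come out exactly as advertised and so that the self-reference can be closed under the stated step-size bounds.
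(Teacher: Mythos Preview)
Your overall architecture is right---integral representation, split into a $p$-quadratic piece and a $\grad f$-quadratic piece, invoke \ref{ass:semiB:gfHKsemi} and the $\alpha$-comparison inequalities for the latter, and close a self-referential bound using assumption \ref{ass:semiB:weird} and the second step-size constraint---and this is exactly what the paper does. The one substantive difference is how the $p$-quadratic piece is handled, and here the paper takes a cleaner route that you should be aware of.

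You substitute $p_{i+1}-p_i = -\epsilon\gamma p_i - \epsilon\grad f(x_{i+1})$, obtain $\inner{p_i}{\hess\K(p^{(t)})p_i}$, and then bridge the mismatch between $p_i$ and the Hessian evaluation point $p^{(t)}$ by a secondary Cauchy--Schwarz on $p_i = p^{(t)} - t(p_{i+1}-p_i)$. This introduces an extra self-referential term $t^2\inner{\Delta p}{\hess\K(p^{(t)})\Delta p}$ in addition to the one coming from \ref{ass:semiB:weird}. The paper instead observes that the update rule lets one solve for $\Delta p$ \emph{directly in terms of} $p^{(t)}$:
\[
p_{i+1}-p_i \;=\; \frac{-\epsilon\gamma\, p^{(t)} - \epsilon\,\grad f(x_{i+1})}{1-\epsilon\gamma t},
\]
so after a single Cauchy--Schwarz the $p$-piece is already $\inner{p^{(t)}}{\hess\K(p^{(t)})p^{(t)}}$, which \ref{ass:semiB:gHpsem} controls without any secondary split. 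Convexity then gives $\int_0^1 \K(p^{(t)})\,dt \le \tfrac12(\K(p_i)+\K(p_{i+1}))$, and a single application of \ref{ass:semiB:weird} to $\K(p_{i+1})$ produces the sole self-reference.

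The practical consequence is that your route carries an unavoidable multiplicative overhead $(1+\lambda)(1+\mu)$ on the $\K(p_i)$-coefficient together with a competing $(1+\lambda^{-1})$ on the gradient term and $(1+\mu^{-1})$ in the self-reference; balancing these so that both $C=3\Dfk$ and $D=2\gamma^2\Dk(1+E_\K)$ emerge under the stated step-size bounds is not achievable (e.g.\ with $\lambda=\mu=1$ one already gets a pre-absorption factor of $4$ on $D$). Your approach would yield the lemma with some $C',D'$ of the same form, but not the advertised constants. The paper's rewriting trick eliminates the secondary split and the attendant bookkeeping entirely.
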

\begin{proof}
For $0\le t\le 1$, let
\begin{align}
p_{i}^{(t)}&:=p_i+t(p_{i+1}-p_i)=(1-\epsilon \gamma t)p_i-\epsilon t \grad f(x_{i+1}),\\
\Ha_{i}^{(t)}&:=\Ha\l(x_{i+1},p_{i}^{(t)}\r)=f(x_{i+1})-f(\xmin)+\K\l(p_{i}^{(t)}\r),\\
\Pa_{i,i+1}&:=\inner{\grad \K(p_{i+1})-\grad \K(p_{i})}{p_{i+1}-p_i}.
\end{align}
Note that by rearrangement we have $p_i=\l(p_{i}^{(t)}+\epsilon t \grad f(x_{i+1})\r)/(1-\epsilon \gamma t)$, and hence
\begin{equation}\label{eq:pip1pidiff}
p_{i+1}-p_i=\frac{p_i^{(t)}-p_i}{t}=\frac{-\epsilon \gamma p_{i}^{(t)}-\epsilon \grad f(x_{i+1}) }{1-\epsilon\gamma t}.
\end{equation}
Using assumption \ref{ass:semiB:Ktwicediff}, it follows that
$\inner{p_{i+1}-p_i}{\grad^2 \K\l(p_{i}^{(t)}\r) (p_{i+1}-p_i)}$ is piecewise continuous, hence by the fundamental theorem of calculus, we have
\begin{align}
\nonumber &\Pa_{i,i+1}= \int_{t=0}^{1}
\inner{p_{i+1}-p_i}{\grad^2 \K\l(p_{i}^{(t)}\r) (p_{i+1}-p_i)} dt\\
\nonumber&=\frac{1}{(1-\epsilon\gamma t)^2}\int_{t=0}^{1}
\inner{\epsilon \gamma p_{i}^{(t)}+\epsilon \grad f(x_{i+1})}{\grad^2 \K\l(p_{i}^{(t)}\r) (\epsilon \gamma p_{i}^{(t)}+\epsilon\grad f(x_{i+1})} dt\\
\label{eq:prelimsemiBmain}
&\le \frac{2\epsilon^2 \gamma^2}{(1-\epsilon\gamma)^2}\int_{t=0}^{1}
\inner{p_{i}^{(t)}}{\grad^2 \K\l(p_{i}^{(t)}\r) p_{i}^{(t)}}+
\frac{2\epsilon^2}{(1-\epsilon\gamma)^2}\int_{t=0}^{1}
\inner{\grad f(x_{i+1})}{\grad^2 \K\l(p_{i}^{(t)}\r) \grad f(x_{i+1})} dt
\end{align}
For the first integral, using Assumptions \ref{ass:semiB:gHpsem}, the convexity of $\K$, and then \ref{ass:semiB:weird}, we have
\begin{align}
\nonumber\int_{t=0}^{1}\inner{p_{i}^{(t)}}{\grad^2 \K\l(p_{i}^{(t)}\r) p_{i}^{(t)}} dt&\le D_{\K}\int_{t=0}^{1} \K\l(p_{i}^{(t)}\r) dt\le \frac{D_{\K}}{2}\l(\K(p_{i})+\K(p_{i+1})\r)\\
\label{eq:prelimsemiBint1}
&\le \frac{D_{\K}}{2}((1+E_{\K})\K(p_i)+F_{\K} \Pa_{i,i+1} )
\end{align}
For the second integral, using Assumption \ref{ass:semiB:gfHKsemi}, we have
\begin{align}\label{eq:prelimsemiBint2}
\int_{t=0}^{1}\inner{\grad f(x_{i+1})}{\grad^2 \K\l(p_{i}^{(t)}\r) \grad f(x_{i+1})}dt\le \Dfk \int_{t=0}^{1}\Ha_{i}^{(t)} \alpha(3\Ha_i^{(t)})dt.
\end{align}
We are going to show the following 3 inequalities based on the assumptions of the Lemma.
\begin{align}
	\label{eq:prelimsemiBin1}\Ha_{i}^{(t)}&\le \frac{1-\epsilon \gamma}{1-\epsilon(\gamma+2\Cfk)} \cdot \Ha_{i},\\
	\label{eq:prelimsemiBin2}\alpha(3\Ha_{i}^{(t)})&\le \alpha(3\Ha_{i+1})\cdot \frac{1-(\Cfk+\gamma)\epsilon }{1-(\Cfk+\gamma+\Cfk/\Ca)\epsilon},\\
	\label{eq:prelimsemiBin3}\alpha(3\Ha_{i}^{(t)})&\le \alpha(3\Ha_{i})\cdot \frac{1-\epsilon(2 \Cfk +\gamma \Ck)}{1-\epsilon [\Cfk(2+3/\Ca)+\gamma \Ck (1+1/\Ca)]}.
\end{align}
The claim of the lemma follows from substituting these bounds into \eqref{eq:prelimsemiBint2}, and then substituting the bounds \eqref{eq:prelimsemiBint1} and \eqref{eq:prelimsemiBint2} into \eqref{eq:prelimsemiBmain} and rearranging.

First, by the convexity of $f$ and Assumption \ref{ass:imp:innerproductfk}, we have
\begin{align}
	\nonumber f(x_{i+1})-f(x_i)&\le \epsilon \inner{\grad f(x_{i+1})}{\grad \K (p_i)}\le \epsilon \Cfk (f(x_{i+1})-f(\xmin)+\K(p_i))\\
	\nonumber &=\epsilon \Cfk ((f(x_{i+1}-f(x_i)) +\Ha_i )
	\intertext{so by rearrangement it follows that}
	\label{eq:semiBfdiff1}f(x_{i+1})-f(x_i)&\le \frac{\epsilon \Cfk}{1-\epsilon \Cfk}\cdot \Ha_i,
	\intertext{ and similiarly}
	\label{eq:semiBfdiff2}
	f(x_{i})-f(x_{i+1})&\le -\epsilon \inner{\grad f(x_i)}{\grad \K(p_i)}\le \epsilon \Cfk \Ha_i.
\end{align}
Using \eqref{eq:semiBfdiff1}, and the convexity of $\K$, we have
\begin{align*}
	\Ha_{i}^{(t)}-\Ha_{i}&=f(x_{i+1})-f(x_i)+\K\l(p_{i}^{(t)}\r)-\K(p_{i})\\
	&\le \frac{\epsilon \Cfk}{1-\epsilon \Cfk}\cdot \Ha_i+\inner{\grad \K\l(p_{i}^{(t)}\r)}{t(p_{i+1}-p_i)}
	\intertext{now using \eqref{eq:pip1pidiff}, and then Assumption \ref{ass:imp:innerproductfk},}
	&\le \frac{\epsilon \Cfk}{1-\epsilon \Cfk}\cdot \Ha_i-\epsilon t\inner{\grad \K\l(p_{i}^{(t)}\r)}
	{\frac{\gamma p_{i}^{(t)}+ \grad f(x_{i+1}) }{1-\epsilon\gamma t}}\\
	&\le \frac{\epsilon \Cfk}{1-\epsilon \Cfk}\cdot \Ha_i+\frac{\epsilon \Cfk}{1-\epsilon \gamma}\Ha_{i}^{(t)},
\end{align*}
and inequality \eqref{eq:prelimsemiBin1} follows by rearrangement.

By the convexity of $\K$, and using  \eqref{eq:pip1pidiff} for $t=1$, we have
\begin{align*}
	&\Ha_{i+1}-\Ha_{i}^{(t)}=\K(p_{i+1})-\K\l(p_{i}^{(t)}\r)\le \inner{\grad \K(p_{i+1})}{p_{i+1}-p_{i}^{(t)}}\\
	&=\inner{\grad \K(p_{i+1})}{(1-t) (p_{i+1}-p_{i})}=
	-(1-t)\inner{\grad \K(p_{i+1})}{\frac{\epsilon \gamma}{1-\epsilon \gamma}p_{i+1}+\frac{\epsilon}{1-\epsilon \gamma} \grad f(x_{i+1})}
	\intertext{ using Assumption \ref{ass:imp:innerproductfk},}
	&\le \frac{\epsilon \Cfk}{1-\gamma \epsilon} \cdot \Ha_{i+1},
\end{align*}
so by rearrangement,
\begin{equation}
\Ha_{i+1}-\Ha_{i}^{(t)}\le \frac{\epsilon \Cfk}{1-(\Cfk+\gamma)\epsilon}\Ha_{i}^{(t)}.
\end{equation}
Using this, the convexity of $\alpha$, and Assumption \ref{ass:cont:alpha}, we have
\begin{align*}
	&\alpha(3\Ha_{i}^{(t)})-\alpha(3\Ha_{i+1})\le -3\alpha'(3\Ha_{i}^{(t)}) (\Ha_{i+1}-\Ha_{i}^{(t)})\le  -\alpha'(3\Ha_{i}^{(t)})3\Ha_{i}^{(t)}\cdot
	\frac{\epsilon \Cfk}{1-(\Cfk+\gamma)\epsilon}\\
	&\le \frac{1}{\Ca} \cdot \frac{\epsilon \Cfk}{1-(\Cfk+\gamma)\epsilon}\cdot \alpha(3\Ha_{i}^{(t)}),
\end{align*}
and \eqref{eq:prelimsemiBin2} follows by rearrangement. Finally, using inequality \eqref{eq:semiBfdiff2}, we have
\begin{align*}
	\Ha_{i}-\Ha_{i}^{(t)}&=f(x_{i})-f(x_{i+1})+\K(p_{i})-\K\l(p_{i}^{(t)}\r)\\
	&\le \epsilon \Cfk \Ha_i+\inner{\grad \K(p_{i})}{-t(p_{i+1}-p_i)}\\
	&\le \epsilon \Cfk \Ha_i+\epsilon t\inner{\grad \K(p_{i})}{\gamma p_i +\grad f(x_{i+1})}
	\intertext{now using Assumptions \ref{ass:imp:innerproductfk} and \ref{ass:semiB:gKpsem},}
	&\le \epsilon (\Cfk \Ha_i+\gamma \Ck \K(p_i)+\Cfk \K(p_i)+\Cfk(f(x_{i+1})-f(\xmin))\\
	&\le \epsilon ((2\Cfk+\gamma \Ck)\Ha_i+\Cfk \Ha_{i}^{(t)}),
\end{align*}
and by rearrangement this implies that
\begin{equation}
\Ha_{i}-\Ha_{i}^{(t)}\le \frac{(3 \Cfk+\gamma \Ck)\epsilon}{1-(2\Cfk+\gamma \Ck)\epsilon}\cdot \Ha_{i}^{(t)}.
\end{equation}
Using this, the convexity of $\alpha$, and Assumption \ref{ass:cont:alpha}, we have
\begin{align*}
	&\alpha(3\Ha_{i}^{(t)})-\alpha(3\Ha_{i})\le -3\alpha'(3\Ha_{i}^{(t)}) (\Ha_{i}-\Ha_{i}^{(t)})\le  -\alpha'(3\Ha_{i}^{(t)})3\Ha_{i}^{(t)}\cdot
	\frac{\epsilon(3 \Cfk+\gamma \Ck)}{1-(2\Cfk+\gamma \Ck)\epsilon}\\
	&\le \frac{1}{\Ca} \cdot \frac{\epsilon(3 \Cfk+\gamma \Ck)}{1-(2\Cfk+\gamma \Ck)\epsilon}\cdot \alpha(3\Ha_{i}^{(t)}),
\end{align*}
and \eqref{eq:prelimsemiBin3} follows by rearrangement.
\end{proof}

Now we are ready to prove the convergence bound.

\semiBlemma*

\begin{proof}
We follow the notations of Lemma \ref{lem:prelimbetai}, and the proof is based on Lemma \ref{lem:prelimconv}. For the Hamiltonian terms, by the convexity of $f$ and $\K$, we have
\begin{align}
\nonumber	\Ha_{i+1} - \Ha_i &=f(x_{i+1})-f(x_i)+\K(p_{i+1})-\K(p_i)\\		
\nonumber&\leq \inner{\grad f(x_{i+1})}{x_{i+1}-x_i}+
\inner{\grad \K(p_{i+1})}{p_{i+1}-p_i}\\	
\nonumber &= \inner{\grad f(x_{i+1})}{x_{i+1}-x_i}+
\inner{\grad \K(p_{i})}{p_{i+1}-p_i}+\inner{\grad \K(p_{i+1})-\grad \K(p_{i})}{p_{i+1}-p_i}\\
\nonumber &=  \epsilon\inner{\grad f(x_{i+1})}{\grad \K(p_{i})}-\epsilon\inner{\grad \K(p_{i})}{\grad f(x_{i+1})+\gamma p_{i}}
+ \inner{\grad \K(p_{i+1})-\grad \K(p_{i})}{p_{i+1}-p_i}\\		
\label{eq:Habnd1B}	&= - \gamma \epsilon \inner{\grad \K(p_{i})}{p_{i}}	+\inner{\grad \K(p_{i+1})-\grad \K(p_{i})}{p_{i+1}-p_i}
\end{align}
for any $\epsilon>0$. For the inner product terms, we have
\begin{align}
\nonumber		&\inner{x_{i+1}-\xmin}{p_{i+1}}-\inner{x_i-\xmin}{p_i}\\
\nonumber		&=
\inner{x_{i+1}-\xmin}{p_{i+1}}-\inner{x_{i+1}-\xmin}{p_{i}}+\inner{x_{i+1}-\xmin}{p_{i}}-\inner{x_i-\xmin}{p_i}\\
\nonumber		&=
\inner{x_{i+1}-\xmin}{- \epsilon \gamma p_i -\epsilon \grad \f(x_{i+1})}+\inner{x_{i+1}-x_{i}}{p_i}\\
\nonumber		&=
-\epsilon \inner{\grad \f(x_{i+1})}{x_{i+1}-\xmin}
- \epsilon \gamma \inner{x_{i+1}-\xmin -(x_{i+1}-x_{i})}{p_i}
+(1-\epsilon \gamma)\inner{x_{i+1}-x_{i}}{p_i}\\
\label{eq:xpdiffexpB}		&=
-\epsilon \inner{\grad \f(x_{i+1})}{x_{i+1}-\xmin}
+\epsilon(1-\epsilon \gamma)\inner{\grad \K(p_{i})}{p_i}- \epsilon \gamma \inner{x_{i}-\xmin}{p_i}.
\end{align}
Note that from assumption \ref{ass:imp:innerproductfk} and the convexity of $f$ it follows that
\begin{align}
\nonumber
&-(f(x_{i+1})-f(\xmin))\le -(f(x_{i})-f(\xmin))+ f(x_i)-f(x_{i+1})\\
\label{eq:fxip1xi}
&\le -(f(x_{i})-f(\xmin))+ \inner{\grad f(x_i)}{-\epsilon \grad \K(p_i)}\le -(f(x_{i})-f(\xmin))+\epsilon C_{f,K} \Ha_i.
\end{align}
By combining \eqref{eq:Lydiff2}, \eqref{eq:Habnd1B}, and \eqref{eq:xpdiffexpB}, it follows that
\begin{align*}
\Ly_{i+1}-\Ly_i&= \Ha_{i+1}-\Ha_i+\beta_{i+1}(\inner{x_{i+1}-\xmin}{p_{i+1}}-
\inner{x_{i}-\xmin}{p_{i}})+
(\beta_{i+1}-\beta_i) \inner{x_{i}-\xmin}{p_{i}}\\
&\le - \gamma \epsilon \inner{\grad \K(p_{i})}{p_{i}}	+\inner{\grad \K(p_{i+1})-\grad \K(p_{i})}{p_{i+1}-p_i}+(\beta_{i+1}-\beta_i) \inner{x_{i}-\xmin}{p_{i}}\\
&+\epsilon\beta_{i+1}\l( - \inner{\grad \f(x_{i+1})}{x_{i+1}-\xmin}
+(1-\epsilon \gamma)\inner{\grad \K(p_{i})}{p_i}-  \gamma \inner{x_{i}-\xmin}{p_i}\r)\\
&\le -\epsilon(\gamma -\beta_{i+1} )\inner{\grad \K(p_{i})}{p_{i}}-\epsilon \beta_{i+1}
\inner{\grad f(x_{i+1})}{x_{i+1}-\xmin}
-\epsilon\gamma\beta_{i+1}\inner{x_{i}-\xmin}{p_i}\\
&+\inner{\grad \K(p_{i+1})-\grad \K(p_{i})}{p_{i+1}-p_i}+(\beta_{i+1}-\beta_i) \inner{x_{i}-\xmin}{p_{i}}\\
\intertext{which can be further bounded using $\beta_{i+1}\le \frac{\gamma}{2}$, the convexity of $\K$ and $\f$, and Lemma \ref{lem:prelimsemiB} as}
&\le -\epsilon(\gamma -\beta_{i+1} -\epsilon D)\K(p_{i})-\epsilon \beta_{i+1} (f(x_{i+1})-f(\xmin))-\epsilon\gamma \beta_{i+1}\inner{x_{i}-\xmin}{p_i}\\
&+2\epsilon^2 \beta_{i+1}\cdot C/\Ca\cdot \Ha_{i} +(\beta_{i+1}-\beta_i) \inner{x_{i}-\xmin}{p_{i}}\\
\intertext{and now using \eqref{eq:fxip1xi} and $\Ha_i\le 2\Ly_i$ leads to}
&\le -\epsilon(\gamma -\beta_{i+1} -\epsilon D)\K(p_{i})-\epsilon \beta_{i+1} (f(x_{i})-f(\xmin))-\epsilon\gamma \beta_{i+1}\inner{x_{i}-\xmin}{p_i}\\
&+\epsilon^2 \beta_{i+1}\cdot (4 C/\Ca+2\Cfk)\cdot \Ly_{i} +(\beta_{i+1}-\beta_i) \inner{x_{i}-\xmin}{p_{i}},
\end{align*}
implying that \eqref{eq:Vdiffass2a} holds with $C_1=D$ and $C_2=4C/\Ca+2\Cfk$.

By combining \eqref{eq:Lydiff1}, \eqref{eq:Habnd1B}, and \eqref{eq:xpdiffexpB}, it follows that
\begin{align*}
\Ly_{i+1}-\Ly_i&= \Ha_{i+1}-\Ha_i+\beta_{i}(\inner{x_{i+1}-\xmin}{p_{i+1}}-
\inner{x_{i}-\xmin}{p_{i}})+
(\beta_{i+1}-\beta_i) \inner{x_{i+1}-\xmin}{p_{i+1}}\\
&\le - \gamma \epsilon \inner{\grad \K(p_{i})}{p_{i}}	+\inner{\grad \K(p_{i+1})-\grad \K(p_{i})}{p_{i+1}-p_i}+(\beta_{i+1}-\beta_i) \inner{x_{i+1}-\xmin}{p_{i+1}}\\
&+\epsilon\beta_{i}\l( - \inner{\grad \f(x_{i+1})}{x_{i+1}-\xmin}
+(1-\epsilon \gamma)\inner{\grad \K(p_{i})}{p_i}-  \gamma \inner{x_{i}-\xmin}{p_i}\r)\\
&\le -\epsilon(\gamma -\beta_{i} )\inner{\grad \K(p_{i})}{p_{i}}-\epsilon \beta_{i}
\inner{\grad f(x_{i+1})}{x_{i+1}-\xmin}
-\epsilon\gamma\beta_{i}\inner{x_{i}-\xmin}{p_i}\\
&+\inner{\grad \K(p_{i+1})-\grad \K(p_{i})}{p_{i+1}-p_i}+(\beta_{i+1}-\beta_i) \inner{x_{i+1}-\xmin}{p_{i+1}}\\
\intertext{which can be further bounded using $\beta_{i}\le \frac{\gamma}{2}$, the convexity of $\K$ and $\f$, and Lemma \ref{lem:prelimsemiB} as}
&\le -\epsilon(\gamma -\beta_{i} -\epsilon D)\K(p_{i})-\epsilon \beta_{i} (f(x_{i+1})-f(\xmin))-\epsilon\gamma \beta_{i}\inner{x_{i}-\xmin}{p_i}\\
&+2\epsilon^2 \beta_{i}\cdot C/\Ca\cdot \Ha_{i} +(\beta_{i+1}-\beta_i) \inner{x_{i+1}-\xmin}{p_{i+1}}\\
\intertext{and now using \eqref{eq:fxip1xi} and $\Ha_i\le 2\Ly_i$ leads to}
&\le -\epsilon(\gamma -\beta_{i} -\epsilon D)\K(p_{i})-\epsilon \beta_{i} (f(x_{i})-f(\xmin))-\epsilon\gamma \beta_{i}\inner{x_{i}-\xmin}{p_i}\\
&+\epsilon^2 \beta_{i}\cdot (4 C/\Ca+2\Cfk)\cdot \Ly_{i} +(\beta_{i+1}-\beta_i) \inner{x_{i+1}-\xmin}{p_{i+1}}
\end{align*}
implying that \eqref{eq:Vdiffass2b} holds with $C_1=D$ and $C_2=4C/\Ca+2\Cfk$ (see \eqref{eq:gradfdiffcondBdefCD} for the definition of $C$ and $D$). The claim of the lemma now follows by Lemma \ref{lem:prelimconv}.
\end{proof}

\subsection{Explicit Method on Non-Convex $f$}

\explicitmethodnonconvex*

\begin{proof}
By assumption \ref{ass:semiAsmoothness}
\begin{align*}
\Ha_{i+1} - \Ha_i &\leq \K(p_{i+1}) - \K(p_i) + \epsilon \inner{\grad \f(x_i)}{\grad \K(p_{i+1})} +  \Df\sigma (\epsilon \norm{\grad \K(p_{i+1})})
\intertext{now with convexity of $\K$}
&\leq - \epsilon \gamma \inner{\grad \K(p_{i+1})}{p_{i+1}}  + \Df \sigma (\epsilon \norm{\grad \K(p_{i+1})})\\
&\leq  - \epsilon \gamma \K(p_{i+1}) + \Df \sigma (\epsilon \norm{\grad \K(p_{i+1})})
\intertext{we have $\sigma(\epsilon t) \leq \epsilon^b \sigma(t)$ and $\sigma (\norm{\grad \K(p)}) \leq \Dk\K(p)$ by assumption \ref{ass:semiAsmoothness}}
&\leq  (\epsilon^b \Df\Dk - \epsilon \gamma) \K(p_{i+1})
\end{align*}
If $\epsilon \leq   \sqrt[b-1]{\gamma/\Df\Dk}$, then $\Ha_{i+1} \leq \Ha_i$.
If $\Ha$ is bounded below we get that $x_i, p_i$ is such that $\K(p_i) \to 0$ and thus $p_i \to 0$. Since $\norm{p_{i+1}}_2  + \delta \norm{p_i}_2 \geq \epsilon \delta \norm{\nabla f(x_i)}_2$,
we get $\norm{\nabla f(x_i)} \to 0$.
\end{proof}

\section{Proofs for power kinetic energies}\label{sec:proofskinetic}
The proofs of the results in this section will be based on the following three preliminary lemmas.


\begin{lem}
\label{lem:norms}
Let $\norm{\c}$ be a norm on $\R^d$ and $x \in \R^d \setminus \{0\}$. If $\norm{x}$ is differentiable, then
\begin{equation}
\norm{\grad \norm{x}}_* = 1 \qquad \inner{\grad \norm{x}}{x} = \norm{x},
\end{equation}
and if $\norm{x}$ is twice differentiable, then
\begin{equation}
(\hess \norm{x}) x =  0.
\end{equation}
\end{lem}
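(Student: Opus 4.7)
The plan is to exploit positive homogeneity of the norm, $\norm{\lambda x} = \lambda \norm{x}$ for $\lambda > 0$, together with Euler's identity for homogeneous functions and the variational definition of the dual norm.

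First I would prove $\inner{\grad \norm{x}}{x} = \norm{x}$ by differentiating the identity $\norm{\lambda x} = \lambda \norm{x}$ in $\lambda$ at $\lambda = 1$; by the chain rule the left side gives $\inner{\grad \norm{x}}{x}$ and the right side gives $\norm{x}$. This is essentially Euler's theorem on $1$-homogeneous functions applied to $\norm{\c}$ at the point $x \neq 0$ where it is differentiable.

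Next, to show $\norm{\grad \norm{x}}_* = 1$, I would combine the identity just proved with convexity. From $\inner{\grad \norm{x}}{x} = \norm{x}$ and the definition $\norm{p}_* = \sup\{\inner{y}{p} : \norm{y} \le 1\}$, applied to $y = x/\norm{x}$, we get $\norm{\grad \norm{x}}_* \ge 1$. For the reverse bound, convexity of $\norm{\c}$ at $x$ gives, for any $y \in \R^d$,
\[
\norm{y} \ge \norm{x} + \inner{\grad \norm{x}}{y - x} = \inner{\grad \norm{x}}{y},
\]
using the Euler identity to cancel $\norm{x}$. Supremizing over $\norm{y} \le 1$ yields $\norm{\grad \norm{x}}_* \le 1$.

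Finally, for the Hessian claim, I would differentiate the identity $\inner{\grad \norm{x}}{x} = \norm{x}$ componentwise in $x$. The left side gives $(\hess \norm{x}) x + \grad \norm{x}$ (using symmetry of the Hessian), while the right side gives $\grad \norm{x}$, so $(\hess \norm{x}) x = 0$. No step is really a technical obstacle; the only subtlety is that differentiability of $\norm{\c}$ is assumed only at the specific point $x \neq 0$, so the Euler identity must be obtained from a one-parameter variation along the ray through $x$ rather than a global homogeneity argument, and similarly for the twice-differentiable case.
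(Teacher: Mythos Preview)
Your proposal is correct. The route differs slightly from the paper's: you first obtain $\inner{\grad \norm{x}}{x} = \norm{x}$ via Euler's identity for $1$-homogeneous functions (differentiating $\norm{\lambda x} = \lambda \norm{x}$ in $\lambda$), and then use convexity to bound the dual norm from above and the Euler identity to bound it from below. The paper instead starts from the convexity inequality $\inner{\grad \norm{x}}{y} - \norm{y} \le \inner{\grad \norm{x}}{x} - \norm{x}$, takes the supremum over $y$ to identify the left side as the conjugate of the norm (zero on the dual unit ball, $+\infty$ outside), and extracts both $\norm{\grad \norm{x}}_* \le 1$ and $\inner{\grad \norm{x}}{x} \ge \norm{x}$ in one stroke; a sandwich with the dual-norm inequality then forces equality in both. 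Your argument is perhaps more elementary and makes the role of homogeneity explicit; the paper's is a bit slicker in that it derives the two identities simultaneously from the single convexity inequality without invoking homogeneity directly. The Hessian step is identical in both proofs.
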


\begin{proof}
Let $x \in \R^d \setminus \{0\}$. By the convexity of the norm we have
\begin{align*}
\inner{\grad \norm{x}}{y} - \norm{y} \leq \inner{\grad \norm{x}}{x} - \norm{x}
\end{align*}
for all $y \in \R^d$. Thus
\begin{align*}
\sup_{y \in \R^d} \{\inner{\grad \norm{x}}{y} - \norm{y}\} \leq \inner{\grad \norm{x}}{x} - \norm{x}
\end{align*}
Because the right hand side is finite, we must have $\norm{\grad \norm{x}}_* \leq 1$ and the left hand side equal to 0.
\begin{align*}
0 \leq \inner{\grad \norm{x}}{x} - \norm{x} \leq \norm{\grad \norm{x}}_*\norm{x} - \norm{x} \leq 0
\end{align*}
forces $ \norm{\grad \norm{x}}_* = 1$ and $\inner{\grad \norm{x}}{x} = \norm{x}$. In fact, this argument goes through for non-differentiable $\norm{\c}$, by definition of the subderivative. For twice differentiable norms, take the derivative of $\inner{\grad \norm{x}}{x} = \norm{x}$ to get
\begin{align*}
(\hess \norm{x}) x + \grad \norm{x} = \grad \norm{x}
\end{align*}
and our result follows.
\end{proof}


\begin{lem}
\label{kin:robustonedim}
Given $a \in [1, \infty)$, $A \in [1, \infty)$, and $\varphi_a^A$ in \eqref{eq:kindef}. Define $B = A/(A-1)$, $b = a/(a-1)$. For convenience, define 
\begin{equation}
\varphi(t) = \varphi_a^A(t) \qquad \qquad \phi(t) = \varphi_b^B(t).
\end{equation}
The following hold.
\begin{enumerate}
\item Monotonicity. For $t \in (0, \infty)$, $\varphi'(t) > 0$. If $a = A = 1$, then for $t \in (0, \infty)$, $\varphi''(t) = 0$, otherwise $\varphi''(t) > 0$. This implies that $\varphi$ is strictly increasing on $[0, \infty)$.

\item Subhomogeneity. For all $t,\epsilon \in [0, \infty)$,
\begin{equation}
\label{kin:robustonedim:subhomo} \varphi(\epsilon t) \leq \max\{\epsilon^a, \epsilon^A\} \varphi(t)
\end{equation}
with equality iff $a=A$ or $t = 0$ or $\epsilon = 0$ or $\epsilon = 1$.

\item Strict Convexity. If $a> 1$ or $A > 1$, then $\varphi(t)$ is strictly convex on $[0, \infty)$ with a unique minimum at $0$.

\item Derivatives. For all $t \in (0, \infty)$,
\begin{align}
\label{kin:robustonedim:gradbound} \min\{a,A\}\varphi(t) &\leq t \varphi'(t) \leq \max\{a,A\} \varphi(t), \\
\label{kin:robustonedim:hessbound} (\min\{a,A\} - 1)\varphi'(t) &\leq t \varphi''(t) \leq (\max\{a,A\} - 1)\varphi'(t).
\end{align}
If $a,A \geq 2$, then for all $t, s \in (0, \infty)$,
\begin{equation}
\label{kin:robustonedim:unifgradbound} \varphi(t) \leq s \varphi'(s)  + (\varphi'(t) - \varphi'(s))(t-s)
\end{equation}
\end{enumerate}
\end{lem}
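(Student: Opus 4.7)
Parts 1, 3, and 4 are by direct calculation from the explicit formulas
\[\varphi'(t) = t^{a-1}(t^a+1)^{A/a - 1}, \qquad \varphi''(t) = t^{a-2}(t^a+1)^{A/a-2}\bigl[(A-1)t^a + (a-1)\bigr],\]
from which monotonicity (Part 1) and strict convexity (Part 3) follow by inspecting signs. For the derivative bounds \eqref{kin:robustonedim:gradbound} of Part 4, I would substitute $u = t^a$ and $r = A/a$ to rewrite $t\varphi'(t) = u(u+1)^{r-1}$ and $A\varphi(t) = (u+1)^r - 1$; direct differentiation then yields
\[\tfrac{d}{du}\bigl[t\varphi'(t) - a\varphi(t)\bigr] = u(r-1)(u+1)^{r-2}, \qquad \tfrac{d}{du}\bigl[A\varphi(t) - t\varphi'(t)\bigr] = (r-1)(u+1)^{r-2},\]
both of which have fixed sign depending on whether $r \gtrless 1$ and vanish at $u = 0$. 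The Hessian bound \eqref{kin:robustonedim:hessbound} follows from the direct identity $t\varphi''(t)/\varphi'(t) = [(A-1)u + (a-1)]/(u+1)$, whose $u$-derivative is $(A-a)/(u+1)^2$, so it interpolates monotonically between $a-1$ at $u=0$ and $A-1$ at $u=\infty$.

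Part 2 is then an immediate consequence of Part 4: for $\epsilon \geq 1$, the function $h(\epsilon) := \varphi(\epsilon t)/\epsilon^{\max\{a,A\}}$ satisfies $\epsilon^{\max\{a,A\}+1} h'(\epsilon) = (\epsilon t)\varphi'(\epsilon t) - \max\{a,A\}\varphi(\epsilon t) \leq 0$ by \eqref{kin:robustonedim:gradbound}, so $h$ is nonincreasing and $h(\epsilon) \leq h(1) = \varphi(t)$; the case $\epsilon \in (0,1]$ is analogous with $\min\{a,A\}$ in the exponent. The equality clause follows by tracking when the derivative bound is strict.

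The main obstacle is Part 5. Writing $G(t) := s\varphi'(s) + (\varphi'(t) - \varphi'(s))(t-s) - \varphi(t)$ and using integration by parts gives
\[G(t) = \bigl[s\varphi'(s) - \varphi(s)\bigr] - (t-s)\varphi'(s) + \int_s^t (u-s)\varphi''(u)\,du,\]
and I plan to show $G(t) \geq 0$ by splitting into regimes. For $t \leq s$, all three summands are nonnegative by Part 4, using that $\min\{a,A\} \geq 2$ gives $s\varphi'(s) \geq 2\varphi(s) \geq \varphi(s)$. For $s < t \leq 3s/2$, Fenchel--Young $\varphi^*(\varphi'(t)) \geq s\varphi'(t) - \varphi(s)$ combined with $s\varphi'(s) \geq 2\varphi(s)$ reduces the claim to $\varphi(s) \leq (2s-t)\varphi'(s)$, which holds throughout this range. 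For $t \geq 2s$, the derivative bound $\varphi(t) \leq t\varphi'(t)/2$ combined with $\varphi'(t) \geq \varphi'(2s) \geq 2\varphi'(s)$ yields the claim; the bound $\varphi'(2s) \geq 2\varphi'(s)$ in turn follows from the explicit ratio $\varphi'(t)/\varphi'(s) = (t/s)^{a-1}[(t^a+1)/(s^a+1)]^{A/a-1}$ evaluated at $t = 2s$, whose infimum over $s > 0$ equals $\min\{2^{a-1}, 2^{A-1}\} \geq 2$ under the hypothesis $a, A \geq 2$. The principal technical hurdle is the intermediate regime $3s/2 < t < 2s$, where neither of the above arguments is tight; I plan to handle it by direct algebraic verification after substituting $u = s^a$ and expanding the binomial factors of $\varphi_a^A$, reducing the problem to a polynomial inequality whose non-negativity for $a, A \geq 2$ can be checked by endpoint and discriminant analysis.
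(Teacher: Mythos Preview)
Your treatment of Parts 1--4 (excluding \eqref{kin:robustonedim:unifgradbound}) is correct and in places cleaner than the paper's. Deriving Part 2 from the gradient bound \eqref{kin:robustonedim:gradbound} via the monotonicity of $\epsilon\mapsto\varphi(\epsilon t)/\epsilon^{\max\{a,A\}}$ is a nice reversal of the paper's logic, which proves subhomogeneity directly by bounding $\epsilon\varphi'(\epsilon t)$ and integrating, and only afterwards obtains \eqref{kin:robustonedim:gradbound} by integrating \eqref{kin:robustonedim:hessbound}.

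The genuine gap is the intermediate regime $3s/2<t<2s$ for \eqref{kin:robustonedim:unifgradbound}. Your three handled regimes are correct, but your plan for the remaining one does not work as stated: after substituting $u=s^a$, the expressions $\varphi(s)=\tfrac{1}{A}[(u+1)^{A/a}-1]$, $\varphi(\lambda s)=\tfrac{1}{A}[(\lambda^a u+1)^{A/a}-1]$, etc., carry the generally irrational exponent $A/a$, so ``expanding the binomial factors'' does not reduce the claim to a polynomial inequality in $u$. You are still facing a two-parameter inequality (in $\lambda$ and $u$), and endpoint/discriminant analysis is not available.

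The paper closes this gap differently. For $\lambda=t/s>1$ it first proves the strengthened mean-value bounds
\[
\sigma(\lambda)\,\varphi'(s)\;\le\;\frac{\varphi(\lambda s)-\varphi(s)}{(\lambda-1)s}\;\le\;\tau(\lambda)\,\varphi'(\lambda s),
\]
with explicit $\sigma(\lambda)=\tfrac{\lambda^{\min\{a,A\}}-1}{\min\{a,A\}(\lambda-1)}$ and $\tau(\lambda)=\tfrac{\lambda(1-\lambda^{-\min\{a,A\}})}{\min\{a,A\}(\lambda-1)}$, and then writes the target $s\varphi'(s)+(\varphi'(t)-\varphi'(s))(t-s)-\varphi(t)$ as a linear combination, with coefficients $c_1,c_2,c_3,c_4$ depending only on $\lambda$ and $\min\{a,A\}$, of the four inequalities $\varphi(s)\ge0$, $s\varphi'(s)\ge\min\{a,A\}\varphi(s)$, and the two displayed bounds. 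Nonnegativity of the $c_i$ reduces to the single-variable inequality
\[
2\alpha-\alpha\lambda+\alpha\lambda^{\alpha}-\alpha\lambda^{\alpha-1}-\lambda^{\alpha}\ge 0\qquad(\lambda>1,\ \alpha=\min\{a,A\}\ge 2),
\]
which is then verified by elementary calculus. The key point is that the intermediate mean-value bounds eliminate the dependence on $s$, collapsing the problem to one variable; your regime-splitting never achieves this reduction in the remaining interval.
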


\begin{proof}
First, for $t \in (0, \infty)$, the following identities can be easily verified.
\begin{align}
\label{kin:onedim:grad} t\varphi'(t) &=  (t^a + 1)^{\tfrac{A-a}{a}} t^{a}\\
\label{kin:onedim:hess} t\varphi''(t) &=  \varphi'(t)\l(a-1  + (A-a)\tfrac{t^{a}}{t^a+1}\r)
\end{align}

\begin{enumerate}
\item \emph{Monotonicity}. First, for $t > 0$ we have,
\begin{align}
\varphi'(t) &=  (t^a + 1)^{\tfrac{A-a}{a}} t^{a-1} > 0
\end{align}
For $\varphi'(t)$ for $t > 0$, we have the following with equality iff $a = A = 1$
\begin{align}
	\label{eq:monofderv} \varphi''(t) =  t^{-1}\varphi'(t)\l(a-1  + (A-a)\tfrac{t^{a}}{t^a+1}\r) \geq 0.
\end{align}
Finally, $\varphi(t)$ is continuous at $0$, which gives our result.

\item \emph{Subhomogeneity}. If $a=A$ or $t = 0$ or $\epsilon = 0$ or $\epsilon = 1$, then equality clearly holds. Assume $a \neq A$, $t, \epsilon > 0$, and $\epsilon \neq 1$. Assuming $A > a$, $t^{\tfrac{A-a}{a}}$ is strictly increasing. If $\epsilon < 1$, then
\begin{align*}
\epsilon \varphi'(\epsilon t) &=  \epsilon^a (\epsilon^a t^a + 1)^{\tfrac{A-a}{a}} t^{a-1} <  \epsilon^a (t^a + 1)^{\tfrac{A-a}{a}} t^{a-1}.
\end{align*}
If $\epsilon > 1$, then
\begin{align*}
\epsilon \varphi'(\epsilon t) &=  \epsilon^A ( t^a + \epsilon^{-a})^{\tfrac{A-a}{a}} t^{a-1} <  \epsilon^A (t^a + 1)^{\tfrac{A-a}{a}} t^{a-1}.
\end{align*}
Integrating both sides gives $\varphi(\epsilon t) < \max\{\epsilon^a, \epsilon^A\} \varphi(t)$. The case $A < a$ follows similarly, using the fact that $t^{\tfrac{A-a}{a}}$ is strictly decreasing.

\item \emph{Strict convexity}. First, since $\varphi$ is strictly increasing we get $\varphi(t) > \varphi(0) = 0$, which proves that $0$ is the unique minimizer. Our goal is to prove that for $t,s \in [0, \infty)$ and $\epsilon \in (0, 1)$ such that $t \neq s$,
\begin{align*}
	\varphi(\epsilon t + (1-\epsilon) s) < \epsilon \varphi(t) + (1-\epsilon) \varphi(s)
\end{align*}
First, for $t = 0$ or $s = 0$, this reduces to a condition of the form $\varphi(\epsilon t) < \epsilon \varphi(t)$ for all $t \in [0, \infty)$ and $\epsilon \in (0, 1)$. Considering separately the cases $A=1, a > 1$ and $a = 1, A>1$ and $a,A > 1$, it is easy to see that this follows from the subhomogeneity result \eqref{kin:robustonedim:subhomo}. For $s,t > 0$, our result follows from the positivity of $\varphi''$, \eqref{eq:monofderv}.

\item \emph{Derivatives}. Since,
\begin{align*}
\min\{a,A\} - 1 \leq a-1 + (A-a) \frac{t^a}{t^a+1} \leq \max\{a,A\}-1,
\end{align*}
we get the second derivative bound \eqref{kin:robustonedim:hessbound} from identity \eqref{kin:onedim:hess}. The first derivative bound \eqref{kin:robustonedim:gradbound} follows from \eqref{kin:robustonedim:hessbound}, since
\begin{align*}
t \varphi'(t) = \int_0^t \varphi'(t) + t\varphi''(t) \; dt.
\end{align*}
Our goal is now to prove the uniform gradient bound \eqref{kin:robustonedim:unifgradbound} for $a,A \geq 2$. In the case that $0 < t < s$, the bound reduces to $(\varphi'(t) - \varphi'(s))(t-s) \geq 0$, which follows from convexity.  The remaining case is $0 < s \leq t$. Notice that for the case $0 < s < t$, convexity implies
\begin{equation}
\label{eq:convexityts} \varphi'(s) \leq \frac{\varphi(t) - \varphi(s)}{t-s} \leq \varphi'(t)
\end{equation}
Notice that in the case $s = 0$ for \eqref{eq:convexityts} we get the inequality $\varphi(t) \leq t\varphi'(t)$, again a condition of convexity. On the other hand, we have just shown that for our $\varphi$ the stronger inequality $\min\{a,A\} \varphi(t) \leq t \varphi'(t)$ holds. This motivates a strategy of searching for a stronger bound of form \eqref{eq:convexityts}, and using this to derive the uniform gradient bound \eqref{kin:robustonedim:unifgradbound}. Indeed, let $\lambda = t/s > 1$, then we will show
\begin{equation}
\label{eq:powerkints} \sigma(\lambda) \varphi'(s) \leq \frac{\varphi(\lambda s) - \varphi(s)}{\lambda s-s} \leq \varphi'(\lambda s) \tau(\lambda)
\end{equation}
where
\begin{equation}
\label{eq:powerkintssigmatau} \sigma(\lambda) = \begin{cases}
\frac{\lambda^a-1}{a(\lambda-1)} & A\geq a\\
\frac{\lambda^A-1}{A(\lambda-1)} & A\leq a
\end{cases} \qquad\qquad  \tau(\lambda) = \begin{cases}
\frac{\lambda (1-\lambda^{-a})}{a(\lambda-1)} & A\geq a\\
\frac{\lambda (1-\lambda^{-A})}{A(\lambda-1)} & A\leq a
\end{cases}
\end{equation}
First, assume $A \geq a$. We need to prove
\begin{equation*}
\frac{\lambda^a -1}{a} s \varphi'(s) \leq \varphi(\lambda s) - \varphi(s) \leq \frac{1 - \lambda^{-a}}{a}  \lambda s \varphi'(\lambda s)
\end{equation*}
We fix $s > 0$, and take $F_1(\lambda) := \varphi(\lambda s) - \varphi(s) - \frac{\lambda^a-1}{a} s \varphi'(s)$ and $F_2(\lambda) := \frac{1 - \lambda^{-a}}{a} \lambda s \varphi'(\lambda s) - \varphi(\lambda s) + \varphi(s)$.
We need to prove that $F_1(\lambda) \ge 0$ and $F_2(\lambda) \ge 0$ for $\lambda \ge 1$.
We have $F_1(1) = F_2(1) = 0$,
\begin{equation*}
F_1'(\lambda) = \frac{(\lambda s)^a}{\lambda} (((\lambda s)^a + 1)^{\frac{A-a}{a}} - (s^a + 1)^{\frac{A-a}{a}}) \ge 0
\end{equation*}
and
\begin{align*}
F_2'(\lambda) &= \frac{(1-\lambda^{-a})s}{a} \l(\varphi'(\lambda s) + \lambda s \varphi''(\lambda s) - a\varphi'(\lambda s) \r)\\
&= \frac{(1-\lambda^{-a})s}{a} \varphi'(\lambda s) (A-a) \frac{(\lambda s)^a}{(\lambda s)^a+1}  \ge 0,
\end{align*}
so indeed $F_1(\lambda) \ge 0$ and $F_2(\lambda) \ge 0$ for every $\lambda > 1$.

Now let $A \leq a$.
Then we need to prove that
\begin{equation*}
\frac{\lambda^A-1}{A} s \varphi'(s) \le \varphi(\lambda s)-\varphi(s) \le \frac{ 1 - \lambda^{-A}}{A} \lambda s \varphi'(\lambda s).
\end{equation*}
We fix $s > 0$, and take $F_3(\lambda) := \varphi(\lambda s) - \varphi(s) - \frac{\lambda^A-1}{A} s \varphi'(s)$ and $F_4(\lambda) := \frac{1 - \lambda^{-A}}{A} \lambda s \varphi'(\lambda s) - \varphi(\lambda s) + \varphi(s)$.
We need to prove that $F_3(\lambda) \ge 0$ and $F_4(\lambda) \ge 0$ for $\lambda \ge 1$.
We have $F_3(1) = F_4(1) = 0$,
\begin{equation*}
F_3'(\lambda) = \frac{(\lambda s)^a}{\lambda} (((\lambda s)^a + 1)^{-\frac{a-A}{a}} - ((\lambda s)^a + \lambda^a)^{-\frac{a-A}{a}}) \ge 0
\end{equation*}
and
\begin{align*}
F_4'(\lambda) &= \frac{(1-\lambda^{-A})s}{A} \l(\varphi'(\lambda s) + \lambda s \varphi''(\lambda s) - A\varphi'(\lambda s) \r)\\
&= \frac{(1-\lambda^{-A})s}{A} \varphi'(\lambda s)\l(a - A + (A-a) \frac{(\lambda s)^a}{(\lambda s)^a+1}\r) \ge 0,
\end{align*}
so indeed $F_3(\lambda) \ge 0$ and $F_4(\lambda) \ge 0$ for every $\lambda > 1$.

Now, we can prove \eqref{kin:robustonedim:unifgradbound}. We have so far proven the following inequalities in $\varphi(t), \varphi(s), \varphi'(t), \varphi'(s)$:
\begin{align*}
\varphi(s) &\ge 0, & s \varphi'(s) - \min(a,A) \varphi(s) &\ge 0, \\
\varphi(t) - \varphi(s) - (\lambda-1) \phi(\lambda) s \varphi'(s) &\ge 0, & (\lambda-1) \tau(\lambda) s \varphi'(t) - \varphi(t) + \varphi(s) &\ge 0.
\end{align*}
We try to express the inequality $s \varphi'(s) + (t-s) (\varphi'(t)-\varphi'(s)) - \varphi(t) \ge 0$ as a linear combination of the above four inequalities with non-negative coefficients:
\begin{align*}
&s \varphi'(s) + (t-s) (\varphi'(t)-\varphi'(s)) - \varphi(t) = c_1 \varphi(s) + c_2 (s \varphi'(s) - \min(a,A) \varphi(s)) \\
&+ c_3 (\varphi(t) - \varphi(s) - (\lambda-1) \sigma(\lambda) s \varphi'(s)) + c_4 ((\lambda-1) \tau(\lambda) s \varphi'(t) - \varphi(t) + \varphi(s)).
\end{align*}
Comparing the coefficients of $\varphi(s)$, $\varphi(t)$, $\varphi'(s)$, $\varphi'(t)$, we get the following equations:
\begin{align*}
c_1 - \min(a,A) c_2 - c_3 + c_4 &= 0, \\
c_3 - c_4 &= -1, \\
c_2 - (\lambda-1) \sigma(\lambda) c_3 &= 2 - \lambda, \\
(\lambda-1) \tau(\lambda) c_4 &= \lambda-1.
\end{align*}
This system of equations has a unique solution:
$c_4 = \frac{1}{\tau(\lambda)}$, $c_3 = \frac{1}{\tau(\lambda)} - 1$, $c_2 = 2-\lambda + (\lambda-1) \sigma(\lambda) (\frac{1}{\tau(\lambda)} - 1)$ and $c_1 = \min(a,A) c_2 - 1$.
We will prove that $c_1, c_2, c_3, c_4 \ge 0$.
Clearly $\tau(\lambda) > 0$.
We claim that $\tau(\lambda) \le 1$.
For this it is enough to check that $\lambda (1 - \lambda^{-\alpha}) \le \alpha (\lambda-1)$ for every $\lambda > 1$ and $\alpha \ge 2$.
After reordering the terms, we get $1 + (1-\alpha) (\lambda-1) \le \lambda^{1-\alpha} = (1 + (\lambda-1))^{1-\alpha}$, which follows from the generalized Bernoulli inequality.
So $0 < \sigma(\lambda) \le 1$, therefore $c_3, c_4 \ge 0$.
We just need to prove now that $\min(a,A) c_2 \ge 1$, because then $c_1, c_2 \ge 0$.
If $a \le A$, then $c_1 = \min(a,A) c_2 - 1 = a (2-\lambda + \lambda^a - \lambda^{a-1} - \frac{\lambda^a}{a})$.
If $a \ge A$, then $c_1 = A (2-\lambda + \lambda^A - \lambda^{A-1} - \frac{\lambda^A}{A})$.
So the only remaining thing to show is
\[
2\alpha- \alpha\lambda + \alpha\lambda^{\alpha} -\alpha \lambda^{\alpha-1} - \lambda^{\alpha} \ge 0
\]
for every $\lambda > 1$ and $\alpha \ge 2$. Letting $\epsilon = 1/\lambda$, this is equivalent to showing
\[
2\alpha \epsilon^{\alpha}- \alpha\epsilon^{\alpha-1} + \alpha -\alpha \epsilon - 1 \ge 0
\]
for $\epsilon \in (0, 1)$. Let $\pi(\epsilon) = 2\alpha \epsilon^{\alpha}- \alpha\epsilon^{\alpha-1} + \alpha -\alpha \epsilon - 1$. To see that $\pi(\epsilon) \geq 0$, note
\[
\pi'(\epsilon)=  \alpha \epsilon^{\alpha-2} (2 \alpha \epsilon - \alpha + 1) - \alpha
\]
from which $\pi'(\epsilon) < \pi'(1/2) \leq 0$ for $\epsilon < 1/2$ and $\pi'(\epsilon) > \pi'((\alpha-1)/\alpha) \geq 0$ for $\epsilon > \alpha / (\alpha-1)$. This implies that $\pi$ is minimized on $[1/2, (\alpha-1) / \alpha]$. Our result follows then from the fact that for $\epsilon \in [1/2, (\alpha-1) / \alpha]$,
\begin{equation*}
\pi(\epsilon) \geq \alpha - \alpha \epsilon - 1 \geq 0
\end{equation*}

\end{enumerate}
\end{proof}


\begin{lem}
\label{kin:rho}
Given $a \in [1, \infty)$, $A \in [1, \infty)$, and $\varphi_a^A$ in \eqref{eq:kindef}. Define $B = A/(A-1)$, $b = a/(a-1)$. For convenience, define 
\begin{equation}
\varphi(t) = \varphi_a^A(t) \qquad \qquad \phi(t) = \varphi_b^B(t).
\end{equation}
For $t\in [0, \infty)$ define the function
\begin{equation}
\label{kin:robust:rho} \rho(t) = \l(\frac{t^a}{t^a+1} + (t^a+1)^{-\tfrac{A-1}{a-1}}\r)^{\tfrac{a-A}{a(A-1)}},
\end{equation}
and for $a \neq A$ define the constant
\begin{equation}
\label{kin:robustonedim:nearconjcon} C_{a,A} = \l(1 - \l(\tfrac{a-1}{A-1}\r)^{\tfrac{a-1}{A-a}} + \l(\tfrac{a-1}{A-1}\r)^{\tfrac{A-1}{A-a}}\r)^{\tfrac{B-b}{b}}.
\end{equation}
We have the following results. For all $t \in (0, \infty)$,
\begin{equation}
\label{kin:robustonedim:nearconjerr} \phi'(\varphi'(t)) = \rho(t) t,
\end{equation}
which means that $\rho$ captures the relative error between $(\varphi^*)'$ and $\phi'$, because $(\varphi^*)'(t) = (\varphi')^{-1}(t)$. Finally, $\rho$ is bounded for all $t \in (0, \infty)$ between the constants,
\begin{equation}
\label{kin:robustonedim:nearconjerrbound} 1 \leq \rho(t) \leq C_{a,A}\\
\end{equation}
\end{lem}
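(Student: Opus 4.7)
The first part is a direct verification: differentiating $\varphi(t) = \tfrac{1}{A}(t^a+1)^{A/a} - \tfrac{1}{A}$ and $\phi(s) = \tfrac{1}{B}(s^b+1)^{B/b} - \tfrac{1}{B}$ gives
\[
\varphi'(t) = (t^a+1)^{(A-a)/a} t^{a-1}, \qquad \phi'(s) = (s^b+1)^{(B-b)/b} s^{b-1}.
\]
The plan is to substitute $s = \varphi'(t)$ and use the conjugate identities $b(a-1) = a$ and $(b-1)(a-1) = 1$, together with the computation $(B-b)/b = (a-A)/(a(A-1))$, to reduce $s^{b-1}$ and $s^b+1$ to clean expressions in $u := t^a+1$. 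Concretely, $s^{b-1} = u^{(A-a)/(a(a-1))} t$ and $s^b = (u-1)u^{(A-a)/(a-1)}$. Collecting the $u$-factors and multiplying by $u^{-(A-1)/(a-1)}$ inside the base, the expression $\phi'(\varphi'(t))/t$ will collapse to exactly $\rho(t)$ as defined in \eqref{kin:robust:rho}, establishing \eqref{kin:robustonedim:nearconjerr}.

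For the bounds \eqref{kin:robustonedim:nearconjerrbound}, set $u = t^a$ and study
\[
h(u) = \frac{u}{u+1} + (u+1)^{-(A-1)/(a-1)}, \qquad \rho(t) = h(u)^{(B-b)/b}.
\]
I will show $h(0)=1$ and $\lim_{u\to\infty} h(u) = 1$, then compute
\[
h'(u) = (u+1)^{-2} - \tfrac{A-1}{a-1}(u+1)^{-(A+a-2)/(a-1)},
\]
so the equation $h'(u^*) = 0$ reduces to $(u^*+1)^{(A-a)/(a-1)} = (A-1)/(a-1)$, which has a unique positive root $u^* = \bigl(\tfrac{A-1}{a-1}\bigr)^{(a-1)/(A-a)} - 1$ (strictly positive in both cases $A>a$ and $A<a$, since the base is on the correct side of $1$). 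Substituting $u^*$ into $h$ and simplifying yields
\[
h(u^*) = 1 - \l(\tfrac{a-1}{A-1}\r)^{(a-1)/(A-a)} + \l(\tfrac{a-1}{A-1}\r)^{(A-1)/(A-a)},
\]
so $C_{a,A} = h(u^*)^{(B-b)/b}$.

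The final step is a short sign analysis. When $A > a$, the exponents on $(a-1)/(A-1) < 1$ satisfy $(a-1)/(A-a) < (A-1)/(A-a)$, giving $h(u^*) < 1$; hence $u^*$ is the unique minimizer of $h$, and since $(B-b)/b < 0$ the map $h \mapsto h^{(B-b)/b}$ is decreasing, so $u^*$ becomes the maximizer of $\rho$, with value $C_{a,A}>1$. When $A < a$, an analogous computation gives $h(u^*) > 1$ as the unique maximizer, and $(B-b)/b > 0$ preserves this as the maximizer of $\rho$. In both cases, the endpoint values $h(0) = \lim_{u\to\infty}h(u) = 1$ supply the matching lower bound $\rho \geq 1$. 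The bookkeeping in the first step --- especially tracking the interaction of the $(A-a)/(a-1)$ and $(a-A)/(a(A-1))$ exponents --- is the only delicate part; everything after that is elementary single-variable calculus.
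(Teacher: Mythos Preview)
Your proposal is correct and follows essentially the same route as the paper: direct computation for the identity $\phi'(\varphi'(t)) = \rho(t)t$, then a single-variable analysis of the base function $h(u) = \tfrac{u}{u+1} + (u+1)^{-(A-1)/(a-1)}$ to locate the unique critical point and evaluate the extremum as $C_{a,A}$. The only minor difference is that the paper obtains the lower bound $\rho\ge 1$ by a quick algebraic rearrangement (pulling a factor of $(t^a+1)^{-1}$ out of the second term and checking signs in the two cases $a\gtrless A$), whereas you deduce it from the shape of $h$ together with the sign of the outer exponent $(B-b)/b$; both arguments are valid and of comparable length.
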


\begin{proof}
We will show the results backwards, starting with \eqref{kin:robustonedim:nearconjerrbound}. By rearrangement,
\begin{equation*}
\rho(t) = (\tfrac{t^{a}}{t^a + 1} + (t^a + 1)^{-1}(t^a + 1)^{\tfrac{a-A}{a-1}})^{\tfrac{a - A}{a(A-1)}}.
\end{equation*}
If $a \geq A$ we have $(t^a + 1)^{\tfrac{a-A}{a-1}} \geq 1$ and $t^{\tfrac{a-A}{a(A-1)}}$ is increasing, so $\rho(t) \geq 1$. If $a < A$ we have $(t^a + 1)^{\tfrac{a-A}{a-1}} \leq 1$ and $t^{\tfrac{a-A}{a(A-1)}}$ is decreasing, so again $\rho(t)  \geq 1$. This proves the left hand inequality of \eqref{kin:robustonedim:nearconjerrbound}. Now, assume that $A \neq a$. Looking at $\tfrac{t^a}{t^a + 1} + (t^a +1)^{-\tfrac{A-1}{a-1}}$ we have
\begin{align*}
\l[\tfrac{t^a}{t^a + 1} + (t^a +1)^{-\tfrac{A-1}{a-1}}\r]'  = \tfrac{at^{a-1}}{(t^a +1)^2} - \tfrac{A-1}{a-1} (t^a +1)^{-\tfrac{A-1}{a-1}-1} a t^{a-1}
\end{align*}
Since $t \neq 0$ we see that it has a stationary point at 
\begin{align*}
(t^a+1)^{-1} - \tfrac{A-1}{a-1} (t^a +1)^{-\tfrac{A-1}{a-1}} = 0,
\end{align*}
which is equivalent to $(t^a + 1) = \l(\tfrac{A-1}{a-1}\r)^{\tfrac{a-1}{A-a}}$. This is also a stationary point of $\rho(t)$. Since $\rho(0) = \rho(\infty) = 1$ and $\rho(t) \geq 1$ this stationary point must be a maximum. Thus
\begin{align*}
\rho(t) &= \l(1 - (t^a+1)^{-1} + (t^a+1)^{-\tfrac{A-1}{a-1}}\r)^{\tfrac{B-b}{b}}\\
&\leq \l(1 - \l(\tfrac{a-1}{A-1}\r)^{\tfrac{a-1}{A-a}} + \l(\tfrac{a-1}{A-1}\r)^{\tfrac{A-1}{A-a}}\r)^{\tfrac{B-b}{b}}
\end{align*}
This proves the right hand inequality of \eqref{kin:robustonedim:nearconjerrbound}. For \eqref{kin:robustonedim:nearconjerr}, since $(b-1)(a-1) = ab - a - b +1 = 1$, we have,
\begin{align*}
\phi'(\varphi'(t)) &= ([(t^a + 1)^{\tfrac{A-a}{a}} t^{a-1}]^b + 1)^{\tfrac{B-b}{b}} [(t^a + 1)^{\tfrac{A-a}{a}} t^{a-1}]^{b-1}\\
&= ((t^a + 1)^{\tfrac{A-a}{a-1}} t^{a} + 1)^{\tfrac{B-b}{b}}  (t^a + 1)^{\tfrac{A-a}{a(a-1)}} t\\
\intertext{we have $\tfrac{B-b}{b} = \tfrac{A(a-1) - a(A-1)}{(a-1)(A-1)b} = \tfrac{a - A}{a(A-1)}$ and thus}
&= ((t^a + 1)^{\tfrac{A-a}{a-1}} t^{a} + 1)^{\tfrac{a - A}{a(A-1)}}  (t^a + 1)^{\tfrac{A-a}{a(a-1)}} t\\
&= \rho(t)t
\end{align*}
\end{proof}


Now we are ready to prove the key results in this section.

\kinrobust*

\begin{proof}
Again, for the purposes of this proof, let $\varphi(t) = \varphi_a^A(t)$ and $\phi(t) = \varphi_b^B(t)$ for $t \in [0, \infty)$. 
\begin{enumerate}

\item \emph{Convexity}. First, since norms are positive definite and $\varphi$ uniquely minimized at $0 \in \R$ by Lemma \ref{kin:robustonedim}, this proves that $0 \in \R^d$ is a unique minimizer of $\K$. Let $\epsilon \in (0,1)$ and $p, q \in \R^d$ such that $p \neq q$. By the monotonicity proved in Lemma \ref{kin:robustonedim} and the triangle inequality
\begin{align*}
	\K(\epsilon p + (1-\epsilon) q) &= \varphi(\norm{\epsilon p + (1-\epsilon) q}_*)\\
	&\leq \varphi(\epsilon \norm{p}_* + (1-\epsilon) \norm{q}_*)
	\intertext{and finally, by the strict convexity proved in Lemma \ref{kin:robustonedim}}
	&< \epsilon \K(p) + (1-\epsilon) \K(q).
\end{align*}

\item \emph{Conjugate}. Let $x \in \R^d$. First, by definition of the convex conjugate and the dual norm,
\begin{align*}
	\K^*(x) &= \sup_{p \in \R^d} \{\inner{x}{p} - k(p)\} = \sup_{p \in \R^d} \{\inner{x}{p} - \varphi(\norm{p}_*)\}\\
	&= \sup_{t \geq 0} \sup_{\norm{p}_* = t} \{\inner{x}{p} - \varphi(t)\} = \sup_{t \geq 0} \{t \norm{x} - \varphi(t)\} = \varphi^*(\norm{x})
\end{align*}

\item \emph{Gradient}.
First we argue for differentiability. For $p = 0$ (or $q=0$ in the case of \eqref{kin:robust:unifineq}), we have by the equivalence of the norms that there exists $c > 0$ such that $\norm{p}_* < c\norm{p}_2$. Thus, $\lim_{\norm{p}_2 \to 0} \K(p) \norm{p}_2^{-1} \leq \lim_{\norm{p}_2 \to 0} \varphi(c \norm{p}_2)\norm{p}^{-1}= c \lim_{t \to 0} \varphi(t)t^{-1}=0$, and thus we have $\grad \K(0) = 0$. Now for $p \neq 0$, we have $\norm{p}_* \neq 0$. Since $\varphi(t)$ is differentiable for $t > 0$ and $\norm{p}_*$ at $p \neq 0$, we have by the chain rule $\grad \K(p) = \varphi'(\norm{p}_*) \grad \norm{p}_*$.

All four results follow trivially when $p = 0$. In particular, \eqref{kin:robust:unifineq} reduces to $\K(p) \leq \inner{\grad \K(p)}{p}$ for $q=0$ and $0 \leq \inner{\grad \K(q)}{q}$ for $p=0$; both follow from convexity.

Now, assume $p \neq 0$. For \eqref{kin:robust:innerbound}, \eqref{kin:robust:slowgrowth1}, and \eqref{kin:robust:slowgrowth2}  we have, by Lemma \ref{lem:norms}, $\inner{\grad \K(p)}{p}  = \norm{p}_* \varphi'(\norm{p}_*)$ and $\varphi^*(\norm{\grad \K(p)}) = \varphi^*(\varphi'(\norm{p}_*))$ and $\phi(\norm{\grad \K(p)}) = \phi(\varphi'(\norm{p}_*)) $. Letting $t = \norm{p}_* > 0$, \eqref{kin:robust:innerbound} follows directly from \eqref{kin:robustonedim:gradbound} of Lemma \ref{kin:robustonedim}. For \eqref{kin:robust:slowgrowth1}, we have from convex analysis \eqref{eq:fenchelyoungtight} that 
\begin{equation}
\label{eq:convexanalysisresult} \varphi^*(\varphi'(t)) = t\varphi'(t) - \varphi(t).
\end{equation}
This implies that  $\varphi^*(\varphi'(t)) \leq (\max\{a,A\} -1) \varphi(t)$, again by \eqref{kin:robustonedim:gradbound} of Lemma \ref{kin:robustonedim}.  For \eqref{kin:robust:slowgrowth2} assume $a,A>1$ and consider that by \eqref{kin:robustonedim:hessbound} of Lemma \ref{kin:robustonedim} and \eqref{kin:robustonedim:nearconjerrbound} of Lemma \ref{kin:rho},
\begin{align*}
[\phi(\varphi'(t)) ]' = \phi'(\varphi'(t)) \varphi''(t) = \rho(t) t \varphi''(t) \leq \varphi'(t) C_{a,A} (\max\{a, A\} - 1)
\end{align*}
Integrating both sides of this inequality gives $\phi(\varphi'(t)) \leq C_{a,A} (\max\{a, A\} - 1) \varphi(t)$.

Finally, for the uniform gradient bound \eqref{kin:robust:unifineq}, assume $p \neq 0$ and $q \neq 0$. Lemma \ref{lem:norms} implies by Cauchy-Schwartz that $-\inner{\grad \norm{p}_*}{q} \geq - \norm{q}$ for any $p,q \in \R^d \setminus \{0\}$. Thus by Lemma \ref{lem:norms},
\begin{align*}
\inner{\grad \K(q)}{q} + \inner{\grad \K(p)  - \grad \K(q)}{p-q} \geq \varphi'(\norm{q}_*) \norm{q}_* + (\varphi'(\norm{p}_*) - \varphi'(\norm{q}_*))(\norm{p}_*-\norm{q}_*)
\end{align*}
and our result is implied by the one dimensional result \eqref{kin:robustonedim:unifgradbound} of Lemma \ref{kin:robustonedim}.

\item \emph{Hessian}.
Throughout, assume $p \in \R^d \setminus \{0\}$. First we argue for twice differentiability. We have $\grad \K(p) = \varphi'(\norm{p}_*) \grad \norm{p}_*$, which for $p \neq 0$ is a product of a differentiable function and a composition of differentiable functions. Thus, we have differentiability, and by the chain rule,
\begin{equation}
\label{eq:khess} \hess \K(p) = \varphi''(\norm{p}_*) \grad\norm{p}_* \grad\norm{p}_*^T + \varphi'(\norm{p}_*) \hess \norm{p}_*
\end{equation}
All of these terms are continuous at $p \neq 0$ by assumption or inspection of \eqref{kin:onedim:hess}.

We study \eqref{eq:khess}. For \eqref{kin:robust:hessianpp}, we have by Lemma \ref{lem:norms} and \eqref{kin:robustonedim:gradbound},\eqref{kin:robustonedim:hessbound} of Lemma \ref{kin:robustonedim},
\begin{align*}
\inner{p}{\hess \K(p) p} &= \varphi''(\norm{p}_*) \inner{p}{\grad\norm{p}_* \grad\norm{p}_*^T p} + \varphi'(\norm{p}_*)  \inner{p}{\hess \norm{p}_* p}\\
&= \varphi''(\norm{p}_*) \norm{p}_*^2\\
&\leq \max\{a,A\}(\max\{a,A\} - 1) \varphi(\norm{p}_*).
\end{align*}
For \eqref{kin:robust:hessiannorm} first note, by Lemma \ref{lem:norms}
\begin{align*}
\inner{v}{\grad\norm{p}_* \grad\norm{p}_*^T v} = (\inner{v}{\grad\norm{p}_*})^2 \leq \norm{v}_*^2
\end{align*}
and further $\inner{p}{\grad\norm{p}_* \grad\norm{p}_*^T p}  = \norm{p}_*^2$. Thus $\maxeigenconj{\grad\norm{p}_* \grad\norm{p}_*^T} = 1$. Together, along with our assumption on the Hessian of $\norm{p}_*$, we have
\begin{align*}
\maxeigenconj{\hess \K(p)} &\leq \varphi''(\norm{p}_*) \maxeigenconj{\grad\norm{p}_* \grad\norm{p}_*^T} + \varphi'(\norm{p}_*)  \maxeigenconj{\hess \norm{p}_*}\\
&\leq \varphi''(\norm{p}_*) +  N \varphi'(\norm{p}_*) \norm{p}_*^{-1}
\intertext{and by \eqref{kin:robustonedim:hessbound} of Lemma \ref{kin:robustonedim}}
&\leq \varphi'(\norm{p}_*) \norm{p}_*^{-1} (\max\{a,A\} - 1 + N)
\end{align*}
On the other hand, by Lemma \ref{lem:norms} and the monotonicity of Lemma \ref{kin:robustonedim},
\begin{align*}
	\maxeigenconj{\hess \K(p)} &\geq \varphi''(\norm{p}_*) \inner{\frac{p}{\norm{p}_*}}{\grad\norm{p}_* \grad\norm{p}_*^T \frac{p}{\norm{p}_*}} + \varphi'(\norm{p}_*)\inner{\frac{p}{\norm{p}_*}}{\hess \norm{p}_* \frac{p}{\norm{p}_*}}\\
	&= \varphi''(\norm{p}_*) > 0
\end{align*}
Taken together, we have
\begin{equation*}
0 < \frac{\maxeigenconj{\hess \K(p)}}{\MaA - 1 + N} \leq \varphi'(\norm{p}_*) \norm{p}_*^{-1}
\end{equation*}
Now, assume $a, A \geq 2$ and let $t = \norm{p}_* > 0$ and $\chi(t) = \varphi_{a/2}^{A/2}(t)$. Our goal is to show that 
\begin{equation*}
\chi^*\l(\frac{\maxeigenconj{\hess \K(p)}}{\MaA - 1 + N}\r) \leq (\MaA - 2) \varphi(\norm{p}_*)
\end{equation*}
To do this we argue that $\chi^*(t)$ is an non-decreasing function  on $(0,\infty)$ and $\chi^*(\varphi'(t)t^{-1}) \leq (\max\{a,A\} - 2) \varphi(t)$, from which our result would follow. First, we have for $r \in [0, \infty)$ and $s \in (0, \infty)$ such that for $t \geq s$,
\begin{align*}
	\chi^*(t) \geq tr - \chi(r) \geq sr - \chi(r)
\end{align*}
Taking the supremum in $r$ returns the result that $\chi^*$ is non-decreasing. Otherwise it can be verified directly from \eqref{kin:robustonedim:conjugatebound} -- \eqref{kin:robustonedim:conjugatebound5}. Thus, what remains to show is $\chi^*(\varphi'(t) t^{-1}) \leq (\max\{a,A\}-2) \varphi(t)$.
Note that $\chi(t^2) = 2 \varphi(t)$ and $\chi'(t^2) t = \varphi'(t)$. Using \eqref{eq:fenchelyoungtight} of convex analysis and \eqref{kin:robustonedim:gradbound} of Lemma \ref{kin:robustonedim},
\begin{align*}
\chi^*(\varphi(t) t^{-1}) &= \chi^*(\chi'(t^2)) =t^2 \chi'(t^2) - \chi(t^2) \leq (\max\{\tfrac{a}{2}, \tfrac{A}{2}\} - 1) \chi(t^2) = (\max\{a,A\} - 2) \varphi(t)
\end{align*}
from which our result follows.
\end{enumerate}
 \end{proof}

\normhess*

\begin{proof}
A short calculation reveals that 
\begin{equation}
\hess \norm{x}_b = \frac{(b-1)}{\norm{x}_b}\l(\diag\l(\frac{|x^{(n)}|^{b-2}}{\norm{x}_b^{b-2}}\r) -  \grad \norm{x}_b \grad \norm{x}_b^T\r)
\end{equation}
Thus, since $\inner{b}{aa^Tb} = \inner{a}{b}^2 \geq 0$ for any $a, b \in \R^d$, we have $\maxeigennorm{(1-b)\grad \norm{x}_b \grad \norm{x}_b^T}{\norm{\c}_b} \leq 0$ and
\begin{align*}
\norm{x}_b \maxeigennorm{\hess \norm{x}_b}{\norm{\c}_b} \leq (b-1) \maxeigennorm{\diag\l(|x^{(n)}|^{b-2}\norm{x}_b^{2-b}\r)}{\norm{\c}_b}.
\end{align*}
First, consider the case $b > 2$. Given $v \in \R^d$ such that $\norm{v}_b = 1$, we have by the H{\"o}lder's inequality along with the conjugacy of $b/2$ and $b/(b-2)$,
\begin{align*}
\inner{v}{\diag\l(|x^{(n)}|^{b-2}\norm{x}_b^{2-b}\r) v} \leq \l(\sum_{n=1}^d \frac{|x^{(n)}|^b}{\norm{x}_b^b}\r)^{\tfrac{b-2}{b}} \l(\sum_{n=1}^d |v^{(n)}|^b \r)^{\tfrac{2}{b}} = 1
\end{align*}
Now, for the case $b=2$ we get $\diag\l(|x^{(n)}|^{b-2}\norm{x}_2^{2-b}\r) = I$ and $\maxeigennorm{I}{\norm{\c}_2} = 1$.
Our result follows.
\end{proof}

\robustonedimconj*

\begin{proof}
For convenience, define 
\begin{equation}
\varphi(t) = \varphi_a^A(t) \qquad \qquad \phi(t) = \varphi_b^B(t).
\end{equation}
As a reminder, for $t \in (0, \infty)$, the following identities can be easily verified.
\begin{align}
\label{kin:onedim:grad} \varphi'(t) &=  (t^a + 1)^{\tfrac{A-a}{a}} t^{a-1}\\
\label{kin:onedim:hess} \varphi''(t) &=  t^{-1}\varphi'(t)\l(a-1  + (A-a)\tfrac{t^{a}}{t^a+1}\r)
\end{align}

\begin{enumerate}
\item \emph{Near Conjugate}. As a reminder $\varphi^*(t) = \sup_{s \geq 0} \{ts - \varphi(s)\}$. First, since $\varphi^*(0) = - \inf_{s \geq 0} \{\varphi(s)\}$ = 0, this result holds for $t=0$. Assume $t > 0$. Our strategy will be to show that for $s \in [0, \infty)$ we have $st - \varphi(s) \leq \phi(t)$. This is true for $s=0$ by the monotonicity of $\phi$ in Lemma \ref{kin:robustonedim}, so assume $s>0$. Now, consider $s = \phi'(\varphi'(r))$ for some $r \in (0, \infty)$. To see that this is a valid parametrization for $s$, notice that $\lim_{r \to 0} \phi'(\varphi'(r)) = 0$ and
\begin{align*}
[\phi'(\varphi'(r))]' &= \phi''(\varphi'(r)) \varphi''(r) > 0
\end{align*}
Thus $s(r) = \phi'(\varphi'(r))$ is one-to-one and onto $(0, \infty)$. Further we have by Lemma \ref{kin:rho} that
\begin{equation}
t \leq \rho(t)t = \phi'(\varphi'(t))
\end{equation}
and thus $(\phi')^{-1}(t) \leq \varphi'(t)$, since $\phi''(t) > 0$.
All together, using convexity we have
\begin{align*}
\phi(t) &\geq \phi(\varphi'(r)) +  \phi'(\varphi'(r)) (t - \varphi'(r)) = st  + \phi((\phi')^{-1}(s)) - s (\phi')^{-1}(s)
\intertext{taking the derivative of $\phi( (\phi')^{-1}(s)) - s (\phi')^{-1}(s)$ we get $- (\phi')^{-1}(s)$. Since $- (\phi')^{-1}(s) \geq - \varphi'(s)$, we finally get}
&\geq s t -\varphi(s).
\end{align*}
Taking the supremum in $s$ gives us our result.

\item \emph{Conjugate}. As a reminder $\varphi^*(t) = \sup_{s \geq 0} \{ts - \varphi(s)\}$. Since $\varphi^*(0) = - \inf_{s \geq 0} \{\varphi(s)\}$ = 0, these results all hold when $t=0$. \eqref{kin:robustonedim:conjugatebound1} is a standard result, since $\varphi_a^a(t)= \tfrac{1}{a} t^a$. \eqref{kin:robustonedim:conjugatebound5} is a standard result, since $\varphi_1^1(s) = s$. Thus, we assume $a,A > 1$ and $t > 0$ for the remainder. For \eqref{kin:robustonedim:conjugatebound4}, assume just $A = 1$. The stationary condition of the supremum of $ts - \varphi(s)$ in $s$ is
\begin{align*}
t = (s^a + 1)^{-\tfrac{1}{b}} s^{a-1}
\end{align*}
Raising both sides to $b$ we get $t^b = \tfrac{s^a}{s^a+1}$, whose solution for $t \in [0,1]$ is $s = \l(\tfrac{t^b}{1-t^b}\r)^{\tfrac{1}{a}}$. Thus,
\begin{align*}
\varphi^*(t) &= t \l(\frac{t^b}{1-t^b}\r)^{\tfrac{1}{a}} - \l(\frac{1}{1-t^b}\r)^{\tfrac{1}{a}} + 1\\
&= \frac{t^b}{(1-t^b)^{\tfrac{1}{a}}} - \frac{1}{(1-t^b)^{\tfrac{1}{a}}}+ 1\\
&= 1 - (1-t^b)^{1 - \tfrac{1}{a}}
\end{align*}
When $t > 1$, $ts$ dominates $\varphi(s)$ and the supremum is infinite.
Now for \eqref{kin:robustonedim:conjugatebound3}, assume just $a = 1$. We have the stationary condition of the conjugate equal to $t = (s+1)^{A-1}$, which corresponds to
\begin{align*}
s = \max\{t^{\tfrac{1}{A-1}} - 1, 0\}
\end{align*}
Thus, when $t > 1$ we have
\begin{align*}
\varphi^*(t) &= t (t^{\tfrac{1}{A-1}} - 1)  - \tfrac{1}{A}t^B + \tfrac{1}{A}\\
&= \tfrac{1}{B}t^B - t + \tfrac{1}{A}
\end{align*}
otherwise $\varphi^*(s) = 0$.
\end{enumerate}
 \end{proof}

\assvarphiaA*

\begin{proof}
	First, by Lemma \ref{kin:robust}, this choice of $\K$ satisfies assumptions \ref{ass:cont:kconvex} and \ref{ass:semiA:gKpsem} / \ref{ass:semiB:gKpsem} with constant $C_{\K} = \max\{a, A\}$. We consider the remaining assumptions of \ref{ass:cont}, \ref{ass:imp}, \ref{ass:semiA}, and \ref{ass:semiB}..
	
\begin{enumerate}
\item Our first goal is to derive $\alpha$. By assumption \ref{knownpower:impcondition}, we have $\mu \varphi_b^B(\norm{x}) \leq \fc(x)$. Since $(\mu \varphi_b^B(\norm{\c}))^* = \mu (\varphi_b^B)^*(\mu^{-1}\norm{\c}_*)$ by Lemma \ref{kin:robust} and the results discussed in the review of convex analysis, we have by assumption \ref{knownpower:fpower},
\begin{align*}
\fc^*(p) \leq \mu (\varphi_b^B)^*\l(\mu^{-1}\norm{p}_*\r) \leq \max\{\mu^{1-a}, \mu^{1-A}\} \K(p)
\end{align*}
Thus, we have $\alpha=\min\{\mu^{a-1}, \mu^{A-1}, 1\}$ constant. Moreover, we can take $\Ca = \gamma$. This along with \ref{knownpower:fconvex} implies that $f$ and $\K$ satisfy assumptions \ref{ass:cont}

By Lemma \ref{kin:robust} and assumption \ref{knownpower:impcondition} we have
\begin{align*}
\varphi_a^A(\norm{\grad f(x)}_*) &\leq L (f(x) - f(\xmin)),\\
(\varphi_a^A)^*(\norm{\grad \K(p)}) &= (\MaA-1) \K(p),
\end{align*}
By Fenchel-Young and the symmetry of norms, we have $|\inner{\grad f(x)}{\grad \K(p)}| \leq \max\{\MaA-1, L\} \Ha(x,p)$, from which we derive $\Dfk$ and the fact that $f,\K$ satisfy assumptions \ref{ass:imp}.

\item The analysis of 1. holds for assumptions \ref{ass:cont} and \ref{ass:imp}. Now, to derive the conditions for assumptions \ref{ass:semiA} consider 
\begin{align*}
\norm{\grad \K(p)}^2 =  [(\varphi_a^A)'(\norm{p}_*)]^{2}
\end{align*}
Note that,
\begin{align*}
\varphi_{b/2}^{B/2}([(\varphi_a^A)'(t)]^{2}) = 2 \varphi_b^B((\varphi_a^A)'(t))
\end{align*}
Thus $\varphi_{b/2}^{B/2}(\norm{\grad \K(p)}^2) \leq 2 C_{a,A} (\MaA - 1) \K(p)$ for all $p \in \R^d$, by Lemma \ref{kin:robust}.
Now all together, by the Fenchel-Young inequality and assumption \ref{knownpower:semiAcondition}, we have for $p \in \R^d$ and $x \in \R^d \setminus \{\xmin\}$
\begin{align*}
\inner{\grad \K(p)}{\hess f(x) \grad \K(p)} &\leq \norm{\grad \K(p)}^2 \maxeigen{\hess f(x)} \\
&\leq \Lf\varphi_{b/2}^{B/2}(\norm{\grad \K(p)}^2) + \Lf(\varphi_{b/2}^{B/2})^*\l(\frac{\maxeigen{\hess f(x)}}{\Lf}\r)\\
&\leq \Lf2 C_{a,A} (\MaA - 1) \K(p) + \Lf(\varphi_{b/2}^{B/2})^*\l(\frac{\maxeigen{\hess f(x)}}{\Lf}\r)\\
&\leq \Dfk \alpha \Ha(x,p).
\end{align*}
This gives us assumptions \ref{ass:semiA}.

\item The analysis of 1. holds again for assumptions \ref{ass:cont} and \ref{ass:imp}. Now, for assumptions \ref{ass:semiB}, we first note that Lemma \ref{kin:robust} gives us constants $\Dk = \max\{a, A\} (\max\{a, A\} - 1), E_{\K} = \max\{a,A\} - 1, F_{\K} =1$. 

For the remaining constant $\Dfk$ we follow a similar path as 2. First, note that since $b,B \leq 2$ we have that $a,A \geq 2$. This, along with assumption \ref{knownpower:semiBcondition}, let's us use \eqref{kin:robust:hessiannorm} of Lemma \ref{kin:robust} for $p \in \R^d \setminus \{0\}$. Now, letting $M = (\max\{a, A\} - 1 + N)$ and applying \eqref{kin:robust:hessiannorm} of Lemma \ref{kin:robust} along with the Fenchel-Young inequality, we have for $p \in \R^d \setminus \{0\}$ and $x \in \R^d$
\begin{align*}
\inner{\grad \f(x)}{\hess \K(p) \grad f(x)} &\leq \norm{\grad f(x)}_*^2 \maxeigenconj{\hess \K(p)} \\
&\leq M \varphi_{a/2}^{A/2}(\norm{\grad f(x)}_*^2) + M(\varphi_{a/2}^{A/2})^*\l(\frac{\maxeigenconj{\hess \K(p)}}{M}\r)\\
&\leq 2 L M (f(x) - f(\xmin)) + M (\max\{a,A\} - 2) \K(p)\\
&\leq \Dfk \alpha \Ha(x,p).
\end{align*}
This gives us assumptions \ref{ass:semiB}.
\end{enumerate}

\end{proof}

\end{document}